\documentclass[reqno,11pt]{amsart}
\usepackage{amsmath,amssymb,mathrsfs,amsthm,amsfonts}
\usepackage{enumerate}
\usepackage[usenames,dvipsnames]{xcolor}
\usepackage{hyperref}
\usepackage[percent]{overpic}
\usepackage{comment}
\usepackage{algorithm}
\usepackage{algorithmic}
\usepackage{mathtools}
\usepackage{subcaption}
\usepackage{tikz}
\usepackage{bm}
\usetikzlibrary{calc}
\usetikzlibrary{arrows.meta,backgrounds}
\usepackage{multirow,array,longtable,booktabs}
\usetikzlibrary{arrows}

\hypersetup{
	colorlinks=true, linkcolor=blue,
	citecolor=ForestGreen
}
\usepackage[paper=letterpaper,margin=1in]{geometry}

\newtheorem{theorem}{Theorem}[section]
\newtheorem{lemma}[theorem]{Lemma}
\newtheorem{proposition}[theorem]{Proposition}

\newtheorem{prop}[theorem]{Proposition}
\newtheorem{corollary}[theorem]{Corollary}
\theoremstyle{definition}
\newtheorem{definition}[theorem]{Definition}

\numberwithin{equation}{section}
\allowdisplaybreaks
\usepackage{acronym}

\acrodef{LDP}{Large Deviation Principle}

\newcommand{\E}{\mathbf{E}}	

\newcommand{\be}{\begin{equation}}
\newcommand{\ee}{\end{equation}}
\newcommand{\bea} {\begin{array}{rl}}
\newcommand{\eea} {\end{array}}
\newcommand{\bepa}{\left\{ \begin{array}{l}}
\newcommand{\eepa} {\end{array}\right.}
\makeatletter

\newcommand{\R}{\mathbb{R}} 
\newcommand{\N}{\mathbb{N}}

\newcommand{\ep}{\epsilon}

\newcommand{\ds}{\displaystyle}

\newcommand{\T}{\mathbb{T}}

\renewcommand{\hat}{\widehat}

\renewcommand{\bar}{\overline}

\usepackage{graphicx}

\newcommand{\bd}{{\bf d}}
\newcommand{\ov}{\overline}
\newcommand{\bx}{\bm{x}}

\newcommand{\bX}{\bm{X}}

\newcommand{\cP}{\mathcal{P}}

\newcommand{\eps}{\epsilon}

\newcommand{\cA}{\mathcal{A}}

\newcommand{\lip}{\text{Lip}}

\newcommand{\wt}{\widetilde}

\newcommand{\by}{\bm{y}}
\newcommand{\bz}{\bm{z}}

\newcommand{\balpha}{\bm{\alpha}}

\newcommand{\bzero}{\bm{0}}
\newcommand{\sub}{\cP_{\text{sub}}}

\newcommand{\cad}{\text{c\`{a}dl\`{a}g} }

\newcommand{\btau}{\bm{\tau}}
\numberwithin{equation}{section}
\newcommand{\vs}{\vskip.05in}
\newcommand{\bsp}{\begin{split}}
\newcommand{\espl}{\end{split}}

\title[MFC with stopping]{Mean field control with stopping}

\author[P.\ Cardaliaguet]{Pierre Cardaliaguet}
\address{P.\ Cardaliaguet, 
Ceremade (UMR CNRS 7534), Universite Paris-Dauphine and PSL
\newline\hphantom{\quad \ \ P. Cardaliaguet}
Place du Maréchal De Lattre De Tassigny, 75775 Paris Cedex 16,  France
}
\email{cardaliaguet@ceremade.dauphine.fr}

\author[J.\ Jackson]{Joe Jackson}
\address{J.\ Jackson,
Department of Mathematics, University of Chicago,
\newline\hphantom{\quad \ \ J. Jackson}
5734 S.~University Avenue, Chicago, Illinois 60637, USA
}
\email{jsjackson@uchicago.edu}

\author[P.\ E.\ Souganidis]{Panagiotis E. Souganidis}
\address{P.\ E.\ Souganidis, 
Department of Mathematics, University of Chicago,
\newline\hphantom{\quad \ \ J. Jackson}
5734 S.~University Avenue, Chicago, Illinois 60637, USA
}
\email{souganidis@uchicago.edu}

\thanks{\hskip-.125in P. Cardaliaguet was partially supported by P.E.\ Souganidis' Air
Force Office for Scientific Research grant FA9550-18-1-0494 and by the Agence Nationale de la Recherche (ANR),
project ANR-22-CE40-0010 COSS.} 

\thanks{\hskip-.125in J. Jackson was supported by the NSF under Grant No. DMS2302703.} 

\thanks{\hskip-.13in P. E. Souganidis was partially  supported by the National Science Foundation grants DMS-2153822 
and DMS- 2452972.}

\begin{document}
	\begin{abstract}
          We study a high-dimensional stochastic optimization problem which features both control and stopping. In particular, a central planner steers a large population of particles, and can also remove particles at any time by paying a penalty. In the limit, we obtain a mean field control problem with discontinuous dynamics, in the sense that the controlled Fokker-Planck equation can have jumps. The value of the $N$-particle problem is characterized by a hierarchy of non-linear obstacle problems. The value of the limiting problem, meanwhile, solves an infinite-dimensional quasi-variational inequality (QVI). We introduce a notion of viscosity solution for this QVI, and obtain a comparison principle. Together with various regularity estimates, this comparison principle allows us to characterize the mean field value function as the unique viscosity solution of the QVI, and to establish the convergence of the $N$-particle value functions to the mean field value function. 
	\end{abstract}
	
	
	\maketitle
		
	\setcounter{tocdepth}{1}
\tableofcontents

\section{Introduction} 

We study an optimization problem in which a central planner controls a large number of particles, and aims to minimize a symmetric cost function. The central planner chooses the drift of each particle, and can also choose to remove a particle at any time by paying a certain penalty. This is a continuation of the program initiated in \cite{CJSabsorption}, the goal of which is to establish convergence results for mean field control problems and mean field games in which the number of agents in the population can change over time.

\subsection*{Statement of the problem} 
We work for simplicity on the $d$-dimensional flat torus $\T^d$, and on a fixed probability space hosting independent Brownian motions $(W^i)_{i \in \N}$. We fix a time horizon $T > 0$. For each $N\in \N$ (the initial number of particles) and each $K\in \{1, \dots, N\}$ (the number of remaining particles), we study the stochastic control problem whose value function
\[ V^{N,K} : [0,T]\times (\T^d)^K \to \R\]
is given,  for each $t_0 \in [0,T]$ and $\bx_0=(x^1_0, \ldots, x^K_0) \in (\T^d)^K$, by the formula
\begin{align} \label{def.vnk.intro}
V^{N,K}(t_0,{\bf x}_0) = \inf_{(\balpha, \btau) } \E\bigg[ \int_{t_0}^T \frac{1}{N} &\sum_{i = 1}^K L\big(X_t^i, \alpha_t^i, m_t^{N,K} \big) 1_{t < \tau^i} dt  \nonumber \\
&+ \frac{1}{N} \sum_{i = 1}^K \Psi\big(X_{\tau^i}^i, m_{\tau^i-}^{N,K}\big) 1_{\tau^i \leq T} + G(m_T^{N,K})\bigg].
\end{align}
In \eqref{def.vnk.intro}, the infimum is taken over all tuples
\begin{align*}
    \balpha = (\alpha^1,...,\alpha^K), \quad \btau = (\tau^1,...,\tau^K)
\end{align*}
where each $\alpha^i$ is a square-integrable, $\R^d$-valued process progressively measurable with respect to the filtration generated by the independent Brownian motions $(W^i)_{i = 1,...,K}$, and $\tau^i$ is a $[t_0,T] \cup \{+\infty\}$-valued stopping time (with respect to the same filtration). We note that the stopping time $\tau^i$ represents the time at which particle $i$ is removed, and $\tau^i = +\infty$ indicates that the particle is never removed. 
\vs
The $(\T^d)^K-$valued state process $\bX=(X^1,\ldots,X^K)$ is determined from the control $\balpha$ and the initial condition $\bx_0$ via
\be\label{phl0}
dX^i_t=\alpha^i_t dt +\sqrt{2} dW^i_t \quad t_0 \leq t \leq T, \quad X^i_{t_0} =x^i_0.
\ee
Finally, $m_t^{N,K}$ and $m_{t-}^{N,K}$ are random elements of $\sub= \cP_{\text{sub}}(\T^d)$, the space of sub-probability measures on $\T^d$, given by
\be\label{def.mnn}
 m_t^{N,K}=\frac{1}{N} \sum_{i = 1}^K \delta_{X_t^i} 1_{t < \tau^i} \ \ \text{and} \ \    m_{t-}^{N,K} = \lim_{s \uparrow t} \;m_t^{N,K} = \frac{1}{N} \sum_{i  = 1}^K \delta_{X_t^i} 1_{t \leq \tau^i}. 
\ee 
It will be convenient to also use the notation 
\begin{align*}
   V^{N,0}(t) = G(\bzero) \ \text{for } \ t \in [0,T], 
\end{align*}
where $\bzero \in \sub$ denotes the zero measure on $\T^d$. This should be interpreted as the value when all of the particles have already been removed. 
\vs
We emphasize that we are primarily interested in the optimization problem appearing in the definition of $V^{N,N}$, but we will need the full collection $(V^{N,K})_{K = 0,...,N}$ in order to obtain a closed PDE system; more on this below.
\vs
The data of the problem consist of three functions
\begin{align*}
    L : \T^d \times \R^d \times \sub \to \R, \quad \Psi : \T^d \times \sub  \to \R, \quad G : \sub \to \R.
\end{align*}
We refer to $L$ as the Lagrangian or running cost, $\Psi$ as the stopping penalty, and $G$ as the terminal cost. We also define a Hamiltonian $H:\T^d\times \R^d \times \sub \to \R$ from the Lagrangian $L$ in the usual way, that is,
\be\label{ath200}
H(x,p,m)=\sup_{a \in \R^d} \Big\{-p\cdot a -L(x,a, m)\Big\}.
\ee

\subsection*{The hierarchy of HJB equations}
Standard arguments from the theory of optimal stopping and viscosity solutions yield that, for each fixed $N$, we have the system
\begin{equation} \label{stopping.gen2} \tag{$\text{HJB}_{N,K}$}
    \begin{cases}
      & \ds \max\bigg\{  - \partial_t V^{N,K} (t,\bx)- \sum_{i = 1}^K \Delta_{x^i} V^{N,K} (t,\bx) +
     \frac{1}{N} \sum_{i = 1}^K H\big( x^i,N D_{x^i} V^{N,K}(t,\bx), m_{\bx}^{N,K} \big) ,\\[1.75mm] 
      & \ds \qquad \underset{S \subset [K]} \max \Big\{ V^{N,K}(t,{\bf x})-
      V^{N,K-|S|}(t,{\bf x}^{-S}) -\frac{1}{N} \sum_{i \in S} \Psi(x^i,m_{\bx}^{N,K}) \Big\} \bigg\}=0  \ \ \text{in} \ \  [0,T) \times (\T^d)^K, \vspace{.2cm} \\[2.5mm]
     &   V^{N,K}(T,\bx) = G_{\Psi}^{N,K}(\bx) \quad  \bx \in  (\T^d)^K, 
    \end{cases}
\ee
where  $G^{N,K}_{\Psi} : (\T^d)^K \to \R$ given by 
\begin{align} \label{def.discretemonotoneenvelop}
    G^{N,K}_{\Psi}(\bx) = \inf_{S \subset [K]} \Big\{ G\big( m_{\bx^{-S}}^{N,K-|S|}\big) + \frac{1}{N} \sum_{i \in S} \Psi(x^i,m_{\bx}^{N,K}) \Big\}.
\end{align}
We remark that the terminal condition $G^{N,K}_{\Psi}$ of \eqref{stopping.gen2} is  not $ G^{N,K}$. From a PDE perspective, the new terminal data should be understood as the weak formulation of the terminal value problem. From a control-theoretic viewpoint, the new terminal data  means that by removing particles at the terminal time $T$, the central planner can essentially replace $G^{N,K}$ by $G_{\Psi}^{N,K}$. We collect in the Appendix some important facts about $G_{\Psi}^{N,K}$. 
\vs
In \eqref{stopping.gen2}, we are using the following notational conventions: First, given $N \in \N$, $K \in \{1,...,N\}$ and $\bx \in (\T^d)^K$, we set $m_{\bx}^{N,K} = \frac{1}{N} \sum_{i = 1}^K \delta_{x^i}$. For $\bx \in (\T^d)^K$ and $S \subset [K] \coloneqq \{1,...,K\}$ with $|S| < K$, we denote by $\bx^{-K}$ the element of $\R^{K - |S|}$ obtained by deleting the coordinates of $\bx$ with indices in $S$; for example, if $K = 3$ and $\bx = (x^1,x^2,x^3)$, and $S = \{2\}$, then $\bx^{-S} = (x^1,x^3) \in (\T^d)^2$. We also allow the case $S = [K]$ in \eqref{stopping.gen2}, in which case $V^{N,K-|S|}(t,\bx^{-S})$ is interpreted to mean $V^{N,0}(t) = G(\bzero)$.
\vs

\subsection*{The mean field problem}
The limit problem corresponds (at least formally) to the following variational problem.
 For $(t_0,m_0)\in [0,T]\times \sub$, we define $\cA_{t_0,m_0}$ to be the set of triples $(m,\alpha, \mu)$, such that $[t_0,T] \ni t \mapsto m_t \in \sub$ is \cad, $\alpha : [t_0,T] \times \T^d \to \R$ is a measurable function satisfying 
    \begin{align}
        \int_{t_0}^T \int_{\T^d} |\alpha(t,x)|^2 dm_t(x) dt < \infty,
    \end{align}
 and $\mu$ is a non-negative Borel measure on $[t_0,T] \times \T^d$, such that the equation
    \begin{align} \label{meqn0}
        \partial_t m - \Delta m + \text{div}(m\alpha) = - \mu \ \ \text{in} \ \ [t_0,T] \times \T^d \ \ \text{and} \ \  m_{t_0-} = m_0.
    \end{align}
is satisfied in the sense of distributions on $[t_0,T] \times \T^d$, that is, 
for any  test function $\phi\in C^\infty([t_0,T]\times \T^d)$,  we have 
\be\label{meqn}
\begin{split} 
&\int_{\T^d} \phi(T,x) dm_T(x) - \int_{\T^d} \phi(t_0,x) dm_0(x) =\\
&\int_{t_0}^T \int_{\T^d} \Big( \partial_t \phi(t,x) + \Delta \phi(t,x) + D\phi(t,x) \cdot \alpha(t,x) \Big)dm_t(x) dt - \int_{[t_0,T] \times \T^d} \phi(t,x) d\mu(t,x). 
\end{split}
\ee
\vs
We define the mean field value function $U : [0,T] \times \sub \to \R$ by 
\begin{align*}
    U(t_0,&m_0) = \inf_{(m,\alpha,\mu) \in \cA_{t_0,m_0}} J_{t_0,m_0}(m,\alpha,\mu), \text{ where }
    \\
    & J_{t_0,m_0}(m,\alpha,\mu) = \int_{t_0}^{T} \int_{\T^d} L\big( x, \alpha(t,x), m_t \big) dm_t(x) dt + \int_{[t_0,T] \times \T^d} \Psi\big(x, m_{t-}\big) d\mu(t,x) + G\big( m_T \big).
\end{align*}
A formal dynamic programming argument indicates that $U$ is $\Psi$-non-increasing, in the sense that, for all  $(t,m) \in [0,T] \times \sub$ and all $m',m\in \sub$ such that $m'\leq m$,
\be\label{def.psinon-increasing}
    U(t,m) \leq U(t,m') + \int_{\T^d} \Psi(x,m) d(m-m')(x).
\ee
Here we use the notation $m' \leq m$ to mean that $m'(A) \leq m(A)$ for all Borel subsets $A$ of $\T^d$. The inequality \eqref{def.psinon-increasing} can be deduced from the observation that for any $m' \leq m$, the controller may choose to jump from $m$ to $m'$ immediately by removing the mass $m - m'$ at time $t$, which corresponds to choosing a measure $\mu$ such that $\mu(\{t\} \times A) = (m - m')(A)$ for $A \subset \T^d$. 
\vs
Moreover, $U$ should satisfy the inequality
\begin{equation} \label{u.subsol.intro}
\begin{split}
   -\partial_t U -\int_{\T^d} \Delta_x \frac{\delta U}{\delta m}(t,m,x)m(dx) +\int_{\T^d} H\Big(x, D_x \frac{\delta U}{\delta m}(t,m,x),m \Big)m(dx) \leq 0 \ \ \text{ on \ \ $[0,T) \times \sub$}
     \end{split}
\end{equation}
in the viscosity sense, a fact that can deduced 
by noting that the controller may choose $\mu = 0$, in which case the problem reduces to a more standard mean field control problem. 
\vs
It also follows that $U$ should satisfy 
\be\label{phl11}
   -\partial_t U -\int_{\T^d} \Delta_x \frac{\delta U}{\delta m}(t,m,x)m(dx) +\int_{\T^d} H\Big(x, D_x \frac{\delta U}{\delta m}(t,m,x),m \Big)m(dx) = 0 \ \ \text{in } \mathcal{D}
   \ee
where    
   \be\label{phl11.1}
 \begin{split}   
 & \mathcal{D} = \Big\{ (t,m) : U(t,m) < U(t,m') + \int_{\T^d} \Psi(x,m) (m' - m)(dx) \\
 &\hspace{6cm} \ \text{for all} \  m' \ \text{such that}  \ m' \leq m, \, m' \neq m \Big\},
\end{split}
\ee
as can be seen, at least formally,  from the fact that if $(t,m) \in \mathcal{D}$, it is not optimal to remove mass immediately. 
\vs
Finally, we expect that $U(T,\cdot) = G_{\Psi}$, where $G_{\Psi}$ is the ``$\Psi$-monotone envelope" of $G$, given by
\begin{align} \label{def.monotoneenvelope}
    G_{\Psi}(m) = \inf_{m' \leq m} \Big\{ G(m') + \int_{\T^d} \Psi(x,m) (m-m')(dx) \Big\}.
\end{align}
\vs
We summarize these four conditions in the infinite-dimensional quasi-variational inequality
\be \label{eq.HJstoppingPSI} \tag{$\text{HJB}_{\infty}$}
\begin{cases} 
& \displaystyle \max\bigg\{    -\partial_t U -\int_{\T^d} \Delta_x \frac{\delta U}{\delta m}(t,m,x)m(dx) +\int_{\T^d} H\Big(x, D_x \frac{\delta U}{\delta m}(t,m,x),m \Big)m(dx), \vspace{.1cm} \\[1.2mm] 
& \displaystyle \qquad  \sup_{m' \leq m, m' \neq m}  \text{sgn} \Big( U(t,m) - U(t,m') - \int_{\T^d} \Psi(y,m) (m-m')(dy) \Big) \bigg\}= 0 \; \text{in}  \;  [0,T) \times \sub, \vspace{.1cm}  \\[1.5mm]
& \ds  U(T,m)=G_{\Psi}(m), 
\end{cases}
\ee
where we have set $\text{sgn}(x)$ to be $1$ when $x>0$, $0$ when $x=0$ and $-1$ for $x<0$.

\subsection*{The main results}
We now state our main results, although we prefer to defer the precise assumptions on $H$, $\Psi$, and $G$ until Section \ref{sec.prelim} below. In particular, these are stated in \eqref{phl1}, \eqref{phl2}, and \eqref{phl3}. Our first main result, which is a crucial ingredient for the convergence and uniqueness results which will be presented shortly,  is a Lipschitz bound for both $V^{N,K}$ and $U$, expressed in term of a bounded Lipschitz distance on $\sub$: 
$$
{\bf d}(m,n)=  \sup_{\|f \|_{W^{1,\infty}(\T^d)} \leq 1} \int_{\T^d} f d(m- n) \  \text{  for any \ $m,n\in \sub $}. 
$$

\begin{theorem} \label{thm.main.lip} 
Assume  \eqref{summarize}.  Then there exists  a constant $C>0$ such that, for each $t,s \in [0,T]$ and $m,n \in \sub$, 
    \begin{align} \label{ulip}
        | U(t,m) - U(s,n) | \leq C \Big( \bd(m,n) + |t-s|^{1/2} \Big), 
    \end{align}
    and, for each $t,s \in [0,T]$, $N \in \N$, $K,J \in \{1,...,N\}$, $\bx \in (\T^d)^K$ and $\by \in (\T^d)^J$, 
    \begin{align} \label{vnklip}
        \big| V^{N,K}(t,\bx) - V^{N,J}(s,\by) \big| \leq C \Big( \bd\big( m_{\bx}^{N,K}, m_{\by}^{N,J} \big) + |t-s|^{1/2} \Big).
    \end{align}
\end{theorem}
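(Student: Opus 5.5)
We prove the estimate first for the $N$-particle values $V^{N,K}$, with constants independent of $N,K,J$, and then obtain \eqref{ulip} for $U$. The $U$ bound can come either from passing to the limit (through the convergence result, or through a direct comparison of $U$ with the empirical problems) or, more simply, by running the same arguments directly on the mean field problem. The key structural fact is that $\bd$ is a bounded Lipschitz distance on sub-probability measures: it controls both transport of mass and creation or destruction of mass, and the latter costs only total-mass discrepancy. The proof splits into three steps: Lipschitz continuity in space for a fixed number of particles, comparison across different numbers of particles via stopping, and H\"older-$1/2$ continuity in time.

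\textbf{Step 1 (same $K$, spatial Lipschitz).} Fix $\bx,\by\in(\T^d)^K$. I would first show that $|V^{N,K}(t,\bx)-V^{N,K}(t,\by)|\le C\frac1N\sum_i|x^i-y^i|$. The method is to use a near-optimal pair $(\balpha,\btau)$ for $\by$ with the same controls and stopping times from $\bx$, so that $X^i_t-Y^i_t=x^i-y^i$ for all $t$. The cost difference is then bounded by the Lipschitz properties of $L$ (locally uniform in $a$, with the usual a priori bound on optimal controls, or via the Hamiltonian structure in \eqref{summarize}), of $\Psi$, and of $G$ with respect to $\bd$ and $x$. Since the controls are kept common, $\bd(m^{N,K}_{X_t},m^{N,K}_{Y_t})\le \frac1N\sum|x^i-y^i|$. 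For the sharper statement in terms of $\bd$ rather than the matching distance, I would use symmetry of $V^{N,K}$ and minimize over permutations; this gives a $W_1$-type bound. To upgrade to $\bd$ one needs removal: $\bd(m,n)$ is comparable to $\inf\{W_1(m',n')+|m-m'|+|n-n'|\}$ over $m'\le m$, $n'\le n$ with equal mass.

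\textbf{Step 2 (mass removal, $K\ne J$).} The main obstacle is controlling $V^{N,K}(t,\bx)-V^{N,K-|S|}(t,\bx^{-S})$ from both sides by $C|S|/N$. One side follows directly from the obstacle in \eqref{stopping.gen2}: $V^{N,K}(t,\bx)\le V^{N,K-|S|}(t,\bx^{-S})+\frac1N\sum_{i\in S}\Psi\le V^{N,K-|S|}+C|S|/N$, which amounts to removing immediately. For the reverse inequality $V^{N,K-|S|}(t,\bx^{-S})\le V^{N,K}(t,\bx)+C|S|/N$, I would take a near-optimal strategy for $K$ particles and simply drop the particles in $S$ (or equivalently stop them but not pay). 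The empirical measures then differ by at most $|S|/N$ in total mass at all times, so by the $\bd$-Lipschitz bounds on $L,\Psi,G$ the cost changes by $C|S|/N$. Here I must use that $\Psi$ and $L$ are bounded below and above appropriately, so that the removed particles' own costs are $O(1/N)$ each. The delicate point is that removing particles alters $m_{t-}$ at the stopping times of the others; this is handled because $\Psi$ is Lipschitz in $m$. Combining Steps 1 and 2 through the decomposition of $\bd$ into a matched part plus excess mass gives \eqref{vnklip} at equal times. The passage from the continuous decomposition to empirical measures with atoms of size $1/N$ requires a rounding argument, costing $O(\bd)$ since $\bd\ge c/N$ whenever measures differ.

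\textbf{Step 3 (time).} For $s>t$, I would compare $V^{N,K}(t,\bx)$ with $V^{N,K}(s,\bx)$. From $t$, using zero control and no stopping on $[t,s]$, dynamic programming gives $V(t,\bx)\le C(s-t)+\E V(s,\bX_s)$. The increment satisfies $\E\,\bd(m_{\bx},m_{\bX_s})\le \frac1N\sum\E|X^i_s-x^i|\le C(s-t)^{1/2}$, so Step 2 yields $V(t,\bx)\le V(s,\bx)+C(s-t)^{1/2}$. For the reverse inequality, I would start from an optimal strategy from $t$, restrict it to $[t,s]$, and apply dynamic programming at time $s$. Removed mass on $[t,s]$ is compared with immediate removal via the obstacle inequality. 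The running cost on $[t,s]$ is $\ge -C(s-t)$ because $L$ is bounded below, while the drift-induced displacement is controlled by an a priori $L^2$ bound on near-optimal controls (from coercivity of $L$, using $\E\int|\alpha|^2\le C$). This gives $\E|X^i_s-x^i|\le C(s-t)^{1/2}$, and Step 1 closes the argument. For $U$, the identical arguments apply verbatim to $(m,\alpha,\mu)\in\cA_{t_0,m_0}$: common control fields in Lagrangian form, restriction of $\mu$, and superposition of measures.
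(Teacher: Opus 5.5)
\textbf{Genuine gap: the equal-mass (transport) step.} The gap is in your Step 1, and everything else depends on it. With common controls, the running-cost difference contains the term $\frac{C}{N}\sum_i |x^i-y^i|\,\E\int_t^T(1+|\alpha^i_s|)\,ds$. So you need $\E\int_t^T|\alpha^i_s|\,ds\le C$ for each individual particle, uniformly in $N$. Coercivity only gives the aggregate bound $\frac1N\sum_i\E\int_t^T|\alpha^i_s|^2\mathbf{1}_{s<\tau^i}\,ds\le C$. That bound allows a single particle to carry energy of order $N$, in which case moving it by $r$ costs up to $r/\sqrt N$ rather than $r/N$. The ``usual a priori bound on optimal controls'' is $|\alpha^i|\le C(1+|ND_{x^i}V^{N,K}|)$, which presupposes the gradient bound you are trying to prove.

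The paper never uses controls at this point. Lemma~\ref{lem.derivativebound2} is a doubling-of-variables comparison on the whole obstacle hierarchy. It penalizes with $(\lambda(t)+C_0K/N)\frac1N\sum_i(|x^i-y^i|^2+\eps)^{1/2}$ and takes $\ov K$ minimal among maximizers, so that, via the Lipschitz bound on $\Psi$, the point $(\ov t,\ov\by)$ lies off the obstacle. The Hamiltonian is then evaluated only at the bounded gradient of the penalization, which is what makes the constant independent of $N$. For $V^{N,K}$ your coupling argument might be rescued by a Gronwall bootstrap on $\max_K\sup|ND_{x^i}V^{N,K}(s,\cdot)|$, using the optimal feedback in the continuation region, but that has to be stated and justified.

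\textbf{For $U$ the verbatim transfer fails, and the limit route is circular.} Under a coupling $(X_0,Y_0)$ of $m,n$, the same term becomes $C\,\E\int|\alpha(s,X_s)|\,|X_0-Y_0|\,ds$. With only $\int\!\!\int|\alpha|^2\,dm_s\,ds\le C$, Cauchy--Schwarz gives $C\,W_2(m,n)\lesssim \bd(m,n)^{1/2}$. For instance, a mass $\eps$ displaced by $r$ has $\bd\approx \eps r$ but an error bound of $r\eps^{1/2}$. There is no feedback formula to bootstrap from, since $U$ is not differentiable. Producing an optimal triple with $\|\alpha\|_\infty\le R_0$ is exactly the content of Section~\ref{sec.U}, which proceeds as follows.
\begin{itemize}
\item It mollifies the stopping cost in time, giving $U^\theta$.
\item It replaces $\mu$ by a killing rate $\beta m$ penalized by $\frac\delta2\int\!\!\int\beta^2\,dm_t\,dt$, giving $U^{\theta,\delta}$.
\item It derives the optimality system $\ov\alpha=-D_pH(x,Du,\ov m)$, $\ov\beta=\delta^{-1}(u-\Psi)_+$, with $\|u\|_{W^{1,\infty}}$ bounded uniformly in $\theta,\delta$ (Proposition~\ref{prop.dualdeltaAppend}).
\item It obtains the Lipschitz bound for $U^{\theta,\delta}$ by duality (Proposition~\ref{prop.UthetadeltaLip}).
\item It passes to the limit (Lemmas~\ref{lem.UthetatoU} and \ref{lem.UthetadeltatoUtheta}), after smoothing and $\Psi$-monotonizing $G$ (Proposition~\ref{prop.mollification}).
\end{itemize}
Your other route, deriving \eqref{ulip} from the limit of $V^{N,K}$, is circular within the paper. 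Theorem~\ref{thm.convergence} identifies limit points with $U$ through uniqueness in the class of Lipschitz solutions. It also uses the supersolution property of $U$, whose proof relies on bounded optimal controls (Corollary~\ref{cor.regU}). An independent convergence proof for the stopping problem would be a major argument, and you do not supply one. The Lagrangian (superposition) representation of $\cA_{t_0,m_0}$ with killing and drifts only in $L^2(m)$ is also asserted rather than justified.

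\textbf{What is sound, and minor points.} Your Step 2 is correct and simpler than Lemma~\ref{lem.removingmass}. Dropping the particles in $S$ changes the empirical measure by mass at most $|S|/N$ uniformly in time, so the bound $|L(x,a,m)-L(x,a,n)|\le C(1+|a|)\bd(m,n)$ together with the aggregate energy bound suffices. You should add that the surviving controls depend on the dropped Brownian motions, so you need the standard fact that enlarging the filtration does not lower the value.

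Step 3 is fine given Step 1, with one caveat. Comparing $\Psi(X^i_{\tau^i},m_{\tau^i-})$ with $\Psi(X^i_s,m^{N,K}_{\bX_s})$ requires $\Psi$ to be non-increasing in $m$, not merely Lipschitz, because $m_{\tau^i-}$ may have much less mass.

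Finally, $\bd\ge c/N$ is false for distinct empirical measures with the same $K$, since configurations can be arbitrarily close. What you actually need is that the matching can be taken along sub-selections, as in Lemma~\ref{lem.wminus.discrete}.
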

\vs
The estimate \eqref{vnklip} is proved in several steps. First, we use a doubling of variables argument to prove in Lemma \ref{lem.derivativebound2} that,  for $t \in [0,T]$ and $\bx, \by \in (\T^d)^K$, 
\begin{align} \label{vnklip1.intro}
    |V^{N,K}(t,\bx) - V^{N,K}(t,\by)| \leq \frac{C}{N} \sum_{i = 1}^K |x^i - y^i|.
\end{align}

Together with the symmetry of $V^{N,K}$, this implies that \eqref{vnklip} holds when $t = s$ and $K = J$. 
\vs
Next, in Lemma \ref{lem.removingmass}, we obtain the estimate 
\begin{align} \label{removemass.intro}
    |V^{N,K-1}(t,\bx^{-i}) - V^{N,K}(t,\bx)| \leq C/N. 
\end{align}
In some sense, this shows that \eqref{vnklip} holds when $t = s$, $J = K-1$, and $\by = \bx^{-i}$, that is,  we have the required spatial regularity when a single particle is removed from the system. 
\vs
Finally, we study the properties of the distance $\bd$ when restricted to empirical measures to deduce that \eqref{vnklip1.intro} and \eqref{removemass.intro} together imply the spatial regularity in \eqref{vnklip}, and then infer the time regularity from the space regularity through dynamic programming.
\vs 
The bound \eqref{ulip} is much more delicate because of the fact that the controlled dynamics are discontinuous. Our strategy is to infer the regularity for $U$ from the optimality conditions for the corresponding mean field control problem; very formally, $U$ should be Lipschitz if optimal controls $\alpha$ are uniformly bounded. 
\vs
Making sense of optimality conditions for $U$ is challenging in view of the discontinuous dynamics, so we first regularize the problem in several ways, then study optimality conditions for the regularized problem (see Proposition \ref{prop.dualdeltaAppend}). Then we use these the optimality conditions to obtain a Lipschitz bound on the regularized problem,  which is independent of the regularization parameters (Proposition \ref{prop.UthetadeltaLip}). Finally, we prove that the value functions of the regularized problem converge to $U$ (Lemmas \ref{lem.UthetatoU} and \ref{lem.UthetadeltatoUtheta}) to complete the proof of \eqref{ulip}. 
\vs
Our second main result characterizes $U$ as the unique Lipschitz continuous viscosity solution of \eqref{eq.HJstoppingPSI}, a fact which is  a  consequence of the  comparison principle in  Theorem \ref{thm.comparison}. To state the uniqueness result precisely, we say that a function $V : [0,T] \times \sub \to \R$ is uniformly Lipschitz in $m$,  if there exists a constant $C$ such that, for all $t \in [0,T]$, $m,n \in \sub$, 
\begin{align*}
    |V(t,m) - V(t,n)| \leq C \bd(m,n).
\end{align*}

\begin{theorem} \label{thm.uniqueness}
Assume  \eqref{summarize}. Then the value function $U$ is a viscosity solution of the quasi-variational inequality \eqref{eq.HJstoppingPSI} in the sense of Definition \ref{def.viscositysoln} below. It is the unique solution which is uniformly Lipschitz in $m$. 
\end{theorem}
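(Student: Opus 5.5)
Theorem \ref{thm.uniqueness} has two parts. Uniqueness will follow from the comparison principle, Theorem \ref{thm.comparison}, and the Lipschitz bound \eqref{ulip} of Theorem \ref{thm.main.lip}. Existence, meaning that $U$ is a viscosity solution in the sense of Definition \ref{def.viscositysoln}, will follow from a dynamic programming principle for the mean field problem. So the plan has two independent parts.

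\emph{Uniqueness.} Let $V$ be any viscosity solution of \eqref{eq.HJstoppingPSI} which is uniformly Lipschitz in $m$. Theorem \ref{thm.main.lip} says $U$ is also uniformly Lipschitz in $m$, so both functions lie in the class where Theorem \ref{thm.comparison} applies. Applying it with $U$ as subsolution and $V$ as supersolution, and then with the roles reversed, gives $U\le V$ and $V\le U$. Both share the terminal datum $G_\Psi$, hence $U=V$. The only point to check is that the comparison theorem is stated for Lipschitz-in-$m$ sub/supersolutions with merely continuous time dependence. The time modulus $|t-s|^{1/2}$ from \eqref{ulip} should supply whatever time continuity is needed.

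\emph{$U$ is a solution.} First I prove a dynamic programming principle: for $t_0<t_1\le T$,
\[
U(t_0,m_0)=\inf_{(m,\alpha,\mu)\in\cA_{t_0,m_0}}\Big\{\int_{t_0}^{t_1}\!\!\int L\,dm_t\,dt+\int_{[t_0,t_1)\times\T^d}\Psi(x,m_{t-})\,d\mu+U(t_1,m_{t_1-})\Big\}.
\]
This is proved by concatenating admissible triples. Measurable selection is avoided because the state is deterministic, so we can take $\varepsilon$-optimal controls from $(t_1,m_{t_1-})$ directly. From the DPP we read off the four ingredients of the definition.
\begin{enumerate}[(i)]
\item \emph{Terminal condition.} $U(T,m)=G_\Psi(m)$ holds directly, since at time $T$ only a jump $\mu=\delta_T\otimes(m-m')$ is available.
\item \emph{$\Psi$-non-increasing property \eqref{def.psinon-increasing}.} Jump immediately from $m$ to $m'$ and then follow near-optimal controls from $(t,m')$. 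Here $m_{t-}=m$, so the penalty is $\int\Psi(x,m)\,d(m-m')$. This gives the obstacle part (the $\mathrm{sgn}$ term is $\le 0$), and hence the subsolution property of the obstacle term.
\item \emph{Subsolution inequality \eqref{u.subsol.intro}.} Take $\mu=0$ and a constant feedback $\alpha$ built from a smooth test function touching $U$ from above. Then use the standard chain rule for smooth functions along the Fokker--Planck flow, exactly as for classical mean field control.
\item \emph{Supersolution property.} Let $\phi$ touch $U$ from below at $(t,m)$, and suppose the obstacle term there is strictly negative, i.e.\ $(t,m)\in\mathcal D$. I must show the HJ part of the equation is $\ge0$.
\end{enumerate}

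Step (iv) is the main obstacle. Argue by contradiction: suppose $-\partial_t\phi-\int\Delta_x\frac{\delta\phi}{\delta m}\,dm+\int H\,dm<-\eta$ near $(t,m)$. Then take $\varepsilon$-optimal triples on $[t,t+h]$ from the DPP, where $\varepsilon\ll h$. Two cases arise.
\begin{itemize}
\item If the removed mass $\mu([t,t+h]\times\T^d)$ stays small, then the curve $m_s$ stays $\bd$-close to $m$. Since $\phi$ is smooth, we can write $\phi(t+h,m_{t+h})-\phi(t,m)$ via the chain rule with the jump contributions $-\int\frac{\delta\phi}{\delta m}\,d\mu$. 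Controlling these jump contributions requires comparing $\frac{\delta\phi}{\delta m}$ with $\Psi$; this is where the Lipschitz bound of $U$ and the test-function touching enter. Combining with $H\ge -p\cdot a-L$ then produces a contradiction.
\item If instead substantial mass (of order $\ge\delta$) is removed in short time, I use the Lipschitz continuity of $U$ and compactness of $\sub$. Passing $h\to0$, the removal yields $m'\le m$, $m'\ne m$, with $U(t,m)\ge U(t,m')+\int\Psi(x,m)\,d(m-m')$, contradicting $(t,m)\in\mathcal D$.
\end{itemize}

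The delicate point is making this dichotomy quantitative, since $\mathcal D$ is defined by a strict inequality without a uniform gap. I would first try to extract a uniform gap from compactness of $\{m'\le m:\bd(m,m')\ge\delta\}$ and continuity of $U$. The regime of small removals is then handled by the first-order expansion above, which should be consistent with how Definition \ref{def.viscositysoln} encodes the test-function condition on $\mathcal D$.
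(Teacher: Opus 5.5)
Steps (i)--(iii) are fine. The genuine gaps are in step (iv) and in the uniqueness part.

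In step (iv) you are proving a different supersolution property from the one in Definition \ref{def.viscositysoln}(ii). That definition never mentions $\mathcal D$. It requires the HJ inequality $\ge 0$ only for test functions $\Phi$ that are locally $\Psi$-decreasing at the touching point (Definition \ref{def.locallydecreasing}): there is $\gamma>0$ such that, for all $(t,m')$ near $(\bar t,\bar m)$ and every $n\le m'$, $\Phi(t,m')\le\Phi(t,n)+\int\Psi(x,m')\,d(m'-n)-\gamma(m'-n)(\T^d)$. Such a $\Phi$ touching from below already forces $(\bar t,\bar m)\in\mathcal D$, so your target is a stronger requirement. Your argument for it does not close in the small-removal regime. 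There you need $\int\bigl(\Psi(x,m_{s-})-\frac{\delta\phi}{\delta m}(s,m_s,x)\bigr)\,d\mu(s,x)\ge -o(h)$, which is an \emph{upper} bound on $\frac{\delta\phi}{\delta m}$ by $\Psi$ near $(t,m)$. Touching from below only gives $\phi(t,m)-\phi(t,n)\ge U(t,m)-U(t,n)$, which is a lower bound. The Lipschitz bound on $U$ says nothing about $\frac{\delta\phi}{\delta m}-\Psi$. Finally, the gap in the definition of $\mathcal D$ may be $o(\bd(m,n))$, so compactness gives a uniform gap only for removals of size $\ge\delta$. The missing bound is exactly what the local $\Psi$-decrease hypothesis supplies, and once you use it no dichotomy is needed. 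The paper's proof (Proposition \ref{prop.valsub}) proceeds as follows:
\begin{enumerate}
\item[(a)] Take an exact optimizer with bounded $\alpha$ (Corollary \ref{cor.regU}).
\item[(b)] Letting $h\to0$ in the DPP gives $U(\bar t,\bar m)-U(\bar t,m_{\bar t})=\int\Psi(x,\bar m)\,d(\bar m-m_{\bar t})$. Touching plus local $\Psi$-decrease bounds the same quantity by itself minus $\gamma(\bar m-m_{\bar t})(\T^d)$, so there is no jump at $\bar t$.
\item[(c)] Compare $m$ with the flow $\tilde m\ge m$ driven by the same $\alpha$ without removal. Use the local $\Psi$-decrease at $(\bar t+h,\tilde m_{\bar t+h})$ and $\bd_{\mathrm{TV}}(\tilde m_t,m_t)\to0$ to absorb the removal cost on $[\bar t,\bar t+h]$.
\item[(d)] Expand $\Phi$ only along the regular flow $\tilde m$.
\end{enumerate}

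In the uniqueness part you cannot apply Theorem \ref{thm.comparison} directly. That theorem covers arbitrary continuous sub/supersolutions, but only under \eqref{hyp.comparison}, which requires $H$ to be globally Lipschitz \eqref{ath100}. This fails under \eqref{phl1}, where $H$ grows quadratically in $p$. The Lipschitz-in-$m$ assumption is therefore not about time regularity; it is what makes a truncation possible (Corollary \ref{cor.truncation}). If $V$ is $\bd$-Lipschitz and $V-\psi$ has a maximum or minimum at $m$, then $|D_x\frac{\delta\psi}{\delta m}(m,x)|\le\mathrm{Lip}(V)$ on the support of $m$. Hence $U$ and $V$ remain sub/supersolutions when $H$ is replaced by $H^R(x,p,m)=H(x,\pi^R(p),m)$ for $R$ large, where $\pi^R$ is the projection onto the ball of radius $R$. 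Since $H^R$ is globally Lipschitz and bounded below, only then can you apply Theorem \ref{thm.comparison} in both directions.
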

The key comparison result, Theorem \ref{thm.comparison}, is proved via a doubling of variables argument. The main challenge is to build appropriate test functions. In particular, to ensure that the optima in our doubling of variables procedure are achieved at points where the supersolution is not touching the barrier, we must build test functions which are ``$\Psi$-decreasing" in an appropriate sense; see Definition \ref{def.psinon-increasing}. This is a difficult task. In the end, we managed to prove comparison by studying maxima of the expression
\be \label{doubling.intro}
\begin{split}
&V^-(t,m)-V^+(s,n) -\frac{1}{\ep}\Phi(m-n) 
 \\
&\qquad  - \int_{\T^d} \Psi(x,m) (m-n)(dx) -\frac{1}{2\ep}(t-s)^2 -\delta( \|m\|^2_2+ \|n\|^2_2) -\theta(T-t) - \theta n(\T^d)
\end{split}
\ee
for some subsolution $V^-$ and supersolution $V^+$ of \eqref{eq.HJstoppingPSI}, and $\delta, \eps, \theta > 0$, where $\Psi$ is exactly the stopping penalty and $\Phi : H^{-1} \to \R$ is a tailor-made non-increasing function defined in \eqref{defPhiPsi}. 
\newline \newline 
Finally, we have our main convergence result, which shows that the family  $(V^{N,K})_{K = 1,...,N}$ converges as $N \to \infty$ towards $U$. 
\begin{theorem} \label{thm.convergence}
Assume  \eqref{summarize}. Then 
    \begin{align*}
        \lim_{N \to \infty} \max_{K = 1,...,N} \sup_{(t,\bx) \in [0,T] \times (\T^d)^K} \big| V^{N,K}(t,\bx) - U(t,m_{\bx}^N) \big| = 0.
    \end{align*}
\end{theorem}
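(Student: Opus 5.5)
We will prove Theorem \ref{thm.convergence} by the Barles--Souganidis half-relaxed limits method, using the uniform Lipschitz bounds of Theorem \ref{thm.main.lip} to make the limits well defined and the comparison principle (Theorem \ref{thm.comparison}) to identify them. By \eqref{vnklip}, the functions $V^{N,K}$ "factor" through empirical measures: setting $\tilde V^N(t,m_{\bx}^{N,K}) = V^{N,K}(t,\bx)$ defines a function on the set of empirical sub-probability measures with denominators $N$. This function is $C$-Lipschitz in $\bd$ and $1/2$-Hölder in $t$ uniformly in $N$. Since these measures become $\bd$-dense in $\sub$ (any $m \in \sub$ is within $O(N^{-1/d})$-type distance of some $m^{N,K}_{\bx}$, including the mass constraint $K/N \approx m(\T^d)$), we extend $\tilde V^N$ to all of $[0,T]\times\sub$ by inf-convolution (McShane extension) with the same modulus. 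The family $(\tilde V^N)$ is then uniformly bounded and equicontinuous on the compact space $[0,T]\times\sub$. By Arzel\`a--Ascoli, every subsequence has a further subsequence converging uniformly to some $W$, which is Lipschitz in $m$. It therefore suffices to show that every such limit $W$ is a viscosity solution of \eqref{eq.HJstoppingPSI}. Uniqueness in Theorem \ref{thm.uniqueness} then forces $W=U$, and uniform convergence of the whole sequence yields the statement, since $\tilde V^N(t,m^N_{\bx}) = V^{N,K}(t,\bx)$.

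The terminal condition is handled directly. We must show $G^{N,K}_\Psi(\bx) \to G_\Psi(m_{\bx}^N)$ uniformly, and the key facts are as follows. Sub-measures $m'\le m^{N,K}_{\bx}$ of the form $m^{N,K-|S|}_{\bx^{-S}}$ give $G_\Psi \le G^{N,K}_\Psi$. Conversely, any $m' \le m_{\bx}$ can be approximated in $\bd$ by removing whole atoms, after rounding fractional weights. This uses the Lipschitz continuity of $G$ and $\Psi$ and the facts collected in the Appendix.

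The obstacle part of the equation passes to the limit by stability. For the $\Psi$-non-increasing property \eqref{def.psinon-increasing}, given $m'\le m$ we approximate $m$ by $m^{N}_{\bx}$ and $m'$ by $m^N_{\bx^{-S}}$. The inequality $V^{N,K} \le V^{N,K-|S|}(\cdot,\bx^{-S}) + \frac1N\sum_{i\in S}\Psi(x^i,m_{\bx}^{N,K})$ from \eqref{stopping.gen2} then passes to the limit by uniform convergence, giving the sgn-term $\le 0$. The subsolution inequality for the PDE part follows by the usual argument: we test $W - \phi$ with a smooth test function $\phi$ as in Definition \ref{def.viscositysoln}, and we project $\phi$ to the finite-dimensional function $\phi^N(t,\bx)=\phi(t,m^{N}_{\bx})$. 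The identities $N D_{x^i}\phi^N = D_x\frac{\delta\phi}{\delta m}(t,m,x^i)$ and $\sum_i\Delta_{x^i}\phi^N = \int \Delta_x \frac{\delta \phi}{\delta m}\,dm + O(1/N)$ then give the limiting operator. Perturbing the test function by a small penalization ensures strict extrema and their convergence.

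The main obstacle is the supersolution property, and specifically the dichotomy on the set $\mathcal D$. We must show that if $W$ is touched from below by $\phi$ at $(t_0,m_0)$ and $W(t_0,m_0) < W(t_0,m') + \int\Psi(\cdot,m_0)\,d(m_0-m')$ for all $m'\le m_0$, $m'\neq m_0$, then the PDE supersolution inequality holds. At nearby discrete minimum points $(t_N,\bx_N)$, it is tempting to argue that the obstacle is inactive, so the PDE inequality holds there. However, the obstacle may be active via removals of $O(1)$ particles costing only $O(1/N)$ mass, where strict inequality in the limit gives no information. We plan to exploit the precise form of Definition \ref{def.viscositysoln} (presumably formulated with test functions that are $\Psi$-decreasing, or with a strict version of $\mathcal D$ quantified over $\bd(m',m)\ge r$). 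If a single removal is active at $\bx_N$, i.e.\ $V^{N,K} = V^{N,K-1}(\bx_N^{-i}) + \frac1N\Psi$, we iterate. Either the process terminates in a state where the PDE holds with equality, or a macroscopic amount of mass is removed, contradicting strictness in the limit. Here \eqref{removemass.intro} and the Lipschitz bound guarantee that states reached after removing $o(1)$ mass remain near $(t_0,m_0)$ and are near-minimizers of $\tilde V^N-\phi$ up to the $\Psi$-correction. Making this iteration compatible with the test function (adding $\int\Psi\,dm$-type corrections, as in \eqref{doubling.intro}) is the delicate step.
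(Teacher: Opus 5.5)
Your overall architecture matches the paper's. You extend the $V^{N,K}$ to $\tilde V^N$ on $[0,T]\times\sub$ via Theorem \ref{thm.main.lip} (the paper uses the sup-convolution $\sup_{K,\bx}\{V^{N,K}(t,\bx)-C\bd(m^{N,K}_{\bx},m)\}$) and extract uniform limits. You check the terminal condition (Lemma \ref{lem.gnk.conv}), the $\Psi$-non-increasing property (approximating $m'$ and $m-m'$ separately and concatenating) and the subsolution inequality (Lemma \ref{lem.C11}), then conclude with Theorem \ref{thm.uniqueness}. Those steps are fine.

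The gap is the supersolution step, which you leave open and aim at a stronger statement than needed. What must be verified is Definition \ref{def.viscositysoln}(ii), where the test function $\Phi$ is required to be locally $\Psi$-decreasing at $(t_0,m_0)$. Any such $\Phi$ touching $W$ from below already forces $W(t_0,m_0)<W(t_0,m')+\int\Psi(x,m_0)\,d(m_0-m')$ for every $m'\lneq m_0$. So your $\mathcal{D}$-based formulation is the more demanding one, and its extra demand is exactly what creates the difficulty you found: the strict inequality defining $\mathcal{D}$ has no margin as $m'\to m_0$, hence says nothing about removing $O(1)$ particles.

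Your iteration does not close this. First, an active removal of a set $S$ makes a single removal active only up to the nonnegative error $\frac1N\sum_{j\in S\setminus\{i\}}\big(\Psi(x^j,m^{N,K-1}_{\bx^{-i}})-\Psi(x^j,m^{N,K}_{\bx})\big)$. More seriously, the states you reach are only near-minimizers, not minimum points of $V^{N,K'}$ minus a test function, so the discrete equation cannot be invoked there.

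With the paper's definition no iteration is needed. Let $\Phi$ satisfy Definition \ref{def.locallydecreasing} with some $\gamma>0$. Let $(t_N,K_N,\bx_N)$ minimize $V^{N,K}(t,\bx)-\Phi(t,m^{N,K}_{\bx})$ over all $t$, $K$ and $\bx$; after making the minimum of $W-\Phi$ strict, uniform convergence gives $(t_N,m^{N,K_N}_{\bx_N})\to(t_0,m_0)$. Suppose the obstacle in \eqref{stopping.gen2} were active at $(t_N,\bx_N)$ for some nonempty $S$. Comparing with the competitor $(t_N,K_N-|S|,\bx_N^{-S})$ would give
\begin{align*}
\frac1N\sum_{i\in S}\Psi\big(x_N^i,m^{N,K_N}_{\bx_N}\big) \le \Phi\big(t_N,m^{N,K_N}_{\bx_N}\big)-\Phi\big(t_N,m^{N,K_N-|S|}_{\bx_N^{-S}}\big) \le \frac1N\sum_{i\in S}\Psi\big(x_N^i,m^{N,K_N}_{\bx_N}\big)-\frac{\gamma|S|}{N},
\end{align*}
which is a contradiction. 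Hence the PDE part of \eqref{stopping.gen2} holds at $(t_N,\bx_N)$ for $N$ large, and you pass to the limit exactly as in the subsolution step. The margin $\gamma|S|/N$, proportional to the removed mass and built into the test function rather than into $W$, is what disposes of the $O(1/N)$ removals that worried you.
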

The proof of Theorem \ref{thm.convergence} follows from a compactness/uniqueness argument. Theorem \ref{thm.main.lip} shows that $(V^{N,K})_{K= 1,...,N}$ are equicontinuous in an appropriate sense, and so we can extract sub-sequential limit points. By showing that every sub-sequential limit point is a viscosity solution of \eqref{eq.HJstoppingPSI}, we can deduce Theorem \ref{thm.convergence} from Theorem \ref{thm.uniqueness}. The challenge here is that the test functions appearing in the doubling of variables argument \eqref{doubling.intro} are singular; well-defined only on $\sub \cap L^2$. But to prove that limit points of the $V^{N,K}$'s  are solutions, we would like to evaluate the relevant test functions at empirical measures. This difficulty is circumvented by Proposition \ref{prop.subsol.equiv}, which shows that a sub/supersolution with respect to smooth test functions is also a sub/supersolution with respect to the relevant class of singular test functions.
\vskip-1in
\subsection*{Related literature}
Our results are connected to several active areas of research, in particular mean field models involving optimal stopping, the well-posedness of Hamilton-Jacobi-Bellman equations set on spaces of measures, and the convergence problem in mean field control.
\vs 
In the absence of stopping, the issues studied in the present paper are by now reasonably well-understood. In this case, the value functions $(V^{N,K})_{K = 1,...,N}$ are replaced by a single value function $V^N : [0,T] \times (\T^d)^N \to \R$, and the limiting value function $U : [0,T] \times \cP(\T^d) \to \R$ solves an HJB equation on $\cP(\T^d)$, without any ``obstacle". The limiting HJB equation in this case has received huge attention in recent years; see e.g. \cite{BayraktarEkrenZhangCPDE, BayraktarEkrenHeZhang, BCEQTZ, touzizhangzhou, bertucci2023, confortihj, CKT23b, CKTT24, daudinseeger, DJS2025, silbersteintonon, bertuccisilberstein}. The uniqueness proof developed here is closest in spirit to the ones presented in the papers \cite{Lionsvideo, daudinseeger, CJSabsorption, BertucciLionsSoug}, which also use some sort of ``singular penalization" in their doubling of variables arguments. The qualitative convergence of $V^N$ to $U$ has been obtained in various settings, through both PDE and probabilistic methods in \cite{budhiraja2012, Lacker2017, DjetePossamaiTan, djete2022extended, GangboMayorgaSwiech, mayorgaswiech, SW}. These results have been quantified in some cases in \cite{bayraktarcecchinchakrabory, cdjs2023, cjms2023, BayraktarEkrenZhangQuant, ddj2023, CDJM, DJSrates}.
\vs 
Mean field games (rather than mean field control problems) with optimal stopping have been thoroughly studied. Particular models were proposed in \cite{nutzstopping} and \cite{cdlstopping}. Meanwhile, \cite{bertuccistopping} pursued a PDE approach, characterizing optimizers in terms of a novel forward-backward PDE system and introducing the notion of mixed solutions. Alternative approaches were later developed in \cite{Bouveret2020, Dumitrescu2021}. These works focused on studying the limiting model, and did not address the connection with finite-player games. 
\vs 
As far as mean field control problems involving stopping, the only work we are aware of is the series of three papers \cite{TTZ1,TTZ2,TTZ3} by Talbi, Touzi, and Zhang. In \cite{TTZ1}, an equation similar to \eqref{eq.HJstoppingPSI} is proposed for the limiting value function, in \cite{TTZ2} a notion of viscosity solutions is introduced and a comparison principle is proved, and in \cite{TTZ3}, a convergence result is obtained. While the Talbi-Touzi-Zhang model shares several features with the model proposed here, there are also key differences. For example, in \cite{TTZ1} particles are ``frozen" rather then ``removed" by the controller, which changes the natural state space of the problem. In addition, the dynamics are very general but uncontrolled, so that the optimization is only over stopping times and the relevant PDEs are linear obstacle problems. More important than the difference in the models is the difference in the methodology.  The definition of viscosity solutions and the techniques used to prove the comparison principle in the present work are very different from those in \cite{TTZ1, TTZ2, TTZ3}, and, in particular, are of a more analytical flavor.
\vs

\subsection*{Organization of the paper} The paper is organized as follows. In Section \ref{sec.prelim} we introduce our notation, and state precisely our standing assumptions. Section \ref{sec.vnk} is devoted to the proof of the Lipschitz bounds \eqref{vnklip} for $V^{N,K}$ stated in Theorem \ref{thm.main.lip}. In Section \ref{sec.U} we obtain the corresponding regularity estimates~\eqref{ulip} for the value function $U$. In Section \ref{sec.viscosity} we define precisely a notion of viscosity solution for \eqref{eq.HJstoppingPSI}. We prove a comparison result (Theorem \ref{thm.comparison}) for \eqref{eq.HJstoppingPSI} when the Hamiltonian is globally Lipschitz continuous and then use this together with the Lipschitz bound on $U$ to prove Theorem \ref{thm.uniqueness}. Section \ref{sec.convergence} is devoted to the proof of Theorem \ref{thm.convergence}. Finally, in the Appendix we deal with some technical issues regarding mollifications and $\Psi$-monotone envelopes of the terminal condition $G$.

\section{Notation, preliminaries, and assumptions} \label{sec.prelim}

\subsection*{Notation} We recall that $d \in \N$ and $T \in (0,\infty)$ are fixed throughout the paper. We denote by $\T^d$ the $d$-dimensional flat torus, and by $\sub$ or $\sub(\T^d)$ the space of sub-probability measures on $\T^d$, that is, the space of non-negative Borel measures on $\T^d$ with total mass at most one. Given $m,n \in \sub$, we write $m \leq n$ if $m(A) \leq n(A)$ for all Borel subsets $A$ of $\T^d$. We write $m \lneq n$ if $m \leq n$ and $m \neq n$. 
\vs
We denote by $W^{1,\infty} = W^{1,\infty}(\T^d)$ the space of bounded and Lipschitz functions on $\T^d$ with norm 
\begin{align*}
    \| f \|_{W^{1,\infty}} = \|f\|_{1,\infty} = \| f\|_{\infty} + \| Df \|_{\infty}, 
\end{align*}
with $\|\cdot\|_{\infty}$ being the standard $L^{\infty}$ norm. We write $H^1$ for the space of square-integrable functions with square-integrable first derivatives with norm $\| \cdot \|_{H^1}$ inherited from the standard inner product
\begin{align*}
   \langle f,g \rangle_{H^1} = \int_{\T^d} f(x) g(x) dx + \int_{\T^d} Df(x) \cdot Dg(x) dx.
\end{align*}
Then, $H^{-1}$ denotes the space of bounded linear functionals on $H^1$ with the inner product $\langle \cdot, \cdot \rangle_{H^{-1}}$ and norm $\| \cdot \|_{H^{-1}}$ inherited from duality with $H^1$. 
\vs
We mostly  work with the distance on $\sub$ inherited from duality with $W^{1,\infty}$, that is, we define a metric $\bd$ on $\sub$ via
\be\label{ord1}
{\bf d}(m,n)= \|m-n\|_{(W^{1,\infty})^*}= \sup_{\|f \|_{1,\infty} \leq 1} \int_{\T^d} f d(m- n) \  \text{  for any \ $m,n\in \sub $. }
\ee
As usual, ${\bf d}_1$ denotes the Monge-Kantorovitch distance between two measures with the same mass given by 
$$
{\bf d}_1(m,n)= \sup_{\phi\in W^{1, \infty}, \; \|D\phi\|_\infty\leq 1} \int_{\T^d} \phi(m-n)\qquad \forall m,n \in \sub \; \text{with}\; \int_{\T^d}(m-n)=0.
$$
We will often use the  facts that 
\be\label{ord2}
\begin{split}
& {\bf d}(m, n) \geq  |n(\T^d)-m(\T^d)| \ \ \text{for all} \ \  m,n\in \sub \, ,\\ 
& {\bf d}(m, n) =  |n(\T^d)-m(\T^d)| \ \ \text{ if \ $ m\leq n$ or $n\leq m$}.
\end{split}
\ee
The lower bound on $\bd$ can be obtained by consider the test function $f(x) = 1$, and the equality when $m \leq n$ or $n \leq m$ comes from the fact that in this case, $f(x) = 1$ or $f(x) = -1$ is optimal for the maximization problem which defines $\bd(m,n)$. 
\vs
We say that a function $\Phi : \sub \to \R$ is non-increasing if $\Phi(m) \leq \Phi(n)$ whenever $n \leq m$. We say that a function $\Phi : \sub \to \R$ is $C^1$,  if there exists a continuous function $\dfrac{\delta \Phi}{\delta m} : \sub \times \T^d \to \R$, often called the linear derivative of $\Phi$,   with the property that 
\begin{align*}
    \Phi(m) - \Phi(n) = \int_0^1 \int_{\T^d} \frac{\delta \Phi}{\delta m}\big( (1-t) n + t m, x \big) d(m - n)(x) \quad \text{ for all } m,n \in \sub. 
\end{align*}
\vs
We use similar notation if $\Phi$ depends on an additional finite-dimensional parameter, that is,  for $\Phi : [0,T] \times \sub \to \R$. At times, we will also work with functions $\Phi : H^{-1} \to \R$  or $\Phi : [0,T] \times H^{-1} \to \R$. In this case, we use $D_{H^{-1}} \Phi$ to denote the Frech\'et derivative of $\Phi$, and $\nabla_{H^{-1}} \Phi$ to denote the Hilbertian gradient of $\Phi$, that is,  for $q \in H^{-1}$, $\nabla_{H^{-1}} \Phi(q)$ is defined by 
\begin{align*}
    \Phi(p) = \Phi(q) + \langle \nabla_{H^{-1}} \Phi(q), p - q \rangle_{H^{-1}} + o(\|p-q\|_{-1}), 
\end{align*}
and $D_{H^{-1}} \Phi(q) \in H^1$ is defined by 
\begin{align*}
    \Phi(p) = \Phi(q) + \langle D_{H^{-1}} \Phi(q), p - q \rangle_{-1,1} + o(\|p-q\|_{H^{-1}}) = \Phi(q) + \big(p-q\big) \big( D_{H^{-1}} \Phi(q) \big) + o(\|p-q\|_{H^{-1}}).
\end{align*}
\vs

Finally, we emphasize that, throughout the paper, we will write $C$ for constants which depend on the data and may change from line to line. 

\subsection*{Standing assumptions}

We now state our main assumptions. First, throughout the paper, we make the following assumption on the Hamiltonian $H$:
\be\label{phl1}
\begin{cases}
&\text{the Hamiltonian $H$ is jointly $C^2$, and there is a constant $C$} \\[1.5mm] &\text{   such that for all $x, y \in \T^d, p \in  \R^d$ and $m,n \in  \sub $,}\\[1.5mm] 
&\hskip.25inC^{-1} |p|^2 - C \leq H(x,p,m) \leq C |p|^2 + C , \qquad C^{-1} I_{d \times d} \leq  D_{pp}H(x,p,m) \leq C I_{d \times d},\\[1.5mm]
&\hskip.5in   \left|H(x,p,m)-H(y,p,n)\right| \leq C\big(1 + |p|\big)\Big(|x-y|+{\bf d}(m,n) \Big), \\[1.5mm]
 &\hskip.5in \ds   \Big| \frac{\delta H}{\delta m}(x,p,m,y) \Big| +  \Big| D_y \frac{\delta H}{\delta m}(x,p,m,y) \Big| \leq C\big(1 + |p|^2\big).
\end{cases}
\ee

In \eqref{phl1}, jointly $C^2$ means that mixed second derivatives involving $x$, $p$, $m$ exist and are continuous, the derivatives with respect to $m$ being linear derivatives defined as defined above, and in addition $D_y \frac{\delta H}{\delta m}(x,p,m,y)$ exists and is continuous. 
\vskip.05in

For the stopping penalty and terminal cost  we assume that 
\begin{equation}\label{phl2}
\begin{cases} 
&\text{the cost $\Psi$ and its linear derivative $\ds \frac{\delta \Psi}{\delta m} : \T^d \times \sub \times \T^d \to \R$ are both}\\[1.2mm] 
&\text{globally
Lipschitz continuous, there is a constant $C$ such that}\\[1.2mm]
& \hskip1.25in \underset{m \in \sub} \sup \| \Psi(\cdot, m)\|_{C^2(\T^d)} \leq C,\\[1.5mm]
& \text{and, for each } x \in \T^d,\\[1.2mm]
& \hskip1.25in   m \mapsto \Psi(x,m) \text{ is non-increasing,}
\end{cases}
\ee
and
\be\label{phl3}
\text{the terminal cost $G$ is Lipschitz continuous with respect to $\bd$}.
\ee
In what follows, we summarize  these assumptions  in
\be\label{summarize}
\eqref{phl1}, \eqref{phl2} \ \text{and} \  \eqref{phl3} \  \text{hold}.
\ee
Most of the conditions appearing in \eqref{phl1}, \eqref{phl2}, and \eqref{phl3} are technical in nature, and, for example, the regularity conditions on $H$ and $\Psi$ could likely be relaxed in various directions. 
\vskip.075in
The monotonicity of the cost $\Psi$, however, is a key structural condition. It ensures that, for any $m_2\leq m_1\leq m_0$, we have 
$$
\int_{\T^d}\Psi(x,m_0)(m_0-m_1)(dx) +\int_{\T^d}\Psi(x,m_1)(m_1-m_2)(dx) \geq \int_{\T^d}\Psi(x,m_0)(m_0-m_2)(dx);
$$
in other words, it is less costly to make one big jump from $m_0$ to $m_2$ than two consecutive ones.
\vs

The conditions on $H$ appearing in \eqref{phl1} imply several corresponding estimates on the Lagrangian $L$. Since $L$ is continuous and convex in its second argument, we have 
\begin{align} \label{lagrangianduality}
L(x,a,m)= \sup_{p\in \R^d} \Big( - p\cdot a- H(x,p,m) \Big)= p \cdot D_a L(x,a,m) \cdot a - H\big(x,-D_aL(x,a,m),m\big).
\end{align}
Thus, we see that \eqref{phl1} implies that
\be\label{hypL}
C^{-1}|\alpha|^2-C \leq L(x,\alpha,m)  \leq C(|\alpha|^2+1), \qquad C^{-1} I_{d \times d} \leq D_{aa} L(x,a,m) \leq C I_{d \times d}.
\ee
Moreover, since $-D_a L(x,a,m)$ is the minimizer in \eqref{lagrangianduality}, the assumption \eqref{phl1} on $H$ also implies that
\begin{align} \label{DaLgrowth}
    |D_a L(x,a,m)| \leq C \big(1 + |a| \big).
\end{align}
Finally, differentiating \eqref{lagrangianduality} shows that 
\begin{align*}
    \frac{\delta L}{\delta m}(x,a,m,y) = - \frac{\delta H}{\delta m}\big( x, - D_aL(x,a,m),m, y \big), 
\end{align*}
so that \eqref{phl1} and \eqref{DaLgrowth} together imply that 
\be\label{hypdeltaL}
\left| \frac{\delta L}{\delta m} (x,\alpha, m,y) \right| + \left| D_y \frac{\delta L}{\delta m} (x,\alpha, m,y) \right|  \leq C(|\alpha|^2+1).
\ee

\section{The regularity of $V^{N,K}$} \label{sec.vnk}

The proof of  the estimate~\eqref{vnklip} in Theorem~\ref{thm.main.lip} is based on a number of technical lemmata which we state and prove first. 
\vs

\begin{lemma} \label{lem.derivativebound2}
Assume  \eqref{phl1}, \eqref{phl2} and \eqref{phl3}.  Then there exists a constant $C$ such that, for each $N \in \N$, each $K \in \{1,...,N\}$, $t \in [0,T]$ and $\bx,\by \in (\T^d)^K$, we have 
     \begin{align*}
         |V^{N,K}(t,\bx) - V^{N,K}(t,\by)| \leq \frac{C}{N} \sum_{i = 1}^K |x^i - y^i|.
     \end{align*}
 \end{lemma}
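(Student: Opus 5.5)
We argue by induction on $K$, starting from $K=0$, where $V^{N,0}=G(\bzero)$ is constant. Fix $N$ and suppose that for all $K'<K$ we have $|V^{N,K'}(t,\bx)-V^{N,K'}(t,\by)|\le \frac{C_0}{N}\sum_i|x^i-y^i|$, with a constant $C_0$ to be chosen independently of $N$ and $K$. The natural tool is a doubling of variables argument directly on the obstacle problem \eqref{stopping.gen2}. We look at
\[
\Phi(t,\bx,\by)=V^{N,K}(t,\bx)-V^{N,K}(t,\by)-\frac{C_0}{N}\sum_{i=1}^K \phi(x^i-y^i)-\lambda(T-t)
\]
with $\phi$ a smooth (regularized) version of the distance $|\cdot|$ on $\T^d$, and aim to show $\max\Phi\le 0$. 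Fix $\lambda$ small and let $\lambda\to0$ at the end; more precisely, use the standard weighting $e^{C(T-t)}$ so that the Gronwall-type terms are absorbed. The maximum cannot lie at $\bx=\by$ unless it is $\le 0$.

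The first step is the terminal time. By \eqref{def.discretemonotoneenvelop}, $G^{N,K}_\Psi$ is an infimum over $S$ of functions, and for fixed $S$ we have
\[
\Big|G(m^{N,K-|S|}_{\bx^{-S}})-G(m^{N,K-|S|}_{\by^{-S}})\Big|\le \frac{C}{N}\sum_i|x^i-y^i|,
\]
since $\bd$ of two empirical measures with matched atoms is at most $\frac1N\sum|x^i-y^i|$. The $\Psi$ terms also change by at most $\frac CN\sum|x^i-y^i|$, using the Lipschitz bounds on $\Psi$ in $x$ and $m$ from \eqref{phl2} together with $\sum_{i\in S}1/N\le1$. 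Hence $G^{N,K}_\Psi$ is $\frac CN\ell^1$-Lipschitz.

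The second step is the interior maximum, split by which part of the max in \eqref{stopping.gen2} is active for the supersolution at $\by$.

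If the obstacle is active at $(t,\by)$, i.e.\ $V^{N,K}(t,\by)=V^{N,K-|S|}(t,\by^{-S})+\frac1N\sum_{i\in S}\Psi(y^i,m^{N,K}_{\by})$ for some $S$, note that the subsolution always satisfies $V^{N,K}(t,\bx)\le V^{N,K-|S|}(t,\bx^{-S})+\frac1N\sum_{S}\Psi(x^i,m_{\bx}^{N,K})$. Subtracting and using the induction hypothesis for $K-|S|$ together with the Lipschitz property of $\Psi$, we get
\[
V^{N,K}(t,\bx)-V^{N,K}(t,\by)\le \Big(\frac{C_0}{N}+\frac{C}{N}\Big)\sum|x^i-y^i|.
\]
The extra $\frac CN$ coming from $\Psi$ must not accumulate across the induction, and this needs care. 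Rather than inducting with a growing constant, we prove directly the statement for the whole family: we take the maximum of $\Phi$ jointly over $K$, i.e.\ we define $M=\sup_K\sup\Phi_K$ and argue at the $K$ realizing it. In the obstacle case we then get $M\le$ (value of $\Phi_{K-|S|}$) $+\frac1N\sum_{S}[\Psi(x^i,m_{\bx})-\Psi(y^i,m_{\by})]-\frac{C_0}N\sum_{S}\phi(x^i-y^i)$. The $\Psi$ difference is bounded by $\frac CN\sum_S|x^i-y^i|+C\,\bd(m_{\bx},m_{\by})|S|/N$. The first part is absorbed by $\frac{C_0}{N}\sum_S\phi$ if $C_0\ge C$. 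The second is only bounded by $\frac{C}{N}\sum_{\text{all }i}|x^i-y^i|\cdot|S|/N$, and I expect this term to be the main obstacle. To handle it, I would replace the penalization by $\frac{C_0}{N}\sum_i\phi(x^i-y^i)$ times $e^{\kappa\cdot(\text{mass } K/N)}$, so that removing mass decreases the weight and frees a budget proportional to $|S|/N$; alternatively, I would exploit that $\Psi$ is non-increasing in $m$ via the structural inequality stated after \eqref{phl3}.

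If instead the PDE is active at $\by$, the usual viscosity inequalities hold at the points $\bx$ and $\by$ with shared gradients $p^i=C_0 D\phi(x^i-y^i)$, so that $N D_{x^i}$ gives $C_0D\phi$, which is bounded. Subtracting the two inequalities, the Laplacian terms cancel in the doubled variable (via Ishii's lemma, with the second-order terms matching because the noise is independent per coordinate and $\phi$ is translation-invariant). The Hamiltonian difference is then
\[
\frac1N\sum_i\big|H(x^i,p^i,m_{\bx})-H(y^i,p^i,m_{\by})\big|\le \frac{C(1+C_0)}N\sum_i|x^i-y^i|
\]
by \eqref{phl1}. With the time weight $e^{\kappa(T-t)}$ multiplying $C_0$, this is absorbed by the $\partial_t$ term provided $\kappa\ge C(1+C_0)/C_0$, which forces a contradiction. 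Since the gradients are bounded, the quadratic growth of $H$ causes no trouble.
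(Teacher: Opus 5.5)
Your final scheme is essentially the paper's proof: maximize jointly over $K$, use a penalization weight increasing in $K/N$ so that removing particles frees a budget of order $\frac{|S|}{N}\cdot\frac1N\sum_i|x^i-y^i|$ (your factor $e^{\kappa K/N}$ plays the role of the paper's $\frac{C_0K}{N}$), add a time weight that absorbs the Hamiltonian differences, and reach a contradiction whenever the obstacle is active at $\by$. Two small points: it is this $K$-dependent weight, not the monotonicity of $\Psi$ (the measures $m_{\bx}^{N,K}$ and $m_{\by}^{N,K}$ are not ordered), that closes the obstacle case; and turning $M\le \Phi_{K-|S|}$ into a contradiction needs strictness, which the paper gets by taking the maximizing $K$ minimal, or which you can get from $\phi(z)=(|z|^2+\eps)^{1/2}>|z|$.
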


 \begin{proof} It follows from the  classical parabolic regularity theory that  $V^{N,K} \in C^{1,2}$ in the set 
 \begin{align*}
     \Big\{(t,\bx), \; V^{N,K}(t,\bx)< \inf_{S \subset [K]} \Big( V^{N,K - |S| }(t,\bx^{-S}) + \frac{1}{N} \sum_{i \in S} \Psi(x^i,m_{\bx}^{N,K}) \Big) \Big\}.
 \end{align*} 
      Fix $N \in \N$, fix $\eps > 0$ and let  $(\ov{K}, \ov{t},\ov{\bx}, \ov{\by})$ be a maximum point for  the optimization problem
     \begin{align*}
         \max_{K = 1,...,N} \sup_{t \in [0,T], \, \bx,\by \in (\T^d)^K} \Big\{ V^{N,K}(t,\bx) - V^{N,K}(t,\by) - \Big(\lambda(t) + \frac{C_0 K}{N}\Big) \frac{1}{N} \sum_{i = 1}^K \big(|x^i - y^i|^2 + \eps \big)^{1/2} \Big\},
     \end{align*}
     where $\lambda : [0,T] \to (0,\infty)$ and $C_0 > 0$ are respectively a smooth function and a constant to be decided later. 
     Moreover, we assume that $\ov{K}$ is minimal, in the sense that any other maximizer $(\ov{K}', \ov{t}',\ov{\bx}', \ov{\by}')$ satisfies $\ov{K} \leq \ov{K}'$. 
     \vs
     We use Lemma \ref{lem.envelopeLip} and choose $\lambda(T)$ large enough so  that,    for each $N \in \N$, $K \in \{1,...,N\}$, $\bx,\by \in (\T^d)^N$, 
     \begin{align*}
         |G_{\Psi}^{N,K}(\bx) - G_{\Psi}^{N,K}(\by)| \leq \frac{\lambda(T)}{N} \sum_{i = 1}^K |x^i - y^i|. 
     \end{align*}
    This implies that 
     \begin{align*}
         \max_{K = 1,...,N} \sup_{ \bx,\by \in (\T^d)^K} \Big\{ V^{N,K}(T,\bx) - V^{N,K}(T,\by) - \Big(\lambda(T) + \frac{C_0 K}{N} \Big) \frac{1}{N} \sum_{i = 1}^K \big(|x^i - y^i|^2 + \eps \big)^{1/2} \Big\} \leq 0, 
     \end{align*}
     and, thus, if $\ov{t} = T$, then we must have,   for all $K,t,\bx,\by$,
     \begin{align} \label{epsbound}
         V^{N,K}(t,\bx) - V^{N,K}(t,\by) \leq \Big(\lambda(t) + \frac{C_0 K}{N} \Big)\frac{1}{N}  \sum_{i = 1}^K \big(|x^i - y^i|^2 + \eps \big)^{1/2},
     \end{align}
  
     On the other hand, if $\ov{t} < T$, then by the minimality of $\ov{K}$, we see that, for any $S \subset [\ov K]$, we have 
     \be\label{vnkminus}
     \begin{split} 
         &V^{N,\ov K}(\ov t, \ov \bx) - V^{N,\ov K}(\ov t, \ov \by) - \Big( \lambda(\ov t) + \frac{C_0 \ov K}{N} \Big) \frac{1}{N} \sum_{i = 1}^{\ov K} \big( |\ov{x}^i - \ov{y}^i|^2 + \eps \big)^{1/2}\\
         &> V^{N,\ov K- |S|}(\ov t, \ov \bx^{-S}) - V^{N,\ov K- |S|}(\ov t, \ov \by^{-S})  + \Big( \lambda(\ov t) + \frac{C_0 (\ov K - |S|)}{N} \Big) \frac{1}{N} \sum_{i \notin S} \big( |\ov x^i - \ov y^i|^2 + \eps \big)^{1/2}.
     \end{split}
     \ee
Let $C_{\lip, \Psi}$ be  the Lipschitz constant of $\Psi$.  Then, rearranging \eqref{vnkminus}, using the equation for $V^{N,K}$ and, in particular, the fact that it lies above the obstacle in \eqref{stopping.gen2}), and then the Lipschitz continuity of $\Psi$, we obtain
     \begin{align*}
         V^{N,\ov K}(\ov t, \ov \by) &< V^{N,\ov K - |S|}(\ov t, \ov \by^{-S}) + V^{N,K}(\ov t, \ov \bx) - V^{N, \ov K - |S|}(\ov t, \ov \bx^{-S}) 
         \\
         &\quad - \frac{C_0 |S|}{N} \frac{1}{N} \sum_{i = 1}^{\ov K} \big( |\ov x^i - \ov y^i|^2 + \eps \big)^{1/2} - \lambda(\ov t) \frac{1}{N} \sum_{i \in S} \big(|\ov x^i - \ov y^i|^2 + \eps)^{1/2}
         \\
         &\leq V^{N, \ov K - |S|}(\ov t, \ov \by^{-S}) + \frac{1}{N} \sum_{i \in S} \Psi(\ov x^i,m_{\ov{\bx}}^{N,K})
         \\
         &\quad - \frac{C_0 |S|}{N} \frac{1}{N} \sum_{i = 1}^{\ov K} \big( |\ov x^i - \ov y^i|^2 + \eps \big)^{1/2} - \lambda(\ov t) \frac{1}{N} \sum_{i \in S} \big(|\ov x^i - \ov y^i|^2 + \eps)^{1/2}
         \\
         &\leq V^{N,\ov K - |S|}(\ov t, \ov \by^{-S}) +  \frac{1}{N} \sum_{i \in S} \Psi( \ov y^i, m_{\ov \by}^{N,K}) + C_{\lip, \Psi} \Big(\frac{1}{N} \sum_{i \in S} |\ov x^i- \ov y^i| + {\frac{|S|}{N} \frac{1}{N}} \sum_{i = 1}^N |\ov x^i - \ov y^i| \Big)
         \\
         &\quad - \frac{C_0 |S|}{N} \frac{1}{N} \sum_{i = 1}^{\ov K} \big( |\ov x^i - \ov y^i|^2 + \eps \big)^{1/2} - \lambda(\ov t) \frac{1}{N} \sum_{i \in S} \big(|\ov x^i - \ov y^i|^2 + \eps)^{1/2}. 
     \end{align*}
    Choosing  $\lambda$ and $C_0$ in such a way that 
     \begin{align*}
         \lambda(t) \geq C_{\lip,\Psi}, \ \text{for all } t\in [0,T] \ \ \text{and} \ \  C_0 \geq C_{\lip,\Psi}, 
     \end{align*}
      we find that,   for all $S \subset [\ov K]$, 
     \begin{align*}
          V^{N,\ov K}(\ov t, \ov \by) < V^{N, \ov K - |S|}(\ov t, \ov \by^{-S}) + \frac{1}{N} \sum_{i \in S} \Psi(\ov y^i, m_{\ov \by}^{N,K}).
     \end{align*}
     In particular, this means that $V^{N,\ov K}$ is smooth in a neighborhood of $(\ov t, \ov \by)$, and the equation is satisfied there, that is, 
     \begin{align*}
         - \partial_t V^{N, \ov K}(\ov t, \ov \by) - \sum_{i = 1}^{\ov K} \Delta_{x^i} V^{N,\ov K}(t,\ov \by) + \frac{1}{N} \sum_{i = 1}^{\ov K} H\Big(\ov x^i, \lambda(t) \frac{(\ov x^i - \ov y^i)}{\big(|\ov x^i - \ov y^i|^2 + \eps\big)^{1/2}}, m_{\ov \bx}^{N,\ov K} \Big) = 0.
     \end{align*}
    For the subsolution at $(\ov{t},\ov \bx)$, we argue as if $V^{N,\ov K}$ is $C^{1,2}$ in a neighborhood of $(\ov t, \ov \bx)$ to simplify the presentation; the general case can be treated by using a standard viscosity solutions argument as in, for example, \cite[Theorem 8.3]{usersguide}. It follows that 
     \begin{align*}
         - \partial_t V^{N,\ov K}(\ov t,\ov \bx) - \sum_{i = 1}^{\ov K} \Delta_{x^i} V^{N,\ov K}(\ov t,\ov \bx) + \frac{1}{N} \sum_{i = 1}^{\ov K} H\Big(\ov x^i, \lambda(\ov t) \frac{(\ov x^i - \ov y^i)}{\big(|\ov x^i - \ov y^i|^2 + \eps\big)^{1/2}} , m_{\ov \bx}^{N,\ov K} \Big) \leq 0.
     \end{align*}
     Subtracting the two equations and using that
\[
         \partial_t V^{N, \ov K}(t,\bx) - \partial_t V^{N, \ov K}(t,\by) = \lambda'(t) \frac{1}{N} \sum_{i  = 1}^{\ov K} \big(|\ov x^i - \ov y^i|^2 + \eps \big)^{1/2} \ \text{and} \  D_{x^ix^i} V^{N,\ov K}(t,\bx) \leq  D_{x^ix^i} V^{N,\ov K}(t,\by),
         \] 
     we find, using \eqref{phl1}, that 
     \begin{align*}
         &- \lambda'(\ov t) \frac{1}{N} \sum_{i = 1}^{\ov K} \big(|\ov x^i - \ov y^i|^2 + \eps\big)^{1/2} 
         \\
         &\qquad \leq  \frac{1}{N} \sum_{i = 1}^{\ov K} \bigg( H\Big(\ov y^i, \lambda(\ov t) \frac{(\ov x^i - \ov y^i)}{\big(|\ov x^i - \ov y^i|^2 + \eps\big)^{1/2}}, m_{\ov \by}^{N,\ov K} \Big) - H\Big(\ov x^i, \lambda(\ov t) \frac{(\ov x^i - \ov y^i)}{\big(|\ov x^i - \ov y^i|^2 + \eps\big)^{1/2}}, m_{\ov \bx}^{N,\ov K} \Big) \bigg) 
         \\
         &\qquad \leq  \frac{C(1 + \lambda(\ov t)) }{N} \sum_{i = 1}^{\ov K} \frac{|\ov x^i - \ov y^i|^2}{\big(|\ov x^i - \ov y^i|^2 + \eps\big)^{1/2}}{+
         \frac{C}{N}\sum_{i=1}^K |\ov x^i - \ov y^i|(1+ \lambda(\ov t)) }
         \\
         &\qquad \leq \frac{C(1 + \lambda(\ov t)) }{N} \sum_{i = 1}^{\ov K} \big(|\ov x^i - \ov y^i|^2 + \eps \big)^{1/2},
     \end{align*}
 and, hence, a contradiction, if  we choose $\lambda$ such that 
     \begin{align*}
         - \lambda'(t) > C(1 + \lambda(t)).
     \end{align*}
     In conclusion, we can choose a smooth function $\lambda$ independent of $N$ such that, for all $\eps > 0$, we have $\ov{t} = T$, and thus the bound \eqref{epsbound} holds for all $\eps > 0$. Sending $\eps \to 0$ completes the proof. 
 
 \end{proof}

In the next two steps, we introduce a new distance $\rho$ on $\sub$, which, according to Lemma~\ref{lem.dequrho}, is 
equivalent to the distance ${\bf d}$, and, as shown in Lemma~\ref{lem.wminus.discrete},  convenient to estimate distances between empirical measures. 
\vs
We define, for $m,n \in \sub $ with $m(\T^d) \leq n(\T^d)$, 
 \begin{align*}
     \rho(m,n) = n(\T^d) - m(\T^d) + \inf \big\{ {\bf d}(m , n') : n' \leq n, \,\, n'(\T^d) = m(\T^d) \big\}.
 \end{align*}
 If $n(\T^d) \leq m(\T^d)$, $\rho$ is defined in a symmetric way. We show first that $\rho$ is equivalent to ${\bf d}$.

 \begin{lemma}\label{lem.dequrho}
     There is a constant $C$ such that, for $m,n \in \sub$,
     \begin{align*}
         {\bf d}(m , n) \leq \rho(m,n) \leq 3  {\bf d}(m , n).
     \end{align*}
 \end{lemma}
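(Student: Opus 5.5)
We may assume $m(\T^d) \leq n(\T^d)$, since $\rho$ is defined symmetrically and $\bd$ is symmetric. The plan is to get both inequalities from the triangle inequality for $\bd$ combined with the two facts in \eqref{ord2}: $\bd$ controls the difference of masses, and $\bd$ equals that difference for comparable measures. The only nontrivial choice is an explicit competitor $n'$ for the upper bound.

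\emph{Lower bound.} Take any $n' \leq n$ with $n'(\T^d) = m(\T^d)$. By the triangle inequality and the equality case of \eqref{ord2} (valid because $n' \leq n$),
\begin{align*}
\bd(m,n) \leq \bd(m,n') + \bd(n',n) = \bd(m,n') + n(\T^d) - m(\T^d).
\end{align*}
Taking the infimum over all such $n'$ gives $\bd(m,n) \leq \rho(m,n)$.

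\emph{Upper bound.} It suffices to exhibit one admissible $n'$ with $\bd(m,n') \leq 2\bd(m,n)$. If $n(\T^d)=0$, then $m = n = \bzero$ and there is nothing to prove. Otherwise set
\begin{align*}
n' = \frac{m(\T^d)}{n(\T^d)}\, n .
\end{align*}
This measure satisfies $n' \leq n$ and $n'(\T^d) = m(\T^d)$, so it is admissible. We then have two estimates:
\begin{align*}
\bd(m,n') &\leq \bd(m,n) + \bd(n,n') = \bd(m,n) + n(\T^d) - m(\T^d), \\
n(\T^d) - m(\T^d) &\leq \bd(m,n).
\end{align*}
The first uses the triangle inequality and the equality case of \eqref{ord2}. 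The second is the lower bound in \eqref{ord2}. Together they give $\bd(m,n') \leq 2\bd(m,n)$. Hence
\begin{align*}
\rho(m,n) \leq \big(n(\T^d) - m(\T^d)\big) + \bd(m,n') \leq \bd(m,n) + 2\bd(m,n) = 3\bd(m,n).
\end{align*}

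There is no real obstacle here. The only points needing care are the choice of the rescaled competitor $n'$, which is automatically dominated by $n$, and the degenerate case $n = \bzero$.
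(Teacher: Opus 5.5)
Your proof is correct and is essentially the paper's argument: both bounds come from the triangle inequality for $\bd$ together with \eqref{ord2}, and the paper's upper bound shows $\bd(m,n') + (n-m)(\T^d) \leq 3\bd(m,n)$ for every admissible $n'$, not just a chosen one. Your explicit rescaled competitor $n' = \frac{m(\T^d)}{n(\T^d)}\, n$ has the small added merit of showing that the admissible set in the definition of $\rho$ is nonempty, which the paper uses tacitly.
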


 \begin{proof}
    Suppose without loss of generality that $m(\T^d) \leq n(\T^d)$. For any test function $\phi$ with $\|\phi\|_{1,\infty} \leq 1$, and any $n' \leq n$ with $n'(\T^d) = m(\T^d)$, we have 
    \begin{align*}
        \int \phi d(n - m) &= \int \phi d(n' - m) + \int \phi d(n - n') \leq \bd(n',m) + \|\phi\|_{\infty} (n-n')(\T^d) 
        \\
        &\leq \bd(n',m) + (n-m)(\T^d). 
    \end{align*}
    Taking first the sup over $\phi$ and then the  inf over $n'$, we get ${\bf d}(m,n)\leq \rho(m,n)$. 
  
    Next, we note that, for any signed measure $\mu$ on $\T^d$, $\|\mu\|_{-1,\infty} \leq |\mu|(\T^d)$, and so, for any $n'$ with $n'(\T^d) = m(\T^d)$ and $n' \leq n$, 
    \begin{align*}
        \bd(m,n') & + (n-m)(\T^d) \leq \|m - n + (n-n') \|_{-1,\infty} + (n-m)(\T^d) 
        \\
        &\leq \|m-n\|_{-1,\infty} + \|n-n'\|_{-1,\infty} + (n-m)(\T^d)
        \\
        &\leq \|m-n\|_{-1,\infty} + 2(n-m)(\T^d) \leq 3 \|m-n\|_{-1,\infty}=3 {\bf d}(m,n). 
    \end{align*}
 \end{proof}

We show next that $\rho$ can be easily estimated on empirical measures.

 \begin{lemma} \label{lem.wminus.discrete}
     There is a constant $C$ such that,  for all $N \in \N$ and $K,M$ with $K + M \leq N$, $\bx \in \R^K$ and $\by \in \R^{K+M}$, 
     \begin{align*}
         \rho(m_{\bx}^{N,K}, m_{\by}^{N,K+M}) \leq \frac{M}{N} + \inf_{\bz \in (\T^d)^K, \, m_{\bz}^{N,K} \leq m_{\by}^{N,K+M}} \frac{1}{N} \sum_{i = 1}^K |x^i - z^i| \leq \text{diam}(\T^d) \rho(m_{\bx}^N, m_{\by}^{N,K+M}).
     \end{align*}
 \end{lemma}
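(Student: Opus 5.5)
The plan is to rewrite the infimum over sub-measures $n'\le m_{\by}^{N,K+M}$ in the definition of $\rho$ as an infimum over sub-selections $\bz$ of the points of $\by$. This goes through a linear-programming (integrality) argument combined with Kantorovich duality. Since $m_{\bx}^{N,K}(\T^d)=K/N\le (K+M)/N=m_{\by}^{N,K+M}(\T^d)$, we have by definition
\begin{align*}
\rho(m_{\bx}^{N,K},m_{\by}^{N,K+M})=\frac{M}{N}+\inf\Big\{\bd(m_{\bx}^{N,K},n') : n'\le m_{\by}^{N,K+M},\ n'(\T^d)=K/N\Big\}.
\end{align*}

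\emph{First inequality.} If $\bz\in(\T^d)^K$ satisfies $m_{\bz}^{N,K}\le m_{\by}^{N,K+M}$, then $n'=m_{\bz}^{N,K}$ is admissible above. For any $f$ with $\|f\|_{1,\infty}\le 1$ we have
\begin{align*}
\int f\,d(m_{\bx}^{N,K}-m_{\bz}^{N,K})=\frac1N\sum_{i=1}^K\big(f(x^i)-f(z^i)\big)\le\frac1N\sum_{i=1}^K|x^i-z^i|,
\end{align*}
with $|\cdot|$ the torus distance. Taking the infimum over such $\bz$ gives the first inequality.

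\emph{Second inequality, step 1: reduction to $\bd_1$.} Fix an admissible $n'$. Because $n'\le m_{\by}^{N,K+M}$, it has the form $n'=\frac1N\sum_j w_j\delta_{y^j}$ with $w_j\in[0,1]$ and $\sum_j w_j=K$; repeated points of $\by$ are kept as separate indices. The measures $m_{\bx}^{N,K}$ and $n'$ have equal mass, so Kantorovich duality applies. In the supremum defining $\bd_1$ we may add a constant to $\phi$ so that $\|\phi\|_\infty\le \mathrm{diam}(\T^d)/2$, whence $\|\phi\|_{1,\infty}\le 1+\mathrm{diam}(\T^d)/2$. This gives
\begin{align*}
\bd_1(m_{\bx}^{N,K},n')\le C\,\bd(m_{\bx}^{N,K},n'),\qquad C=1+\mathrm{diam}(\T^d)/2.
\end{align*}
This constant equals $\mathrm{diam}(\T^d)$ only up to harmless adjustment; in general one gets a constant depending on $d$, which is what the lemma's ``there is a constant'' allows.

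\emph{Second inequality, step 2: integrality, the main step.} I will show
\begin{align*}
\inf_{n'}\bd_1(m_{\bx}^{N,K},n')=\inf_{\bz}\frac1N\sum_{i=1}^K|x^i-z^i|,
\end{align*}
with the infima taken over the same classes as before. After multiplying by $N$, minimizing $\bd_1(m_{\bx}^{N,K},n')$ jointly over $n'$ and couplings is the linear program
\begin{align*}
\min\Big\{\sum_{i,j}\pi_{ij}|x^i-y^j| : \pi_{ij}\ge0,\ \sum_j\pi_{ij}=1\ \forall i,\ \sum_i\pi_{ij}\le 1\ \forall j\Big\},
\end{align*}
where the weights are recovered as $w_j=\sum_i\pi_{ij}$. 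The constraint matrix is the incidence matrix of a bipartite graph, hence totally unimodular, and the right-hand sides are integers. Therefore the minimum is attained at a vertex with $\pi_{ij}\in\{0,1\}$, i.e.\ at an injective map $i\mapsto j(i)$. Setting $z^i=y^{j(i)}$ gives an admissible $\bz$ (admissible because the map is injective) whose cost is $\frac1N\sum_i|x^i-z^i|$. The reverse inequality between the two infima is immediate, since each admissible $\bz$ is a feasible $0$--$1$ point of the program. This is the step I expect to need the most care: the bookkeeping with repeated points of $\by$ and checking total unimodularity with the mixed equality/inequality constraints. As an alternative route I would try a direct Birkhoff--von Neumann argument, padding with $M$ dummy rows so the problem becomes a square assignment problem.

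\emph{Conclusion.} Combining the two steps,
\begin{align*}
\frac MN+\inf_{\bz}\frac1N\sum_{i=1}^K|x^i-z^i| \le \frac MN+C\inf_{n'}\bd(m_{\bx}^{N,K},n')\le \max(1,C)\,\rho(m_{\bx}^{N,K},m_{\by}^{N,K+M}),
\end{align*}
which is the claimed upper bound.
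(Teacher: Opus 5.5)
Your proof is correct and uses the same two ingredients as the paper's. The first is the comparison $\bd_1\le C\,\bd$ for measures of equal mass. The second is the identification of $\bd_1$ between empirical measures with an optimal assignment cost, which the paper obtains from Birkhoff--von Neumann in the square case.

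The difference is that the paper passes to $M=0$ by appeal to the definition of $\rho$. But the infimum in $\rho$ runs over all sub-measures $n'\le m_{\by}^{N,K+M}$ of mass $K/N$, including fractional ones, not only over sub-selections $m_{\bz}^{N,K}$. Your total-unimodularity argument (or the padding with $M$ dummy rows) is exactly what justifies that reduction. It shows that fractional $n'$ cannot lower the $\bd_1$-infimum.

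Your constant $1+\mathrm{diam}(\T^d)/2$ is also what the comparison step actually yields. The paper's $\mathrm{diam}(\T^d)$ comes from normalizing $\phi(0)=0$ and drops the $\|D\phi\|_\infty$ part of the $W^{1,\infty}$ norm. This is harmless, since only the existence of some constant is used later.
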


 \begin{proof}
 From the definition of $\rho$, we see that it suffices to consider the case $M = 0$. That is, we need to show that, for $K \leq N$ and $\bx,\by \in (\T^d)^K$, 
 \begin{align} \label{empricialsufficient}
    \bd(m_{\bx}^{N,K}, m_{\by}^{N,K}) \leq \inf_{\sigma} \frac{1}{N} \sum_{i = 1}^K |x^i - y^{\sigma(i)}| \leq \text{diam}(\T^d)\rho(m_{\bx}^N, m_{\by}^{N,K+M}), 
 \end{align}
with the infimum taken over permutations $\sigma$ of $\{1,...,K\}$. For this, we first note that 
\begin{align} \label{d1.empirical}
    \inf_{\sigma} \frac{1}{N} \sum_{i = 1}^K |x^i - y^{\sigma(i)}| = \frac{K}{N} \inf_{\sigma} \frac{1}{K} \sum_{i = 1}^K |x^i - y^{\sigma(i)}|=\frac{K}{N} \bd_1\Big(\frac{1}{K} \sum_{i = 1}^K \delta_{x^i}, \frac{1}{K} \sum_{i = 1}^K \delta_{y^i} \Big).
\end{align}
It is clear that, for two measures $m$ and $n$ with equal mass, 
\be
\begin{split} \label{dandd1comp}
   & \bd_1(m,n) = \sup_{\phi \text{ 1-Lipschitz}, \, \phi(0) = 0}  \int \phi d(m-n)\\
   &\leq \text{diam}(\T^d) \sup_{\| \phi \|_{1,\infty} \leq 1} \int \phi d(m-n) = \text{diam}(\T^d) \bd(m,n).
\end{split}
\ee
Combining \eqref{d1.empirical} and \eqref{dandd1comp} gives \eqref{empricialsufficient}, and completes the proof.
 \end{proof}

The next step towards the proof of estimate~\eqref{vnklip} in Theorem~\ref{thm.main.lip} is to  estimate the distance between $V^{N,K}$ and $V^{N,K-1}$. 

 \begin{lemma} \label{lem.removingmass}
Assume  \eqref{phl1}, \eqref{phl2} and \eqref{phl3}.Then there exists  a constant $C$ such that, for each $N \in \N$, $K \in \{1,...,N\}$,  $(t,\bx) \in [0,T] \times (\T^d)^K$ and $i = 1,...,K$, we have 
     \begin{align*}
         \Bigl| V^{N,K-1}(t,\bx^{-i}) - V^{N,K}(t,\bx)\Bigr| \leq C/N. 
     \end{align*}
 \end{lemma}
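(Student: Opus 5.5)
Both inequalities will come from the control formulation \eqref{def.vnk.intro}, with the $\Psi$-upper bound supplied directly by the obstacle. Fix $N$, $K$, $(t,\bx)$ and $i$.

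\emph{Upper bound $V^{N,K}(t,\bx)\le V^{N,K-1}(t,\bx^{-i})+C/N$.} The obstacle part of \eqref{stopping.gen2} with $S=\{i\}$ gives
\[
V^{N,K}(t,\bx)\le V^{N,K-1}(t,\bx^{-i})+\tfrac1N\Psi(x^i,m_{\bx}^{N,K})\le V^{N,K-1}(t,\bx^{-i})+\tfrac{\|\Psi\|_\infty}{N}.
\]
Equivalently, at the level of controls one removes particle $i$ at time $t$ and then follows an $\varepsilon$-optimal control for $V^{N,K-1}(t,\bx^{-i})$. The empirical measures after time $t$ are then identical, since $m^{N,K}_s$ only counts particles with $s<\tau^i$. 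The only extra cost is therefore the penalty $\frac1N\Psi(x^i,m^{N,K}_{t-})$.

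\emph{Lower bound $V^{N,K-1}(t,\bx^{-i})\le V^{N,K}(t,\bx)+C/N$.} Take an $\varepsilon$-optimal pair $(\balpha,\btau)$ for $V^{N,K}(t,\bx)$ and simply discard particle $i$: the remaining $K-1$ particles keep their controls and stopping times. Using the Brownian motions $W^j$, $j\neq i$, and adjoining $W^i$ as independent randomization, this is admissible; if strictly adapted controls are required, one conditions on $W^i$ and uses that $V^{N,K-1}$ is an infimum. The new empirical measure is $\tilde m_s=m_s^{N,K}-\frac1N\delta_{X^i_s}1_{s<\tau^i}$, so $\bd(\tilde m_s,m_s^{N,K})\le 1/N$ for all $s$, and the same holds for left limits. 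The cost difference splits into three pieces.
\begin{enumerate}[(i)]
\item The removed particle's own terms, $-\frac1N\int L(X^i,\alpha^i,m)\,ds-\frac1N\Psi(\cdots)1_{\tau^i\le T}$. These are bounded above by $C/N$ because $L\ge -C$ by \eqref{hypL} and $\Psi$ is bounded.
\item The change in $G$, which is at most $\mathrm{Lip}(G)/N$ by \eqref{phl3}.
\item The change in the other particles' $\Psi$ terms, at most $\frac{K}{N}\cdot\frac{C}{N}\le C/N$ by the Lipschitz continuity of $\Psi$ in $m$ from \eqref{phl2}.
\end{enumerate}

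\emph{Main obstacle.} The running cost of the other particles changes by
\[
\frac1N\sum_{j\ne i}\int\big|L(X^j,\alpha^j,\tilde m_s)-L(X^j,\alpha^j,m_s)\big|\,ds\le \frac CN\cdot\frac1N\sum_j\int(1+|\alpha^j_s|^2)\,ds,
\]
using \eqref{hypdeltaL}. We therefore need $\E\big[\frac1N\sum_j\int_t^T|\alpha^j|^2 1_{s<\tau^j}\,ds\big]\le C$ for $\varepsilon$-optimal controls. This follows from the coercivity in \eqref{hypL} and the uniform bound on $V^{N,K}$: comparing with the zero control and no stopping shows $V^{N,K}\le C$, while the bounds on $L$, $\Psi$ and $G$ give the energy bound. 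Alternatively, Lemma \ref{lem.derivativebound2} lets one restrict to feedback controls with $|\alpha|\le C$. In that case one argues directly at the level of \eqref{stopping.gen2}: one shows that $V^{N,K-1}(t,\bx^{-i})-C(1+T-t)/N$ is a subsolution of the $K$-particle problem. The Hamiltonian perturbation is $O(1/N)$ because $N D_{x^j}V$ is bounded. This is the step I expect to require the most care, especially because the obstacle terms couple different $K$.

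Combining the two bounds and letting $\varepsilon\to0$ gives the lemma.
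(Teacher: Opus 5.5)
Your proof is correct. The upper bound is the same as the paper's (the obstacle with $S=\{i\}$), but your lower bound takes a genuinely different route.

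The paper never returns to the control problem. It maximizes $V^{N,M-1}(t,\bx^{-k})-V^{N,M}(t,\bx)-\frac{\lambda}{N}(1+\frac MN)(T-t+1)$ jointly over $(M,k,t,\bx)$. Maximality in $M$ and $\Psi$-monotonicity place the maximizer in the continuation set of $V^{N,K}$, where $V^{N,K}$ is classical. There it compares the equation for $V^{N,K}$ with the subsolution inequality for $V^{N,K-1}$. The Hamiltonian mismatch is $O(1/N)$ only because of the gradient bound $\|ND_{x^j}V^{N,K}\|_\infty\le C$ from Lemma~\ref{lem.derivativebound2}, and the case $t=T$ is closed with Lemma~\ref{lem.envelopeLip}.

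Your coupling argument drops particle $i$ from an $\varepsilon$-optimal control and removes the dependence on $W^i$ by conditioning. It is more elementary: it uses only the uniform energy bound from \eqref{hypL} and $V^{N,K}\le C$, the $(1+|a|^2)$-weighted Lipschitz dependence of $L$ on $m$ from \eqref{hypdeltaL}, and boundedness and Lipschitz continuity of $G$ and $\Psi$. It needs neither Lemma~\ref{lem.derivativebound2}, nor interior regularity or viscosity arguments, nor the discrete envelope $G^{N,K}_\Psi$, since stopping at $T$ is already built into the control formulation. The price is the standard measurability check that freezing $W^i$ leaves controls and stopping times admissible for the $(K-1)$-particle filtration. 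The paper's argument, by contrast, works purely at the level of the hierarchy \eqref{stopping.gen2} and follows the same template as Lemma~\ref{lem.derivativebound2}.

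Your fallback PDE sketch is not needed, and as written it would not close. With a $K$-independent correction $C(1+T-t)/N$, the obstacle constraint for $S\not\ni i$ requires the same estimate at level $K-|S|$ plus an extra error $|S|\,\text{Lip}(\Psi)/N^2$. Absorbing that error is exactly why the paper uses the weight $1+\frac KN$ and maximizes over the number of particles.
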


 \begin{proof} Since $V^{N,K}$ is $\Psi$-non-increasing, we have 
 \begin{align*}
     V^{N,K}(t,\bx) \leq V^{N,K-1}(t,\bx^{-i}) + \frac{1}{N} \Psi(x^i,m_{\bx}^N) \leq V^{N,K-1}(t,\bx^{-i}) + \frac{\|\Psi\|_{\infty}}{N}.
 \end{align*}
We will prove the other inequality by showing the existence of a constant  $\lambda>0$ such that, for any $N$, $K$, $i$, $t$ and $\bx$, 
 $$
 V^{N,K-1}(t, \bx^{-i})-V^{N,K}(t, \bx) -  \frac{\lambda}{N} (1+\frac{K}{N})(T-t+1) \leq 0.
 $$
Arguing by contradiction, we assume that, for some $N>0$,  the map
 $$
(M, k, t,\bx)\to  V^{N,M-1}(t, \bx^{-k})-V^{N,M}(t,\bx)- \frac{\lambda}{N} (1+\frac{M}{N})(T-t+1)
 $$
 has a positive maximum $\mathcal M$  over $1\leq M\leq N$, $1\leq k\leq M$, and $(t,\bx)\in [0,T]\times (\T^d)^M$.  
 \vs
 
Denote by  $(K,i,\bar t, \ov{x})$  a maximum point,  we first claim that $(\bar t, \ov{\bx})$ belongs to the set 
           $$
     \mathcal O = \Big\{ (t,\bx) \in [0,T]\times (\T^d)^K :  V^{N,K}(t,{\bf x})< V^{N,K-|S|}(t,{\bf x}^{-S})  + \frac{1}{N} \sum_{i \in S} \Psi(x^i,m_{\bx}^{N,K}), \; \forall S\subset  [K], \; S\neq \emptyset \Big\}.
     $$
 Indeed, otherwise, there exists $S\subset  [K]$ with $|S|\geq 1$ such that 
 \be\label{defSkelsrd}
 V^{N,K}(\bar t,\ov{\bf x})= V^{N,K-|S|}(\bar t,\ov{\bf x}^{-S})  + \frac{1}{N} \sum_{j \in S} \Psi(\bar x^j,m_{\ov{\bx}}^{N,K}) . 
 \ee
If $i\notin S$, then, by the optimality of $(K,i,\bar t, \ov{x})$,  we have 
\begin{align*}
& V^{N,K-|S|-1}(\bar t, \ov{\bx}^{-(S\cup\{i\})})-V^{N,K-|S|}(\bar t,\ov{\bx}^{-S})- \frac{\lambda }{N}(1+\frac{K-|S|}{N}) (T-\bar t+1) \\
& \qquad\qquad\qquad  \leq V^{N,K-1}(\bar t, \ov{\bx}^{-i})-V^{N,K}(\bar t,\ov{\bx})- \frac{\lambda}{N} (1+\frac{K}{N})(T-\bar t+1).
\end{align*}
Using \eqref{defSkelsrd},  the  $\Psi-$monotonicity of $V^{N,K}$ for the second inequality, and  the fact that, for any $K,i$ and $\bx$,  $\bd(m_{\bx^{-i}}^{N,K-1}, m_{\bx}^{N,K}) = \frac{1}{N}$, and denoting by  $\text{Lip}(\Psi)$ the Lipschitz constant of $\Psi$ in $m$ with respect to the metric $\bd$, we find 
\begin{align*}
& V^{N,K-|S|-1}(\bar t, \ov{\bx}^{-(S\cup\{i\})})  \leq V^{N,K}(\bar t, \ov{\bx}^{-i})-\frac{1}{N} \sum_{j \in S} \Psi(\bar x^j,m_{\ov{\bx}}^{N,K})  -\lambda  \frac{|S|}{N^2}(T - \ov t - 1) \\
& \qquad  \leq V^{N,K-|S|-1}(\bar t, \ov{\bx}^{-(S\cup\{i\})})   +\frac{1}{N} \sum_{j\in S} \Psi(\bar x^j,m_{\ov{\bx}^{-i}}^{N,K-1})-\frac{1}{N} \sum_{j \in S} \Psi(\bar x^j,m_{\ov{\bx}}^{N,K})- \lambda\frac{|S|}{N^2} (T-\bar t+1)
\\
& \qquad  \leq V^{N,K- |S| - 1}(\ov t, \ov{\bx}^{-(S \cup \{i\})} ) + \text{Lip}(\Psi) \frac{|S|}{N} \bd\big( m_{\bx^{-i}}^{N,K-1}, m_{\bx}^{N,K}\big) - \lambda \frac{|S|}{N^2} (T - \ov t +1)
\\
&\qquad  = V^{N,K - |S| - 1} + \text{Lip}(\Psi) \frac{|S|}{N^2} - \lambda \frac{|S|}{N^2} (T- \ov t + 1), 
\end{align*}
which is  a contradiction provided that we choose $\lambda > \text{Lip}(\Psi)$.
\vs

We assume now that $i\in S$. Then, using the  $\Psi-$monotonicity of $V^{N, K-1}$ and the fact that $\bd(m_{\bx^{-i}}^{N,K-1}, m_{\bx}^{N,K}) = \frac{1}{N}$, we have
\begin{align*}
V^{N,K-1}(\bar t, \ov{\bx}^{-i}) &  \leq V^{N,K-|S|}(\bar t, \ov{\bx}^{-S}) + \frac{1}{N} \sum_{j\in S\backslash\{i\}} \Psi(\bar x^j, m^{N,K-1}_{\bx^{-i}}) \\
& \leq V^{N,K-|S|}(\bar t, \ov{\bx}^{-S}) + \frac{1}{N} \sum_{j\in S\backslash\{i\}} \Psi(\bar x^j, m^{N,K}_{\bx}) + \frac{|S|}{N^2} \text{Lip}(\Psi). 
\end{align*}
By \eqref{defSkelsrd}, this implies that 
$$
V^{N,K-1}(\bar t, \ov{\bx}^{-i}) \leq V^{N,K}(\bar t,\ov{\bf x}) -\frac{1}{N} \Psi(\bar x^i,  m^{N,K}_{\bx})+ \frac{|S|}{N^2} \text{Lip}(\Psi).
$$
But then 
$$
\mathcal M= V^{N,K-1}(\bar t, \ov{\bx}^{-i})- V^{N,K}(\bar t,\ov{\bf x}) -\frac{\lambda}{N}(1+\frac{K}{N})(T-\bar t+1) \leq - \frac{\lambda}{N}(1+\frac{K}{N})+ \frac{\|\Psi\|_\infty}{N}  + \frac{|S|}{N^2} \text{Lip}(\Psi), 
$$
and the right-hand side is negative as soon as $\lambda$ is larger than $\|\Psi\|_\infty + \text{Lip}(\Psi)$. This contradicts the assumption that $\mathcal M$ is positive and proves that $(\bar t, \ov{\bx})$ belongs to ${\mathcal O}$. 
 \vs
 
Next we show that we cannot have $\bar t <T$. Arguing again by contradiction, we note that, since  $(\bar t, \bar x)\in \mathcal O$ with $\bar t<T$, the map $V^{N,K}$ satisfies \eqref{stopping.gen2} with an equality  in a neighborhood of $(\bar t, \ov{\bx})$ and thus is of class $C^{1,2}$  in this neighborhood by classical parabolic regularity. 
\vs
We now view $\ov{x}^i$ as fixed, and note that the map
\begin{align*}
    [0,T] \times (\T^d)^{K-1} \ni (t,\by) \mapsto V^{N,K-1}(t,\by) - V^{N,K}\big(t, \by \oplus_i \ov{x}^i\big) - \frac{\lambda}{N}\Big(1 + \frac{K}{N} \Big)(T - t + 1)
\end{align*}
has a maximum at $(\ov{t}, \ov{\bx}^{-i})$, where we define $\by \oplus_i \ov{x}^i$ to be the element of $(\T^d)^K$ obtained by inserting $\ov{x}^i$ at slot $i$, i.e. 
\begin{align*}
    \by \oplus_i \ov{x}^i = (y^1,...,y^{i-1}, x^i, y^i,...,y^{K-1}). 
\end{align*}
We now use the fact that $V^{N,K-1}$ is a global subsolution of \eqref{stopping.gen2} (with $K$ replaced by $K-1$). Setting $\theta= \frac{\lambda }{N}(1+\frac{K}{N})$ to simplify the expressions, we find that 
     \begin{align*}
  0& \geq       \theta- \partial_t V^{N,K} (\bar t,\ov{\bx})- \sum_{j \in [K]\backslash\{ i\} }^K \Delta_{x^j} V^{N,K} (\bar t,\ov{\bx})+ \frac{1}{N} \sum_{j \in [K]\backslash\{ i\}}H\big( \bar x^j,N D_{x^j} V^{N,K}(\bar t,\ov{\bx}), m_{\ov{\bx}^{-i}}^{N,K-1} \big).
     \end{align*}
  Plugging the equation for $V^{N,K}$ into this inequality, we obtain
     \begin{align*}
0  & \geq  \theta + \Delta_{x^i} V^{N,K} (\bar t,\ov{\bx}) + \frac{1}{N} \sum_{j \in [K]\backslash\{ i\}}  H\big( \bar x^j,N D_{x^j} V^{N,K}(\bar t,\ov{\bx}), m_{\ov{\bx}^{-i}}^{N,K-1} \big)\\
  & \qquad -  \frac{1}{N} \sum_{j=1}^K H\big( \bar x^j,N D_{x^j} V^{N,K}(\bar t,\ov{\bx}), m_{\ov{\bx}}^{N,K} \big)  .
     \end{align*}
Lemma \ref{lem.derivativebound2} yields that $\|N D_{x^j} V^{N,K}\|_\infty$ is bounded independently of $N$, $K$ and $j$. 
Moreover, since $\tilde V^{N,K}-V^{N,K}$ has a maximum at $(\bar t, \ov{\bx})$, we have $D_{x^i} V^{N,K}(\bar t, \ov{\bx})=0$ and $D^2_{x^ix^i} V^{N,K}(\bar t, \ov{\bx})\geq0$.
 \vs
Using \eqref{phl1} and  the fact that ${\bf d}(m_{\ov{\bx}^{-i}}^{N,K-1}, m_{\ov{\bx}}^{N,K}) = 1/N$, we obtain 
   \begin{align*}
  0&  \geq \theta  + \frac{1}{N} \sum_{j \in [K]\backslash\{ i\}}  \Bigl( H\big( \bar x^j,N D_{x^j} V^{N,K}(\bar t,\ov{\bx}), m_{\ov{\bx}^{-i}}^{N,K-1} \big)- H\big( \bar x^j,N D_{x^j} V^{N,K}(\bar t,\ov{\bx}), m_{\ov{\bx}}^{N,K} \big)\Bigr) \\
  & \qquad \hspace{5cm}  - \frac{1}{N} H\big( \bar x^i,0, m_{\ov{\bx}}^{N,K} \big)\\
  &   \geq \frac{\lambda }{N}(1+\frac{K}{N})  - \frac{C (K-1)}{N^2}- \frac{C}{N},  
     \end{align*}
     \vs
 which is  a contradiction if $\lambda$ is large enough. 
  \vs
  We have now shown that $\ov{t} = T$. To complete the proof, we note that, in view of the terminal condition for the $V^{N,K}$ and Lemma \ref{lem.envelopeLip},
  \begin{align*}
  \mathcal M &= V^{N,K-1}(T, \ov{\bx}^{-i})- V^{N,K}(T,\ov{\bf x}) -\frac{\lambda}{N}(1+\frac{K}{N}) = G_{\Psi}^{N,K-1}(\bx^{-i})-G_{\Psi}^{N,K}( \bx)- \frac{\lambda}{N}(1+\frac{K}{N}) \\
  &\qquad \qquad \qquad  \leq \frac{C}{N} -\frac{\lambda}{N}(1+\frac{K}{N}), 
  \end{align*}
  the right-hand side being negative if $\lambda$ larger than the constant $C$ appearing in Lemma \ref{lem.envelopeLip}. So we find again a contradiction and $\mathcal M$ cannot be positive. 

 \end{proof}

In order to infer time-regularity from space regularity, we need the following lemma. Since its proof follows from a standard verification argument together with the Lipschitz regularity in Lemma \ref{lem.derivativebound2}, it is omitted.

 \begin{lemma} \label{lem.partialdp}
Assume  \eqref{phl1}, \eqref{phl2} and \eqref{phl3}. Then there exists a constant $C$ such that, for each $N \in \N$, $K \in \{1,...,N\}$,  $(t_0,\bx_0) \in [0,T) \times (\T^d)^K$, and  $h \in (0,T-t_0)$, there exists an admissible control $(\balpha, \btau)$ for the problem defining $V^{N,K}$ such that
     \begin{align*}
         \|\alpha^i\|_{\infty} \leq C \quad \text{ for all } i = 1,...,K,
    \end{align*}
    as well as 
    \begin{align*}
     V^{N,K}(t, \bx) &= \E\bigg[ V^{N,K-|S|}\big(t + h, \bX_{t_0 + h}^{-S} \big) + \int_{t}^{t + h} \frac{1}{N} \sum_{i = 1}^N L\big(X_t^i, \alpha_t^i, m_t \big) 1_{t \leq \tau^i} dt + \frac{1}{N} \sum_{i \in S} \Psi\big(X_{\tau^i}^i, m_{\tau^i-} \big) \bigg ], 
     \end{align*}
     where 
     \begin{align*}
      m_t  = \frac{1}{N} \sum_{i = 1}^K \delta_{X_t^i} 1_{t < \tau^i}, \,\, m_{t-} = \frac{1}{N} \sum_{i = 1}^K \delta_{X_t^i} 1_{t \leq \tau^i} \ \ \text{and} \ \  S = \{i : \tau^i < t_0  + h\}. 
     \end{align*}
 \end{lemma}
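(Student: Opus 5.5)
The plan is a verification argument: build the optimal feedback control for the $N$-particle problem from the hierarchy \eqref{stopping.gen2}, and bound its drift using the gradient estimate of Lemma \ref{lem.derivativebound2}. As in the proof of that lemma, I would first argue as if each $V^{N,K}$ were $C^{1,2}$ up to the boundary of the continuation region. In general, $V^{N,K}$ is only Lipschitz in $\bx$ with the bound $|N D_{x^i}V^{N,K}|\le C$ from Lemma \ref{lem.derivativebound2}. It is $C^{1,2}$ inside the open continuation set
\[
\mathcal O_K=\Big\{(t,\bx): V^{N,K}(t,\bx)<\min_{\emptyset\ne S\subset[K]}\Big(V^{N,K-|S|}(t,\bx^{-S})+\tfrac1N\sum_{i\in S}\Psi(x^i,m_{\bx}^{N,K})\Big)\Big\}.
\]
On its complement, some removal is optimal.

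\textbf{Step 1 (feedback).} On $\mathcal O_K$, let
\[
\alpha^{i}(t,\bx)=-D_pH\big(x^i,ND_{x^i}V^{N,K}(t,\bx),m_{\bx}^{N,K}\big),
\]
which attains the supremum in \eqref{ath200}. Since $|ND_{x^i}V^{N,K}|\le C$ by Lemma \ref{lem.derivativebound2}, and $D_pH$ is Lipschitz in $p$ by \eqref{phl1}, we get $|\alpha^i|\le C$ uniformly in $N,K,i$. Outside $\mathcal O_K$, extend $\alpha^i$ measurably, for example by the same formula using any bounded element of the superdifferential.

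\textbf{Step 2 (stopping and SDE).} Define the strategy recursively in the number of surviving particles. While $(t,\bX_t)\in\mathcal O_K$, run the SDE \eqref{phl0} with the bounded feedback above. Bounded measurable drifts with nondegenerate noise give a unique strong solution (Veretennikov), so the control is progressively measurable with respect to the Brownian filtration. At the first exit time from $\mathcal O_K$, remove a set $S$ that attains the minimum in the obstacle; this is chosen measurably, e.g.\ the smallest in lexicographic order. Then restart with $V^{N,K-|S|}$ from $\bX^{-S}$. Because $K$ decreases at each removal, there are at most $K$ such stages.

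\textbf{Step 3 (identity).} On each stage, apply Itô's formula to $V^{N,K}(s,\bX_s)$ up to the minimum of the exit time and $t_0+h$. In $\mathcal O_K$ the equation holds with equality, and the feedback attains the Hamiltonian. This gives equality between $V^{N,K}$ at the start and the expected value at the stopping time plus the running cost. At an exit time, the obstacle equality replaces $V^{N,K}$ by $V^{N,K-|S|}$ plus $\frac1N\sum_{i\in S}\Psi$, evaluated at the pre-jump measure $m_{\tau-}$. Chaining the stages yields the displayed dynamic programming identity with $S=\{i:\tau^i<t_0+h\}$.

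\textbf{Main obstacle.} The hardest step is the lack of $C^{1,2}$ regularity of $V^{N,K}$ across the free boundary $\partial\mathcal O_K$, and possibly the degenerate case where the process starts on the boundary. I would handle this by regularization. Replace the hard obstacle by a penalization
\[
\tfrac1\eta\big(V-\text{obstacle}\big)^+,
\]
under which the solutions are classical. In the penalized problem the controls are still bounded by the same constant, since the proof of Lemma \ref{lem.derivativebound2} goes through unchanged for the penalized equation, and the identity holds with randomized removal intensities. Then pass to the limit $\eta\to0$ using compactness of relaxed controls and stopping times, in the Meyer–Zheng/weak sense. A direct alternative is the standard optimal stopping verification: the process $V^{N,K}(s,\bX_s)$ plus the accumulated cost is a martingale up to the first hitting time of the stopping region, with Itô–Krylov's formula applied to the $W^{2,1}_{p,\text{loc}}$ solution of the obstacle problem. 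Either route only uses the gradient bound to keep $\|\alpha^i\|_\infty\le C$.
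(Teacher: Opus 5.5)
Your proposal is correct and is essentially the argument the paper has in mind; the paper omits the proof as ``a standard verification argument together with the Lipschitz regularity in Lemma \ref{lem.derivativebound2}''. Your route is the same: use the optimal feedback $\alpha^i=-D_pH\big(x^i,ND_{x^i}V^{N,K},m_{\bx}^{N,K}\big)$ on the continuation set, bound it through $|ND_{x^i}V^{N,K}|\le C$, and remove a minimizing set $S$ at each exit, with at most $K$ stages.

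The boundary regularity you flag as the main obstacle needs no penalization or relaxed controls, which would in any case only give a weak limit rather than an admissible control on the given Brownian filtration. Since $V^{N,K}$ is $C^{1,2}$ in the open set $\mathcal O_K$ and continuous everywhere, you can apply It\^o's formula up to the exit time from $\{\mathrm{dist}(\cdot,\mathcal O_K^c)>\epsilon\}$, capped at $t_0+h$, and then let $\epsilon\to0$.

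One small point: your lexicographic choice of $S$ may trigger further removals at the same instant. This is harmless, because $\Psi$-monotonicity forces $\Psi(x^i,m_{\bx^{-S}}^{N,K-|S|})=\Psi(x^i,m_{\bx}^{N,K})$ for those particles. Alternatively, choose $S$ of maximal cardinality among the minimizers, which rules out such cascades.
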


We now complete the proof of the regularity of $V^{N,K}$.

\begin{proof}[Proof of the estimate \eqref{vnklip} in Theorem~\ref{thm.main.lip}]
    Combining Lemma~\ref{lem.derivativebound2} and Lemma~\ref{lem.removingmass}, we deduce that,  for all $K \leq M$, $\bx \in (\T^d)^M$, and  $\by \in (\T^d)^K$, and for any $\bx' \in (\T^d)^K$ with $m_{\bx'}^{N,K} \leq m_{\bx}^{N,M}$, 
    \begin{align*}
        |V^{N,M}(t,\bx) - V^{N,K}(t,\by)| &\leq \frac{C(M-K)}{N} + |V^{N,K}(t,\bx') - V^{N,K}(t,\by)|\\
         &\leq \frac{C(M-K)}{N} + \frac{C}{N} \sum_{i = 1}^K |x^{'i} - y^i|. 
    \end{align*}
    Taking an infimum over $\bx'$ and applying Lemma \ref{lem.wminus.discrete} completes the proof of the regularity in the space variable. 
\vs
   We address next the time regularity of $V^{N,K}$. Since  $V^{N,K}$ is a globally Lipschitz in the space variable subsolution to \eqref{stopping.gen2}, standard arguments from the theory of parabolic equations yield  the existence of an 
independent of $N$ and $K$    constant $C>0$ such that, for any $(t,\bx)\in [0,T)\times (\T^d)^K$ and $h\in (0,T-t)$,   
$$
V^{N,K}(t, \bx) \leq V^{N,K}(t+h, \bx) + Ch^{1/2}.
$$
We prove  the converse inequality, that is, the existence of an independent of $(t,\bx)$, $K$ and $N$ constant $C$ such that 
\be\label{cvineqlkjsndfcv}
V^{N,K}(t, \bx) \geq V^{N,K}(t+h, \bx) - Ch^{1/2}.
\ee
Let $(\balpha, \btau)$ be as in the statement of Lemma \ref{lem.partialdp}, and set 
\begin{align*}
    \wt{m}_t = \frac{1}{N} \sum_{i = 1}^K \delta_{X_t^i}. 
\end{align*}
Then, using the fact that $\Psi$ is non-increasing in $m$, the boundedness of $\alpha^1,...,\alpha^K$ and the $\Psi$-monotonicity of $(V^{N,K})_{K = 1,...,N}$, we find 
\be \label{dpptimebound}
\begin{split}
    V^{N,K}(t, \bx) &= \E\bigg[ V^{N,K-|S|}\big(t + h, \bX_{t_0 + h}^{-S} \big) + \int_{t}^{t + h} \frac{1}{N} \sum_{i = 1}^N L\big(X_t^i, \alpha_t^i, m_t\big) 1_{t \leq \tau^i} dt \\
    &\hskip1in + \frac{1}{N} \sum_{i \in S} \Psi\big(X_{\tau^i}^i, m_{\tau^i-} \big) \bigg ]\\
    &\geq \E\bigg[ V^{N,K-|S|}\big(t + h, \bX_{t + h}^{-S} \big) + \frac{1}{N} \sum_{i \in S} \Psi\big(X_{\tau^i}^i, m_{\tau^i} \big) \bigg ] - Ch\\
    &\geq \E\bigg[ V^{N,K}\big(t + h, \bX_{t + h} \big) - \frac{1}{N} \sum_{i \in S} \Psi\big(X_{t + h}^i, \wt{m}_{t + h}\big) + \frac{1}{N} \sum_{i \in S} \Psi\big(X_{\tau^i}^i, m_{\tau^i - } \big) \bigg] - Ch, 
\end{split}
\ee
Next, we note that, since  $\Phi$ is non-increasing and Lipschitz,  
\begin{align*}
    \Psi(X_{\tau^i-}^i, m_{\tau^i}) \geq \Psi(X^i_{\tau^i}, \wt{m}_{\tau}^i) \geq \Psi(X^i_{t + h}, \wt{m}_{t + h}) - C \Big( |X_{\tau^i}^i - X_{t+h}^i| + \bd\big(\wt{m}_{\tau^i}, \wt{m}_{t + h}\big) \Big).
\end{align*}
Returning to \eqref{dpptimebound} and using that, in view of bound on the  $\alpha^i$'s, we have $\E[\sup_{t \leq s \leq t+ h} |X_s^i - x^i| ] \leq C \sqrt{h}$, we obtain
\begin{align*}
    V^{N,K}(t, \bx) &\geq \E\Big[ V^{N,K}\big(t + h, \bX_{t + h} \big) + \frac{1}{N} \sum_{i \in S} \big( |X_{\tau_i}^i - X_{t + h}^i| + \bd(\wt{m}_{t + h}, \wt{m}_{\tau}) \Big] - Ch 
    \\
    &\geq \E\Big[ V^{N,K}\big(t + h, \bX_{t + h} \big) \Big] - C (h + \sqrt{h})
 \geq V^{N,K}(t + h, \bx) - C(h + \sqrt{h}). 
\end{align*}
The proof is now complete. 

\end{proof}

\section{The mean field problem} \label{sec.U}

We investigate here some  properties of the mean field control problem and its value function $U$.
 
\subsection*{Existence of minimizers for and some properties of the limit problem}

Recall that $\cA_{t_0,m_0}$ is the set of triples $(m,\alpha, \mu)$ where $m$ is a \cad path taking values in $\cP$, $\alpha$ is a measurable function, $\mu$ is a non-negative measure on $[t_0,T] \times \T^d$, which satisfy the equation \eqref{meqn0}. For the purposes of compactness arguments, it is easier to view $m$ as a measure on $[t_0,T] \times \T^d$ and  to facilitate this, we make use of the following lemma.

\begin{lemma}\label{lem.cdlag} Let $(t_0,m_0) \in [0,T) \times \sub$. Suppose that $(m',m'_T, \alpha, \mu)$ are such $m'$ and $\mu$ are non-negative measures on $[t_0,T] \times \T^d$, $m'_T \in \sub$, and $\alpha : [t_0,T] \times \T^d \to \R^d$ is a measurable function satisfying 
\begin{align}\label{ord10}
    \int_{[t_0,T] \times \T^d} |\alpha(t,x)|^2 dm'(t,x) < \infty
\end{align}
and, for every test function $\phi \in C_c^{\infty}([t_0,T] \times \T^d)$, 
\be\label{meqn2}
\begin{split}
& \int_{\T^d} \phi(T,x) dm_T'(x) - \int_{\T^d} \phi(t_0,x) dm_0(x)\\ 
&= \int_{t_0}^T \int_{\T^d} \Big( \partial_t \phi(t,x) + \Delta \phi(t,x) + D\phi(t,x) \cdot \alpha(t,x) \Big)dm'(t, x) - \int_{[t_0,T] \times \T^d} \phi(t,x) d\mu(t,x).
\end{split}
\ee

Then there is a unique \cad path $[t_0,T] \ni t \mapsto m_t \in \sub$ such that
\begin{align*}
    dm'(t,x) = d m_t(x) dt, \ \ m_{t_0} \leq m_0 \ \text{and} \  m_T = m_T'.
\end{align*}
Moreover, if $(m_t)_{t_0 \leq t \leq T}$ is extended to all  $[0,T]$ by setting $m_t = m_0$ for $t < t_0$, and we define, for $t\in [0,T]$, $m_{t-} = \lim_{s \uparrow t} m_s,$
then, for each $t \in [t_0,T]$, we have 
\begin{align} \label{jump.char}
    m_{t-} - m_t = \mu(\{t\}, \cdot  ),  
\end{align}
where we denote by $\mu(\{t\} ,\cdot )$ the measure $A \mapsto \mu(\{t\} \times A )$. 
\end{lemma}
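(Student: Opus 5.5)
The idea is to disintegrate $m'$ in time and identify a \cad representative through the weak formulation with time-localized test functions. The plan has four steps.

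\emph{Step 1: bounded-variation structure.} Fix $\psi \in C^\infty(\T^d)$ and test \eqref{meqn2} with $\phi(t,x)=\eta(t)\psi(x)$, where $\eta \in C^\infty([t_0,T])$. This shows that, in the sense of distributions on $(t_0,T)$, the time derivative of the measure $A\mapsto \int_{A\times\T^d}\psi\, dm'$ equals
\begin{align*}
\int_{\T^d} (\Delta\psi + D\psi\cdot\alpha)\, dm'(t,\cdot) - \int_{\T^d}\psi\, d\mu(t,\cdot).
\end{align*}
This is a finite signed measure in $t$. Indeed, $\int |D\psi||\alpha|\,dm' \le \|D\psi\|_\infty (m'([t_0,T]\times\T^d))^{1/2}(\int|\alpha|^2dm')^{1/2}$ by Cauchy--Schwarz and \eqref{ord10}, and $\mu$ is finite, which follows by taking $\psi=1$, $\eta=1$. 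Hence there is a function of bounded variation $t\mapsto M_\psi(t)$ whose distributional time derivative is this measure, and $\int\eta\psi\,dm' = \int \eta(t) M_\psi(t)\,dt$. We choose the right-continuous representative
\begin{align*}
M_\psi(t)=\int_{\T^d}\psi\,dm_T' - \int_{(t,T]\times\T^d}(\Delta\psi+D\psi\cdot\alpha)\,dm' + \int_{(t,T]\times\T^d}\psi\,d\mu ,
\end{align*}
where the boundary term at $T$ is obtained by letting $\eta$ approximate indicators $1_{[t,T]}$ in \eqref{meqn2}.

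\emph{Step 2: identifying $m_t$.} For a.e.\ $t$, $M_\psi(t)=\int\psi\,dm'_t$, where $m'_t$ is the disintegration of $m'$ with respect to Lebesgue measure. Here one must check that the time marginal of $m'$ is absolutely continuous: the case $\psi=1$ gives $M_1$ bounded, and $m'(A\times\T^d)=\int_A M_1$. Using a countable dense family of $\psi$, positivity, and $M_1\le m_0(\T^d)\le1$, one sees that for every $t$ the functional $\psi\mapsto M_\psi(t)$ is given by a measure $m_t\in\sub$. For all $t$ this follows from right limits of a.e.\ times, the variation bounds being uniform in $\|\psi\|_{C^2}$. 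The map $t\mapsto m_t$ is then \cad in the weak topology because each $M_\psi$ is, and uniqueness follows since a \cad path is determined by its a.e.\ values. The formula gives $m_T=m'_T$ directly.

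\emph{Step 3: jumps and the initial condition.} From the formula for $M_\psi$, the absolutely continuous parts carry no atoms, so $M_\psi(t-)-M_\psi(t)=\int\psi\,d\mu(\{t\},\cdot)$. This is \eqref{jump.char} for $t>t_0$. At $t_0$, take $\eta$ approximating $1_{[t_0,T]}$ in \eqref{meqn2}. This gives $\int\psi\,dm_0 - M_\psi(t_0) = \int\psi\, d\mu(\{t_0\},\cdot)$, i.e.\ $m_0-m_{t_0}=\mu(\{t_0\},\cdot)\ge0$. Hence $m_{t_0}\le m_0$, and \eqref{jump.char} also holds at $t_0$ with the convention $m_{t_0-}=m_0$.

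\emph{Main obstacle.} The delicate point is Step 2: passing from countably many a.e.-defined identities to a genuine measure-valued \cad path defined at every time, and handling the endpoint $t_0$, where $\mu$ may carry an atom that is invisible to compactly supported test functions. For the latter, I would use test functions $\phi$ that do not vanish at $t_0$, which the weak formulation \eqref{meqn2} allows since it involves $m_0$.
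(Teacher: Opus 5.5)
Your argument is correct, but it takes a different route from the paper.

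\textbf{The paper's route.} It invokes a parabolic regularity result (Bogachev--Krylov--R\"ockner) to write $dm'=\rho\,dx\,dt$, and defines $\tilde m_t$ only for a.e.\ $t$. It then builds the c\`adl\`ag version from a quantitative estimate. For $\phi\ge0$ it proves $\int\phi\,d(\tilde m_t-\tilde m_s)\le C(t-s)^{1/2}\|\phi\|_{C^2}$, and mollifies to pass to $W^{1,\infty}$ test functions at the cost of a $(t-s)^{1/4}$ rate. Combining this with the non-increasing mass $M(t)=m_0(\T^d)-\mu([t_0,t]\times\T^d)$ gives $\bd(\tilde m_t,\tilde m_s)\le C(t-s)^{1/4}+3(M(s)-M(t))$. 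Right limits therefore exist, and they define $m_t$.

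\textbf{Your route.} You work one test function at a time.
\begin{enumerate}
\item Each $M_\psi$ is an explicit right-continuous BV function, given by your backward formula from $m'_T$.
\item The map $\psi\mapsto M_\psi(t)$ is positive and bounded by $C\|\psi\|_{C^2}$ uniformly in $t$. Weak compactness in $\sub(\T^d)$ and density of $C^2$ in $C$ then turn it into a measure at every $t$.
\item The c\`adl\`ag property transfers from the $M_\psi$'s, because $\bd$ metrizes weak convergence on $\sub(\T^d)$.
\item \eqref{jump.char} is read off from the atoms of $\mu$ in your formula.
\end{enumerate}

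\textbf{What each buys.} Your route is more elementary: it needs no spatial absolute continuity and no rate, and it gives the identity at every time directly. The paper's route also yields the one-sided modulus recorded in Lemma \ref{lem.estisol}, which is reused in Proposition \ref{cor.compactmn}. Its $C^2$ version follows at once from your formula, since for $\psi\ge0$ the $\mu$-term only decreases $M_\psi$; you would still need the mollification step to reach $W^{1,\infty}$.

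\textbf{One detail to add.} The distributional argument in Step 1 only shows that the time marginal of $m'$ is absolutely continuous on the open interval $(t_0,T)$. You also need to rule out mass of $m'$ on $\{t_0\}\times\T^d$ and $\{T\}\times\T^d$. This is used both for $dm'=dm_t\,dt$ and for the continuity of the $dm'$-integrals at $t_0$ in Step 3. To do this, test \eqref{meqn2} with $\phi(t,x)=(t-t_0)\chi(k(t-t_0))\psi(x)$, where $\chi$ is a cutoff equal to $1$ near $0$, and let $k\to\infty$. Only $\int\partial_t\phi\,dm'\to\int_{\{t_0\}\times\T^d}\psi\,dm'$ survives, so this integral is zero; the argument at $T$ is the same. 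The paper passes over this point as well.
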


\begin{proof} Let  $(m',m'_T,\alpha,\mu)$ satisfy \eqref{ord10} and \eqref{meqn2}. 
It is known that $dm'=\rho dx dt$ with $\rho\in L^q([t_1,t_2]\times \T^d)$ for any $q\in [1, (d+2)')$ and any $t_0<t_1<t_2<T$,  where $(d+2)'$ indicates the conjugate exponent of $d+2$ (see for instance \cite[Theorem 2.2.1]{BKR}). Thus, we can define
\begin{align*}
    d\wt{m}_t(x) = \begin{cases}
        \rho(t,x) dx   \ \ \text{if} \ \  t \in [t_0,T), 
        \\
        dm'_T(x)  \ \ \text{if} \ \  t = T.
    \end{cases}
\end{align*}

Note that, for any $C^2$ test function $\phi : \T^d \to \R$, we have, for a.e. $t \in [t_0,T]$ and for $t = T$,
\be \label{phi.expansion}
\begin{split}
    \int_{\T^d} \phi(x) d\wt{m}_t(x) = \int_{\T^d} \phi(x) dm_0(x) &+ \int_{t_0}^t \big( \Delta \phi(x) - D \phi(x) \cdot \alpha \big) d\wt{m}_t(x)dt \\
    &- \int_{[t_0,t] \times \T^d} \phi(x) d\mu(t,x).
\end{split}
\ee
In particular, by considering a countable and dense subset of $C^2(\T^d)$, we deduce that there exists a set $A \subset [t_0,T]$ of full measure which contains $T$, such that, for each test function $\phi$ and each $t \in A$, $\eqref{phi.expansion}$ holds. 
\vs
In particular, testing \eqref{phi.expansion} with $\phi\equiv 1$, we see that the map $M : A \to [0,1]$ given by $M(t) = \wt{m}_t(\T^d)$ satisfies, for all $t \in A$,  
\begin{align} \label{mprop}
    M(t) = m_0(\T^d) - \mu([t_0,t] \times \T^d), 
\end{align}
and, since \eqref{mprop} easily extends to all of $[t_0,T]$, we can define  $M$ as  \cad and non-increasing on $[t_0,T]$ satisfying, for all $s,t \in [t_0,T]$ with $s < t$, 
\begin{align} \label{mincrememnts}
   M(s) - M(t) = \mu\big( (s,t] \times  \T^d \big). 
\end{align}
Coming back to \eqref{phi.expansion}, we see that, if $\phi$ is positive, then for $t,s \in A$ with $s < t$ and a constant $C$ depending on $\int_{t_0}^T \int_{\T^d}|\alpha|^2 m<\infty$, 
\begin{align}\label{eljkzhrsdnfg}
\int_{\T^d} \phi d\wt{m}_t &  \leq \int_{\T^d} \phi d\wt{m}_s +\int_s ^t \int_{\T^d}( \Delta \phi -D\phi\cdot \alpha) d\wt{m}_r dr 
\leq \int_{\T^d} \phi d\wt{m}_s +C (t-s)^{1/2} \|\phi\|_{C^2}.
\end{align}

Thus if $\xi:\R^d\to\R$ is a smooth and compactly supported non-negative  kernel and $\xi_\eta=\eta^{-d}\xi(\cdot/\eta)$, for any $\phi$ with $\|\phi\|_{W^{1,\infty}}\leq 1$ and $\phi\geq 0$,  we have
\begin{align*}
\int_{\T^d} \phi d(\wt{m}_t - \wt{m}_s) & \leq \int_{\T^d} \xi_\eta \ast \phi d(\wt{m}_t - \wt{m}_s) +\int_{\T^d} (\xi_\eta \ast \phi -\phi)d(\wt{m}_t - \wt{m}_s) \\
& \leq \int_{\T^d} \xi_\eta \ast \phi d(\wt{m}_t - \wt{m}_s) +2 \|\xi_\eta \ast \phi-\phi\|_\infty\\
& \leq C (t-s)^{1/2} \|\xi_\eta \ast \phi\|_{C^2} + 2 \eta\;  \leq \;  C (t-s)^{1/2}\eta^{-1} + 2 \eta,
\end{align*}
where the third inequality comes from \eqref{eljkzhrsdnfg} and the last one from the assumption on $\phi$. 
\vs
Choosing $\eta = (t-s)^{1/4}$, we find 
\be\label{erlfgjfgkjnl}
\sup_{\begin{array}{c} \|\phi\|_{W^{1,\infty}}\leq 1, \\ \phi\geq 0 \end{array}} \int_{\T^d} \phi (\wt{m}(t)-\wt{m}(s)) \leq C (t-s)^{1/4}.
\ee
Next, we notice that, for any $s ,t \in A$, $s < t$, we have 
\begin{align*}
{\bf d}(\wt{m}_t,\wt{m}_s) & \leq  
\sup_{\begin{array}{c} \|\phi\|_{W^{1,\infty}}\leq 1 \end{array}} \int_{\T^d} \phi d\big(\wt{m}_t +M(s) \text{Leb}-M(t) \text{Leb} -\wt{m}_s \big) + M(s)-M(t) \\
& = \sup_{\begin{array}{c} \|\phi\|_{W^{1,\infty}}\leq 1 \end{array}} \int_{\T^d} (\phi +1) d\big(\wt{m}_t +M(s) \text{Leb}-M(t) \text{Leb} -\wt{m}_s \big) + M(s) - M(t) \\ 
& \leq 2 \sup_{\begin{array}{c} \|\phi\|_{W^{1,\infty}}\leq 1, \\ \phi\geq 0 \end{array}}  \int_{\T^d} \phi d\big(\wt{m}_t +M(s) \text{Leb}-M(t) \text{Leb} -\wt{m}_s \big)  + M(s)-M(t) \\ 
& \leq  C (t-s)^{1/4} + 3(M(s)-M(t)).
\end{align*}
\vs
Since $M$ is \cad, this shows that the limit $\lim_{s \downarrow t, s \in A} \wt{m}_s$ exists for any $t \in [t_0,T]$. It is then easy to check that the map 
\begin{align} \label{def.wtm}
    [t_0,T] \ni t \mapsto m_t \coloneqq \lim_{s \downarrow t, s \in A} \wt{m}_s
\end{align}
is \cad and extends $m_t$, that is, $m_t = \wt{m}_t$ for $t \in A$. It follows that $dm_t(x) dt = dm'(t,x)$.
\vs
In addition, it also follows that the $m_t$'s also  satisfy, for  every $t \in [t_0,T]$, the identity 
\be \label{phi.expansion2}
\begin{split}
    \int_{\T^d} \phi(x) dm_t(x) = \int_{\T^d} \phi(x) dm_0(x) & + \int_{t_0}^t \big( \Delta \phi(x)  - D \phi(x) \cdot \alpha \big) d\wt{m}_t(dx)dt \\
    & - \int_{[t_0,t] \times \T^d} \phi(x) d\mu(t,x),
\end{split}
\ee
from which we deduce the relation \eqref{jump.char}.
\vs
 
Finally,  \eqref{phi.expansion2} yields that  $m_T = m_T'$, and  \eqref{jump.char} implies that $m_{t_0} \leq m_0$. Then \eqref{jump.char} follows from \eqref{mincrememnts}.

\end{proof}

In the next lemma we isolate an estimate on a solution $(m, \alpha, \mu)$ to \eqref{meqn2} established in \eqref{erlfgjfgkjnl} during the proof of Lemma \ref{lem.cdlag}. 

\begin{lemma}\label{lem.estisol} For any  $(m, \alpha,\mu) \in \cA_{t_0,m_0}$, there is a constant $C$, which  depends only  $\int_{t_0}^T\int_{\T^d}|\alpha|^2 m$, such that, for any $t_0\leq s\leq t\leq T$,  
$$
\sup_{\begin{array}{c} \|\phi\|_{W^{1,\infty}}\leq 1, \\ \phi\geq 0 \end{array}} \int_{\T^d} \phi (m(t)-m(s^-)) \leq C (t-s)^{1/4}.
$$
\end{lemma}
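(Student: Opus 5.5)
The plan is to rerun the argument from the proof of Lemma \ref{lem.cdlag}, now with $m_{s-}$ in place of $m_s$ at the left endpoint. Fix $(m,\alpha,\mu)\in\cA_{t_0,m_0}$ and $t_0\le s\le t\le T$, with the convention $m_{t_0-}=m_0$. The identity \eqref{phi.expansion2} holds for every time in $[t_0,T]$. For $r\uparrow s$, the integral $\int_{[t_0,r]\times\T^d}\phi\,d\mu$ converges to $\int_{[t_0,s)\times\T^d}\phi\,d\mu$. Subtracting the two expansions therefore gives, for every $\phi\in C^2(\T^d)$,
\begin{align*}
\int_{\T^d}\phi\, d(m_t-m_{s-}) = \int_s^t\int_{\T^d}\big(\Delta\phi + D\phi\cdot\alpha\big)\,dm_r\,dr - \int_{[s,t]\times\T^d}\phi\,d\mu .
\end{align*}
The sign convention of \eqref{meqn}, namely $+D\phi\cdot\alpha$, is the one used here. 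The key point is that the closed interval $[s,t]$ contains the jump at time $s$. When $\phi\ge0$, the $\mu$-term is therefore nonpositive and can be discarded. This is exactly why the estimate is stated with $m_{s-}$.

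Next I bound the drift term. Since $m_r(\T^d)\le1$, the term $\int_s^t\int\Delta\phi\,dm_r\,dr$ is at most $\|\phi\|_{C^2}(t-s)$. For the other term, Cauchy--Schwarz gives
\[
\int_s^t\int |D\phi||\alpha|\,dm_r\,dr\le \|D\phi\|_\infty (t-s)^{1/2}\Big(\int_{t_0}^T\int|\alpha|^2 dm_r\,dr\Big)^{1/2}.
\]
Combining these yields $\int\phi\,d(m_t-m_{s-})\le C\|\phi\|_{C^2}(t-s)^{1/2}$ for $\phi\ge0$, using $t-s\le T$. The constant $C$ depends only on $T$ and on the energy $\int|\alpha|^2m$.

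To pass to $W^{1,\infty}$ test functions, take $\phi\ge0$ with $\|\phi\|_{W^{1,\infty}}\le1$ and mollify it as in the proof of Lemma \ref{lem.cdlag}: set $\phi_\eta=\xi_\eta\ast\phi\ge0$. This gives $\|\phi_\eta-\phi\|_\infty\le\eta$ and $\|\phi_\eta\|_{C^2}\le C\eta^{-1}$. The measures $m_t$ and $m_{s-}$ each have mass at most one. Hence
\[
\int\phi\,d(m_t-m_{s-})\le C(t-s)^{1/2}\eta^{-1}+2\eta .
\]
Choosing $\eta=(t-s)^{1/4}$ gives the bound $C(t-s)^{1/4}$. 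If $t=s$ the statement reduces to $\int\phi\,d(m_s-m_{s-})\le 0$. This holds by \eqref{jump.char}, and it is also the $t=s$ case of the identity above.

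The only delicate point is justifying the identity at arbitrary times, including the left limit at $s$. I will handle it through the càdlàg version constructed in Lemma \ref{lem.cdlag}, for which \eqref{phi.expansion2} holds for every $t$. Passing to the limit $r\uparrow s$ along arbitrary times $r<s$ is then legitimate: the Lebesgue integral in $r$ is continuous in its upper limit, and the measure of $[t_0,r]$ increases to that of $[t_0,s)$ by monotone convergence. Weak convergence of $m_r$ to $m_{s-}$ then gives the left-hand side, which is exactly \eqref{erlfgjfgkjnl} extended to left limits.
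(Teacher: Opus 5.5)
Your proof is correct and follows the paper's route: the paper just points to the estimate \eqref{erlfgjfgkjnl} from the proof of Lemma \ref{lem.cdlag}. That estimate is obtained exactly as you do, by discarding the nonpositive $\mu$-term for $\phi\ge 0$, bounding the drift by $C\|\phi\|_{C^2}(t-s)^{1/2}$ via Cauchy--Schwarz, mollifying, and choosing $\eta=(t-s)^{1/4}$. Your passage to the left limit $m_{s-}$ through \eqref{phi.expansion2}, where the closed interval $[s,t]$ absorbs the jump at $s$ with the right sign, fills in a step the paper leaves implicit.
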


We now discuss in the next proposition  the compactness of the set of solutions to \eqref{meqn2}. 

\begin{proposition}\label{cor.compactmn} Assume that the sequence $(m^n, \alpha^n,\mu^n) \in \cA_{t_0,m_0}$  has uniformly bounded energies, that is,
\be\label{cond.alphan}
\sup_n \int_{t_0}^T\int_{\T^d}|\alpha^n(t,x)|^2dm^n_t(x) <\infty. 
\ee
Then there exists a subsequence (denoted in the same way) and $(m, \alpha,\mu) \in \cA_{t_0,m_0}$ such that, weakly$-\star$,
\begin{align*}
    m^n_t(x)  dt \to dm_t(x )dt, \quad  \alpha^n(t,x) dm^n_t(x) dt \to \alpha(t,x) dm_t(x) dt, \quad \mu^n \to \mu, \text{  and  } m_T^n \to m_T,
\end{align*} 
 and
   \be\label{lscL}
\liminf_n \int_{t_0}^T\int_{\T^d}L\big(x,\alpha^n(t,x),m^n_t)dm^n_t(x) dt \geq \int_{t_0}^T\int_{\T^d}L\big(x, \alpha(t,x), m_t) dm_t(x) dt.
\ee

In addition, along a converging subsequence, if a sequence $(t^n)_{n\in \N}$ in $[t_0,T]$ converges to $t$, then any cluster point $\rho$ (for the convergence of measures in $\sub$) of the $m^n_{t_n}$'s  satisfies $m_{t-} \leq \rho \leq m_t$, and, thus, for almost every $t \in [t_0,T]$, $m_t^n \to m_t$. 
\end{proposition}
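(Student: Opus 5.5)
We treat the triples as measures on $[t_0,T]\times\T^d$ and pass to the limit in the weak formulation \eqref{meqn2}, then use Lemma \ref{lem.cdlag} to recover a \cad path. All measures involved have bounded mass: $m^n_t(\T^d)\le 1$, and testing \eqref{meqn} with $\phi\equiv 1$ gives $\mu^n([t_0,T]\times\T^d)\le m_0(\T^d)\le 1$. The vector measures $E^n:=\alpha^n m^n_t\,dt$ have bounded total variation by Cauchy--Schwarz and \eqref{cond.alphan}: $|E^n|\le (\int|\alpha^n|^2dm^n_tdt)^{1/2}T^{1/2}$. By Banach--Alaoglu we extract a subsequence along which $m^n_t dt\to m'$, $E^n\to E$, $\mu^n\to\mu$, and $m^n_T\to m'_T$ weakly-$\star$.

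To identify $E$ as $\alpha m'$ with $\alpha\in L^2(m')$, we use the Benamou--Brenier-type functional $(m,E)\mapsto \int |dE/dm|^2 dm$. It is convex, positively one-homogeneous, and weakly-$\star$ lower semicontinuous, being the supremum of the linear functionals $\int(a\,dm+b\cdot dE)$ over continuous $(a,b)$ with $a+|b|^2/4\le 0$. The limit is therefore finite, so $E\ll m'$ with density $\alpha\in L^2(m')$. Passing to the limit in \eqref{meqn2} is then immediate, since every term is linear in $(m^n,E^n,\mu^n,m^n_T)$ against continuous test functions. Lemma \ref{lem.cdlag} produces a \cad path $m_t$ with $m'=m_t\,dt$ and $m_T=m'_T$, and hence $(m,\alpha,\mu)\in\cA_{t_0,m_0}$.

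The main obstacle is the lower semicontinuity \eqref{lscL}, because $L$ depends on $m^n_t$ and we only have convergence of the space-time measures, not pointwise in $t$. The plan is to prove the last assertion (cluster points of $m^n_{t_n}$) first. By Lemma \ref{lem.estisol}, applied uniformly in $n$ thanks to \eqref{cond.alphan}, nonnegative $W^{1,\infty}$ test functions satisfy the one-sided bound $\int\phi\, d(m^n_t-m^n_{s-})\le C(t-s)^{1/4}$. The total masses $M^n(t)$ are nonincreasing, so Helly's theorem gives convergence at all but countably many $t$. Let $\rho$ be a cluster point of $m^n_{t_n}$ and fix $\phi\ge 0$. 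Averaging the one-sided bound over $s\in[t_n-h,t_n]$ and passing to the limit yields
\begin{align*}
\int\phi\,d\rho\le \frac1h\int_{t-h}^t\int\phi\,dm_s\,ds+Ch^{1/4},
\end{align*}
and sending $h\to0$ gives $\rho\le m_{t-}$ as measures. Averaging over $[t_n,t_n+h]$ instead gives $m_t\le\rho$ up to a mass correction. Combining this with the mass convergence, the ordering should come out as stated (the precise orientation may need care). At continuity points of $t\mapsto m_t$, which are all but countably many, we have $m_{t-}=m_t$, hence $m^n_t\to m_t$ for a.e.\ $t$.

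For \eqref{lscL}, use the a.e.\ convergence $\bd(m^n_t,m_t)\to0$ together with the Lipschitz dependence of $L$ on $m$. Since $H$ is $C(1+|p|)$-Lipschitz in $m$, one gets $|L(x,a,m)-L(x,a,m')|\le C(1+|a|^2)\bd(m,m')$, which makes the error
\begin{align*}
\int\!\!\int |L(x,\alpha^n,m^n_t)-L(x,\alpha^n,m_t)|\,dm^n_t\,dt
\end{align*}
small. This uses dominated convergence in $t$ combined with the energy bound; if $|\alpha^n|^2$ lacks uniform integrability, truncate $L$ at level $R$ and use that $L$ is bounded below. The remaining term, $\int L(x,\alpha^n,m_t)\,dm^n_t\,dt$ with $m_t$ frozen, is a convex integral functional of $(m^n_t dt,E^n)$ in the perspective form $m\,L(x,E/m,m_t)$. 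Its lower semicontinuity follows from the same duality as above, writing $L$ as a supremum over $p$ of affine functions in $a$ and approximating by continuous $p(t,x)$; the measurable $t$-dependence through $m_t$ is handled by Scorza-Dragoni or Lusin.
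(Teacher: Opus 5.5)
\textbf{The gap is in the error-term step.} Your architecture matches the paper's: weak-$\star$ extraction, $E\ll m'$ via a convex l.s.c.\ functional, Lemma \ref{lem.cdlag} for the c\`adl\`ag representative, window averaging with Lemma \ref{lem.estisol} for cluster points, and duality for \eqref{lscL}. But the step you flag as the main obstacle is not justified as written. After freezing the measure you need
\begin{align*}
\int_{t_0}^T\!\!\int_{\T^d}\big|L(x,\alpha^n,m^n_t)-L(x,\alpha^n,m_t)\big|\,dm^n_t\,dt\to 0 .
\end{align*}
With your bound $C(1+|a|^2)\bd(m,m')$ this reduces to $\int_{t_0}^T(1+g_n)h_n\,dt\to0$, where $g_n(t)=\int|\alpha^n(t,\cdot)|^2dm^n_t$ and $h_n(t)=\bd(m^n_t,m_t)$. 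However, $g_n$ is only bounded in $L^1(dt)$ and $h_n\to0$ only a.e. In this problem energy can concentrate exactly where $h_n$ is not small, namely just before a jump of the limit. For example, take $m_0=\mathrm{Leb}$, a fixed unit vector $e$, $\alpha^n=\sqrt n\,e$ and $\mu^n=\frac n2\,\mathrm{Leb}\,dt$ on $[s-1/n,s]$, zero elsewhere. Then the energy is $3/4$ and $E^n\to0$, yet $\int g_nh_n\,dt=1/6$ for every $n$. So dominated convergence does not apply. Your fallback does not close the gap either: if ``truncate $L$ at level $R$'' means replacing $L$ by $\min(L,R)$, you lose the convexity in $a$ that the frozen-measure lower semicontinuity step requires.

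\textbf{The fix is short.} Under \eqref{phl1}, $L$ depends on $m$ only linearly in $|a|$. Evaluate the supremum defining $L(x,a,m)$ at its maximizer $p^*=-D_aL(x,a,m)$; by \eqref{DaLgrowth}, $|p^*|\le C(1+|a|)$. Since $H$ is $C(1+|p|)$-Lipschitz in $m$, this gives $|L(x,a,m)-L(x,a,m')|\le C(1+|a|)\bd(m,m')$. The error is then at most $C\int(1+k_n)h_n\,dt$, where $k_n(t)=\int|\alpha^n|dm^n_t\le g_n^{1/2}$ is bounded in $L^2(dt)$ and $0\le h_n\le 2$. This tends to $0$ by Cauchy--Schwarz and dominated convergence. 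Truncating the dual variable instead, $L_R=\sup_{|p|\le R}\{-p\cdot a-H(x,p,m)\}$, would also work, since $L_R$ stays convex.

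\textbf{How the paper avoids this.} It applies $L(x,a,m^n_t)\ge-\phi\cdot a-H(x,\phi,m^n_t)$ with a fixed continuous $\phi$. Then $m^n_t$ enters only through the bounded term $H(x,\phi(t,x),m^n_t)$, where a.e.\ convergence of $m^n_t$ and dominated convergence suffice, and the paper takes the supremum over $\phi$ at the end.

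\textbf{Two minor remarks.} First, the ordering you derive, $m_t\le\rho\le m_{t-}$, is the correct one and is what the paper proves; the displayed statement has it reversed, since mass is only removed. Second, no mass correction is needed for $m_t\le\rho$. Applying Lemma \ref{lem.estisol} with left endpoint $s'\downarrow t_n$ and using right continuity of $m^n$ gives $\int\phi\,dm^n_s\le\int\phi\,dm^n_{t_n}+Ch^{1/4}$ for $s\in[t_n,t_n+h]$. Averaging over that window yields the claim directly.
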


\begin{proof} Define 
\begin{align*}
    dm^{n,'}(t,x) = dm_t(x) dt \ \ \text{and} \ \ dE^n(t,x) = \alpha(t,x) dm^{n,'}(t,x),
\end{align*}
where $m^{n,'}$  and $E^n$ are  viewed respectively as a non-negative Borel measure on $[t_0,T] \times \T^d$ and  an $(\R^d)$-valued vector measure on $[t_0,T] \times \T^d$. 
\vs
We first note that the total variation of $m^{n,'}$, $E^n$, $\mu^n$ and $m^n_T$ are bounded uniformly in $n$. Indeed, the bound for  $m^{n,'}$ is straightforward, while of $\mu^n$ is easily obtained by integrating \eqref{meqn2} in time-space . Finally, the bound in  the total variation of $E^n$ is a consequence of Cauchy-Schwarz inequality combined with  \eqref{cond.alphan}. 
\vs
Thus,  there exists a subsequence (denoted in the same way) and $(m',m'_T, E, \mu)$ such that, in the weak$-\star$ topology
\begin{align*}
    m^{n,'} \to m', \quad m^n_T \to m'_T, \quad E^n \to E, \quad \mu^n \to \mu. 
\end{align*}
Moreover, since \eqref{cond.alphan} holds, it is known that $E\ll m$ (see for instance Theorem 5 in \cite{Rock71}). 
\vs

Let $\alpha$ to be the Radon-Nikodym derivative of $E$ with respect to $m$, and note that, since $(m',m_T',\alpha,\mu)$ satisfy \eqref{meqn2}, there exists a \cad function $m$ such that $(m,\alpha,\mu) \in \cA_{t_0,m_0}$. 
\vs
Assume now that  $t^n\to t$ and let $\rho$ be a cluster point of $(m^n_{t_n})_{n\in \N}$. To simplify the notation, we argue as if the whole sequence $(m^n_{t_n})$ converges to $\rho$. 
\vs
Fix $\ep>0$ and, using the  right continuity of $m$, choose $h>0$ small enough such that ${\bf d}(m_s,m_t)\leq \ep$ for $s\in (t, t+h)$. 
It follows from  Lemma \ref{lem.estisol} that, for any $s\in [t^n, t^n+h]$ and any $\phi\in W^{1,\infty}(\T^d)$ with $\phi\geq0$, 
$$
\int_{\T^d} \phi d m^n_s \leq \int_{\T^d} \phi d m^n_{t_n} + C\|\phi\|_{W^{1,\infty}}h^{1/4}.
$$
Fix a continuous nonnegative function $\xi=\xi(t)$ with support in $(t, t+h)$ and such that $\int_t^{t+h}\xi>0$, and,  for $n$ large enough, integrate the inequality above against $\xi$ and pass to the limit using the convergence of the  $m^n$'s to obtain
$$
\int_{t}^{t+h} \int_{\T^d} \phi(x) \xi(s) dm_s(x) ds \leq (\int_{t}^{t+h}\xi(s)ds ) \left( \int_{\T^d} \phi(x) d\rho(x) + C\|\phi\|_{W^{1,\infty}}h^{1/4}\right).
$$
As ${\bf d}(m(s), m(t))\leq \ep$ for $s\in (t, t+h)$, we get, after dividing by $\int_{t}^{t+h}\xi(s) ds$, 
$$
\int_{\T^d} \phi dm_t - \ep \|\phi\|_{W^{1,\infty}} \leq  \int_{\T^d} \phi(x) d\rho(x) + C\|\phi\|_{W^{1,\infty}}h^{1/4}. 
$$
Letting $h\to 0$ and then $\ep\to 0$ we conclude that $\rho\geq m(t)$. The proof that $\rho\leq m(t^-)$ can be obtained by symmetric arguments. 
\vs
By duality, we have, for any $\phi\in C^0([t_0,T]\times \T^d, \R^d)$,  
\begin{align*} 
\int_0^T \int_{\T^d} L(x,\alpha^n, m^n_t)dm^n_t(x)dt \geq 
- \int_0^T \int_{\T^d} \phi(t,x)\cdot dE^n(t, x) - \int_0^T \int_{\T^d} H(x, \phi(t,x), m^n_t)dm^n_t(x)dt
\end{align*}
Note that 
$$
\lim \int_{t_0}^T \int_{\T^d} \phi(t,x)\cdot dE^n(t, x) = \int_0^T \int_{\T^d} \phi(t,x)\cdot dE(t, x) ,
$$
while, for a.e. $t\in [t_0,T]$, $m^n_t$ converges to $m_t$ so that, by the continuity of $H$,  
$$
\lim  \int_{\T^d} H(x, \phi(t,x), m^n_t)dm^n_t(x) =  \int_{\T^d} H(x, \phi(t,x), m_t)dm_t(x). 
$$
Then using  dominated convergence we get 
$$
\lim \int_0^T \int_{\T^d} H(x, \phi(t,x), m^n_t)dm^n_t(x)dt = \int_0^T \int_{\T^d} H(x, \phi(t,x), m_t)dm_t(x)dt,
$$
and, hence, 
\begin{equation*}
\begin{split}
\liminf \int_0^T \int_{\T^d} L(x,\alpha^n, m^n_t)m^n_t(dx)dt \geq  &- \int_0^T \int_{\T^d} \phi(t,x) \cdot E^n(dt,dx)\\
& - \int_0^T \int_{\T^d} H(x, \phi(t,x), m_t)dm_t(x)dt.
\end{split}
\end{equation*}
As this holds for any $\phi\in C^0([t_0,T]\times \T^d, \R^d)$, we infer from the representation formula in Theorem~5 of \cite{Rock71} that 
\begin{align*}
&\liminf \int_{t_0}^T \int_{\T^d} L(x,\alpha^n, m^n_t)m^n_t(dx)dt 
\\
&\geq \sup_{\phi\in C^0} \Big(- \int_0^T \int_{\T^d} \phi(t,x) \cdot E^n(dt,dx) - \int_0^T \int_{\T^d} H(x, \phi(t,x), m_t)dm_t(x)dt\Big)\\
&=  \int_{t_0}^T \int_{\T^d} L(x,\alpha, m_t)dm_t(x)dt. 
\end{align*}

\end{proof}

In a series of lemmata, we next discuss various properties of the value function $U$. The first claim, which we state without a proof since it is a classical fact, is that $U$ satisfies the following dynamic programming property.

\begin{lemma}\label{dp}
Assume  \eqref{summarize}. For any $0\leq t_0\leq t_1\leq \T$, 
\begin{align*}
U(t_0,m_0)&= \inf_{(\alpha, m,\mu) \in \cA_{t_0,m_0}} \bigg\{ \int_{t_0}^{t_1}  \int_{\T^d} L\big(x,\alpha(t,x),m_t \big) dm_t(x) dt
\nonumber \\
&\qquad \qquad \qquad \qquad \qquad \qquad  \nonumber  +\int_{[t_0,t_1] \times \T^d} \Psi(x,m_{t-})\mu(dt,dx) + U(t_1, m_{t_1})\bigg\}
\nonumber \\
&= \inf_{(\alpha, m,\mu) \in \cA_{t_0,m_0}} \bigg\{ \int_{t_0}^{t_1}  \int_{\T^d} L\big(x,\alpha(t,x),m_t \big) dm_t(x) dt
\nonumber \\
&\qquad \qquad \qquad \qquad  \qquad \qquad  \nonumber +\int_{[t_0,t_1) \times \T^d} \Psi(x,m_{t-})\mu(dt,dx) + U(t_1, m_{t_1-})\bigg\}.
\end{align*}
\end{lemma}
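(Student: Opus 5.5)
The plan is the usual two-inequality argument of dynamic programming, based on concatenation and restriction of admissible triples. Existence of minimizers (Proposition \ref{cor.compactmn}) is used only if convenient; an $\epsilon$-optimal argument suffices. Denote by $R(t_0,m_0)$ the first right-hand side and by $R'(t_0,m_0)$ the second.

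\emph{Step 1: $U\le R$.} Fix $(m,\alpha,\mu)\in\cA_{t_0,m_0}$ and $\epsilon>0$, and pick $(\tilde m,\tilde\alpha,\tilde\mu)\in\cA_{t_1,m_{t_1}}$ that is $\epsilon$-optimal for $U(t_1,m_{t_1})$. Define the concatenation: $(m,\alpha)$ on $[t_0,t_1)$, $(\tilde m,\tilde\alpha)$ on $[t_1,T]$, and $\hat\mu=\mu\llcorner([t_0,t_1]\times\T^d)+\tilde\mu$. The Fokker--Planck identity \eqref{meqn} for the concatenated triple is checked by splitting $\int_{t_0}^T$ at $t_1$. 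This uses \eqref{phi.expansion2} for $m$ up to time $t_1$ (including the atom of $\mu$ at $t_1$) and the equation for $\tilde m$ started from $\tilde m_{t_1-}=m_{t_1}$. The atom of $\tilde\mu$ at $\{t_1\}$ then adds to that of $\mu$; by \eqref{jump.char} the resulting path is still càdlàg. The only subtle point is that the left limit at $t_1$ seen by $\tilde\mu$ is $m_{t_1}$ rather than $m_{t_1-}$. So an atom of $\mu$ at $t_1$ followed by an atom of $\tilde\mu$ at $t_1$ is a single jump from $m_{t_1-}$ to $\tilde m_{t_1}$, and in the cost the penalty for the combined jump is computed with $\Psi(\cdot,m_{t_1-})$. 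Since $\Psi$ is non-increasing in $m$ and $m_{t_1}\le m_{t_1-}$, we have $\Psi(x,m_{t_1-})\le\Psi(x,m_{t_1})$. This is exactly the inequality stated after \eqref{summarize}, namely that one big jump is cheaper than two, and it gives $J\le(\text{cost on }[t_0,t_1])+U(t_1,m_{t_1})+\epsilon$. Taking infima yields $U\le R$.

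\emph{Step 2: $R\le U$.} Fix an $\epsilon$-optimal triple for $U(t_0,m_0)$. Its restriction to $[t_1,T]$, with $\mu\llcorner((t_1,T]\times\T^d)$, belongs to $\cA_{t_1,m_{t_1}}$. Its left limits at times after $t_1$ coincide with those of $m$, so the tail cost dominates $U(t_1,m_{t_1})$. Adding the cost on $[t_0,t_1]$ gives $R\le U+\epsilon$.

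\emph{Step 3: the second formula.} Here mass removed at time $t_1$ is charged inside $U(t_1,\cdot)$ instead. For $R'\le R$: given any triple, drop the atom of $\mu$ at $t_1$ in the cost and use the $\Psi$-non-increasing property \eqref{def.psinon-increasing} of $U(t_1,\cdot)$. This property follows from Step 1's concatenation with an immediate jump, via the same $\Psi$-monotonicity argument. It gives
\[
U(t_1,m_{t_1-})\le U(t_1,m_{t_1})+\int\Psi(x,m_{t_1-})\,\mu(\{t_1\},dx).
\]
For $U\le R'$: concatenate as in Step 1, now starting the continuation from $m_{t_1-}$. That is, restrict $\mu$ to $[t_0,t_1)$ and let $\tilde\mu$ handle the jump at $t_1$; the left limit at $t_1$ is then exactly $m_{t_1-}$, with no monotonicity needed.

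The main obstacle is the bookkeeping at the junction time $t_1$: verifying via Lemma \ref{lem.cdlag} that the concatenated object is a genuine element of $\cA_{t_0,m_0}$, and that the penalty evaluated at left limits is consistent. The monotonicity of $\Psi$ in \eqref{phl2} is precisely what resolves the double-jump issue. Measurable selection causes no trouble, since the continuation is deterministic and only one $\epsilon$-optimal triple is needed.
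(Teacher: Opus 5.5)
Your proof is correct. The paper states this lemma without proof as a classical fact, and your restriction/concatenation argument is the standard one it alludes to, with the only non-routine point (the double jump at $t_1$) correctly absorbed by the monotonicity of $\Psi$, i.e. one big jump costs less than two. As a minor remark, Step 1 is redundant: Step 3 already gives $U\le R'\le R$, and Step 2 closes the chain with $R\le U$.
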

\vs
The second claim is the $\Psi-$non-increasing property.
 
\begin{lemma} Assume  \eqref{summarize}. The value function  $U$ is {\it $\Psi-$non-increasing}, i.e. satisfies \eqref{def.psinon-increasing}. 
\end{lemma}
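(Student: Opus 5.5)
The plan is to prove \eqref{def.psinon-increasing} by concatenation: an instantaneous removal of the mass $m-m'$ at time $t$ is spliced onto an arbitrary admissible triple started from $m'$. Fix $(t,m)\in[0,T]\times\sub$, $m'\leq m$ and $\eps>0$. Choose $(m^1,\alpha^1,\mu^1)\in\cA_{t,m'}$ with
\[
J_{t,m'}(m^1,\alpha^1,\mu^1)\leq U(t,m')+\eps .
\]
Define the candidate triple $(\tilde m,\tilde\alpha,\tilde\mu)$ by $\tilde m=m^1$ and $\tilde\alpha=\alpha^1$ on $[t,T]$, with initial value $\tilde m_{t-}=m$, and
\[
\tilde\mu=\mu^1+\delta_t\otimes(m-m').
\]

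First I check that $(\tilde m,\tilde\alpha,\tilde\mu)\in\cA_{t,m}$. The weak formulation \eqref{meqn} for $(m^1,\alpha^1,\mu^1)$ with initial datum $m'$ reads
\[
\int\phi(T)\,dm^1_T-\int\phi(t)\,dm'=\cdots-\int\phi\,d\mu^1 .
\]
Replacing $m'$ by $m$ on the left adds $-\int\phi(t,x)\,d(m-m')$. This term is exactly compensated by the added atom $\delta_t\otimes(m-m')$ in $\tilde\mu$, so \eqref{meqn} holds for $(\tilde m,\tilde\alpha,\tilde\mu)$ with initial datum $m$. Moreover $\tilde\mu\geq0$, the energy is unchanged, and the path is still \cad.

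Next I compare costs. The running cost and the terminal cost $G(m^1_T)$ are unchanged. For the stopping cost, note that $\mu^1$ may itself carry an atom at time $t$. The left limit in the new problem is $\tilde m_{t-}=m$, whereas in the old problem it was $m^1_{t-}=m'$. Hence the stopping cost of the new triple equals
\[
\int_{\{t\}\times\T^d}\Psi(x,m)\,d\big((m-m')+\mu^1(\{t\},\cdot)\big)+\int_{(t,T]\times\T^d}\Psi(x,m^1_{s-})\,d\mu^1 .
\]
Since $m'\leq m$ and $\Psi(x,\cdot)$ is non-increasing by \eqref{phl2}, we have $\Psi(x,m)\leq\Psi(x,m')$. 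So the atom of $\mu^1$ at time $t$ costs no more in the new problem than in the old one.

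Putting these together,
\[
U(t,m)\leq J_{t,m}(\tilde m,\tilde\alpha,\tilde\mu)\leq J_{t,m'}(m^1,\alpha^1,\mu^1)+\int\Psi(x,m)\,d(m-m')\leq U(t,m')+\eps+\int\Psi(x,m)\,d(m-m').
\]
Letting $\eps\to0$ gives \eqref{def.psinon-increasing}.

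The case $t=T$ is covered by the same argument, since $\mu$ lives on $[t_0,T]\times\T^d$ and may charge $\{T\}$. The only delicate point is the bookkeeping of $m_{t-}$ at the splice time, via Lemma \ref{lem.cdlag}'s jump relation \eqref{jump.char}, and of a possible atom of $\mu^1$ at $t$. This is exactly where the monotonicity of $\Psi$ in $m$ is used.
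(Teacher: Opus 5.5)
Your proof is correct and follows the paper's argument: take an $\eps$-optimal triple for $U(t,m')$, keep the path and drift, and add the atom $\delta_t\otimes(m-m')$ to the removal measure. Your extra bookkeeping for a possible atom of $\mu^1$ at the splice time, using that $\Psi(x,\cdot)$ is non-increasing, is a genuine refinement. The paper instead writes $J_{t_0,m_0}(\tilde m,\tilde\alpha,\tilde\mu)=J_{t_0,n_0}(m,\alpha,\mu)+\int\Psi(x,m_0)\,d(m_0-n_0)$, which in general holds only as the inequality you prove.
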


\begin{proof} Fix $\ep>0$, let $(\alpha, m, \mu) \in \cA_{t_0,n_0}$ be $\ep-$optimal for $U(t_0,n_0)$ where $n_0 \leq m_0$, and  define $(\tilde m, \tilde \alpha, \tilde \mu)$ by $\tilde m_t = m_t$ for $t \geq t_0$, $\tilde m_{t_0 -} = m_0$, $\tilde \alpha= \alpha$ and $\tilde \mu= \mu + \delta_{t_0}(m_0-n_0)$. 
\vs
Since $(m,\alpha, \mu)$ solves \eqref{meqn}, for any smooth test function $\phi$ with a compact support in $[t_0,T)\times \T^d$, we have 
\begin{align*}
& \int_{\T^d} \phi(t_0,x)m_0(dx) +\int_{t_0}^T \int_{\T^d} (\partial_t \phi+ \Delta \phi +D\phi\cdot \alpha) \tilde m - \iint_{[t_0,T)\times \T^d} \phi \tilde \mu \\ 
& = \int_{\T^d} \phi(t_0,x)m_0(dx) +\int_{t_0}^T \int_{\T^d} (\partial_t \phi+ \Delta \phi +D\phi\cdot \alpha) m -  \iint_{[t_0,T)\times \T^d} \phi \mu - \int_{\T^d}\phi(t_0,x)(m_0-n_0)(dx) \\ 
&= 0,
\end{align*}
It follows that  $(\tilde m, \tilde \alpha, \tilde \mu) \in \cA_{t_0,m_0}$. Moreover, 
\begin{align*}
U(t_0,m_0)& \leq J_{t_0,m_0}(\tilde m, \tilde \alpha, \tilde \mu) = J_{t_0,n_0}(m,\alpha, \mu)+ \int_{\T^d} \Psi(x,m_0)(m_0-n_0)(dx) \\
& \leq U(t_0,n_0)+\ep +  \int_{\T^d} \Psi(x,m_0)(m_0-n_0)(dx).
\end{align*}
Letting $\ep\to 0$ yields \eqref{def.psinon-increasing} and, hence, the claim.

\end{proof}

Next we show the existence of minimizers.

\begin{prop}\label{prop.existsOS} Assume  \eqref{summarize}. For any initial condition $(t_0,m_0)\in [0,T)\times \sub$, there exists at least a minimizer for $U(t_0, m_0)$. 
\end{prop}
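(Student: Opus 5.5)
We use the direct method of the calculus of variations, relying on the compactness result in Proposition \ref{cor.compactmn}.

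\textbf{Step 1: a minimizing sequence with bounded energy.} Fix $(t_0,m_0)$ and take a minimizing sequence $(m^n,\alpha^n,\mu^n)\in\cA_{t_0,m_0}$ with $J_{t_0,m_0}(m^n,\alpha^n,\mu^n)\to U(t_0,m_0)$. First, $U(t_0,m_0)<\infty$: the admissible triple with $\alpha=0$, $\mu=0$ gives finite cost. Second, $\Psi$ and $G$ are bounded (by \eqref{phl2} and Lipschitz continuity of $G$ on the compact space $\sub$), and $\mu^n$ has total mass at most $m_0(\T^d)\le 1$, as seen by testing \eqref{meqn} with $\phi\equiv1$. Hence the lower bound in \eqref{hypL} gives
\[
C^{-1}\int_{t_0}^T\!\!\int_{\T^d}|\alpha^n|^2dm^n_tdt \le J_{t_0,m_0}(m^n,\alpha^n,\mu^n)+C,
\]
so \eqref{cond.alphan} holds. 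Proposition \ref{cor.compactmn} then yields a subsequence and a limit $(m,\alpha,\mu)\in\cA_{t_0,m_0}$ with the stated convergences and the lower semicontinuity \eqref{lscL} of the running cost.

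\textbf{Step 2: lower semicontinuity of the remaining terms.} The terminal term is easy: $m^n_T\to m_T$ weakly, which in $\sub$ is equivalent to convergence in $\bd$, so $G(m^n_T)\to G(m_T)$ by continuity.

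The main obstacle is the stopping term $\int\Psi(x,m^n_{t-})\,d\mu^n$. Here the integrand depends on the trajectory, and $m^n_{t-}$ need not converge to $m_{t-}$ pointwise in $t$. I would use monotonicity. By the last assertion of Proposition \ref{cor.compactmn}, along the subsequence any cluster point $\rho$ of $m^n_{t_n-}$ with $t_n\to t$ satisfies $m_{t-}\le\rho\le m_t$. I expect the same argument to apply to left limits as well as to $m^n_{t_n}$, by approximating $m^n_{t_n-}$ with $m^n_{s}$ for $s\uparrow t_n$. Since $\Psi(x,\cdot)$ is non-increasing, this gives
\[
\liminf_n \Psi(x_n,m^n_{t_n-})\ge \Psi(x,m_t)\quad\text{whenever } (t_n,x_n)\to(t,x).
\]
The right side is $\Psi(x,m_t)$ rather than the desired $\Psi(x,m_{t-})$. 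To handle this, consider the functions $f_n(t,x)=\Psi(x,m^n_{t-})$, which are uniformly Lipschitz in $x$. The bound above says the lower relaxed limit $\underline f=\liminf_*f_n$ satisfies $\underline f(t,x)\ge\Psi(x,m_t)$. A standard lemma states that if $\mu^n\to\mu$ weakly and $\underline f$ is the lower relaxed limit of uniformly bounded $f_n$, then $\liminf\int f_nd\mu^n\ge\int\underline f\,d\mu$. Hence
\[
\liminf_n\int\Psi(x,m^n_{t-})\,d\mu^n\ge\int\Psi(x,m_t)\,d\mu.
\]

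\textbf{Step 3: correcting at atoms of $\mu$.} The measure $\mu$ may charge only countably many times $t$ with $\mu(\{t\}\times\T^d)>0$, and at all other times $m_t=m_{t-}$. So the discrepancy between $\Psi(x,m_t)$ and $\Psi(x,m_{t-})$ lives on the atoms, where $m_{t-}-m_t=\mu(\{t\},\cdot)$ by \eqref{jump.char}. On an atom $\Psi(x,m_t)\ge\Psi(x,m_{t-})$ by monotonicity, which is the favorable direction. Thus $\int\Psi(x,m_t)d\mu\ge\int\Psi(x,m_{t-})d\mu$, and the stopping term is lower semicontinuous. Alternatively, I could first replace the limit by a modification: if some subtlety arises in the relaxed-limit argument, the structural inequality following \eqref{summarize} (one big jump is cheaper than several small ones) lets one merge limiting clusters of jumps into a single jump at the limiting time without increasing cost.

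\textbf{Conclusion.} Combining the three lower-semicontinuity statements gives $J_{t_0,m_0}(m,\alpha,\mu)\le\liminf_n J_{t_0,m_0}(m^n,\alpha^n,\mu^n)=U(t_0,m_0)$, so $(m,\alpha,\mu)$ is a minimizer.
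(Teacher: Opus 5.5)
Your strategy is exactly the paper's: a minimizing sequence, an energy bound from coercivity, compactness via Proposition~\ref{cor.compactmn}, and a lower relaxed limit of $\Psi(x,m^n_{t-})$ integrated against $\mu^n\to\mu$ weakly. However, Step~2 rests on a false claim. You use the sandwich $m_{t-}\le\rho\le m_t$ for cluster points $\rho$ of $m^n_{t_n-}$, and the inequalities point the wrong way. Every jump removes mass, so $m_t\le m_{t-}$ by \eqref{jump.char}. Your sandwich would then force $\rho=m_t=m_{t-}$, which fails at any atom of $\mu$. What the proof of Proposition~\ref{cor.compactmn} actually establishes, and what the paper uses here, is $m_t\le\rho\le m_{t-}$; the display in the statement of that proposition has the order swapped.

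As a result, your intermediate bounds $\liminf_n\Psi(x_n,m^n_{t_n-})\ge\Psi(x,m_t)$ and $\liminf_n\int\Psi(x,m^n_{t-})\,d\mu^n\ge\int\Psi(x,m_t)\,d\mu$ are false in general. Take $\Psi(x,m)=-m(\T^d)$, which satisfies \eqref{phl2}, and a constant sequence $(m^n,\alpha^n,\mu^n)=(m,\alpha,\mu)$ with $\mu=\delta_{t_1}\otimes\nu$ and $0\ne\nu\le m_{t_1-}$. For instance, let $\alpha=0$, run the heat flow, and remove half the mass at time $t_1$. Then the left side equals $-m_{t_1-}(\T^d)\,\nu(\T^d)$. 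This is strictly smaller than $\int\Psi(x,m_t)\,d\mu=-m_{t_1}(\T^d)\,\nu(\T^d)$, because $m_{t_1}(\T^d)=m_{t_1-}(\T^d)-\nu(\T^d)$.

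The repair is immediate and makes Step~3 unnecessary. From $\rho\le m_{t-}$ and the monotonicity of $\Psi$ you get directly $\liminf_n\Psi(x_n,m^n_{t_n-})\ge\Psi(x,m_{t-})$ whenever $(t_n,x_n)\to(t,x)$. Your reduction from left limits to $m^n_{s_n}$ with $s_n\uparrow t_n$ is fine. Your relaxed-limit lemma then yields $\liminf_n\int\Psi(x,m^n_{t-})\,d\mu^n\ge\int\Psi(x,m_{t-})\,d\mu$, which is precisely the paper's argument; the inf-convolutions $f_{k,\ep}$ there are the standard proof of that lemma. Your Step~3 observation $\Psi(x,m_t)\ge\Psi(x,m_{t-})$ is correct, and holds at every $t$, not only at atoms. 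But it only weakens a bound you had not legitimately obtained. The fallback of merging clusters of jumps is likewise not needed.
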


\begin{proof} Let $(m^n, \alpha^n, \mu^n)$ be a minimizing sequence for $U(t_0, m_0)$. Then, in view of the coercivity condition \eqref{hypL},  \eqref{cond.alphan} holds. 
\vs
By Corollary \ref{cor.compactmn}, there exists a subsequence (denoted in the same way) and $(m, \alpha,\mu) \in \cA_{t_0,m_0}$ such that the sequences $(m^n)_{n\in \N}$, $(\alpha^nm^n)_{n\in \N}$ and $(\mu^n)_{n\in \N}$ converge in measure to $m$, $\alpha m$ and $\mu$ respectively, with 
\be\label{lakejznrdfgkl010}
\liminf_n \int_{t_0}^T\int_{\T^d}L(x,\alpha^n,m^n)m^n \geq \int_{t_0}^T\int_{\T^d}L(x, \alpha, m) m.
\ee
\vs
In addition, if a sequence $(t^n)_{n\in \N}$ in $[t_0,T]$ converges to $t$, then any cluster point $\rho$ (for the convergence of measures) of the $m^n_{t^n}$'s satisfies $$m_t\leq \rho\leq m_{t-}.$$

The continuity and non-increasing property of $\Psi$ imply  that, for any sequence $(t^n,x^n)_{n\in \N}$ such that  $t^n \to t$ and $x^n \to x$, we have 
\be\label{lkzjaenrsdftgm1}
\liminf_{t^n\to t, \; x^n\to x} \Psi(x^n, m^n_{t^n}) \geq \Psi(x, m_{t-}) .
\ee
We claim that 
\be
\begin{split}\label{lakejznrdfgkl01} 
&\liminf \iint_{[t_0,T]\times \T^d} \Psi(x, m^n_{t-}) d\mu^n(t, x) +G(m^n_T)\\
& \qquad \qquad \qquad \qquad \geq \iint_{[t_0,T]\times \T^d} \Psi(x,m_{t-}) d\mu(t,x)+G(m_T).
\end{split}
\ee 
Indeed, we first note that
\be\label{lakejznrdfgkl1}
 \liminf_{(t^n,x^n) \to (t,x)} \Psi(x^n, m^n_{t^n}) = \lim_{k\to\infty, \; \ep\to 0^+} \inf_{n\geq k, \; (s,y)} \Psi(y,m^n_{s}) +\frac{1}{\ep} |(s,y)-(t,x)|, 
\ee
\vs
where the $\liminf$ is taken over all subsequences $t^n \to t$, $x^n \to x$. 
\vs
Set 
$$
f_{k,\ep}(t,x)= \inf_{n\geq k, \; (s,y)} \Psi(y,m^n_s) +\frac{1}{\ep} |(s,y)-(t,x)|.
$$
Then, for any $(k, \ep)$ and any $(t^n,x^n) \to (t,x)$,  
$$
\liminf_{t^n\to t, \; x^n\to x} \Psi(x^n, m^n_{t^n}) \geq f_{k,\ep}(t,x).
$$ 
Conversely, let $n^{k,\ep} \geq k$, $(s^{k,\ep}, y^{k,\ep})$ be $\ep-$optimal for $f_{k,\ep}(t,x)$ and note that, since $\Psi$ is bounded, the $(s^{k,\ep}, y^{k,\ep})$'s tend  as $\ep\to 0$ to $(t,x)$. 
\vs
It follows that 
$$
\lim_{k\to\infty, \; \ep\to 0^+} f_{k,\ep}(t,x) \geq \liminf_{k\to\infty, \; \ep\to 0^+} \Psi(y^{k,\ep}, m^{n^{k,\ep}}_{s^{k,\ep}}) \geq 
 \liminf_{(t^n,x^n) \to (t,x)} \Psi(x^n, m^n_{t^n}), 
$$
and, hence,  \eqref{lakejznrdfgkl1}. 
\vs
Next, we note that the facts that  $f_{k,\ep}$ is Lipschitz continuous and, if  $n\geq k$, then $f_{k,\ep}(t,x)\leq \Psi(x, m^n_{t-})$, yield that 
\begin{align*}
\liminf_n  \iint_{[t_0,T]\times \T^d} \Psi(x, m^n_{t-}) d\mu^n(x,t) & \geq  \liminf_n \iint_{[t_0,T]\times \T^d} f_{k,\ep}(t,x) d\mu^n(t, x) \\
&  = \int_{[t_0, T]\times \T^d} f_{k,\ep} (t,x) d\mu(t, x). 
\end{align*}
\vs
Using dominated convergence, \eqref{lakejznrdfgkl1} and  \eqref{lkzjaenrsdftgm1}, we find 
\begin{align*}
\lim_{k\to \infty, \; \ep\to 0^+} \iint_{[t_0,T]\times \T^d}  f_{k,\ep}(t,x) d\mu(t, x)  &  = \iint_{[t_0,T]\times \T^d}  \liminf_{ n \to \infty, (t^n,x^n) \to (t,x)} \Psi(x^n, m^n_{t^n})  d\mu(t, x) \\
& \geq \iint_{[t_0,T]\times \T^d}  \Psi(x,m_{t-}) d\mu(t,x). 
\end{align*}
Together with the continuity of $G$, this proves \eqref{lakejznrdfgkl01}.
\vs
In conclusion, we infer by \eqref{lakejznrdfgkl010} and \eqref{lakejznrdfgkl01}  that $(m, \alpha, \tilde \mu)$ is a minimizer of $J$. 

\end{proof}

\subsection*{The continuity  of the limit problem} 

Since  the dynamics \eqref{meqn} are discontinuous, to prove the continuity of $U$ requires considerable effort and  several approximation procedures. The first step is to  regularize in time the discontinuous  term $\Psi(x,m_{t-})$ in the cost function $J$. This leads to the new value function $U^\theta$ which approximates  $U$ (Lemma \ref{lem.UthetatoU}). Thanks to this extra gain of regularity, we regularize the dynamics \eqref{meqn} to get  a penalized cost functional  $J^{\theta, \delta}$.  Its  value function $U^{\theta,\delta}$ is, in view of the optimality condition (Proposition \ref{prop.dualdeltaAppend}),  Lipschitz in the measure argument (Proposition \ref{prop.UthetadeltaLip}).
 Then we show that $U^{\theta,\delta}$ is indeed an approximation of $U^\delta$ and, thus, of $U$ (Lemma \ref{lem.UthetadeltatoUtheta}). Passing to the limit in the various penalizations we  eventually obtain the regularity of $U$ (Proposition \ref{prop.reguU}). 
\vs
To execute the strategy outlined above, we will need to assume that the terminal condition $G$ is both smooth and  $\Psi$-monotone, that is,
\be\label{assump.Gsmooth}
\begin{cases} & G \in C^1 \ \text{with} \  \dfrac{\delta G}{\delta m} \text{ jointly Lipschitz in $(m,x)$, and, for all $m,n \in \sub \text{ with } n \leq m$},  \vspace{.1cm}\\  
 & \ds \hskip1in       G(m) \leq G(n) + \int_{\T^d} \Psi(m,x) d(m - n). 
\end{cases}
\ee

Fortunately, Lemma \ref{lem.envelopeLip} and Proposition \ref{prop.mollification} from the Appendix allow us to remove these assumptions in the end. 
\vs

In what follows we summarize the assumptions as
\be\label{ath1}
 \eqref{phl1}, \eqref{phl2}, \eqref{phl3} \ \text{and} \ \eqref{assump.Gsmooth} \ \text{hold}.
\ee

For the first layer of approximation,  we fix $(\xi_{\theta})_{\theta > 0}$ to be a smooth approximation to the identity on $\R$ with the property that,  for each $\theta > 0$, the support of $\xi_{\theta}$ is contained in $[0,\theta]$. 
\vs

For $\theta > 0$ and $(t_0,m_0) \in [0,T) \times \sub$, we introduce the cost functional  $J^{\theta}_{t_0,m_0} : {\cA}_{t_0,m_0} \to \R$ by 
\begin{align*}
    J^{\theta}_{t_0,m_0}(m,\alpha,\beta) = \int_{t_0}^T  \int_{\T^d} L\big(x,\alpha(t,x),m_t \big) dm_t(x) dt +\int_{[t_0,T]\times \T^d} \Psi\big(t,x, (\xi_\theta \ast m)_t) \mu(dt,dx) +G(m_T), 
\end{align*}
where by convention we extend $m$ to $m_{t_0-} = m_0$ on $(-\infty, t_0)$, and $\xi_{\theta} \ast m$ is the  continuous in $\sub$ path  given by
\begin{align*}
    (\xi_{\theta} \ast m)_t = \int_0^{\theta} \xi_{\theta}(s) m_{t - s} ds.
\end{align*}
We then define the value function
$$
U^\theta(t_0,m_0)= \inf_{(m,\alpha, \mu) \in \cA_{t_0,m_0}} J^\theta_{t_0,m_0}(m,\alpha, \mu). 
$$

\begin{lemma}\label{lem.UthetatoU} Assume \eqref{ath1}. Then, for any $(t_0,m_0)\in [0,T]\times \sub$, 
$$
\lim_{\theta\to 0^+} U^\theta(t_0,m_0)= U(t_0,m_0).
$$
\end{lemma}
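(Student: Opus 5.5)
Two inequalities are needed. The key is the monotonicity of $\Psi$: since $\xi_\theta$ is supported in $[0,\theta]$ and $m$ is non-increasing in the order $\leq$ along admissible paths (mass is only removed; see \eqref{jump.char}), we will compare $(\xi_\theta\ast m)_t$ with $m_{t-}$. Note however that $m$ is not pointwise monotone in time: diffusion and drift move mass around. Only the total mass is non-increasing. So I do not expect an exact inequality; both directions should instead follow from a limiting argument.

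\emph{Upper bound} $\limsup_{\theta\to0} U^\theta(t_0,m_0)\le U(t_0,m_0)$. Fix $\ep>0$ and an $\ep$-optimal triple $(m,\alpha,\mu)\in\cA_{t_0,m_0}$ for $U$; Proposition \ref{prop.existsOS} even gives a minimizer. Since $m$ is \cad, $(\xi_\theta\ast m)_t\to m_{t-}$ for every $t$ as $\theta\to0$, because the kernel only averages over $[t-\theta,t)$. By continuity and boundedness of $\Psi$ and dominated convergence with respect to the finite measure $\mu$,
\[
\int\Psi(x,(\xi_\theta\ast m)_t)\,d\mu \to \int \Psi(x,m_{t-})\,d\mu .
\]
Hence $J^\theta_{t_0,m_0}(m,\alpha,\mu)\to J_{t_0,m_0}(m,\alpha,\mu)$, and the upper bound follows.

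\emph{Lower bound} $\liminf_{\theta\to 0} U^\theta\ge U$. Take $\theta_n\to0$ and $(m^n,\alpha^n,\mu^n)$ which are $1/n$-optimal for $U^{\theta_n}(t_0,m_0)$. Because $\Psi$ and $G$ are bounded, the coercivity \eqref{hypL} yields the uniform energy bound \eqref{cond.alphan}. Proposition \ref{cor.compactmn} then gives a subsequence converging to some $(m,\alpha,\mu)\in\cA_{t_0,m_0}$, together with lower semicontinuity of the Lagrangian term \eqref{lscL} and $G(m^n_T)\to G(m_T)$. It then suffices to show
\[
\liminf_n \iint \Psi\bigl(x,(\xi_{\theta_n}\ast m^n)_t\bigr)\,d\mu^n \ \ge\ \iint \Psi(x,m_{t-})\,d\mu ,
\]
after which $U\le J(m,\alpha,\mu)\le\liminf U^{\theta_n}$.

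This last inequality is the main obstacle. I would mimic the proof of Proposition \ref{prop.existsOS} via the lower semicontinuous envelope functions $f_{k,\ep}$. The remaining point is: if $t_n\to t$, then every cluster point $\rho$ of $(\xi_{\theta_n}\ast m^n)_{t_n}$ satisfies $\rho\le m_{t-}$. Given this, monotonicity of $\Psi$ yields $\liminf \Psi(x_n,(\xi_{\theta_n}\ast m^n)_{t_n})\ge \Psi(x,m_{t-})$, and the $f_{k,\ep}$ argument concludes exactly as before.

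To prove $\rho\le m_{t-}$, I would use Lemma \ref{lem.estisol}, which bounds the positive part of increments uniformly in $n$ through the energy:
\[
\int\phi\, dm^n_{s}\le \int\phi\, dm^n_{r-}+C(s-r)^{1/4}\|\phi\|_{W^{1,\infty}}\quad\text{for } r\le s,\ \phi\ge0 .
\]
Average this over $s\in[t_n-\theta_n,t_n]$ with weight $\xi_{\theta_n}$, and over $r$ in a window $(t-2h,t-h)$ with a fixed continuous weight. Passing to the limit using the $dt$-weak convergence of $m^n$ gives
\[
\int\phi\,d\rho\le \text{avg}_{(t-2h,t-h)}\int\phi\, dm_r + Ch^{1/4}.
\]
Letting $h\to0$ and using left limits of $m$ yields $\rho\le m_{t-}$. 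The same estimate also covers the boundary case $t=t_0$ through the convention $m_{t_0-}=m_0$ for earlier times.
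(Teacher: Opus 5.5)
Your proposal is correct and follows the paper's route: dominated convergence along a fixed optimizer for the upper bound, and for the lower bound compactness via Proposition \ref{cor.compactmn} plus the $f_{k,\ep}$ lower-envelope argument from Proposition \ref{prop.existsOS}. Your derivation of $\rho\le m_{t-}$ from Lemma \ref{lem.estisol} (averaging in $s$ against $\xi_{\theta_n}$ and in $r$ over a window before $t$) is the ``symmetric argument'' the paper leaves to the reader, and you correctly note that only this half of the sandwich $m_t\le\rho\le m_{t-}$ is needed.
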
 

\begin{proof} We first claim that 
\be\label{limsupUtheta}
\limsup_{\theta\to 0^+} U^\theta(t_0,m_0) \leq U(t_0,m_0).
\ee
Indeed, let $(m,\alpha, \mu) \in \cA_{t_0,m_0}$   be optimal for $U(t_0,m_0)$. Since $m$ is c\`{a}dl\`{a}g, ${\rm spt}(\xi_\theta)\subset [0,\theta]$ and $\Psi$ is continuous, we have
$$
\lim_{\theta\to 0^+} \Psi\big(x,(\xi_\theta\ast m)_t\big) = \Psi(x, m_{t-}).
 $$
Thus, by dominated convergence,
$$
\lim_{\theta\to 0^+} J^\theta_{t_0,m_0}(m,\alpha, \mu) = J_{t_0,m_0}(m,\alpha, \mu) .
$$
It follows  that 
$$
\limsup_{\theta\to 0^+} U^\theta(t_0,m_0) \leq \lim_{\theta\to 0^+} J_{t_0,m_0}^\theta(m,\alpha, \mu) = J_{t_0,m_0}(m,\alpha, \mu) = U(t_0,m_0),
$$
and \eqref{limsupUtheta} holds. 
\vs
Let now $\theta^n\to 0$ and $(m^n, \alpha^n, \mu^n)$ be $(1/n)-$optimal for $U^{\theta^n}(t_0,m_0)$. By Corollary \ref{cor.compactmn} and up to a subsequence labeled in the same way, $(m^n, \alpha^n, \mu^n)$ converges to some $(m,\alpha, \mu) \in \cA_{t_0,m_0}$, 
$$
\liminf \int_0^T \int_{\T^d} L\big(x,\alpha^n(t,x), m^n_t)dm^n_t dt \geq \int_0^T \int_{\T^d} L\big(x,\alpha(t,x), m_t)dm_t(x) dt,
$$
and, if $t^n\to t$, then it can be checked as in the proof of Proposition \ref{cor.compactmn} that any cluster point $\hat m$ of the sequence $(\xi_{\theta^n} \ast m^n)_{t_n}$ satisfies $m_t \leq \hat m \leq m_{t-}$.
 \vs
 It follows from the above and the fact that $\Psi$ is non-increasing  that,  for any $(t,x)$ and any sequence $(t^n,x^n) \to (t,x)$, 
\be\label{lkzjaenrsdftgm}
\liminf_{n \to \infty} \Psi\big(x^n, (\xi_{\theta^n}\ast m^n)_{t^n} \big) \geq \Psi(x,m_{t-}). 
\ee
The proof  of 
\begin{equation}\label{lakejznrdfgkl0} 
\begin{split}
& \liminf \int_{t_0}^T \int_{\T^d} \Psi\big(x, (\xi_{\theta^n}\ast m^n)_t \big) d\mu^n(x, t) + G(m^n_T)\\
& \qquad \qquad \geq \iint_{[t_0,T] \times \T^d} \Psi(x,m_{t-}) d\mu(x,t)) +G(m_T) 
\end{split}
\ee
 follows exactly the same argument as the proof of \eqref{lakejznrdfgkl01} above, so we omit it. \vs
We can conclude that 
$$
\liminf U^{\theta^n}(t_0,m_0) = \liminf J_{t_0,m_0}(m^n,\alpha^n, \mu^n)  \geq J_{t_0,m_0}(m,\alpha, \mu) \geq U(t_0,m_0).
$$ 
\end{proof}

Next we introduce a further penalized problem.  First, for $(t_0,m_0) \in [t_0,T) \times \sub$, we consider the set  $\wt{\cA}_{t_0,m_0}$  of triples $(m,\alpha,\beta)$ such that $[t_0,T] \ni t \mapsto m_t \in \sub$ is continuous, $\alpha : [t_0,T] \times \T^d \to \R^d$ and $\beta : [t_0,T] \times \R$ satisfy $\beta \geq 0$, 
    \begin{align*}
        \int_{t_0}^T \int_{\T^d} \Big( |\alpha(t,x)|^2 + |\beta(t,x)|^2 \Big) dm_t(x) dt < \infty,
    \end{align*}
    and, in the sense of distributions,  the equation 
    \begin{align*}
        \partial_t m - \Delta m + \text{div}(m\alpha) = - \beta m \ \text{in} \ (t_0,T] \times \T^d \ \text{and} \  m_{t_0} = m_0.
    \end{align*}
    For $\delta, \theta > 0$ and $(t_0,m_0) \in [0,T) \times \sub$, we define the cost functional $J^{\theta,\delta}_{t_0,m_0} : \wt{\cA}_{t_0,m_0} \to \R$ by 
\begin{align*}
&J^{\theta,\delta}_{t_0,m_0}(m,\alpha,\beta) = J^{\theta}_{t_0,m_0}(m,\alpha, \beta m) + \frac{\delta}{2} \int_{t_0} \int_{\T^d} |\beta(t,x)|^2 dm_t(x) dt
\\
&\qquad \qquad \qquad =   \int_{t_0}^T \int_{\T^d} L\big(x,\alpha(t,x),m_t \big)dm_t(x) dt  + \int_{t_0}^T \int_{\T^d} \Psi\big(t,x,(\xi_\theta \ast m)_t ) \beta(t,x) dm_t(x) dt
 \\
 &\qquad \qquad \qquad \qquad \qquad \qquad +\frac{\delta}{2} \int_{t_0}^T \int_{\T^d} |\beta(t,x)|^2  dm_t(x) dt + G(m_T).
\end{align*}
Finally the value function $U^{\theta,\delta} : [0,T] \times \sub \to \R$ is given by
$$
U^{\theta,\delta}(t_0,m_0) =\inf_{(m,\alpha, \beta) \in \wt{\cA}_{t_0,m_0}} J^{\theta,\delta}_{t_0,m_0} (m,\alpha, \beta) \geq U^\theta(t_0,m_0).
$$
We show in Lemma \ref{lem.UthetadeltatoUtheta} below that, as  $\delta\to 0$,   the $U^{\theta,\delta}$'s converge to $U^\theta$. 
\vs 

For the rest of this section, $C$ denotes a constant which depends on $G$ only through its Lipschitz constant and is independent of $\theta$, $\delta$, and of the initial condition $(t_0,m_0)$.
\vs

Next we discuss an optimality condition for  $U^{\theta,\delta}$.

\begin{proposition}\label{prop.dualdeltaAppend} Assume \eqref{ath1}.  Then, for any initial condition $(t_0,m_0)$, there exists a minimizer $(\bar m, \bar \alpha,\bar \beta)$ for $U^{\theta,\delta}(t_0,m_0)$. Moreover, for any minimizer $(\bar m, \bar \alpha, \bar \beta)$, we have, a.e. with respect to $dt dm_t(x)$,  
\begin{align*}
    \ov{\alpha}(t,x) = - D_p H\big( x, Du(t,x), \ov{m}_t\big) \ \text{and} \ \ov{\beta}(t,x) = \frac{1}{\delta} \Big( u(t,x) - \Psi(x, \ov{m}_t) \Big)_+,
\end{align*}
where $u$ is a classical solution to 
\begin{align} \label{eq.HJuthetadelta}
\begin{cases}
  \ds  - \partial_t u(t,x) - \Delta u(t,x)+ H\big(x, Du(t,x), \ov{m}_t\big) \\[1.2mm]
 \hskip1in  + \dfrac{1}{2\delta} \Big( u(t,x) - \Psi(x, \ov{m}_t) \Big)_+^2 = F(t,x)  \ \text{in}  \  [t_0,T) \times \T^d, 
    \\[1.2mm]
 \ds    u(T,\cdot) = \frac{\delta G}{\delta m}(\ov{m}_T,\cdot), 
\end{cases}
\end{align}
with the function $F$ given by 
\begin{align*}
    F(t,x) &= \int_{\T^d} \frac{\delta L}{\delta m}\big( y, \ov{\alpha}(t,y), \ov{m}_t,x \big) d\ov{m}_t(y) \\
    &\hskip.5in + \int_0^{\theta \wedge (T-t)} \int_{\T^d} \xi_{\theta}(s) \frac{\delta \Psi}{\delta m} \Big(y, (\xi * \ov{m})_{t+s}, x \Big) \ov{\beta}(t+s,y) \ov{m}_{t+s}(dy)ds
    \\[1.2mm]
    &= \int_{\T^d} \frac{\delta L}{\delta m}\big( y, \ov{\alpha}(t,y), \ov{m}_t,x \big)d \ov{m}_t(y)
    \\
    &\qquad \qquad +\frac{1}{\delta} \int_{t}^{(t+\theta)\wedge T} \int_{\T^d} \xi_\theta(s-t) \frac{\delta \Psi}{\delta m}\Big( y, (\xi_{\eta} *\bar m)_s,x \Big)  \Big(u(s,y) - \Psi(y, \ov{m}_s) \Big)_+ d\bar m_s(y) ds. 
\end{align*}
Finally, $u$  and $Du$ are bounded by a constant, which depends on $G$ only through its Lipschitz constant and is  independent of $\theta$, $\delta$ and  the initial condition $(t_0,m_0)$.
\end{proposition}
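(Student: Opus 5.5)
Existence comes from the direct method in flux variables, optimality from a first variation, and the bounds on $u$ from comparison and Bernstein/doubling arguments that use the monotonicity of $\Psi$.

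\emph{Existence.} Set $w = \alpha m$ and $\nu = \beta m$. Then the constraint $\partial_t m - \Delta m + \text{div}(w) = -\nu$ is linear in $(m,w,\nu)$. The terms $\int L(x,w/m,m)\,m$ and $\frac{\delta}{2}\int |\nu/m|^2 m$ are jointly lower semicontinuous perspective functionals (the representation of \cite{Rock71}, as in the proof of Proposition \ref{cor.compactmn}). The extra $L^2$ bound on $\beta$ makes $\nu = \beta m$ absolutely continuous in time, uniformly along a minimizing sequence. This gives equicontinuity of $t\mapsto m_t$, so the limit path is continuous and stays in $\wt{\cA}_{t_0,m_0}$. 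Moreover $m^n_t \to m_t$ for every $t$, and the convolved paths $\xi_\theta * m^n$ converge uniformly. Hence $\int \Psi(x,(\xi_\theta*m^n)_t)\,d\nu^n \to \int \Psi(x,(\xi_\theta*m)_t)\,d\nu$ and $G(m^n_T)\to G(m_T)$. Lower semicontinuity of $J^{\theta,\delta}$ then yields a minimizer.

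\emph{Optimality conditions.} Fix a minimizer $(\bar m,\bar\alpha,\bar\beta)$ and perturb in flux variables, $(m,w,\nu) = (\bar m,\bar w,\bar\nu) + \epsilon(m',w',\nu')$, with $(m',w',\nu')$ solving the linearized equation with zero initial datum and $\nu'$ chosen so that $\nu\ge 0$. Let $u$ be the solution of the linear backward equation in which $H$ and $(\cdot)_+^2$ are frozen along $\bar\alpha,\bar\beta$. Test the linearized equation against $u$ and use that the first variation is nonnegative. Varying $w$ gives $D_aL(x,\bar\alpha,\bar m) = -Du$, i.e. $\bar\alpha = -D_pH(x,Du,\bar m_t)$. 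Varying $\nu$ under the sign constraint $\beta \ge 0$ gives the KKT condition $\Psi - u + \delta\bar\beta \ge 0$ with equality where $\bar\beta>0$, hence $\bar\beta = \frac1\delta(u-\Psi(x,\bar m_t))_+$. Substituting these back, the first-order terms $L + \Psi\beta + \frac\delta2\beta^2$ become $-H + \frac{1}{2\delta}(u-\Psi)_+^2$. The variation of $m$ in the arguments of $L$ and $G$ produces the first term of $F$ and the terminal datum $\frac{\delta G}{\delta m}(\bar m_T,\cdot)$. The variation through the convolution $\xi_\theta * m$ produces the time-advanced second term of $F$, because $(\xi_\theta*m)_s$ depends on $m_t$ for $t\in[s-\theta,s]$. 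Since $(\bar m_t)$ is continuous and $\bar\alpha,\bar\beta$ are given in terms of $u$, the system is a quasilinear parabolic equation with a continuous source. Classical regularity then makes $u$ a classical solution. To make this rigorous I would first work with smooth perturbations and regularity of $\bar m$ from \cite{BKR}, or argue through a convex duality on the flux formulation.

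\emph{Uniform bounds (main obstacle).} The bounds on $u$ and $Du$ must be independent of $\theta$ and $\delta$, although $F$ carries a $1/\delta$ factor. The key is the monotonicity of $\Psi$: $\frac{\delta\Psi}{\delta m}\le 0$ (taking $m\mapsto\Psi(x,m)$ non-increasing to mean $\frac{\delta\Psi}{\delta m}\le 0$), so the $\Psi$-part of $F$ is nonpositive. For the upper bound on $u$ I would drop that part and compare with a supersolution. For the lower bound I would use the $\Psi$-monotonicity of $G$ in \eqref{assump.Gsmooth}, which gives $\frac{\delta G}{\delta m}\le \Psi$ at time $T$. Next I would show $u \le \Psi + C$ by a maximum-principle argument on $u-\Psi$, where the penalization $\frac1{2\delta}(u-\Psi)_+^2$ absorbs growth. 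The mass identity then gives $\int\!\!\int \bar\beta\,\bar m \le 1$. Combined with the Lipschitz bound on $\frac{\delta\Psi}{\delta m}$, this controls the $\Psi$-term of $F$ in sup norm and in $x$-Lipschitz norm by $C$, independently of $\delta$ and $\theta$. The $L$-term of $F$ is controlled in sup norm and Lipschitz norm in $x$ by $C\int(1+|\bar\alpha|^2)\,d\bar m_t$ via \eqref{hypdeltaL}; this is bounded only once $\|Du\|_\infty$ is, so this part is circular. I would close it by a Bernstein or doubling-of-variables argument as in Lemma \ref{lem.derivativebound2}, applied to $u(t,x)-u(t,y)-\lambda(t)|x-y|$. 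The penalization term has the favorable sign there, since $r\mapsto r_+^2$ is nondecreasing and $\Psi$ is Lipschitz in $x$, and a continuity/bootstrap argument fixes the constant. This coupled gradient estimate is the delicate step.
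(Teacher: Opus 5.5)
\textbf{Gap: the control of the $L$-part of $F$.} Your existence and first-variation sketches match the level of detail the paper gives, since it simply invokes the standard argument. Your gradient mechanism is also the paper's: the penalization has a favorable sign once the Lipschitz constant exceeds that of $\Psi$, which the paper exploits for $u_z=Du\cdot z$ on $\{u_z>\|D_x\Psi\|_\infty|z|\}$. The problem is that you bound the $L$-part pointwise in time by $C\int_{\T^d}(1+|\bar\alpha|^2)\,d\bar m_t\le C(1+\|Du(t,\cdot)\|_\infty^2)$, call this circular, and propose a bootstrap. Fed into a doubling argument as in Lemma \ref{lem.derivativebound2}, or into Gronwall, this forces a Riccati-type condition such as $-\lambda'\ge C(1+\lambda+\lambda^2)$. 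Its solutions blow up after a time of order $1/C$, so a continuity argument only gives bounds on short intervals, not on all of $[t_0,T]$. The same circularity already affects your upper bound on $u$, because any spatially constant supersolution must contain $\int_t^T\|F(s,\cdot)\|_\infty\,ds$.

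\textbf{Missing idea: an energy bound from minimality.} Nothing is actually circular. You only need an $L^1$-in-time bound on $\|F(s,\cdot)\|_{W^{1,\infty}}$, and it follows from minimality.
\begin{itemize}
\item Testing the equation for $\bar m$ with $1$ gives $\int_{t_0}^T\int_{\T^d}\bar\beta\,d\bar m_t\,dt\le m_0(\T^d)\le 1$. This needs neither $u\le\Psi+C$ nor $\frac{\delta G}{\delta m}\le\Psi$.
\item Compare $J^{\theta,\delta}_{t_0,m_0}(\bar m,\bar\alpha,\bar\beta)=U^{\theta,\delta}(t_0,m_0)$ with the cost of $\alpha=\beta=0$. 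Using the coercivity in \eqref{hypL}, the boundedness of $\Psi$ and $G$, and $\frac{\delta}{2}\int\int\bar\beta^2\,d\bar m\ge 0$, you get $\int_{t_0}^T\int_{\T^d}|\bar\alpha|^2\,d\bar m_t\,dt\le C$, independently of $\theta$ and $\delta$.
\item With \eqref{hypdeltaL} and the bounds on $\frac{\delta\Psi}{\delta m}$, this yields $\int_{t_0}^T\|F(s,\cdot)\|_{W^{1,\infty}}\,ds\le C$. The $\Psi$-part is bounded only in $L^1_t$, since $\xi_\theta$ has height of order $1/\theta$, and its sign plays no role.
\item The bound $|u|\le C$ then follows by comparison with the barriers $w(t)=\|\frac{\delta G}{\delta m}\|_\infty+\int_t^T\big(\|H(\cdot,0,\bar m_s)\|_\infty+\|F(s,\cdot)\|_\infty\big)\,ds$ and $-w-\|\Psi\|_\infty$. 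The shift by $-\|\Psi\|_\infty$ makes the penalization vanish on the lower barrier.
\item The gradient bound closes by a linear Gronwall inequality. This works because $|D_xH(x,p,m)|\le C(1+|p|)$ is linear in $p$ and the forcing $\|D_xF(s,\cdot)\|_\infty$ is integrable in time.
\end{itemize}
Your doubling argument would also go through once this $L^1_t$ bound replaces the pointwise one.
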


\begin{proof} The proof of the existence of a minimizer and of the optimality condition follows a standard argument. Indeed,  it suffices to notice that, since the the non-local dependence on $\bar m$ is smoothing,  the Hamilton-Jacobi-Bellman equation \eqref{eq.HJuthetadelta} possesses a classical solution, 
and then proceed for instance as in  \cite{BrianiCardaliaguet}. 
\vs
\vs
Next we show two preliminary estimates, which will be used to bound $u$ and $Du$, namely,
\be\label{qlejksnfdgc}
\int_{t_0}^T\int_{\T^d}  \delta^{-1}  (u(t,x)-\Psi(x,\bar m(t)))_+d\bar m_t(x)dt \leq 1, 
\ee
and 
\be\label{qlejksnfdgc2}
 \int_{t_0}^T\int_{\T^d} \left( \left\| \frac{\delta L}{\delta m}(x, \bar \alpha(t,x),\ov{m}_t,\cdot)\right\|_\infty +
  \left\| D_y \frac{\delta L}{\delta m}(x, \bar \alpha(t,x),\ov{m}_t,\cdot)\right\|_\infty \right) d\ov{m}_t(x) \leq C.
\ee
\vs
The first follows  by integrating the equation \eqref{meqn} for $\bar m$ in time and space and using the  equality  $\bar \beta = \delta^{-1}  (u-\Psi(x,\bar m))_+$. The second  is a direct consequence of the bound on $\int_{t_0}^T \int_{\T^d} |\bar \alpha|^2\bar m$, which follows from the coercivity assumption \eqref{hypL} and the bound on $U^{\theta,\delta}$, combined with the growth conditions on $\dfrac{\delta L}{\delta m}$ and $D_y \dfrac{\delta L}{\delta m}$.
\vs
We note that \eqref{qlejksnfdgc} and \eqref{qlejksnfdgc2} together with the regularity of $\dfrac{\delta \Psi}{\delta m}$ imply that there is a constant $C$ such that
\begin{align*}
    \int_t^T  \|F(s,\cdot)\|_{W^{1,\infty}} ds \leq C.
\end{align*}
 We now show that $u$ is uniformly bounded  independently of $\delta$. Since the proofs of the lower and upper bounds are similar, here we only check that $u$ is bounded above. 
 \vs
 For this we note that the map $t\to w(t)$ given by 
\begin{align*}
 w(t) & = \left\|\frac{\delta G}{\delta m}\right \|_\infty + \int_t^T \left \|H(\cdot, 0, \bar m_s)\right                                                                                                                                    \|_\infty ds + 
 \int_t^T \left\|F(s, \cdot)\right\|_{\infty} ds.
\end{align*}
 is a supersolution to  \eqref{eq.HJuthetadelta}. Thus, by comparison,  
\begin{align*}
\max_{t,x} u(t,x) &  \leq \|\frac{\delta G}{\delta m}\|_\infty + \int_{t_0}^T \|H(\cdot, 0, \bar m(s))\|_\infty ds + 
 \int_{t_0}^T \|F(s,\cdot)\|_{\infty} ds  \leq \; C.
\end{align*}
 To  check that $Du$ is, independently of $\delta$, globally bounded, we  fix $z\in \R^d$, set $u_z= Du\cdot z$  and note that $u_z$ solves  
\begin{align*}
-\partial_t u_z -\Delta u_z + H_x\cdot z + H_p\cdot Du_z + \frac{1}{\delta} (u-\Psi(x,\bar m(t)))_+(u_z-D_x\Psi\cdot z)= D_xF(t,x) \cdot z. 
\end{align*}
Thus $u_z$ is in the set $\{u_z> \|D_x\Psi\|_\infty |z|\}$ a subsolution to the equation 
\begin{align*}
&-\partial_t u_z -\Delta u_z + H_x\cdot z + H_p\cdot Du_z  \leq   |z|  |D_x F(t,x)|.
\end{align*}
Recalling \eqref{phl1} to bound the term $H_x \cdot z$, we infer by comparison that, for any $t\in [t_0, T]$,  
\begin{align*}
& \max_{x\in\T^d} \max\{ u_z(t,x), \|D_x\Psi\|_\infty |z|\} \\
&  \leq |z| \|D_x \frac{\delta G}{\delta m}\|_\infty + C|z|(\int_t^T \|Du(s,\cdot)\|_\infty ds +1)+ 
|z| \int_{t_0}^T \int_{\T^d} \|DF(s,\cdot)\|_{\infty} ds.
\end{align*}
Taking the sup over $|z|\leq 1$ we conclude by Gronwall Lemma that $\|Du\|_\infty$ is bounded uniformly in $\delta$, $\theta$, $t_0$ and $m_0$. 

\end{proof}

We now show the uniform Lipschitz continuity  of $U^{\theta,\delta}$ in the measure argument. 

\begin{prop}\label{prop.UthetadeltaLip} Assume \eqref{ath1}. Then the map $U^{\theta,\delta}$ is, uniformly in $t$, $\theta$ and $\delta$, Lipschitz continuous in $m$ with a constant which depends on $G$ only through its Lipschitz constant.
\end{prop}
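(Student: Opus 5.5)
The plan is to obtain the Lipschitz bound from a first-order expansion of the cost around an optimizer. The adjoint $u$ of Proposition \ref{prop.dualdeltaAppend} should play the role of the linear derivative $\frac{\delta U^{\theta,\delta}}{\delta m}(t_0,m_0,\cdot)$. The bounds on $\|u\|_\infty$ and $\|Du\|_\infty$ from that proposition are uniform in $\theta$, $\delta$, $(t_0,m_0)$, and they would then give
\[
\Big|\int_{\T^d} u(t_0,x)\, d(n_0-m_0)(x)\Big| \leq C\,\|u(t_0,\cdot)\|_{W^{1,\infty}}\,\bd(m_0,n_0).
\]
To avoid second-order error terms, which are not uniform in $\delta$ because $\ov\beta$ can be of size $1/\delta$, I would argue infinitesimally along the segment $m^s=(1-s)m_0+s n_0$, $s\in[0,1]$. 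I would show that $s\mapsto U^{\theta,\delta}(t_0,m^s)$ has upper Dini derivatives, in both directions, bounded by $C\,\bd(m_0,n_0)$. Integrating then gives the claim.

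\textbf{Step 1 (competitor by frozen feedback).} Fix $s$ and let $(\ov m,\ov\alpha,\ov\beta)$ be optimal for $U^{\theta,\delta}(t_0,m^s)$. By Proposition \ref{prop.dualdeltaAppend}, $\ov\alpha=-D_pH(x,Du,\ov m_t)$ and $\ov\beta=\delta^{-1}(u-\Psi)_+$ are bounded functions of $(t,x)$ for fixed $\delta$. For $h$ small, I would use the same feedbacks $(\ov\alpha,\ov\beta)$ from the initial measure $m^{s+h}$. The resulting linear, positivity-preserving equation gives a path $m^h_t=\ov m_t+h\rho_t$. Here $\rho$ solves the linearized equation $\partial_t\rho-\Delta\rho+{\rm div}(\rho\ov\alpha)=-\ov\beta\rho$ with $\rho_{t_0}=n_0-m_0$. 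Hence $(m^h,\ov\alpha,\ov\beta)\in\wt{\cA}_{t_0,m^{s+h}}$.

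\textbf{Step 2 (first-order expansion).} Expanding $J^{\theta,\delta}$ in $h$, using the $C^1$ dependence of $L$, $\Psi$, $G$ on $m$, should give $U^{\theta,\delta}(t_0,m^{s+h})-U^{\theta,\delta}(t_0,m^s)\leq h\,\mathcal{L}+o(h)$. The first-order term is
\[
\mathcal{L}=\int_{t_0}^T\!\!\int_{\T^d}\Big(L(x,\ov\alpha,\ov m_t)+\Psi\,\ov\beta+\tfrac{\delta}{2}\ov\beta^2+F\Big)\,d\rho_t\,dt+\int_{\T^d}\frac{\delta G}{\delta m}(\ov m_T,x)\,d\rho_T .
\]
The key computation is that $u$ solves exactly the dual linear equation. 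Indeed, from $H(x,Du,\ov m)=-\ov\alpha\cdot Du-L(x,\ov\alpha,\ov m)$ and $\ov\beta u=\ov\beta\Psi+\delta\ov\beta^2$, equation \eqref{eq.HJuthetadelta} becomes
\[
-\partial_t u-\Delta u-\ov\alpha\cdot Du+\ov\beta u=L(x,\ov\alpha,\ov m_t)+\Psi\,\ov\beta+\tfrac{\delta}{2}\ov\beta^2+F,\qquad u(T)=\tfrac{\delta G}{\delta m}(\ov m_T,\cdot).
\]
Here $F$ contains precisely the terms coming from the $m$-dependence of $L$ and of $\Psi(\cdot,(\xi_\theta*m)_t)$. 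Pairing with $\rho$ then yields $\mathcal{L}=\int u(t_0,x)\,d(n_0-m_0)\leq C\,\bd(m_0,n_0)$.

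\textbf{Step 3 (conclusion).} The $o(h)$ remainder may depend on $\delta$, but that is harmless. Repeating Step 2 with $h<0$, i.e. moving toward $m_0$, bounds both one-sided upper Dini derivatives of $f(s)=U^{\theta,\delta}(t_0,m^s)$ by $C\,\bd(m_0,n_0)$. To integrate this I need $f$ continuous. For fixed $\delta$ this follows from the same frozen-feedback construction with crude constants (bounded feedbacks make the cost continuous in the initial measure), or from upper semicontinuity plus the two-sided Dini bounds. The same bound holds with the roles of $m_0$ and $n_0$ exchanged, so $|f(1)-f(0)|\leq C\,\bd(m_0,n_0)$ with $C$ depending on $G$ only through its Lipschitz constant.

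The main obstacle is the rigorous justification of Step 2. The $o(h)$ control requires differentiating $G$ and the $\theta$-convolved term $\Psi(x,(\xi_\theta*m)_t)$ along $m^h$, and $\rho$ is only a signed measure. I would handle this with the Lipschitz regularity of the linear derivatives in \eqref{phl2}, \eqref{assump.Gsmooth} and \eqref{hypdeltaL}, and with the boundedness of $\ov\alpha$ and $\ov\beta$ at fixed $\delta$. The duality pairing of $u$ against $\rho$ also has to be justified, using the classical regularity of $u$ from Proposition \ref{prop.dualdeltaAppend}.
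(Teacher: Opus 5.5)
Your proposal is correct and follows essentially the paper's route: freeze the optimal feedbacks $(\ov\alpha,\ov\beta)$, pair the linearized equation for $\rho$ with the adjoint $u$ so that the first-order term collapses to $\int_{\T^d}u(t_0,x)\,d(n_0-m_0)(x)$, and remove the $\delta$-dependent second-order remainder by localizing along the segment. The paper does this by proving $U^{\theta,\delta}(t_0,m_0')-U^{\theta,\delta}(t_0,m_0)\le C_0\bd(m_0,m_0')\bigl(1+C_\delta\bd(m_0,m_0')\bigr)$ and then implicitly subdividing the segment, which is the discrete version of your Dini-derivative argument.

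One caution for Step 3: rely on your first justification of continuity, the crude frozen-feedback estimate. Its $\delta$-dependent constant is uniform in the base point, so it makes $s\mapsto U^{\theta,\delta}(t_0,m^s)$ Lipschitz for fixed $\delta$. Your alternative does not suffice: upper semicontinuity together with the two one-sided upper Dini bounds does not rule out an upward jump such as $s\mapsto 1_{[s_0,1]}(s)$.
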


\begin{proof} 
The classical Schauder estimates imply that, for given $(t_0, m_0)$ and $(\bar m,\bar \alpha,\bar \beta)$ optimal for $U^{\theta,\delta}(t_0,m_0)$, the  $u$ given by Proposition \ref{prop.dualdeltaAppend} is in $C^{1+\gamma/2, 2+\gamma}$ for any $\gamma\in (0,1)$ with a  norm depending on $\delta$. 
\vs
Given $m_0'\in\sub $, let $m'$ be the solution to 
$$
\partial_t m'-\Delta m' +{\rm div}(m'\bar \alpha)= -\bar \beta m' \ \text{in} \ (t_0,T]\times \T^d \ \  \text{and} \  \  m'_{t_0} =m'_0.
$$
Since  $\bar \alpha = - D_pH(x, Du, \ov{m}_t)$, with $Du$ Lipschitz in space, and $\bar \beta = \delta^{-1} (u-\Psi)_+$  is Lipschitz (with Lipschitz constants depending on $\delta$), we have 
\be\label{ksjfdlqhjsdkd}
\sup_{t\in [t_0,T]} {\bf d}(m(t), m'(t)) \leq C_\delta {\bf d}(m_0, m_0').
\ee
We set $\rho=m'-\bar m$ and note that $\rho$ solves 
$$
\partial_t \rho-\Delta \rho +{\rm div}(\rho\bar \alpha)= -\bar \beta \rho  \ \text{in} \ (t_0,T] \ \  \text{and} \  \  \rho_{t_0} =m'_0-m_0.
$$
Then, with $m^{\tau} = (1-\tau)\bar m+\tau m'$,
\begin{align*}
& U^{\theta,\delta}(t_0,m_0') -U^{\theta,\delta}(t_0,m_0)\leq J^{\theta,\delta}_{t_0,m_0'}(m',\bar  \alpha,\bar  \beta)-J_{t_0,m_0}^{\theta,\delta}(m, \bar \alpha, \bar \beta)\notag \\
&  \leq  \int_{t_0}^T \int_{\T^d} \Big( L\big(x,\bar \alpha(t,x),\ov{m}_t\big) +\frac{\delta}{2} \big| \ov{\beta}(t,x) \big|^2 \Big)d\rho_t(x) dt
\\
&\qquad + \int_{t_0}^T \int_{\T^d} \Big( \Psi\big(t,x,(\xi_\theta\ast m)'_t\big)\bar \beta(t,x) dm'_t(x)-\Psi\big(t,x,(\xi_\theta\ast \bar m)_t \big)\bar \beta(t,x) \bar dm_t(x) \Big) dt\notag\\
& \qquad + \int_{t_0}^T \int_{\T^d}  (L(x,\bar \alpha, m')-L(x,\bar \alpha,\bar m))dm'_t(x)  dt +G(m'_T)-G(m_T) \notag \\ 
&=  \int_{t_0}^T  \int_{\T^d} \Bigl(L(x,\bar \alpha(t,x),\bar m_t) +\frac{\delta}{2} \bar \beta^2+\int_0^1 \Psi\big(t,x,(\xi_\theta \ast m^{\tau})_t\big) \bar \beta(t,x)d\tau \Bigr) d\rho_t(x) dt \notag \\ 
& \qquad  +  \int_{t_0}^T \int_{\T^d} \int_0^1 \int_{\T^d} \frac{\delta \Psi}{\delta m}(t,y,(\xi_\theta\ast m^{\tau})_t,x) \big(\xi_\theta \ast \rho\big)_t(dx)  \bar \beta (t,y) dm^{\tau}_t(y) d\tau \notag \\
& \qquad + \int_{t_0}^T \int_{\T^d} \int_0^1 \frac{\delta L}{\delta m}(y,\bar \alpha, m^{\tau}_t,x))dm'_t(y) d\rho_t(x) \; d\tau dt +\int_0^1 \int_{\T^d} \frac{\delta G}{\delta m}( m^{\tau}_T,x) d\rho_T(x) d\tau.
\end{align*}

Exchanging the roles of $t$ and $s$ in the second integral, we find 
\be\label{kjhezrnfd}
\begin{split}
& U^{\theta,\delta}(t_0,m_0') -U^{\theta,\delta}(t_0,m_0)\\ 
& \leq   \int_{t_0}^T  \int_{\T^d} \Bigl(L(x,\bar \alpha(t,x),\bar m_t) +\frac{\delta}{2} \bar \beta^2+\int_0^1 \Psi\big(t,x,(\xi_\theta \ast m^{\tau}\big)_t \big)\bar \beta(t,x)d\tau \Bigr) d\rho_t(x) dt \\ 
&   +  \int_{t_0}^T  \int_{\T^d} \int_0^1 \int_{t}^{(t+\theta)\wedge T}\int_{\T^d} \frac{\delta \Psi}{\delta m}\big(s,y, (\xi_\theta\ast m^\tau)_s,x\big)\xi_\theta(s-t)  \bar \beta (s,y) m^{\tau}_s(dy) ds d\tau  d\rho_t(x)dt  \\
&+ \int_{t_0}^T \int_{\T^d} \int_{\T^d} \int_0^1 \frac{\delta L}{\delta m}(y,\bar \alpha, m^{\tau}_t,x)dm'_t(y) d\rho_t(x) \; d\tau dt +\int_0^1 \int_{\T^d} \frac{\delta G}{\delta m}( m^\tau_T,x) d\rho_T(x)d\tau . 
\end{split}
\ee

Using  the equations satisfied by $u$ and $\rho$ and the fact that $\bar \beta=\delta^{-1}(u-\Psi)_+$ and $\bar \alpha = -D_pH(x,Du, \bar m)$, we have 
\begin{align*}
&\int_{t_0}^T \int_{\T^d} \Bigl(L(x,\bar \alpha(t,x),\bar m_t) +\frac{\delta}{2} \bar \beta^2(t,x) + \Psi(x,\bar m_t)\bar \beta(t,x)+ \int_{\T^d}  \frac{\delta L}{\delta m}(y,\bar \alpha, \bar m_t,x))d\bar m_t(y)    \Bigr) d\rho_t(x) \; dt \\ 
&  +  \int_{t_0}^T  \int_{\T^d}  \int_{t}^{(t+\theta)\wedge T}\int_{\T^d} \frac{\delta \Psi}{\delta m}\big(s,y,(\xi_\theta\ast\bar m)_s,x \big)\xi_\theta(s-t)  \bar \beta (s,y) m^{\tau}_s(dy) ds  \rho_t(dx)dt
\\
&\qquad \qquad 
+ \int_{\T^d} \frac{\delta G}{\delta m}(\bar m_T,x) d\rho_T(x) \notag\\
&=  \int_{t_0}^T \bigg( \int_{\T^d} \Big(L(x,\bar \alpha, \bar m) +\frac{\delta}{2} \bar \beta^2- \partial_t u-\Delta u+H(x,Du, \bar m)+\frac{1}{2\delta} (u-\Psi)_+^2+\bar \beta \Psi \Big) d\rho_t(x) \bigg) \; dt 
\\
&\qquad \qquad + \int_{\T^d} \frac{\delta G}{\delta m}(m_T,x) d\rho_T(x) \notag\\ 
&=  \int_{t_0}^T \bigg( \int_{\T^d} \Big(L(x,\bar \alpha, \bar m) +H(x,Du, \bar m) +\delta \bar \beta^2+\bar \beta\Psi \Big) d\rho_t(x) \bigg) dt 
+ \int_{t_0}^T  \int_{\T^d} u( \partial_t \rho-\Delta \rho)  \; dt 
\\
&\qquad \qquad + \int_{\T^d} u(t_0,x)d\rho_{t_0}(x) \notag\\ 
&=  \int_{t_0}^T \bigg( \int_{\T^d} \Big(L(x,\bar \alpha, \bar m) +H(x,Du, \bar m) +\delta\bar \beta^2+Du\cdot \bar \alpha -\bar \beta (u-\Psi)) d\rho_t(x) \Big) \bigg) dt 
\\
&\qquad \qquad + \int_{\T^d} u(t_0,x)d\rho_{t_0}(x) \notag \\
& =  \int_{\T^d} u(t_0,x) d(m'_0-m_0)(x)\; \leq \; \|u(t_0, \cdot)\|_{W^{1,\infty}} {\bf d}(m_0,m_0'). 
\end{align*}

Proposition \ref{prop.dualdeltaAppend} states that $\|u(t_0, \cdot)\|_{W^{1,\infty}}$ is bounded by a constant $C_0$ that is independent of the initial condition and of $\delta$. 
\vs
Plugging the previous inequality into \eqref{kjhezrnfd}, controlling the differences between $m^\tau$ and $\bar m$ 
using  \eqref{ksjfdlqhjsdkd} and thanks to the  regularity  of $\Psi$ and $\dfrac{\delta \Psi}{\delta m}$, $\dfrac{\delta L}{\delta m}$, and $\dfrac{\delta G}{\delta m}$ implied by \eqref{ath1}, we get 
$$
 U^{\theta,\delta}(t_0,m_0') -U^{\theta,\delta}(t_0,m_0)\leq C_0 {\bf d}(m_0,m_0')(1+ C_\delta  {\bf d}(m_0,m_0')). 
$$
This easily implies the global Lipschitz continuity of $U^{\theta,\delta}$ in the measure variable with a Lipschitz constant independent of $\delta$. 

\end{proof}

We next check that $U^{\theta,\delta}$ is an approximation of $U^\theta$. For this, we need the following preliminary fact.

\begin{lemma} \label{lem.rc}
 Assume \eqref{ath1}. Then, for each $(t_0,m_0) \in [0,T) \times \sub$, $\eps > 0$ and $\theta > 0$, there is an $\eps$-optimal control $(m,\alpha, \mu) \in \cA_{t_0,m_0}$ for the problem defining $U^{\theta}(t_0,m_0)$ such that $t \mapsto m_t$ is continuous at time $T$. 
\end{lemma}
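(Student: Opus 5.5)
Starting from an optimal control for $U^\theta(t_0,m_0)$, whose existence follows exactly as in Proposition \ref{prop.existsOS} with the lower semicontinuity argument of Lemma \ref{lem.UthetatoU}, I will build an $\eps$-optimal control by removing any terminal jump. Let $(m,\alpha,\mu)\in\cA_{t_0,m_0}$ be $\eps/2$-optimal for $U^\theta(t_0,m_0)$. By \eqref{jump.char}, the only possible discontinuity at $T$ is the jump $m_{T-}-m_T=\mu(\{T\},\cdot)$. I define a new triple $(\tilde m,\tilde\alpha,\tilde\mu)$ by $\tilde\mu=\mu\llcorner([t_0,T)\times\T^d)$, $\tilde\alpha=\alpha$, and $\tilde m_t=m_t$ for $t<T$, $\tilde m_T=m_{T-}$. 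This triple is admissible: the equation \eqref{meqn} for the new triple differs from the old one only in the terminal term and the atom of $\mu$ at $T$, and these cancel. By construction $\tilde m$ is continuous at $T$.

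It remains to compare the costs. The running cost is unchanged. The stopping cost loses the term $\int_{\T^d}\Psi(x,(\xi_\theta\ast m)_T)\,d\mu(\{T\},x)$. The terminal cost changes from $G(m_T)$ to $G(m_{T-})$. By the $\Psi$-monotonicity of $G$ in \eqref{assump.Gsmooth}, with $m_T\le m_{T-}$,
\begin{align*}
G(m_{T-})\le G(m_T)+\int_{\T^d}\Psi(x,m_{T-})\,d(m_{T-}-m_T)(x).
\end{align*}
So the new cost exceeds the old one by at most
\begin{align*}
\int_{\T^d}\big(\Psi(x,m_{T-})-\Psi(x,(\xi_\theta\ast m)_T)\big)\,d\mu(\{T\},x).
\end{align*}
This is where the smoothing helps: $(\xi_\theta\ast m)_T$ averages $m$ over $[T-\theta,T]$, which need not equal $m_{T-}$, so this error term need not have a good sign. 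This is the main obstacle.

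To handle it, I will first modify the control near $T$ so that no mass is removed on $[T-\eta,T)$ and the atom at $T$ is moved to time $T-\eta$. Concretely, I stop the control at time $T-\eta$ by removing, at $T-\eta$, all the mass that $\mu$ removes in $[T-\eta,T]$, transported along the flow, and I keep the dynamics after that with drift $\alpha$. This is the step to be checked carefully. A cleaner alternative is to compare with the strategy that jumps at $T-\eta$ straight to the state $n$ that solves the forward equation with drift $\alpha$ and no removal, mapped so that $n_T\approx m_T$. By the continuity estimates of Lemma \ref{lem.estisol} and the Lipschitz continuity of $G$ and $\Psi$, the cost difference is $O(\eta^{1/4})+o(1)$ plus the error from the discrepancy between $\xi_\theta\ast m$ and $m_{t-}$. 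This discrepancy is small once $\eta\ll\theta$, because $(\xi_\theta\ast m)_t$ varies continuously.

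Finally, after this modification the path is continuous on $(T-\eta,T]$, hence at $T$. Choosing $\eta$ small then gives $\eps$-optimality.
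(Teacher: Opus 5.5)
Your first step, truncating $\mu$ to $[t_0,T)$ and setting $\tilde m_T=m_{T-}$, is exactly the paper's proof. The paper stops there and asserts $J^\theta(\tilde m,\alpha,\tilde\mu)\le J^\theta(m,\alpha,\mu)$ from the $\Psi$-monotonicity of $G$.

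Your diagnosis of that step is correct. \eqref{assump.Gsmooth} only gives $G(m_{T-})-G(m_T)\le\int_{\T^d}\Psi(x,m_{T-})\,\mu(\{T\},dx)$, whereas $J^\theta$ charges the atom at $T$ with $\Psi(\cdot,(\xi_\theta\ast m)_T)$. Now $\Psi$ is non-increasing, and $(\xi_\theta\ast m)_T$ still counts mass removed during $[T-\theta,T)$, so it often dominates $m_{T-}$. Hence the leftover term $\int_{\T^d}\big(\Psi(x,m_{T-})-\Psi(x,(\xi_\theta\ast m)_T)\big)\,\mu(\{T\},dx)$ can be strictly positive, and it does not vanish for fixed $\theta$. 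The one-line argument therefore does not close, and a time shift of the kind you suggest is genuinely needed.

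\textbf{The gap.} Your repair is never actually constructed, and both sketches have concrete problems.
\begin{itemize}
\item Removing at $T-\eta$ ``the mass that $\mu$ removes in $[T-\eta,T]$, transported along the flow'' is not a well-defined operation for a Fokker--Planck equation. It would need a superposition representation, and the surviving sub-population then has a different, uncontrolled drift.
\item Keeping the drift $\alpha$ after the jump is not controlled either. $\alpha$ is only in $L^2(dm_t\,dt)$, and the new flow, no longer depleted by $\mu$, need not be dominated by $m_t$. So its running cost on $[T-\eta,T]$ is not bounded by the energy of the original control.
\item The ``cleaner alternative'' presupposes a state $n\le m_{T-\eta}$ whose forward evolution ends near $m_T$, but you never show one exists. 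Lemma \ref{lem.estisol} is a one-sided inequality and does not by itself produce a sub-measure of $m_{T-\eta}$ close to $m_T$.
\item You do not explain how the penalty paid at $T-\eta$ compares with $\int_{(T-\eta,T]\times\T^d}\Psi\,d\mu$. That comparison requires relating the spatial distribution of the removed mass to $\mu$.
\end{itemize}

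\textbf{One way to close it.}
\begin{enumerate}
\item Take $T-\eta$ a continuity point of $m$ and keep $(m,\alpha,\mu)$ on $[t_0,T-\eta)$.
\item Jump at $T-\eta$ to some $n\le m_{T-\eta}$, then use $\alpha=0$ and $\mu=0$ on $[T-\eta,T]$. This heat flow has running cost $O(\eta)$, is continuous at $T$, and ends at $\Gamma_\eta\ast n$ with $\bd(\Gamma_\eta\ast n,n)\le C\sqrt\eta$.
\item A suitable $n$ exists by a minimax argument, the last step below being Lemma \ref{lem.estisol}:
\[
\inf_{0\le n\le m_{T-\eta}}\bd(n,m_T)=\sup_{\|\phi\|_{1,\infty}\le1}\Big(\int_{\T^d}\phi\,dm_T-\int_{\T^d}\phi_+\,dm_{T-\eta}\Big)\le\sup_{\phi\ge0,\,\|\phi\|_{1,\infty}\le1}\int_{\T^d}\phi\,d(m_T-m_{T-\eta})\le C\eta^{1/4}.
\]
\item Test the equation on $[T-\eta,T]$ with the $C^2$ function $\psi=\Psi(\cdot,(\xi_\theta\ast m)_T)$. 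This gives $\int\psi\,d(m_{T-\eta}-n)\le\int_{(T-\eta,T]\times\T^d}\psi\,d\mu+o(1)$.
\item For $t\in[T-\eta,T]$ one has $\bd\big((\xi_\theta\ast m)_t,(\xi_\theta\ast m)_T\big)\le C_\theta\eta$. Also, $(\xi_\theta\ast m)_{T-\eta}$ only sees times before $T-\eta$, so the modification does not change it.
\end{enumerate}
Combining these, the new cost exceeds $J^\theta(m,\alpha,\mu)$ by $o(1)$ as $\eta\to0$, which proves the lemma.
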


 \begin{proof} 
    Let $(m,\alpha, \mu) \in \cA_{t_0,m_0}$ $\eps$-optimal for the problem defining $U^{\theta}(t_0,m_0)$  and define $\wt{m}$ and $\wt{\mu}$ by 
    \begin{align*}
        \wt{m}_t = \begin{cases}
            m_t & t < T, 
            \\
            m_{T-} & t = T, 
        \end{cases}
        \qquad \wt{\mu} = \mu|_{[0,T)}.
    \end{align*}
Then Lemma \ref{lem.cdlag} allows us to conclude that $(\wt{m}, \alpha, \wt{\mu}) \in \cA_{t_0,m_0}$ and  the assumption that $G$ is $\Psi$-nondecreasing that ensures that $J^{\theta}(\wt{m},\alpha, \wt{\mu}) \leq J^{\theta}(m,\alpha,\mu)$. Hence $(\wt{m}, \alpha, \wt{\mu})$ is also $\eps$-optimal for the problem defining $U^{\theta}(t_0,m_0)$ and is continuous  at $T$. 
   
\end{proof}

\begin{lemma}\label{lem.UthetadeltatoUtheta} For any $(t_0,m_0)\in [0,T]\times \sub$, 
$$
\lim_{\delta\to 0^+} U^{\theta,\delta}(t_0,m_0) = U^\theta(t_0,m_0).
$$
\end{lemma}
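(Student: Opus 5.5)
The lower bound is immediate. Any $(m,\alpha,\beta)\in\wt{\cA}_{t_0,m_0}$ gives $(m,\alpha,\beta m)\in\cA_{t_0,m_0}$, and $J^{\theta,\delta}\geq J^\theta$ because the penalty term is nonnegative. Hence $U^{\theta,\delta}\geq U^\theta$, and $U^{\theta,\delta}$ is non-increasing as $\delta\downarrow 0$. So it suffices to prove
\[
\limsup_{\delta\to0}U^{\theta,\delta}(t_0,m_0)\leq U^\theta(t_0,m_0).
\]
The plan is to approximate an $\eps$-optimal control for $U^\theta$ by absolutely continuous killing rates with finite $L^2$ cost. For each fixed approximation, sending $\delta\to0$ kills the penalty; afterwards we let the approximation parameter go to $0$ and then $\eps\to 0$.

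Fix $\eps>0$ and use Lemma \ref{lem.rc} to pick an $\eps$-optimal $(m,\alpha,\mu)\in\cA_{t_0,m_0}$ with $m$ continuous at $T$, so that $\mu$ has no atom at $T$. I would first reduce to $\alpha$ bounded. Truncating $\alpha$ is not directly compatible with the constraint, so instead I would also handle the drift by regularizing the pair $(m,\alpha m)$ and $\mu$ together.

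The construction is a time shift plus mollification. Set $\mu^h = \mu$ convolved in time with a kernel supported in $[0,h]$, with mass near $t_0$ reflected forward so that the initial condition is preserved. Also mollify in space with $\xi_\eta$. Then $\mu^h$ has a smooth density $\mu^h(t,x)\,dt\,dx$ on $[t_0,T]$ with no jump at $t_0$. Solve
\[
\partial_t m^h-\Delta m^h+{\rm div}(m^h\alpha^h) = -\mu^h ,\qquad m^h_{t_0}=m_0,
\]
with $\alpha^h$ a correspondingly mollified drift. This is the step I expect to be the main obstacle: we need $\mu^h\leq C_h\, m^h$ pointwise, so that $\beta^h=\mu^h/m^h$ is well defined with $\int\!\!\int|\beta^h|^2 m^h<\infty$. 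It also requires $m^h\geq0$.

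I would try the standard trick of mixing in a small diffusion. For $t>t_0$ the heat semigroup makes densities positive. Concretely, replace $m$ by $(1-\kappa)m + \kappa\,(\text{heat flow of } m_0)$ restricted suitably. The mollified solution then has density bounded below by $c_{h,\eta,\kappa}>0$ on $[t_0+h,T]$. The killing can be delayed to after $t_0+h$ by removing the initial mass instantaneously only through time-shifting, at extra cost $o(1)$ as $h\to0$ by continuity of $\Psi$ and of $\xi_\theta\ast m$. Since $\mu^h$ is smooth and bounded there, $\beta^h$ is bounded, and $(m^h,\alpha^h,\beta^h)\in\wt{\cA}_{t_0,m_0}$.

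Finally, I need
\[
J^\theta(m^h,\alpha^h,\mu^h)\to J^\theta(m,\alpha,\mu).
\]
For the running cost, convexity of $L$ in $\alpha$ and Jensen under mollification give a limsup inequality, together with the convergence $m^h_t\to m_t$ for a.e. $t$. For the stopping term, the key point is that $\Psi(x,(\xi_\theta\ast m)_t)$ is continuous in $(t,x)$, precisely thanks to the $\theta$-regularization; this is why we work with $U^\theta$ rather than $U$. Combined with $\mu^h\rightharpoonup\mu$ weakly and the uniform convergence of $\xi_\theta\ast m^h$ to $\xi_\theta\ast m$, this gives convergence of the $\Psi$ integral. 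The terminal term converges because $m$ is continuous at $T$ and $G$ is continuous.

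We therefore obtain
\[
\limsup_{\delta\to 0}U^{\theta,\delta}\leq J^\theta(m^h,\alpha^h,\mu^h),
\]
and letting $h,\eta,\kappa\to0$ and then $\eps\to0$ concludes.
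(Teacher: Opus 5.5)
Your lower bound is fine. The upper-bound construction, however, has a genuine gap exactly at the step you flag.

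You define $m^h$ by \emph{solving} the Fokker--Planck equation from $m_0$, with a prescribed mollified drift $\alpha^h$ and a prescribed mollified removal term $-\mu^h$. Nothing then gives $m^h\ge 0$, let alone $\mu^h\le C_h m^h$. The ratio $\alpha^h=(\text{mollified }\alpha m)/(\text{mollified } m)$ is only meaningful along the mollified path. Once you re-solve from $m_0$, the solution is no longer that path, and the prescribed removal can exceed the available mass.

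The mixture $(1-\kappa)m+\kappa\,(\text{heat flow of } m_0)$ does give a density lower bound, but only for the mollified \emph{original} path, not for your re-solved $m^h$. You would also need $m^h\to m$ to pass to the limit in the costs. That requires stability and uniqueness for Fokker--Planck equations with merely $L^2(m)$-controlled drifts, which you do not have.

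The initial layer is not handled either. ``Delaying the killing to after $t_0+h$ by time-shifting'' requires saying what the state does on $[t_0,t_0+h]$. For a general $m_0$ (e.g.\ a Dirac mass) it cannot stay at $m_0$ with finite energy. So the shifted path restarts from the wrong state, and the claimed $o(1)$ extra cost is unjustified.

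The paper avoids this by not solving anything. It sets $m^\eta=\Gamma_\eta*m$, $\alpha^\eta=\Gamma_\eta*(\alpha m)/m^\eta$ and $\mu^\eta=\Gamma_\eta*\mu$, with the heat kernel in space. It extends the path backward on $[t_0-\eta,t_0)$ by $\Gamma_{t-t_0+\eta}*m_0$, then averages in time with the causal kernel $\xi_\sigma$. The equation, nonnegativity, and $\beta^{\eta,\sigma}=(\xi_\sigma*\mu^\eta)/m^{\eta,\sigma}$ then come for free, because a causal time average only removes mass that was present just before.

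The price is $m^{\eta,\sigma}_{t_0}\neq m_0$; it only tends to $m_0$ as $\sigma\to0$ and then $\eta\to0$. This is paid with the $\delta$-uniform Lipschitz bound of Proposition \ref{prop.UthetadeltaLip}, namely $U^{\theta,\delta}(t_0,m_0)\le U^{\theta,\delta}(t_0,m^{\eta,\sigma}_{t_0})+C\bd(m_0,m^{\eta,\sigma}_{t_0})$ with $C$ independent of $\delta$. That ingredient is missing from your proposal.

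Without it, the argument only closes if you give an explicit construction that matches $m_0$ exactly. One option: run the heat flow from $m_0$ on $[t_0,t_0+s]$, which lands exactly on $\Gamma_s*m_0$. Then follow $\Gamma_s*m_{\cdot-s}$, whose left limit at $t_0+s$ is $\Gamma_s*m_0$. Finally, average over $s\in[h,h+\sigma]$ to smear the time-atoms of $\mu$.
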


\begin{proof} We already know that $U^{\theta,\delta}(t_0,m_0) \geq U^\theta(t_0,m_0)$.
\vs
We  fix $\eps > 0$ and use Lemma \ref{lem.rc} to find $(m, \alpha,\mu) \in \cA_{t_0,m_0}$, which is $\eps$-optimal for the problem defining $U^{\theta}(t_0,m_0)$ and continuous at time $T$. Then, using  $(m, \alpha,\mu)$, we construct, for $\eta, \sigma > 0$,   admissible controls $(m^{\eta,\sigma}, \alpha^{\eta,\sigma}, \beta^{\eta,\sigma})$  such that  $m^{\eta,\sigma}_0:=m^{\eta,\sigma}_{t_0}$ converges to $m_0$ as $\eta,\sigma$ tend to $0$, and 
\be\label{limreguJetasigma}
 \limsup_{\eta\to 0}   \limsup_{\sigma\to 0}  \limsup_{\delta\to 0}   J^{\theta,\delta}_{t_0,m^{\eta, \sigma}_{0}}(m^{\eta,\sigma}, \alpha^{\eta,\sigma}, \beta^{\eta,\sigma})\leq  J^\theta_{t_0,m_0} (m, \alpha, \mu).
\ee
Then the uniform continuity of $U^{\theta,\delta}$ in the measure argument yields that 
\begin{align*}
& \limsup_{\delta\to 0} U^{\theta,\delta}(t_0, m_0) =   \limsup_{\eta\to 0} \limsup_{\sigma\to 0}  \limsup_{\delta\to 0}  U^{\delta, \theta} (t_0, m^{\eta,\sigma}_0)  \\
& \qquad \leq  \limsup_{\eta\to 0} \limsup_{\sigma\to 0}  \limsup_{\delta\to 0}  J^{\theta,\delta}_{t_0,m^{\eta, \sigma}}(m^{\eta,\sigma}, \alpha^{\eta,\sigma}, \beta^{\eta,\sigma}) \leq J^\theta_{t_0,m_0}( m, \alpha, \mu)
\\
&\qquad \leq U^{\theta}(t_0,m_0) + \eps. 
\end{align*}
Sending $\eps \to 0$ completes the proof. 
\vs
To produce the $m^{\eta, \sigma}$, $\alpha^{\eta, \sigma}$ and $\beta^{\eta, \sigma}$ needed to establish \eqref{limreguJetasigma}, we first extend  $(m, \alpha,\mu)$ to $[t_0,\infty)$ by setting $\alpha=\mu=0$ on $[T,\infty)$ and let $m$ be the solution of the heat equation on $(T,\infty)$ with initial condition $m_T$. 
\vs

Let $\Gamma_t$ be the heat kernel on $\T^d$. For $\eta>0$ small, we let $m^\eta = \Gamma_\eta \ast m$, $\alpha^\eta= (\Gamma_\eta \ast (\alpha m))/m^\eta$ and $\mu^\eta= \Gamma_\eta \ast \mu$, where the convolution is in space only. 
\vs
Then $(m^\eta, \alpha^\eta, \mu^\eta)$ satisfies \eqref{meqn} with an initial condition  $m^\eta_0= \Gamma_\eta\ast m_0$ and, 
for $\eta$ small enough, 
\be\label{qlhskdfnglk0}
J^\theta_{t_0,m^{\eta}_0}(m^\eta, \alpha^\eta, \mu^\eta)\leq J^\theta_{t_0,m_0}( m, \alpha, \mu) + \ep/2.
\ee

We extend $(m^\eta, \alpha^\eta, \mu^\eta)$ to $[t_0-\eta,\infty)$ by setting $\alpha^\eta=\mu^\eta=0$ and $m^\eta(t)= \Gamma_{t-t_0+\eta}\ast m_0$ on $[t_0-\eta,t_0)\times \T^d$. Then $(m^\eta, \alpha^\eta, \mu^\eta)$ still satisfies \eqref{meqn} on the time interval $(t_0-\eta,\infty)$ with initial condition $m_0$. 
\vs
For $\sigma \in (0,\eta)$, let $m^{\eta,\sigma}= \xi_\sigma \ast m^\eta$, $\alpha^{\eta,\sigma}= ( \xi_\sigma \ast (\alpha^\eta m^\eta))/m^{\eta,\sigma}$, $\beta^{\eta,\sigma}= \xi_\sigma \ast \mu^\eta/ m^{\eta,\sigma}$, where the convolution is now in time only with the kernel $\xi_\sigma$ as in the definition of $U^\theta$. 
The triple $(m^{\eta,\sigma},\alpha^{\eta,\sigma},\beta^{\eta,\sigma})$  solves \eqref{meqn} on the time interval $(t_0,T)$ with initial condition $m^{\eta,\sigma}_0= m^{\eta,\sigma}_{t_0}$. 
\vs
We  claim that, for $\sigma$ first  and then $\delta$ small enough, 
\be\label{qlhskdfnglk}
J^{\theta,\delta}_{t_0,m^{\eta, \sigma}_0}( m^{\eta,\sigma}, \alpha^{\eta,\sigma}, \beta^{\eta,\sigma}) \leq  J_{t_0,m^{\eta}_0}^\theta(m^\eta, \alpha^\eta, \mu^\eta)+\ep/2. 
\ee

The inequality
\be\label{qlhskdfnglkstep1}
 \int_{t_0}^T \int_{\T^d} L(x, \alpha^{\eta,\sigma},m^{\eta,\sigma})m^{\eta,\sigma}
 \leq 
 \int_{t_0}^T \int_{\T^d} L(x, \alpha^{\eta},m^{\eta})m^{\eta}+\ep/8
\ee
holds for $\sigma$ small enough by Jensen inequality, the regularity of $L$ and the fact that $m^{\eta,\sigma}_t$ converges to $m^{\eta}_t$ for a.e. $t\in [t_0,T]$. 
\vs

It also follows that, for  $\sigma$ small enough, 
\be\label{ikhjzljenlPsi}
\begin{split}
 \int_{t_0}^T \int_{\T^d} \Psi\big(x,(\xi_\theta \ast & m^{\eta,\sigma})_t \big)\beta^{\eta,\sigma}(t,x)m^{\eta,\sigma}_t(dx) dt\\
& \leq  \int_{t_0}^T \int_{\T^d} \Psi\big(x,(\xi_\theta \ast m^{\eta})_t \big)\mu^{\eta}(dt, dx)dxdt+\ep/8.
 \end{split}
 \ee
 Indeed, we note that
\begin{align*}
I_\sigma &= \int_{t_0}^T \int_{\T^d} \Psi\big(t,x,(\xi_\theta \ast m^{\eta,\sigma})_t \big)\beta^{\eta,\sigma}(t,x)dm^{\eta,\sigma}_t(x) dt
= \int_{t_0}^T \int_{\T^d} \Psi\big( t,x,(\xi_\theta \ast m^{\eta,\sigma})_t\big) d(\xi_\sigma \ast \mu^\eta)_t(x) dt \\ 
& \qquad \qquad = \int_{t_0}^T  \int_{\T^d}  \int_{t-\sigma}^t \xi_\sigma (t-s)  \Psi\big(t,x, (\xi_\theta \ast m^{\eta,\sigma})_t\big) d\mu^\eta(s, x) dt \\
& \qquad \qquad = \int_{t_0-\sigma}^T  \int_{\T^d} \Bigl(\int_{s\vee t_0}^{(s+\sigma)\wedge T}  \xi_\sigma (t-s)  \Psi\big(t,x, (\xi_\theta \ast m^{\eta,\sigma})_t\big) dt\Bigr) d\mu^\eta (s,x)  . 
\end{align*}
\vs
Since the $m^{\eta,\sigma}_t$'s converge, as $\sigma\to0$  for a.e. in $t$, to $m^\eta_t$,  the $\xi_\theta \ast m^{\eta,\sigma}$'s converge uniformly to $\xi_\theta \ast m^{\eta}$ and, hence, the integrand 
$$
\int_{s\vee t_0}^{(s+\sigma)\wedge T}  \xi_\sigma (t-s)  \Psi\big(t,x,(\xi_\theta \ast m^{\eta,\sigma})_t)dt
$$
converges uniformly to $\Psi\big(t,x, (\xi_\theta \ast m^{\eta})_t\big)$, which guarantees the convergence of $I_\sigma$ to 
$$
\int_{t_0}^T \int_{\T^d} \Psi\big(t,x, (\xi_\theta \ast m^{\eta})_t\big) d\mu^{\eta}(t,x),
$$
and, thus \eqref{ikhjzljenlPsi} holds. 
\vs
The continuity of  $m$ at time $T$ implies that $m^{\eta}$ is also continuous at time $T$. Then, in view of  construction of the convolution kernel $\xi_\sigma$,  $m^{\eta,\sigma}_T \to m_T^{\eta}$ as $\sigma \to 0$.
\vs
Thus, for $\sigma$ small, we have
$$
G(m^{\eta,\sigma}_T) \leq G( m^{\eta}_T) +\ep/8. 
$$
\vs
Combining this last inequality with \eqref{qlhskdfnglkstep1} and \eqref{ikhjzljenlPsi}, we find, for sufficiently small $\sigma$, that  
$$
J^{\theta,\delta}_{t_0, m^{\eta,\sigma}_0}(m^{\eta,\sigma}, \alpha^{\eta,\sigma}, \beta^{\eta,\sigma}) \leq  J^{\theta}_{t_0,m_0}(m^{\eta}, \alpha^{\eta}, \mu^{\eta})+ \int_{t_0}^T \int_{\T^d} \frac{\delta}{2} |\beta^{\eta,\sigma}|^2 m^{\eta,\sigma} + 3\ep/8. 
$$
We finally choose $\delta>0$ small enough to obtain \eqref{qlhskdfnglk}. 
\vs
In view of the fact that, as $\eta\to 0$, the $m^{\eta}_0$'s tend to $m_0$,  to complete the proof we just need to check that 
\begin{equation*}
\lim_{\sigma\to 0} m^{\eta,\sigma}_0= m^{\eta}_0.
\end{equation*}
This last claim follows from the observation that, since $m^{\eta}$ is c\`{a}dl\`{a}g,  for any test function $f\in C(\T^d)$, we have 
$$
\lim_{\sigma\to0}\int_{\T^d} f m^{\eta,\sigma}_0=\lim_{\sigma\to0} \int_{\T^d} \int_{t_0-\sigma}^{t_0} f(x) \xi_\sigma(t_0-s) m^{\eta}(ds,x)dx = 
\int_{\T^d} f m^{\eta}(t_0^-,x)dx = \int_{\T^d} f m^{\eta}_0(x)dx. 
$$
\end{proof}

We finally obtain the following result about the joint continuity of $U$ in $t$ and $m$. 

\begin{prop}\label{prop.reguU} Assume \eqref{ath1}. Then there exist   constants $C$ and $R_0$, which depend on $G$ only through its Lipschitz constant,  such that 
\begin{align*}
    \big| U(t,m) - U(s,n) \big| \leq C \Big( \bd(m,n) + |t-s|^{1/2} \Big), 
\end{align*}
and, for any $(t_0,m_0)\in [t_0,T]\times \sub$, there is an optimal solution $(m,\alpha, \mu)$ for $U(t_0,m_0)$ such that \[\|\alpha\|_{L^\infty}\leq R_0.\]
\end{prop}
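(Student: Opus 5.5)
The plan is to pass to the limit in the uniform estimates for the regularized problems, and then to get the time regularity and the bounded optimal control from the dynamic programming principle together with compactness.

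\emph{Lipschitz bound in $m$.} By Proposition \ref{prop.UthetadeltaLip}, for each fixed $t$ the maps $m\mapsto U^{\theta,\delta}(t,m)$ are Lipschitz with respect to $\bd$, with a constant $C$ independent of $\theta$ and $\delta$. Lemma \ref{lem.UthetadeltatoUtheta} gives pointwise convergence $U^{\theta,\delta}\to U^\theta$ as $\delta\to0$. Lemma \ref{lem.UthetatoU} gives pointwise convergence $U^\theta\to U$ as $\theta\to 0$. Pointwise limits preserve the Lipschitz bound, so
\[
|U(t,m)-U(t,n)|\le C\,\bd(m,n).
\]

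\emph{Bounded optimal control.} Fix $(t_0,m_0)$ and let $(\bar m^{\theta,\delta},\bar\alpha^{\theta,\delta},\bar\beta^{\theta,\delta})$ be the minimizers from Proposition \ref{prop.dualdeltaAppend}. There $\bar\alpha^{\theta,\delta}=-D_pH(x,Du,\bar m)$ with $\|Du\|_\infty\le C$ uniformly in $\theta,\delta$. Assumption \eqref{phl1} then gives $\|\bar\alpha^{\theta,\delta}\|_\infty\le R_0$. Set $\mu^{\theta,\delta}=\bar\beta^{\theta,\delta}\bar m^{\theta,\delta}$. By Proposition \ref{cor.compactmn} (its proof goes through along a double sequence $\theta\to0$ after $\delta\to0$, or along a diagonal sequence), we extract a limit $(m,\alpha,\mu)\in\cA_{t_0,m_0}$. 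The bound $|\alpha|\le R_0$ passes to the limit because the set $\{(m',E):|E|\le R_0 m'\}$ is weakly-$\star$ closed. Optimality of the limit uses lower semicontinuity. The $L$ part is \eqref{lscL}. The $\Psi$ part and $G$ are handled as in \eqref{lakejznrdfgkl0}, using the fact that $\Psi$ is non-increasing and that cluster points of $(\xi_\theta\ast m^n)_{t_n}$ lie between $m_t$ and $m_{t-}$. The penalization term $\frac{\delta}{2}\int|\beta|^2m$ is nonnegative and can be dropped. Together with $U^{\theta,\delta}\to U$ this gives $J_{t_0,m_0}(m,\alpha,\mu)\le\liminf U^{\theta,\delta}(t_0,m_0)=U(t_0,m_0)$.

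\emph{Time regularity.} This step has two directions.
\begin{itemize}
\item For $U(t,m)\le U(t+h,m)+Ch^{1/2}$, take $\mu=0$ and $\alpha=0$ on $[t,t+h]$ and apply Lemma \ref{dp}. This gives $U(t,m)\le Ch+U(t+h,\Gamma_h\ast m)$. The heat flow satisfies $\bd(\Gamma_h\ast m,m)\le Ch^{1/2}$, and the Lipschitz bound in $m$ finishes this direction.
\item For the reverse inequality, take the bounded optimal control just constructed and apply Lemma \ref{dp}:
\[
U(t,m)=\int_t^{t+h}\!\!\int L\,dm_s\,ds+\int_{[t,t+h)}\Psi\,d\mu+U(t+h,m_{(t+h)-}).
\]
The $L$ integral is at least $-Ch$. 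Next we use \eqref{def.psinon-increasing} for $U$, comparing $m_{(t+h)-}$ with a measure $\tilde m\ge m_{(t+h)-}$ of mass close to $m(\T^d)$. A natural choice is $\tilde m$ equal to the uncontrolled-killing flow, namely the solution of the Fokker--Planck equation with drift $\alpha$ and $\mu=0$, which dominates $m_{(t+h)-}$ by the maximum principle. This gives $U(t+h,m_{(t+h)-})\ge U(t+h,\tilde m)-\int\Psi(x,\tilde m)\,d(\tilde m-m_{(t+h)-})$. The killed mass $(\tilde m-m_{(t+h)-})(\T^d)$ equals $\mu([t,t+h)\times\T^d)$. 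We then need $\int\Psi(x,m_{s-})\,d\mu\ge\int\Psi(x,\tilde m)\,d(\tilde m-m_{(t+h)-})-C\sqrt h$. This follows from the monotonicity of $\Psi$ ($m_{s-}\le\tilde m_{s}$), the Lipschitz continuity of $\Psi$ in $x$, and bounded drift, which keeps displacements over $[t,t+h]$ of order $\sqrt h$.
\end{itemize}

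The main obstacle is this last comparison. The mass removed at time $s$ at position $x$ must be matched, up to $O(\sqrt h)$ in $\bd$, with the corresponding mass in $\tilde m-m_{(t+h)-}$ at time $t+h$. The cleanest route is probabilistic: realize $m$ through an SDE with drift $\alpha$ and a killing time, and use Lemma \ref{lem.estisol}-type estimates, mirroring the finite-$N$ argument behind \eqref{cvineqlkjsndfcv}. Finally, $\bd(\tilde m,m)\le C\sqrt h$ gives $U(t+h,\tilde m)\ge U(t+h,m)-C\sqrt h$, which closes the estimate.
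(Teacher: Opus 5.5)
Your Lipschitz bound in $m$, the construction of a bounded optimal control, and the easy half of the time estimate follow the paper's arguments. The control part uses limits of the minimizers of $U^{\theta,\delta}$, weak-$\star$ closedness of $\{|E|\le R_0 m'\}$, and lower semicontinuity as in \eqref{lscL} and \eqref{lakejznrdfgkl0}. The reverse time inequality also has the paper's architecture: bounded optimal control, the killing-free flow $\tilde m\ge m$, $\Psi$-monotonicity of $U$, then $\bd(\tilde m_{t+h},m_0)\le C\sqrt h$. However, the inequality that actually carries the estimate,
\[
\iint_{[t,t+h)\times\T^d}\Psi(x,m_{s-})\,d\mu(s,x)\;\ge\;\int_{\T^d}\Psi(x,\tilde m_{t+h})\,d\big(\tilde m_{t+h}-m_{(t+h)-}\big)(x)-C\sqrt h,
\]
is left open; you only sketch a coupling. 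That route is not a formality. It needs to represent an arbitrary $(m,\alpha,\mu)\in\cA_{t,m_0}$ by a diffusion with drift $\alpha$ and a (randomized) killing time whose unkilled continuation has law $\tilde m$. Here $\mu$ may have atoms in time and be singular with respect to $m_s\,ds$, so it need not be of the form $\beta m$. That is a superposition theorem for Fokker--Planck equations with general killing, and nothing in the paper supplies it. The finite-$N$ argument behind \eqref{cvineqlkjsndfcv} works only because there the particles and stopping times are given from the start.

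No particle-wise matching is needed. The missing idea is to freeze the measure argument of $\Psi$ and let the weak formulation do the matching. Since $m_{s-}\le\tilde m_s$ and $\Psi$ is non-increasing, and since $\bd(\tilde m_s,\tilde m_{t+h})\le C\sqrt h$ while $\mu([t,t+h)\times\T^d)\le 1$, you get $\iint\Psi(x,m_{s-})\,d\mu\ge\iint\phi\,d\mu-C\sqrt h$ with $\phi(x)=\Psi(x,\tilde m_{t+h})$. By \eqref{phl2}, $\|\phi\|_{C^2}\le C$ uniformly, so $\phi$ is an admissible time-independent test function in the equations for $m$ and $\tilde m$. These give
\[
\int_{\T^d}\phi\,d\big(\tilde m_{t+h}-m_{(t+h)-}\big)=\iint_{[t,t+h)\times\T^d}\phi\,d\mu+\int_t^{t+h}\!\!\int_{\T^d}\big(\Delta\phi+\alpha\cdot D\phi\big)\,d\big(\tilde m_s-m_s\big)\,ds,
\]
and the last term is $O(h)$ because $\alpha$ is bounded. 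This closes your estimate. The paper does the same computation, phrased with the equation for $m$ alone after replacing $\tilde m_{t+h}$ by $m_0$ in the integrating measure at cost $C\sqrt h$. It relies on the $C^2$ bound on $\Psi(\cdot,m)$ from \eqref{phl2}, not merely Lipschitz continuity in $x$, and that is what makes any coupling unnecessary.
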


\begin{proof} The Lipschitz continuity of $U$ in the measure variable is a straightforward consequence of Proposition \ref{prop.UthetadeltaLip}, Lemma \ref{lem.UthetadeltatoUtheta} and Lemma \ref{lem.UthetatoU}. 
\vs
Indeed,  let $(\bar m^{\delta, \theta},\bar \alpha^{\delta, \theta}, \bar \beta^{\delta, \theta})$ be optimal  for $U^{\delta,\theta}(t_0,m_0)$. Then the  $\bar \alpha^{\delta,\theta}=-H_p(x,Du, \bar m^{\delta,\theta})$'s  are  bounded in $L^\infty$ by a constant $R_0$ depending only on the bound on $Du$ given in Proposition \ref{prop.dualdeltaAppend} and the regularity of $H_p$. 
\vs
Set $\mu^{\delta, \theta}= \bar \beta ^{\delta, \theta}\bar m^{\delta, \theta}$. Then,  Lemma \ref{lem.UthetadeltatoUtheta} and Lemma \ref{lem.UthetatoU} imply that the $(\bar m^{\delta, \theta},\bar \alpha^{\delta, \theta}, \bar \mu^{\delta, \theta})$ 's form a minimizing sequence for $U(t_0,m_0)$. Corollary \ref{cor.compactmn} claims the existence of a subsequence (denoted in the same way) and $(m, \alpha,\mu) \in \cA_{t_0,m_0}$ such that the $(\bar m^{\delta, \theta})$'s, $(\bar \alpha^{\delta, \theta}\bar m^{\delta, \theta})$'s and $(\bar \mu^{\delta, \theta})$'s converge in measure to some $ m$, $ \alpha m$ and $ \mu$ respectively. 
\vs
It follows from the proof of Proposition \ref{prop.existsOS} that $( m,  \alpha,  \mu)$ is optimal for $U(t_0,m_0)$ and, by  construction, $ \alpha$ is bounded by $R_0$ in $L^\infty$. 
\vs

We finally prove the H\"{o}lder continuity in time of $U$.
 Fix $(t_0,m_0)\in [0,T]\times \sub$. Choosing $\alpha=\mu=0$ and letting $m$ solve the heat equation with initial condition $m_0$, we have by a dynamic programming argument that, for any $h\in (0,T-t_0)$,  
$$
U(t_0,m_0)\leq \int_{t_0}^{t_0+h}\int_{T^d} L(x,0,m_t)dt + U(t_0+h,m_{t_0 + h}) \leq C\sqrt{h} + U(t_0+h,m_0), 
$$
where for the second inequality we used the regularity of $U$ with respect to the measure argument and the fact that $\bd_1(m(t_0+h),m_0) \leq C \sqrt{h}$. 
\vs
We now prove the opposite inequality. Let $(m,\alpha, \mu)$ be optimal for $U(t_0,m_0)$, with $\alpha$ bounded. Then, again, dynamic programming implies that 
\begin{equation}\label{kjhkjazen}
\begin{split}
 U(t_0,m_0)& =    U(t_0 + h, m_{t_0 + h}) + \int_{t_0}^{t_0 + h} \int_{\T^d} L\Big( x, \alpha(t,x) , m_t \Big) dm_t(x) dt \\
    & \qquad \qquad  +\iint_{[t_0,t_0 + h]\times \T^d} \Psi(x,m_{t-}) \mu(dt,dx). 
\end{split}
\end{equation}
Now let $\tilde{m}$ satisfy 
\begin{align*}
    \partial_t \tilde{m} - \Delta \tilde{m} + \text{div}(\tilde{m} \alpha) = 0 \ \text{in} \ (t_0,T] 
    \ \ \text{and} \ m(t_0)=m_0,
\end{align*}
and note  that $\tilde m_t \geq m_t$ for any $t$. Then, from the uniform bound of $\alpha$, we find another constant $C$
depending only on the  uniform bound of $\alpha$ such that, for any $t_0\leq s\leq t\leq T$, 
\be\label{ineqtildemst}
    \bd_1(\tilde{m}_t, \tilde{m}_s) \leq C \sqrt{t-s}. 
\ee
It follows, from  \eqref{kjhkjazen} and the boundedness of $L$, that 
\begin{align*}
&    U(t_0 + h, m_0) - U(t_0,m_0) \\
&  \qquad = U(t_0 + h, \wt{m}_{t_0 + h}) - U(t_0, m_0) + U(t_0 + h, m_{t_0}) - U(t_0 + h, \wt{m}_{t_0 + h}) 
    \\
    &\qquad\leq U(t_0 + h, \wt{m}_{t_0 + h}) - U(t_0 + h, m_{t_0 + h}) + U(t_0 + h,m_{t_0}) - U(t_0 + h, \wt{m}_{t_0 + h}) 
    \\
    &\qquad\qquad + Ch - \iint_{[t_0, t_0 + h]\times \T^d} \Psi(x,m_{t-}) d\mu(t,x).
\end{align*} 
Then using that $U$ is $\Psi-$monotone and uniformly Lipschitz continuous  in the $m$ variable and \eqref{ineqtildemst}, we find
\begin{align*}
&    U(t_0 + h, m_0) - U(t_0,m_0) \\
    &\qquad\leq \int_{\T^d} \Psi(x,\wt{m}_{t_0 + h}) d(\wt{m}_{t_0 + h} - m_{t_0 + h})
   - \iint_{[t_0, t_0 + h]\times \T^d} \Psi(x,m_{t-}) d\mu(t,x) + C \sqrt{h}  
    \\
    &\qquad\leq \int_{\T^d} \Psi(x, \wt{m}_{t_0 + h}) d(m_{t_0} - m_{t_0 + h}) - \iint_{[t_0, t_0 + h]\times \T^d} \Psi(x,m_{t-}) d\mu(t,x)
    \\
    &\qquad\qquad+ C \sqrt{h} + C \|\Psi( \cdot, \wt{m}_{t_0 + h})\|_{W^{1,\infty}} \bd_1(\wt{m}(t_0 + h), m_{t_0}) 
    \\
    &\qquad\leq  \int_{\T^d} \Psi( x, \wt{m}_{t_0 + h}) d(m_{t_0} - m_{t_0 + h}) - \iint_{[t_0, t_0 + h]\times \T^d} \Psi(x,m_{t-}) d\mu(t,x) + C \sqrt{h}. 
\end{align*}
That  $\Psi$ is non-increasing in $m$ also yields that 
\begin{align*}
   \iint_{[t_0, t_0 + h]\times \T^d} \Psi(x,m_{t-}) d\mu(t,x) & \geq   \iint_{[t_0, t_0 + h]\times \T^d} \Psi(x,\wt{m}_t) d\mu(t,x)
    \\
    &\geq   \iint_{[t_0, t_0 + h]\times \T^d} \Psi(x,\wt{m}_{t_0 + h}) d\mu(t,x) - C \sqrt{h}, 
\end{align*}
where in the last line we used the fact that  the total variation of $\mu$ is bounded by $1$  and that $\bd_1(\wt{m}(t_0 + h), \wt{m}_t) \leq C \sqrt{h}$. 
\vs
Combining the above estimates and using that, in view of \eqref{ath1}, 
\begin{align*}
    \phi(t,x) = \Psi(x,\wt{m}(t_0 + h))
\end{align*}
is of class $C^2$, 
we have 
\begin{align*}
    U(t_0 + h, m_{0}) - U(t_0,m_0) \leq C \sqrt{h} + \int_{\T^d} \phi(t_0,x) d(m_{t_0} - m_{t_0 + h})(x) - \iint_{[t_0, t_0 + h]\times \T^d} \phi(t,x) d\mu(t,x).
\end{align*}

Finally, the equation satisfied by $m$ implies that 
\begin{align*}
    \int_{\T^d} \phi d(m_{t_0 + h} - m_{t_0})  &  = \int_{t_0}^{t_0+h}\int_{\T^d} \Big(\partial_t \phi +\Delta \phi + \alpha \cdot  D\phi\Big) m- \iint_{[t_0,t_0+h]\times\T^d} \phi \mu.
\end{align*}
Since $\phi$ is $C^2$ and $\alpha$ is bounded, we find 
\begin{align*}
    \int_{\T^d} \phi d(m_{t_0 + h} - m_{t_0}) = - \iint_{[t_0,t_0  + h] \times \T^d} \phi(t,x) d\mu(t,x) + O(h), 
\end{align*}
and, thus, 
\begin{align*}
    U(t_0 + h, m_{0}) - U(t_0,m_0) \leq C \sqrt{h}. 
\end{align*}
\end{proof}

To conclude we prove the following result which removes the need to assume \eqref{assump.Gsmooth}.
\begin{corollary} \label{cor.regU}
Assume   \eqref{summarize}. Then the conclusions of Proposition \ref{prop.reguU} still hold.
\end{corollary}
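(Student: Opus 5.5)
Under \eqref{summarize} alone, I would approximate $G$ by terminal costs $G^k$ that satisfy \eqref{assump.Gsmooth}, apply Proposition \ref{prop.reguU} to each $G^k$, and pass to the limit. The approximation runs in two steps. First replace $G$ by its $\Psi$-monotone envelope $G_\Psi$ from \eqref{def.monotoneenvelope}. Removing mass at the terminal time is free in the definition of $U$, since $\mu$ may charge $\{T\}\times\T^d$. Hence the value function with terminal cost $G$ coincides with the one with terminal cost $G_\Psi$: any $m_T$ can be replaced by $m'\le m_T$ at cost $\int\Psi(x,m_{T-})d(m_T-m')$, and $\Psi(\cdot,m_{T-})\le\Psi(\cdot,m_T)$ because $\Psi$ is non-increasing and $m_T\le m_{T-}$. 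I would check both inequalities directly from the definition of $J_{t_0,m_0}$. By Lemma \ref{lem.envelopeLip}, $G_\Psi$ is $\bd$-Lipschitz with constant depending only on $\mathrm{Lip}(G)$ and the data.

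Next I would invoke Proposition \ref{prop.mollification} from the Appendix to obtain $C^1$ functions $G^k$ with $\frac{\delta G^k}{\delta m}$ jointly Lipschitz. These $G^k$ should be $\Psi$-monotone, have Lipschitz constants bounded uniformly in $k$ by $C(1+\mathrm{Lip}(G))$, and satisfy $\|G^k-G_\Psi\|_\infty\to0$. If the mollification destroys $\Psi$-monotonicity, I would take $\Psi$-monotone envelopes again and use Lemma \ref{lem.envelopeLip} to keep the uniform Lipschitz bound. Smoothness is the delicate part here, so I would rely on the Appendix result delivering both properties at once.

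Let $U^k$ be the value function with terminal cost $G^k$. Proposition \ref{prop.reguU} gives
\[
|U^k(t,m)-U^k(s,n)|\le C\big(\bd(m,n)+|t-s|^{1/2}\big)
\]
with $C$ independent of $k$, because the constants depend on $G^k$ only through its Lipschitz constant. It also gives optimal triples for $U^k(t_0,m_0)$ with $\|\alpha^k\|_\infty\le R_0$ uniformly in $k$. Since $J$ depends on the terminal cost only additively through $G(m_T)$, we have $\|U^k-U\|_\infty\le\|G^k-G_\Psi\|_\infty\to0$. Therefore $U$ inherits the Lipschitz/Hölder estimate.

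It remains to produce an optimal control for $U(t_0,m_0)$ with $\|\alpha\|_\infty\le R_0$. The optimal triples $(m^k,\alpha^k,\mu^k)$ for $U^k$ have bounded energies, so Proposition \ref{cor.compactmn} yields a converging subsequence with limit $(m,\alpha,\mu)$. The bound $|\alpha|\le R_0$ survives the limit because $\alpha^k m^k\to\alpha m$ and the set $\{E:|E|\le R_0\, m\}$ is weakly closed. Lower semicontinuity of the cost then follows as in Proposition \ref{prop.existsOS}, using the uniform convergence $G^k\to G_\Psi$ and $J^{G_\Psi}\le J^G$ on $\cA_{t_0,m_0}$. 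This gives
\[
J^{G_\Psi}_{t_0,m_0}(m,\alpha,\mu)\le\liminf_k U^k(t_0,m_0)=U(t_0,m_0).
\]
Finally, I would convert this into optimality for the original $G$ by adding a terminal jump at time $T$, removing mass down to a minimizer $m'$ in the envelope. This leaves $\alpha$ unchanged. The main obstacle is this last step, because it requires the infimum in \eqref{def.monotoneenvelope} to be attained, which holds by weak compactness of $\{m'\le m_T\}$, and the jump costs to match. The matching uses $\Psi(\cdot,m_{T-})$ versus $\Psi(\cdot,m_T)$ together with the monotonicity of $\Psi$ noted above.
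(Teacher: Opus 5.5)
Your proposal is correct and follows the same route as the paper: pass to $G_\Psi$ via Lemma~\ref{lem.envelopeLip}, mollify with Proposition~\ref{prop.mollification}, use the $k$-independent constants of Proposition~\ref{prop.reguU}, and extract an optimizer with bounded $\alpha$ by compactness. You also supply two steps the paper leaves implicit: that the value functions with terminal costs $G$ and $G_\Psi$ coincide (which is what gives $U^k\to U$ uniformly), and that the limit optimizer, which is a priori optimal only for the $G_\Psi$-problem, becomes an optimizer for the $G$-problem after adding a terminal jump at time $T$ that leaves $\alpha$ unchanged.
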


In other words, the estimate~\eqref{ulip} in Theorem~\ref{thm.main.lip} holds. 

\begin{proof}
    We apply Lemma \ref{lem.envelopeLip} and then Proposition \ref{prop.mollification} to produce a sequence of terminal conditions $(G^k)_{k\in\N}$ which are Lipschitz uniformly in $k$, satisfy \eqref{assump.Gsmooth}, and $G^k \to G_{\Psi}$ uniformly. 
 \vs
    Let $U^k$ be defined exactly like $U$, but with $G^k$ replacing $G$. Then Proposition \ref{prop.reguU} shows that there exists an independent of $k$  a constant $C > 0$ such that, for all  $ m,n \in \sub$  and  $t,s \in [0,T]$,
    \begin{align*}
        \big|U^k(t,m) - U^k(s,n) \big| \leq C \Big( \bd(m,n) + |t-s|^{1/2} \Big). 
    \end{align*}
    Since the $U^k$'s converge to $U$ uniformly, we deduce that $U$ satisfies the same bound. 
   \vs
    Similarly, Proposition \ref{prop.reguU} shows that there is a constant $R_0 > 0$ such that for each $k \in \N$ and each $(t_0,m_0) \in [0,T) \times \sub$, there exists an optimizer $(m^k,\alpha^k, \mu^k) \in \cA_{t_0,m_0}$ for the problem defining $U^k(t_0,m_0)$ which satisfies $\|\alpha^k\|_{\infty} \leq R_0$.
\vs   
     A compactness  argument similar to  the proof of Proposition \ref{prop.reguU} yields that, along a subsequence, the $(m^k,\alpha^k \alpha^k, \mu^k)$'s  converge to an optimizer $(m,\alpha,\mu)$ for the problem defining $U(t_0,m_0)$, which still satisfies the bound $\| \alpha \|_{\infty} \leq R_0$. 
\end{proof}

\section{Viscosity solutions}  \label{sec.viscosity}

In this section, we prove a comparison principle for \eqref{eq.HJstoppingPSI}. Unfortunately, this will require the Hamiltonian $H$ to be globally Lipschitz. This is the reason for the Lipschitz condition for  the uniqueness result in Theorem \ref{thm.uniqueness}. On the other hand, for the comparison result we will not require $H$ to derive from a Lagrangian via \eqref{ath200}, that is, we do not need $H$ to be convex in $p$, and we will not need the regularity conditions appearing in \eqref{phl1}. Thus, in this section we assume that
\be\label{ath100}
H \text{ is globally Lipschitz continuous, and bounded below},
\ee
in addition to the conditions \eqref{phl2}, \eqref{phl3} on $\Psi$ and $G$, respectively. For notational convenience, we record this as the following hypothesis:
\be\label{hyp.comparison}
\text{ \eqref{phl2}, \eqref{phl3}, and \eqref{ath100} all hold}
\ee
\vs
In what follows, we  first introduce the definition of viscosity sub- and super-solutions to \eqref{eq.HJstoppingPSI} and prove a comparison principle when $H$ is Lipschitz. Then we show that the value function $U$ is a viscosity solution of \eqref{eq.HJstoppingPSI}, and, finally, complete the proof of the uniqueness result, Theorem \ref{thm.uniqueness}. 

\begin{definition} \label{def.testfunction}
    A map $\Phi : [0,T] \times \sub \to \R$ is a smooth test function,  if $\partial_t \Phi$ and $\dfrac{\delta \Phi}{\delta m}$ exist and are continuous, $\dfrac{\delta \Phi}{\delta m}(t,m, \cdot)\in C^2$ for any $(t,m)$ and, for any $t\in [0,T]$, the maps
    \begin{align*}
     \sub\times \T^d \ni (m,x) \mapsto D_x\frac{\delta \Phi}{\delta m}(t,m,x)
     \; \text{and}\;      \sub\times \T^d \ni (m,x) \mapsto D^2_x \dfrac{\delta \Phi}{\delta m}(t,m,x) 
    \end{align*}
    are Lipschitz continuous and bounded, uniformly with respect to $t$.
\end{definition}
For the definition of the super-solution we need  an additional property of the test functions. 
\begin{definition} \label{def.locallydecreasing}
Let $\Phi$ be a smooth test function, and $(t_0,m_0) \in [0,T) \times \sub$. For $\gamma \in \R$, we say that $\Phi$ is locally $\gamma$-$\Phi$-non-increasing at $(\ov t, \ov m)$, if there exist $\eps > 0$ such that, if $|t - \ov t| + \bd(m,\ov m) < \eps$, then,  for any $n \leq m$, 
\begin{align*}
    \Phi(t,m) \leq \Phi(t,n) + \int_{\T^d} \Psi( x,m) d(m - n)(x) - \gamma (m - n)(\T^d).
\end{align*}
A test function  $\Phi$ is said to be locally $\Phi$-decreasing at $(\ov t, \ov m)$, if $\Phi$ is $\gamma$-$\Phi$-non-increasing at $(\ov t, \ov m)$ for some $\gamma > 0$. 
\end{definition}

We turn now to the notions of viscosity sub- and super-solutions to \eqref{eq.HJstoppingPSI}.

\begin{definition}\label{def.viscositysoln} (i)~A  continuous map $V:[0,T]\times \sub \to \R$ is a viscosity sub-solution to  \eqref{eq.HJstoppingPSI}, if 
it is  $\Psi-$non-increasing,  $V(T,\cdot)\leq G_{\Psi}$, and, for any smooth test function $\Phi:[0,T]\times \sub(\T^d) \to \R$ such that  $V-\Phi$ has a maximum at some  $(\ov t, \ov m)\in [0, T)\times \sub(\T^d)$, 
$$
-\partial_t \Phi(\ov t, \ov m) - \int_{\T^d}  \Delta_x \dfrac{\delta \Phi}{\delta m}(\ov t,\ov m,x) d \ov m(x)  +\int_{\T^d} H \Big(x, D_x \dfrac{\delta \Phi}{\delta m}(\ov t,\ov m,x),\ov m \Big)d \ov m(x)  \leq0.
$$
(ii)~A continuous map $V:[0,T]\times \sub \to \R$ is a viscosity super-solution to \eqref{eq.HJstoppingPSI},  if $V(T,\cdot) \geq G_{\Psi}$, and, for any smooth test function $\Phi$ and $(\ov t, \ov m) \in [0,T) \times \sub$ such that that $U - \Phi$ has a minimum at $(\ov t, \ov m)$ and $\Phi$ is locally $\Psi$-decreasing at $(\ov t ,\ov m)$,
\begin{align*}
    - \partial_t \Phi(\ov t, \ov m) - \int_{\T^d} \Delta_x \frac{\delta \Phi}{\delta m}(\ov t, \ov m,x) d \ov m(x) + \int_{\T^d} H \Big(x, D_x \frac{\delta \Phi}{\delta m}(\ov t, \ov m,x), \ov m \Big) d \ov m(x)  \geq 0.
\end{align*}
(iii)~ A continuous map $V : [0,T] \times \sub \to \R$ is a viscosity solution to \eqref{eq.HJstoppingPSI} if it is both a viscosity sub-and super-solution.
\end{definition}

The main result of this section is the following comparison principle.

\begin{theorem} \label{thm.comparison} Assume \eqref{hyp.comparison} and let $V^-$ and $V^+$ be respectively a viscosity sub-solution and a viscosity super-solution to \eqref{eq.HJstoppingPSI}. Then $V^- \leq V^+$. 
\end{theorem}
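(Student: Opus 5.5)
I argue by contradiction: suppose $\sup (V^- - V^+) > 0$. I then double variables using the functional announced in the introduction, \eqref{doubling.intro}:
\begin{align*}
\Lambda(t,m,s,n) &= V^-(t,m)-V^+(s,n) -\frac{1}{\ep}\Phi(m-n) - \int_{\T^d} \Psi(x,m)\, d(m-n)(x) \\
&\qquad -\frac{1}{2\ep}(t-s)^2 -\delta\big( \|m\|^2_2+ \|n\|^2_2\big) -\theta(T-t) - \theta\, n(\T^d).
\end{align*}
Here $\Phi$ is a non-increasing function on $H^{-1}$, comparable to $\|\cdot\|_{H^{-1}}^2$, built from $H^{-1}$-norms of positive and negative parts of $m-n$. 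The penalization $\delta\|\cdot\|_2^2$ restricts the maximization to $\sub\cap L^2$. On that set, $\Lambda$ is upper semicontinuous for weak convergence: $\|\cdot\|_2^2$ is weakly lsc, and $V^\pm$ are continuous for $\bd$. Sublevel sets are weakly compact, so a maximizer $(\ov t,\ov m,\ov s,\ov n)$ exists. For $\theta,\delta$ small, the maximum remains positive.

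The first block of steps consists of standard estimates. I will compare $\Lambda$ at the maximizer with $\Lambda(\ov t,\ov m,\ov t,\ov m)$ and $\Lambda(\ov s,\ov n,\ov s,\ov n)$. Using boundedness and the $\bd$-continuity of $V^\pm$, this gives $\Phi(\ov m-\ov n)/\ep\to 0$, $|\ov t-\ov s|^2/\ep\to0$, and $\delta\|\ov m\|_2^2\to0$. The control of $\Phi$ transfers to $\bd(\ov m,\ov n)\to 0$ by interpolation, since $\|\cdot\|_{H^{-1}}$ controls $\bd$ on bounded subsets of $\sub$. Then I exclude $\ov t=T$ or $\ov s=T$. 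At the terminal time, $V^-\le G_\Psi\le V^+$. Moreover, $G_\Psi$ is $\Psi$-non-increasing and Lipschitz by Lemma~\ref{lem.envelopeLip}, so the correction term $-\int\Psi(x,m)\,d(m-n)$ combined with $\bd(\ov m,\ov n)\to0$ forces the maximum to be nonpositive, a contradiction.

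The heart of the argument is the supersolution side. Freezing $(\ov t,\ov m)$, the function $V^+-\Phi^+$ has a minimum at $(\ov s,\ov n)$, where
\begin{align*}
\Phi^+(s,n) = V^-(\ov t,\ov m) -\frac1\ep\Phi(\ov m-n)-\int\Psi(x,\ov m)\,d(\ov m-n) -\frac1{2\ep}(\ov t-s)^2-\delta\|n\|_2^2-\theta n(\T^d),
\end{align*}
up to a constant. I must check that $\Phi^+$ is locally $\Psi$-decreasing at $(\ov s,\ov n)$ in the sense of Definition~\ref{def.locallydecreasing}. This is why $\Phi$ is chosen non-increasing. Removing mass $n\mapsto n'\le n$ changes $-\frac1\ep\Phi(\ov m-n)$ in the favorable direction. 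The linear term $\int\Psi(x,\ov m)\,d(n-n')$ matches the obstacle, up to $\Psi(x,\ov m)$ versus $\Psi(x,n)$, an error bounded by $C\bd(\ov m,n)(n-n')(\T^d)$. The terms $-\delta\|n\|_2^2$ and $-\theta n(\T^d)$ supply a strict gain of at least $\theta (n-n')(\T^d)$, which beats that error once $\bd(\ov m,\ov n)\ll\theta$. The result is $\gamma=\theta/2$-decreasing.

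There is a second technical issue: $\Phi^+$ is only defined on $L^2$ and is not a smooth test function in the sense of Definition~\ref{def.testfunction}. I would handle this by working throughout with regularized objects. I would replace $\|n\|_2^2$ and the $H^{-1}$ quantities by their versions after convolution with the heat kernel at a small time. I would also approximate the sub/supersolutions so that the test function class is legitimate. This is essentially the content later packaged in Proposition~\ref{prop.subsol.equiv}, and I expect this step, together with the construction of a non-increasing $\Phi$ whose derivative is regular enough, to be the main obstacle.

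On the subsolution side, no monotonicity is required. I simply apply Definition~\ref{def.viscositysoln}(i) with the analogous test function in $(t,m)$. The $\Psi$-term there contributes $\frac{\delta}{\delta m}\int\Psi(x,m)\,d(m-n)$, which is bounded in $C^2$ by \eqref{phl2}.

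Subtracting the two inequalities, the time derivatives cancel up to the term $\theta$. The Laplacian terms pair $\int\Delta w\,d(\ov m-\ov n)$, where $w=\frac1\ep\frac{\delta\Phi}{\delta m}$, and are controlled through $-\frac1\ep\|\cdot\|_{L^2}$-type dissipation from the $H^{-1}$ structure, which produces a favorable sign. The $\delta$-terms generate errors of order $\delta\|\ov m\|_2^2\to0$. The Hamiltonian differences are bounded using the global Lipschitz property \eqref{ath100}: the difference of measures is controlled by $\frac1\ep\|Dw\|$ against $\|\ov m-\ov n\|_{H^{-1}}$, absorbed by the dissipation, and the difference in gradients is controlled by $C(\delta\|\cdot\|+\bd)$. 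Letting $\delta\to0$ and then $\ep\to0$ leaves $\theta\le o(1)$, which is a contradiction. Hence $V^-\le V^+$.
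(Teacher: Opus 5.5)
There is a genuine gap, and it is in your penalization. You are right that $\Phi$ must be non-increasing, so that $n\mapsto-\frac1\ep\Phi(\ov m-n)$ passes the local $\Psi$-decreasing test. But then $\Phi$ cannot be comparable to $\|\cdot\|_{H^{-1}}^2$: if $\Phi\ge0=\Phi(0)$ and $\mu\ge0$, monotonicity gives $\Phi(\mu)\le\Phi(0)=0$. The paper's $\Phi$ in \eqref{defPhiPsi} is, by Moreau's decomposition, half the squared $H^{-1}$-distance to the nonnegative cone. So $\frac1\ep\Phi(\ov m-\ov n)\to0$ only gives that weak limits satisfy $m_\delta\ge n_\delta$ (Lemma~\ref{ath300}); it does not give $\bd(\ov m,\ov n)\to0$. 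Every step you build on $\bd(\ov m,\ov n)\to0$ is therefore unsupported: excluding $t=T$, the local $\Psi$-decreasing check, and the final estimate.

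The missing idea is how to upgrade $m_\delta\ge n_\delta$ to $m_\delta=n_\delta$. Suppose $m_\delta\gneq n_\delta$. The $\Psi$-non-increasing property of $V^-$ exactly cancels the term $-\int\Psi(x,m_\delta)\,d(m_\delta-n_\delta)$. Since also $\|m_\delta\|_2>\|n_\delta\|_2$, the point $(t_\delta,n_\delta)$ gives a value strictly larger than
$M_{\delta,\theta}=\max_{t,m}\{V^-(t,m)-V^+(t,m)-2\delta\|m\|_2^2-\theta(T-t)-\theta m(\T^d)\}$, a contradiction. Hence limit points maximize $M_{\delta,\theta}$, which forces $t_\delta<T$ and $\bd(m_{\delta,\ep},n_{\delta,\ep})\to0$.

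Your proposal never uses that $V^-$ is $\Psi$-non-increasing, and no proof can avoid it. Take $\Psi\equiv0$, $G\equiv0$, $H\equiv-1$. Then $(t,m)\mapsto(T-t)m(\T^d)$ passes the PDE part of the subsolution test and the terminal condition, and $V^+\equiv0$ is a supersolution, yet the first exceeds the second.

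The one-sidedness also changes the PDE estimates. By \eqref{slkdfklkeknjzerTERPsi1}, pairing $\frac1\ep D\hat f$ with $D(m_{\delta,\ep}-n_{\delta,\ep})$ yields only $\frac1\ep\|(m_{\delta,\ep}-n_{\delta,\ep})_-\|_2^2-\frac2\ep\Phi(m_{\delta,\ep}-n_{\delta,\ep})$. The Hamiltonian difference carries the large gradient $\frac1\ep D\hat f$, so it can be absorbed only against the negative part, using Young's inequality and $\|D\hat f\|_2^2\le2\Phi$. On $(m_{\delta,\ep}-n_{\delta,\ep})_+$ there is no dissipation, and the paper instead uses that $H$ is bounded below. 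That assumption is part of \eqref{ath100}, and your sketch never invokes it. The leftover error $C\int|m_{\delta,\ep}-n_{\delta,\ep}|$ vanishes because the terms $\delta\int(|Dm_{\delta,\ep}|^2+|Dn_{\delta,\ep}|^2)$ give $H^1$ bounds, hence strong $L^2$ convergence to the common limit.

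Finally, the limits must be taken in the opposite order to yours. First send $\ep\to0$ with $\delta$ fixed, since you need $\|m\|_2^2\le C/\delta$; this gives $\theta\le C\delta\|m_\delta\|_2^2$. Only then send $\delta\to0$.
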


\subsection*{A less regular class of test functions} To prove the comparison result, we need to work with less regular test functions. The following proposition explains how to pass from smooth test functions to the class of test functions used in the proof of Theorem~\ref{thm.comparison}. 
 
\begin{proposition} \label{prop.subsol.equiv} (i)~Assume \eqref{hyp.comparison} and let $V$ be a viscosity sub-solution to \eqref{eq.HJstoppingPSI}. There exists a constant $C$ depending only on the Lipschitz constant of $H$ such that, if $[0,T] \times H^{-1} \ni (t,q) \mapsto \Phi(t,q)\in  \R$ is $C^1$ in $(t,q)$, with the derivative $D_{H^{-1}} \Phi$ being Lipschitz continuous in $q$ uniformly in $t$, and if, for some $\delta >  0$, 
$$(t,m)\to V(t,m)-\Phi(t,m)-\delta \|m\|^2_2$$ has a maximum at a point $(\bar t, \bar m)\in [0, T)\times (H^{-1}\cap \sub)$, then $\bar m\in H^1$ and 
\be\label{lsfdkedfnfrfPSI.0}
\begin{split}
& -\partial_t \Phi(\bar t, \bar m) +\int_{\T^d}  D_x D_{H^{-1}}\Phi(t,\bar m,x)\cdot D\bar m(x)dx +\delta \int_{\T^d} |D\bar m|^2 dx \\
& \qquad\qquad +\int_{\T^d} H \big(x, D_x D_{H^{-1}}\Phi(\bar t,\bar m,x), \bar m \big) d \bar m(x)  \leq C\delta \|\ov{m}\|_2^2.
\end{split}
\ee
(ii)~Assume \eqref{hyp.comparison} and let $U$ is a viscosity super-solution to \eqref{eq.HJstoppingPSI}.  There exists a constant $C$ depending only on the Lipschitz constant of $H$ such that, if $\Phi_1$ is a smooth test function,  $\Phi_2 : [0,T] \times H^{-1} \to \R$ is non-increasing and satisfies the regularity condition placed on $\Phi$ in (i), 
 $(\ov{t}, \ov{m}) \in [0,T) \times (L^2 \cap \sub)$ is a minimum point of 
 $$(t,m)\to V(t,m)-\Phi_1(t,m) - \Phi_2(t,m) +\delta \|m\|^2_2,$$
 for some $\delta \in (0,1)$, 
 and, finally,  $\Phi_1$ is locally $\Psi$-decreasing at $(\ov t, \ov m)$,  then, $\ov m \in H^1$ and 
\be
\begin{split}\label{lsfdkedfnfrfPSI}
&-\partial_t \Phi_1(\bar t, \bar m) - \partial_t \Phi_2(\ov t, \ov m) + \int_{\T^d} \Big( D_x \frac{\delta \Phi_1}{\delta m}(t, \ov m,x) + D_x D_{H^{-1}} \Phi_2(t,\bar m,x) \Big)\cdot D\bar m(x)dx\\
&    -\delta \int_{\T^d} |D\bar m|^2 dx+\int_{\T^d} H\bigg(x, \Big( D_x\frac{ \delta \Phi_1}{\delta m}(t, \ov m,x) + D_x D_{H^{-1}} \Phi_2(t,\bar m,x) \Big),\bar m\bigg) d\bar m(x)\\
&\qquad \qquad  \geq -C\delta \|\ov{m}\|_2^2.
\end{split}
\ee
\end{proposition}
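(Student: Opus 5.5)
The plan is to approximate the singular test function $\Phi(t,m)+\delta\|m\|_2^2$ by smooth test functions in the sense of Definition \ref{def.testfunction}. Definition \ref{def.viscositysoln} then applies at nearby maximum points, and we pass to the limit using the coercive term $\delta\|D\cdot\|_2^2$ that the Laplacian produces.

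Fix a smooth even mollifier $\rho_\ep$ on $\T^d$ and write $m^\ep=\rho_\ep\ast m\in C^\infty$. For part (i) I would consider
\[
\Phi^\ep(t,m)=\Phi(t,m^\ep)+\delta\|m^\ep\|_2^2+\Theta(t,m),
\]
where $\Theta(t,m)=|t-\bar t|^2+\sum_k \lambda_k\big|\widehat{(m-\bar m)}(k)\big|^2$ is a smooth strict-localization term. The weights $\lambda_k$ decay fast enough that $\Theta$ is a smooth test function, and they are small enough in total that $\Theta$ has a small $\bd$-Lipschitz constant; the latter is needed only in part (ii).

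Mollification makes the relevant derivatives smooth. The linear derivative of $m\mapsto\Phi(t,m^\ep)$ is $\rho_\ep\ast D_{H^{-1}}\Phi(t,m^\ep)$, and the linear derivative of $\delta\|m^\ep\|_2^2$ is $2\delta\,\rho_\ep\ast m^\ep$. Both are $C^2$ in $x$ and Lipschitz in $m$, because $D_{H^{-1}}\Phi$ is Lipschitz in $q$ and $\sub\ni m\mapsto m^\ep\in L^2$ is $\bd$-Lipschitz for fixed $\ep$. Hence $\Phi^\ep$ is admissible.

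By compactness of $[0,T]\times\sub$, $V-\Phi^\ep$ attains its maximum at some $(t_\ep,m_\ep)$. Next I show convergence of these maximizers. Comparing with the value at $(\bar t,\bar m)$ and using the strict term $\Theta$, I get $(t_\ep,m_\ep)\to(\bar t,\bar m)$ weakly and $\limsup\|m_\ep^\ep\|_2\le\|\bar m\|_2$. Weak lower semicontinuity then gives $\|m^\ep_\ep\|_2\to\|\bar m\|_2$, so $m^\ep_\ep\to\bar m$ strongly in $L^2$, and in particular in $H^{-1}$.

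I then apply the sub-solution inequality at $(t_\ep,m_\ep)$. Integration by parts turns the Laplacian terms into
\[
\int D_xD_{H^{-1}}\Phi(t_\ep,m_\ep^\ep,x)\cdot Dm^\ep_\ep\,dx+2\delta\|Dm^\ep_\ep\|_2^2.
\]
For the Hamiltonian I use that $H$ is Lipschitz, so $H(x,p+q,m)\ge H(x,p,m)-C|q|$ with $q=2\delta D(\rho_\ep\ast m^\ep_\ep)$. The resulting error satisfies
\[
\int|q|\,dm_\ep\le 2\delta\|Dm^\ep_\ep\|_2\|m^\ep_\ep\|_2\le\delta\|Dm_\ep^\ep\|_2^2+C\delta\|m^\ep_\ep\|_2^2 .
\]
The first piece is absorbed into the coercive term, leaving exactly $\delta\|Dm^\ep_\ep\|_2^2$ on the left and $C\delta\|m^\ep_\ep\|^2_2$ on the right. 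The remaining terms are bounded, so $m^\ep_\ep$ is bounded in $H^1$; its weak limit $\bar m$ therefore lies in $H^1$. I pass to the limit using strong $L^2$ and weak $H^1$ convergence, continuity of $D_{H^{-1}}\Phi$ and $\partial_t\Phi$, and lower semicontinuity of $\|D\cdot\|_2^2$. This yields \eqref{lsfdkedfnfrfPSI.0}. The $\Theta$-contributions vanish as $\ep\to0$ since $(t_\ep,m_\ep)\to(\bar t,\bar m)$.

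Part (ii) follows the same scheme with the test function $\Phi_1+\Phi_2(t,m^\ep)-\delta\|m^\ep\|_2^2-\Theta$, now at minimum points. The extra requirement is that this function be locally $\Psi$-decreasing near $(t_\ep,m_\ep)$. This holds for the following reasons:
\begin{enumerate}
\item $\Phi_1$ is locally $\gamma$-$\Psi$-non-increasing for some $\gamma>0$, and $(t_\ep,m_\ep)$ eventually lies in its neighborhood.
\item Convolution preserves order, so $m\mapsto\Phi_2(t,m^\ep)$ is non-increasing.
\item For $n\le m$ we have $\|m^\ep\|_2\ge\|n^\ep\|_2$, so $-\delta\|m^\ep\|_2^2$ is non-increasing.
\item $\Theta$ has a $\bd$-Lipschitz constant smaller than $\gamma/2$; since $\bd(m,n)=(m-n)(\T^d)$ for $n\le m$, its effect is absorbed by $\gamma$.
\end{enumerate}
The signs are reversed throughout, giving \eqref{lsfdkedfnfrfPSI}. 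The bound $\delta<1$ enters only in controlling the constants.

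The step I expect to be the main obstacle is the convergence of the approximate maximizers, in particular upgrading to strong $L^2$ convergence of $m^\ep_\ep$. This is needed to pass to the limit in the nonlinear terms $\int H(x,\cdot,\bar m)\,d\bar m$ and $\int D_xD_{H^{-1}}\Phi\cdot D\bar m$, and to conclude that the limit point is $\bar m$ itself rather than merely a maximizer of a perturbed functional. I would first try to obtain it from the norm convergence argument above, combined with the strict penalization by $\Theta$.
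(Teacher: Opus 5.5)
Your argument is correct and follows the same architecture as the paper's proof. You mollify the singular part of the test function into an admissible smooth one, plus a strict-localization term (the paper gets this by ``modifying $\Phi_1$ by a smooth perturbation''). You then apply Definition \ref{def.viscositysoln} at the perturbed extremum and integrate by parts. The extra momentum $2\delta\,D(\rho_\varepsilon*m^\varepsilon)$ is absorbed in $H$ through its Lipschitz bound and Young's inequality, which yields an $H^1$ bound and lets you pass to the limit.

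The difference is in the mollification scales. The paper uses two: $\eta_1$ for $\Phi_2$ and $\eta_2$ for $\delta\|\cdot\|_2^2$, and it lets $\eta_2\to0$ first. Along that limit the momentum $D_x\big(\rho_{\eta_1}*D_{H^{-1}}\Phi_2(t,m*\rho_{\eta_1})\big)$ stays smooth, so the Hamiltonian term converges under weak-$*$ convergence alone. The intermediate optimizer $m_{\eta_1}$ is then already in $H^1$, so the second limit only involves $L^2$ densities. Your single scale avoids the double limit and makes the local $\Psi$-decrease check explicit, including the small $\bd$-Lipschitz constant of $\Theta$. The price is two steps you should write out.

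First, the bound $\limsup_\varepsilon\|m^\varepsilon_\varepsilon\|_2\le\|\bar m\|_2$ needs $\Phi(t_\varepsilon,m^\varepsilon_\varepsilon)\to\Phi(\bar t,\bar m)$. So the order must be: an $L^2$ bound on $m^\varepsilon_\varepsilon$ from the comparison with $(\bar t,\bar m)$ (using $\|q\|_{H^{-1}}^2\le C_R+CR^{-2}\|q\|_2^2$ on $\sub$), then strong $H^{-1}$ convergence by the compact embedding $L^2\hookrightarrow H^{-1}$, then the norm argument. In other words, $H^{-1}$ convergence is an input to the norm argument, not a by-product of it.

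Second, $m_\varepsilon$ itself may be singular. So with $g_\varepsilon=D_{H^{-1}}\Phi(t_\varepsilon,m^\varepsilon_\varepsilon)$, the term $\int H\big(x,\rho_\varepsilon*Dg_\varepsilon+\cdots,m_\varepsilon\big)\,dm_\varepsilon$ is not handled directly by strong $L^2$ convergence of $m^\varepsilon_\varepsilon$. It does converge if you replace $Dg_\varepsilon$ by a continuous $\phi$ close to $Dg$ in $L^2$ and use, for even $\rho_\varepsilon$,
\[
\int\big(\rho_\varepsilon*|Dg_\varepsilon-\phi|\big)\,dm_\varepsilon=\int|Dg_\varepsilon-\phi|\,m^\varepsilon_\varepsilon\,dx\le\|Dg_\varepsilon-\phi\|_2\,\|m^\varepsilon_\varepsilon\|_2,
\]
together with the Lipschitz continuity of $H$.
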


\begin{proof} Since the proof of (i) is a simpler version of the proof of (ii), here we present the arguments for the latter.
\vs
Modifying, if necessary,  $\Phi_1$ by a smooth perturbation, we may assume without loss of generality that the  minimum  $(\bar t, \bar m)$ is unique; note that this can be done while maintaining the fact that $\Phi_1$ is locally $\Psi$-decreasing at $(\ov t, \ov m)$. 
\vs
Let $(\rho_{\eta_1})_{0 < \eta_ < 1}$ and $(\rho_{\eta_2})_{0 < \eta_2 < 1}$ be standard approximations to the identity 
and consider the optimization problem 
\begin{align*}
    \inf_{t,m} \Big\{ V(t,m) - \Phi_1(t,m) - \Phi_2(t,m * \rho_{\eta_1}) + \delta \|m * \rho_{\eta_2}\|_2^2 \Big\}.
\end{align*}
\vs
If $(t_{\eta_1,\eta_2}, m_{\eta_1,\eta_2})$ is an optimizer,  then a  compactness argument shows that $\lim_{\eta_1 \to 0} \lim_{\eta_2 \to 0} (t_{\eta_1,\eta_2},m_{\eta_1,\eta_2}) = (\ov t, \ov m)$. In particular, if we choose first $\eta_1$ and then $\eta_2$ small enough, then 
$$t_{\eta_1,\eta_2} < T \ \ \text{and} \ \ \Phi_1 \  \text{is locally $\Psi$-decreasing at} \  (t_{\eta_1,\eta_2}, m_{\eta_1,\eta_2}).$$
The mollified test function
\begin{align*}
 [0,T]\times   \sub  \ni (t,m) \mapsto \Phi_{\eta_1,\eta_2} =\Phi_1(t,m) + \Phi_2(t,m * \rho_{\eta_1}) - \delta \|m * \rho_{\eta_2}\|_2^2
\end{align*}
is smooth and  
\begin{align*}
    \frac{\delta \Phi_{\eta_1, \eta_2}}{\delta m}(t,m,\cdot) = \frac{\delta \Phi_1}{\delta m}(t,m,\cdot ) + \rho_{\eta_1} * D_{H^{-1}} \Phi_2(t,m * \rho_{\eta_1}, \cdot) - 2\delta \rho_{\eta_2} * \rho_{\eta_2} * m.
\end{align*}
Moreover, since $\Phi_1$ is locally $\Psi$-decreasing at $(t_{\eta_1,\eta_2},m_{\eta_1,\eta_2})$ (provided we choose $\eta_1$ and then $\eta_2$ small enough), and $\Phi_2$ and $m \mapsto - \| m * \rho_{\eta_2}\|_2^2$ are both non-increasing, we see that $\Phi_{\eta_1,\eta_2}$ is locally $\Psi$-decreasing at $(t_{\eta_1,\eta_2},m_{\eta_1,\eta_2})$ for $\eta_1$ and then $\eta_2$ small enough. 
\vs
From the definition of viscosity super-solution, we find that, at $(t,m) = (t_{\eta_1,\eta_2}, m_{\eta_1,\eta_2})$, we have
\begin{align*}
    &- \partial_t \Phi_1(t,m) -  \partial_t \Phi_2(t,m * \rho_{\eta_1})
    \\
    &\quad - \int_{\T^d} \Delta_x \Big( \frac{\delta \Phi_1}{\delta m}(t,m, \cdot) + \rho_{\eta_1} * D_{H^{-1}} \Phi_2(t, m *\rho_{\eta_1},x) - 2 \delta \rho_{\eta_2} * \big(\rho_{\eta_2} * m\big) \Big) dm
    \\
    &\quad + \int_{\T^d} H\bigg(x, D_x\Big( \frac{\delta \Phi_1}{\delta m}(t,m,x) + \rho_{\eta_1} * D_{H^{-1}} \Phi_2(t,m * \rho_{\eta_1}, \cdot) - 2\delta \rho_{\eta_2} * \rho_{\eta_2} * m \Big) ,m \bigg) dm
     \geq 0.
\end{align*}

Notice that 
\begin{align*}
    \int_{\T^d} \Delta_x \Big(\rho_{\eta_1} * D_{H^{-1}} \Phi_2(t,m * \rho_{\eta_1},\cdot)\Big) dm &= 
   \int_{\T^d} \Delta_x \Big( D_{H^{-1}} \Phi_2(t,m * \rho_{\eta_1},\cdot)\Big) \rho_{\eta_1} *m  dx\\
    &= - \int_{\T^d} D_x \Big(\rho_{\eta_1} * \frac{\delta \Phi}{\delta m}(t,m * \rho_{\eta_1},\cdot)\Big) D_x\rho_{\eta_1} * m dx, 
\end{align*}
and, similarly, 
\begin{align*}
    \int_{\T^d} \Delta_x \Big(\rho_{\eta_2} * \big(\rho_{\eta_2} * m\big) \Big) dm = -  \int_{\T^d} |D \big( \rho_{\eta_2} * m \big)|^2 dx.
\end{align*}

Finally, using the Lipschitz continuity of $H$, we get
  
\begin{align*}
 &  \int_{\T^d} H\bigg(x, D_x\Big( \frac{\delta \Phi_1}{\delta m}(t,m,x) + \rho_{\eta_1} * D_{H^{-1}} \Phi_2(t,m * \rho_{\eta_1}, \cdot) - 2\delta \rho_{\eta_2} * \rho_{\eta_2} * m \Big) ,m \bigg) dm
    \\
    &{ \leq} \int_{\T^d}  H\Big( x, D_x\Big( \frac{\delta \Phi_1}{\delta m}(t,m,x) + \rho_{\eta_1} * D_{H^{-1}} \Phi_2(t,m * \rho_{\eta_1}, \cdot) ,m \Big) dm + C \delta \int_{\T^d} \Big|  \rho_{\eta_2} * D_x(\rho_{\eta_2} * m) \Big| dm
    \\
    &{\leq}  \int_{\T^d}  H\Big( x, D_x\Big( \frac{\delta \Phi_1}{\delta m}(t,m,x) + \rho_{\eta_1} * D_{H^{-1}} \Phi_2(t,m * \rho_{\eta_1}, \cdot) ,m \Big) dm  + C \delta \int_{\T^d} \rho_{\eta_{2}} * \Big|D_x(\rho_{\eta_2} * m) \Big| dm
    \\
    &=  \int_{\T^d}  H\Big( x, D_x\Big( \frac{\delta \Phi_1}{\delta m}(t,m,x) + \rho_{\eta_1} * D_{H^{-1}} \Phi_2(t,m * \rho_{\eta_1}, \cdot) ,m \Big) dm  + C \delta \int_{\T^d}\Big|D_x(\rho_{\eta_2} * m) \Big| m * \rho_{\eta_2} dx
    \\
    & \leq   \int_{\T^d}  H\Big( x, D_x\Big( \frac{\delta \Phi_1}{\delta m}(t,m,x) + \rho_{\eta_1} * D_{H^{-1}} \Phi_2(t,m * \rho_{\eta_1}, \cdot) ,m \Big) dm  
    \\
    &\qquad +  \delta \int_{\T^d}\Big|D_x(\rho_{\eta_2} * m) \Big|^2 dx {+} C \delta \int_{\T^d} \Big|m * \rho_{\eta_2}\Big|^2 dx. 
\end{align*} 
Combining  the last three computations  we find that  $(t,m) = (t_{\eta_1,\eta_2},m_{\eta_1,\eta_2})$ satisfies
\be\label{ubound}
\begin{split}
  - \partial_t \Phi_1(t,m) &- \partial_t \Phi_2(t,m* \rho_{\eta_1}) - \int_{\T^d} \Delta_x \frac{\delta \Phi_1}{\delta m}(t,m,x) dm(x)\\
 & + \int_{\T^d} D_x \Big(\rho_{\eta_1} * \frac{\delta \Phi_2}{\delta m}(t,m * \rho_{\eta_1},\cdot)\Big) D_x\rho_{\eta_1} * m 
- \delta \int_{\T^d} \Big| D_x \rho_{\eta_2} * m \Big|^2 dx    \\
& + \int_{\T^d}  H\Big( x, D_x\Big( \frac{\delta \Phi_1}{\delta m}(t,m,x) + \rho_{\eta_1} * D_{H^{-1}} \Phi_2(t,m * \rho_{\eta_1}, \cdot) ,m \Big) dm \\
  &\quad \geq -C \delta \int_{\T^d} \Big|m * \rho_{\eta_2}\Big|^2 dx.
\end{split}
\ee
Since  $\delta > 0$ is fixed, we see that there is a constant $C > 0$ such that, for each $\eta_1,\eta_2 > 0$, 
 $\|\rho_{\eta_2} * m_{\eta_1,\eta_2}\|_2^2 \leq C$, and, thus, from \eqref{ubound}, 
\begin{align*}
    \|D_x\big(\rho_{\eta_2} * m_{\eta_1,\eta_2}\big)\|_2^2 \leq C.
\end{align*}
As a consequence, we note that, for all $\eta_1$ small enough, there exists $(t_{\eta_1},m_{\eta_1})$ such that $(t_{\eta_1},m_{\eta_1})$ is an optimizer for 
\begin{align} \label{eta1.problem}
    \inf_{t,m} \Big\{ U(t,m) - \Phi_1(t,m) - \Phi_2(t,m * \rho_{\eta_1}) + \delta \|m\|_2^2 \Big\}, 
\end{align}
and, along an appropriate subsequences, in the limit $\eta_2\to 0$, 
\begin{align*}
 m_{\eta_1,\eta_2} \to  m_{\eta_1} \ \text{weak-}*\ \ \text{and} \ \  \rho_{\eta_2} * m_{\eta_1,\eta_2} \to m_{\eta_1} \ \text{weakly in $H^1$ and  strongly in $L^2$}.
\end{align*}

Since the weak-$*$ convergence of $m_{\eta_1,\eta_2}$ to $m_{\eta_1}$ implies that $\rho_{\eta_1} * m_{\eta_1, \eta_2}$ converges to $\rho_{\eta_1} * m_{\eta_1}$ in $C^k$ for any $k$, we note that, as $\eta_2\to 0$, 
\begin{equation} \label{linearterm}
\begin{split}
    \int_{\T^d} D_x & \Big( \rho_{\eta_1} * D_{H^{-1}} \Phi_2( \rho_{\eta_1} * m_{\eta_1,\eta_2}, \cdot) \Big) \cdot D_x \rho_{\eta_1} * m_{\eta_1,\eta_2} \\
 &   \to \int_{\T^d} D_x \Big( \rho_{\eta_1} *  D_{H^{-1}} \Phi_2 (\rho_{\eta_1} * m_{\eta_1}, \cdot) \Big) \cdot D_x \rho_{\eta_1} * m_{\eta_1}. 
\end{split}
\end{equation}

%

In view of the uniform convergence, as $\eta_2\to 0$, 
\begin{align*}
    D_x \rho_{\eta_1} *  D_{H^{-1}} \Phi_2 (t,m_{\eta_1,\eta_2} * \rho_{\eta_1},\cdot) \to D_x \rho_{\eta_1} *  D_{H^{-1}} \Phi_2 (t,m_{\eta_1} * \rho_{\eta_1},\cdot)  
\end{align*}
and the weak-$*$ convergence $m_{\eta_1,\eta_2} \to m_{\eta_1}$, we get 
\begin{equation} \label{hamiltonianterm}
\begin{split}
& \lim_{\eta_2 \to 0} \int_{\T^d}  H\Big( x, D_x\Big( \frac{\delta \Phi_1}{\delta m}(t_{\eta_1,\eta_2},m_{\eta_1,\eta_2},x) + \rho_{\eta_1} * D_{H^{-1}} \Phi_2(t,m_{\eta_1,\eta_2} * \rho_{\eta_1}, \cdot) ,m_{\eta_1,\eta_2} \Big) dm_{\eta_1,\eta_2}   \\ 
& \qquad\qquad =  \int_{\T^d}  H\Big( x, D_x\Big( \frac{\delta \Phi_1}{\delta m}(t_{\eta_1},m_{\eta_1},x) + \rho_{\eta_1} * D_{H^{-1}} \Phi_2(t,m_{\eta_1} * \rho_{\eta_1}, \cdot) ,m_{\eta_2} \Big) dm_{\eta_1}.
\end{split}
\ee
We  also note that because $\rho_{\eta_2} * m_{\eta_1,\eta_2} \to m_{\eta_1}$ weakly in $H^1$, we also have
\begin{align} \label{lscterm}
    \int_{\T^d} \Big| D_x  m_{\eta_1} \Big|^2 dx \leq \liminf_{\eta_2 \to 0} \int_{\T^d} \Big|D_x \rho_{\eta_2} * m_{\eta_1,\eta_2}\Big|^2 dx. 
\end{align}
Combining \eqref{linearterm}, \eqref{hamiltonianterm}, \eqref{lscterm} and the fact that  $m_{\eta_1,\eta_2} * \rho_{\eta_2}$ converges strongly in $L^2,$ we find that $(t,m) = (t_{\eta_1},m_{\eta_1})$ satisfies $m \in H^1$, and
\begin{equation} \label{ubound2}
\begin{split}
  - \partial_t \Phi_1(t,m) &- \partial_t \Phi_2(t,m* \rho_{\eta_1}) - \int_{\T^d} \Delta_x \frac{\delta \Phi_1}{\delta m}(t,m,x) m(dx) \\
  &+ \int_{\T^d} D_x \Big(\rho_{\eta_1} * D_{H^{-1}} \Phi_2(t,m * \rho_{\eta_1},\cdot)\Big) D_x\rho_{\eta_1} * m - \delta \int_{\T^d} \Big| D_x  m \Big|^2 dx 
 \\
  & + \int_{\T^d}  H\Big( x, D_x\Big( \frac{\delta \Phi_1}{\delta m}(t,m,x) + \rho_{\eta_1} * D_{H^{-1}} \Phi_2(t,m * \rho_{\eta_1}, \cdot) ,m \Big) dm  \\
  &\quad \geq -C \delta \int_{\T^d} m^2 dx.
\end{split}
\ee

Since $(t_{\eta_1},m_{\eta_1})$ is an optimizer for \eqref{eta1.problem}, there is  an independent of $\eta_1$ constant $C$ such that $\|m_{\eta_1}\|_2^2 \leq C$, and, thus,   \eqref{ubound2} yields that $m_{\eta_1}$ is bounded in $H^1$. Thus, setting $(t,m) = (\ov{t},\ov{m})$ for simplicity, we find that, as $\eta_1\to 0$, 
\begin{align*}
    m_{\eta_1} \to   m \ \text{weakly in $H^1$ and  strongly in $L^2$} \ \ \text{and} \ \   \rho_{\eta_1} * m_{\eta_1} \to m \ \text{weakly in $H^1$ and  strongly in $L^2$}.
\end{align*}
It follows that, as $\eta_1\to 0$, 
\[ D_{H^{-1}} \Phi_2 (t,m_{\eta_1} * \rho_{\eta_1},\cdot) \to  D_{H^{-1}} \Phi_2(t,m,\cdot) \ \text{in} \ H^1.\]
Since 
\begin{align*}
    \| \rho_{\eta_1} * & D_x  D_{H^{-1}} \Phi_2(t,m_{\eta_1} * \rho_{\eta_1},\cdot) - D_x D_{H^{-1}} \Phi_2(t,m,\cdot)\|_2 
    \\
    &\leq \| \rho_{\eta_1} * D_x  D_{H^{-1}} \Phi_2(t,m_{\eta_1} * \rho_{\eta_1},\cdot) - \rho_{\eta_1} * D_xD_{H^{-1}} \Phi_2 (t,m,\cdot)\|_2 
    \\
    &\qquad \qquad + \|  \rho_{\eta_1} * D_x D_{H^{-1}} \Phi_2(t,m,\cdot) -D_x D_{H^{-1}} \Phi_2(t,m,\cdot) \|_2
    \\
    &\leq C \|  D_x  D_{H^{-1}} \Phi_2(t,m * \rho_{\eta_1},\cdot) -  D_x D_{H^{-1}} \Phi_2(t,m,\cdot)\|_2\\
    & \qquad \qquad + \|  \rho_{\eta_1} * D_x D_{H^{-1}} \Phi_2(t,m,\cdot) -D_x D_{H^{-1}} \Phi_2 (t,m,\cdot) \|_2, 
\end{align*}
and, for any fixed $f \in L^2$,  $\eta_1 * f \to f$ in $L^2$ as $\eta_1\to 0$, it follows that 
\[ D_x  \Big( \rho_{\eta_1} * D_{H^{-1}} \Phi_2(t,m_{\eta_1} * \rho_{\eta_1},\cdot) \Big)\to  D_x D_{H^{-1}} \Phi_2(t,m,\cdot) \ \text{strongly in $L^2$}.\]
Together with the strong convergence in $L^2$ and the weak convergence in $H^1$ of $m_{\eta_1}$ to $m$,  this is enough to conclude that 
\begin{align*}
    \int_{\T^d} D_x \Big(\rho_{\eta_1} * D_{H^{-1}} \Phi_2(t,m_{\eta_1} * \rho_{\eta_1},\cdot)\Big) D_x\rho_{\eta_1} * m_{\eta_1} dx \to  
     \int_{\T^d} D_x D_{H^{-1}} \Phi_2(t,m ,\cdot) \cdot D_x m dx, 
\end{align*}
as well as
\begin{align*}
    &\int_{\T^d} H \Big(x, D_x \frac{\delta \Phi_1}{\delta m}(t_{\eta_1},m_{\eta_1},x) + \rho_{\eta_1} * D_x \rho_{\eta_1} * D_{H^{-1}} \Phi_2 (t,m_{\eta_1} * \rho_{\eta_1},\cdot),m_{\eta_1} \Big) m_{\eta_1} dx 
    \\
    &\quad \to \int_{\T^d} H \Big(x, D_x \frac{\delta \Phi_1}{\delta m}(t,m,x) +  D_x D_{H^{-1}} \Phi_2 (t,m,\cdot),m \Big) m dx .
\end{align*}
Handling the rest of terms in a similar way  we conclude that 
\begin{align*} 
   &-\partial_t \Phi_1(\bar t, \bar m) - \partial_t \Phi_2(\ov t, \ov m) + \int_{\T^d} \Big( D_x \frac{\delta \Phi_1}{\delta m}(t, \ov m,x) + D_x D_{H^{-1}} \Phi_2(t,\bar m,x) \Big)\cdot D\bar m(x)dx \notag \\
& -\delta \int_{\T^d} |D\bar m|^2 dx  +\int_{\T^d} H\bigg(x, \Big( D_x\frac{ \delta \Phi_1}{\delta m}(t, \ov m,x) + D_x D_{H^{-1}} \Phi_2(t,\bar m,x) \Big),\bar m\bigg)d \bar m(x)  \geq -C\delta \|\ov{m}\|_2^2,
\end{align*}
which completes the proof.

\end{proof}

\subsection*{The proof of Theorem~\ref{thm.comparison}} \label{subsec.comparisonproof}
With Proposition \ref{prop.subsol.equiv}  in hand, we turn to the proof of the comparison principle. For this, we will make use of the penalization $\Phi:H^{-1}\to \R$ defined by 
\be\label{defPhiPsi}
\Phi(\mu) = \sup_{f\in H^1, \ f\leq 0} \Big\{ \langle \mu, f\rangle_{H^{-1},H^1} -\frac12 \|f\|_{H^1}^2 \Big\}. 
\ee
It is immediate that,  for each $\mu\in H^{-1}$, there exists a unique maximizer $\hat{f}_{\mu}$ for $\Phi(\mu)$, which is the  the unique solution to the obstacle problem
\be\label{slkdfklkeknjzerPsi}
\langle \mu, h \rangle_{H^{-1},H^1} - \langle \hat f_\mu,h \rangle_{H^1} \leq 0 \ \ \text{for all} \ \  h\in H^1 \ \text{such that} \  h\leq 0.
\ee
We also note that, since the map  $\lambda \to \lambda \langle \mu, \hat f_\mu \rangle_{H^{-1},H^1} -\frac{\lambda^2}{2} \|\hat f_\mu\|_{H^1}^2$ has a maximum at $\lambda=1$, we have
\be\label{formPhiPsi1}
\Phi(\mu)= \frac12\|\hat f_\mu\|_{H^1}^2= \frac12 \langle \mu, \hat f_\mu \rangle_{H^{-1},H^1}. 
\ee
\vs
In the next lemma we list and prove two key properties of the map $\Phi$.

\begin{lemma}\label{ath300} The map  $\Phi$ is of class $C^{1,1}$ and non-increasing and  
$
D_{H^{-1}} \Phi(\mu) = \hat f_\mu \leq 0. 
$
Moreover, if  $\mu\in L^2$ and $\Phi(\mu)=0$, then $\mu\geq0$. Finally, for all $\mu \in H^1$, if $\mu_-=\min\{\mu, 0\}$, then 
\be\label{slkdfklkeknjzerTERPsi1}
\int_{\T^d} D\hat f_\mu\cdot D\mu \geq-2\Phi(\mu)  +\|\mu_-\|^2_2. 
\ee
\end{lemma}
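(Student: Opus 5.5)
We view $\Phi$ as a Moreau envelope-type object: writing $K=\{f\in H^1: f\le 0\}$ (closed convex cone in $H^1$) and identifying $\mu\in H^{-1}$ with its Riesz representative $R\mu\in H^1$ (so $\langle \mu,f\rangle_{H^{-1},H^1}=\langle R\mu,f\rangle_{H^1}$), we get
\[
\Phi(\mu)=\tfrac12\|R\mu\|_{H^1}^2-\tfrac12\,\mathrm{dist}_{H^1}(R\mu,K)^2,\qquad \hat f_\mu=P_K(R\mu),
\]
where $P_K$ is the $H^1$-projection onto $K$. This is exactly the variational inequality \eqref{slkdfklkeknjzerPsi}. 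Since $P_K$ is $1$-Lipschitz and $\tfrac12\mathrm{dist}^2(\cdot,K)$ is $C^{1,1}$ with gradient $I-P_K$, the gradient of $\Phi$ is $R\mu-(R\mu-P_K R\mu)=P_K R\mu$. As a functional on $H^1$ this gives $D_{H^{-1}}\Phi(\mu)=\hat f_\mu\le 0$, which is Lipschitz in $\mu$, so $\Phi\in C^{1,1}$.

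Monotonicity follows directly from the definition. If $\nu\le\mu$, meaning $\mu-\nu$ is a nonnegative element of $H^{-1}$, then $\langle \mu,f\rangle\le\langle\nu,f\rangle$ for every $f\le 0$. Taking the supremum over $f\le 0$ gives $\Phi(\mu)\le\Phi(\nu)$, so $\Phi$ is non-increasing.

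Next, suppose $\mu\in L^2$ and $\Phi(\mu)=0$. By \eqref{formPhiPsi1}, $\hat f_\mu=0$, so \eqref{slkdfklkeknjzerPsi} reads $\int\mu h\le 0$ for all $h\in H^1$ with $h\le0$. Density of nonpositive $H^1$ functions in nonpositive $L^2$ functions then gives $\mu\ge 0$ a.e.

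For \eqref{slkdfklkeknjzerTERPsi1}, take $\mu\in H^1$ and test \eqref{slkdfklkeknjzerPsi} with $h=\mu_-\le 0$, which lies in $H^1$:
\[
\int \mu\,\mu_- \le \int \hat f_\mu \mu_- + \int D\hat f_\mu\cdot D\mu_-,
\]
so that $\|\mu_-\|_2^2\le \langle \hat f_\mu,\mu_-\rangle_{H^1}$. Now write $\mu=\mu_++\mu_-$. From \eqref{formPhiPsi1},
\[
2\Phi(\mu)=\langle\mu,\hat f_\mu\rangle=\int\mu \hat f_\mu,
\]
and we need to relate this to $\int D\hat f_\mu\cdot D\mu=\langle \hat f_\mu,\mu\rangle_{H^1}-\int\hat f_\mu\mu$. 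We also test with $h=\hat f_\mu$ and $h=2\hat f_\mu$, which yields the equality $\int\mu\hat f_\mu=\|\hat f_\mu\|_{H^1}^2$. Then
\[
\int D\hat f_\mu\cdot D\mu=\langle \hat f_\mu,\mu_+\rangle_{H^1}+\langle\hat f_\mu,\mu_-\rangle_{H^1}-2\Phi(\mu).
\]
The second term is at least $\|\mu_-\|_2^2$ by the test above. For the first term,
\[
\langle \hat f_\mu,\mu_+\rangle_{H^1}=\int \hat f_\mu\mu_+ + \int D\hat f_\mu\cdot D\mu_+,
\]
and $\int \hat f_\mu\mu_+\le0$. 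I expect this term to be the main obstacle: the sign is wrong, and it must be handled by a complementarity argument. Complementarity gives $\hat f_\mu=0$ wherever $R\mu$ is nonnegative in an appropriate sense, but the $L^2$ term $\int\hat f_\mu\mu_+$ is not obviously controlled. The first thing I would try is the alternative test function $h=\hat f_\mu-\mu_+\cdot(\text{cutoff})$; failing that, I would use $h=\min(\hat f_\mu+\mu,0)-\hat f_\mu$ type test functions and lattice properties of $H^1$ (e.g. $D\hat f_\mu\cdot D\mu_+\ge0$ a.e. on the support of $\hat f_\mu$). With any of these, the aim is to show $\langle\hat f_\mu,\mu_+\rangle_{H^1}\ge0$, or at least that its negative part is absorbed into $-2\Phi$, which closes the estimate.
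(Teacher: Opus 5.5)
Your treatment of the first three assertions is correct. For the $C^{1,1}$ claim you take a different and more conceptual route than the paper. Writing $\hat f_\mu=P_K(R\mu)$ and $\Phi(\mu)=\frac12\|R\mu\|_{H^1}^2-\frac12\mathrm{dist}_{H^1}(R\mu,K)^2$ gives $D_{H^{-1}}\Phi(\mu)=\hat f_\mu$, and its Lipschitz bound follows from the nonexpansiveness of $P_K$; the paper instead proves both by hand from the variational inequality. Your monotonicity argument also supplies a step the paper leaves implicit. Your reduction of \eqref{slkdfklkeknjzerTERPsi1} to $\langle\hat f_\mu,\mu_+\rangle_{H^1}\ge0$ is correct as well.

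\textbf{The gap.} That last inequality is the entire content of the estimate, you do not prove it, and the strategies you list cannot prove it. Put $\hat\nu_\mu:=\mu-(\hat f_\mu-\Delta\hat f_\mu)$. The variational inequality \eqref{slkdfklkeknjzerPsi} together with \eqref{formPhiPsi1} says exactly that $\hat\nu_\mu\ge0$ as a distribution and $\langle\hat\nu_\mu,\hat f_\mu\rangle=0$. So testing with an admissible $h\le0$ only ever returns sign or complementarity information on $\hat\nu_\mu$. For instance, $h=\hat f_\mu-t\mu_+$ yields $\langle\hat\nu_\mu,\mu_+\rangle\ge0$, i.e.\ $\langle\hat f_\mu,\mu_+\rangle_{H^1}\le\|\mu_+\|_2^2$, which is the wrong direction. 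Since $\langle\hat f_\mu,\mu_+\rangle_{H^1}=\|\mu_+\|_2^2-\langle\hat\nu_\mu,\mu_+\rangle$, what you need is an \emph{upper} bound on the Lagrange multiplier $\hat\nu_\mu$. That is a regularity property of the obstacle problem, not a consequence of linear testing. The pointwise facts you hope to use are also unavailable. $\hat f_\mu$ depends nonlocally on $\mu$ and can be strictly negative on parts of $\{R\mu>0\}$. On $\{\hat f_\mu<0\}$ one only knows $\hat f_\mu-\Delta\hat f_\mu=\mu$, which gives no pointwise sign for $D\hat f_\mu\cdot D\mu_+$.

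\textbf{The missing ingredient} is $\hat\nu_\mu\in L^2$ with $\|\hat\nu_\mu\|_2\le\|\mu_+\|_2$. The paper obtains it by penalization:
\begin{itemize}
\item the maximizer $f^\eps$ of $\int f\mu-\frac12\|f\|_{H^1}^2-\frac1{2\eps}\int(f_+)^2$ solves $f^\eps-\Delta f^\eps+\eps^{-1}f^\eps_+=\mu$;
\item testing this with $\eps^{-1}f^\eps_+$ gives $\|\eps^{-1}f^\eps_+\|_2^2\le\int\mu_+\,\eps^{-1}f^\eps_+$, hence $\|\eps^{-1}f^\eps_+\|_2\le\|\mu_+\|_2$;
\item a weak $L^2$ limit of $\eps^{-1}f^\eps_+$ is then a nonnegative $\hat\nu_\mu$ with $\hat f_\mu-\Delta\hat f_\mu+\hat\nu_\mu=\mu$.
\end{itemize}
The Lewy--Stampacchia inequality $0\le\hat\nu_\mu\le\mu_+$ would serve equally well. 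With this bound your reduction closes at once:
\[
\langle\hat f_\mu,\mu_+\rangle_{H^1}=\|\mu_+\|_2^2-\int\hat\nu_\mu\mu_+\ge\|\mu_+\|_2^2-\|\hat\nu_\mu\|_2\|\mu_+\|_2\ge0 .
\]
The paper does the same computation in one step, by pairing the multiplier equation with $\mu$ and using $\int\hat\nu_\mu\mu_-\le0$. That also makes your separate test with $h=\mu_-$ unnecessary.

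A minor slip: testing the one-sided inequality \eqref{slkdfklkeknjzerPsi} with $h=\hat f_\mu$ and $h=2\hat f_\mu$ gives the same inequality twice. The identity $\int\mu\hat f_\mu=\|\hat f_\mu\|_{H^1}^2$ comes from \eqref{formPhiPsi1}, or from the full projection inequality $\langle R\mu-\hat f_\mu,h-\hat f_\mu\rangle_{H^1}\le0$ with $h=0$ and $h=2\hat f_\mu$.
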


\begin{proof}
We first claim that $\mu \to \hat f_\mu$ is Lipschitz continuous from $H^{-1}$ to $H^1$. Indeed, fix  $\mu^1, \mu^2\in H^{-1}$. Then  $f^1= \hat f_{\mu^1}$ and $f^2= \hat f_{\mu^2}$  satisfy \eqref{slkdfklkeknjzerPsi} and \eqref{formPhiPsi1} and, thus,
\begin{align*}
\|f^1-f^2\|_{H^1}^2 & = \|f^1\|_{H^1}^2+ \|f^2\|_{H^1}^2- 2 \langle f^1,f^2 \rangle_{H^1} \\
& \leq \langle \mu^1,  f^1 \rangle_{H^{-1},H^1}+\langle \mu^2,  f^2\rangle_{H^{-1},H^1}-\langle \mu^1, f^2 \rangle_{H^{-1},H^1}-\langle \mu^2, f^1 \rangle_{H^{-1},H^1} \\
& = \langle \mu^1-\mu^2, f^1-f^2 \rangle_{H^{-1},H^1} \leq \|\mu^1-\mu^2\|_{H^{-1}}\|f^1-f^2\|_{H^1}. 
\end{align*}
It follows that $\|f^1-f^2\|_{H^1}\leq \|\mu^1-\mu^2\|_{H^{-1}}$.
\vs
Next, we claim that $\Phi$ is differentiable with $D_{H^{-1}} \Phi(\mu) = \hat{f}_{\mu}$, so that in particular $\Phi$ is $C^{1,1}$. We start by noting that  it is easy to check from the definition of $\Phi$ that, for all $\mu,\nu \in H^{-1}$,
\begin{align*}
   \Phi(\mu) \geq \langle \mu - \nu, \hat{f}_{\nu} \rangle_{-1,1} + \Phi(\nu).
\end{align*}
In particular, reversing the role of $\nu$ and $\mu$, 
\begin{align*}
    \Phi(\mu) - \Phi(\nu) &\leq \langle \mu - \nu, \hat{f}_{\mu} \rangle_{H^{-1},H^1} \leq \langle \mu - \nu, \hat{f}_{\nu} \rangle_{H^{-1},H^1} + \langle \mu - \nu, \hat{f}_{\mu} - \hat{f}_{\nu} \rangle_{H^{-1},H^1}
    \\
    &\leq \langle \mu - \nu, \hat{f}_{\nu} \rangle_{H^{-1},H^1} + \| \mu - \nu\|_{H^{-1}}^2.
\end{align*}
That $D_{H^{-1}} \Phi(\mu) = \hat{f}_{\mu}$ follows since combining the previous two inequalities gives
\begin{align*}
    \Big| \Phi(\mu) - \Phi(\nu) - \langle \mu - \nu, \hat{f}_{\nu} \rangle_{H^{-1},H^1} \Big| \leq \|\mu - \nu\|_{H^{-1}}^2.
\end{align*}

\vs
Next, suppose that $\Phi(\mu)=0$. Then, for any $f\in H^1$ such that $f\leq 0$, we have 
 $$
 \int_{\T^d} \mu f \leq \frac12 \|f\|_{H^1}^2. 
 $$
 Replacing $f$ by $\ep f$ with $\ep>0$, dividing by $\ep$ and letting $\ep\to 0$, we find that, for every $f\in H^1$ such that $f\leq 0$,
 $$
  \int_{\T^d} \mu f \leq 0. 
$$
Since, by approximation, this inequality holds for any $f\in L^2$ such that $f\leq 0$, it follows  that $\mu\geq0$. 
\vs 
To prove \eqref{slkdfklkeknjzerTERPsi1}, we introduce the approximation
$$
\max_{f\in H^1} \Big[\int_{\T^d} f\mu - \frac12 \|f\|_{H^1}^2 -\frac{1}{2\ep} \int_{\T^d}  (f_+)^2\Big].
$$
Let $f^\ep$ be the unique maximizer of the above problem. It is immediate that the $f^\ep$'s converge, as $\ep \to 0$ and  weakly in $H^1$,  to  $\hat f_\mu$ and 
\be\label{kjsdnfkfnzkrPsi1}
f^\ep - \Delta f^\ep + \frac{1}{\ep} f^\ep_+= \mu.
\ee
Multiplying the last equation by  $\frac{1}{\ep}f^\ep_+$ and integrating over $\T^d$ gives 
$$
\int_{\T^d} \big[ \frac{1}{\ep}(f^\ep_+)^2 + \frac{1}{\ep}|Df^\ep_+|^2 + (\frac{1}{\ep}f^\ep_+)^2\big] dx = \int_{\T^d}\mu \frac{1}{\ep}f^\ep_+ dx \leq \int_{\T^d} \mu_+\frac{1}{\ep}f^\ep_+ dx,
$$
and thus, 
\begin{align} 
\label{fepsbound} \|\frac{1}{\ep}f^\ep_+\|_2\leq \|\mu_+\|_2.
\end{align}
Let $\hat \nu_\mu$ be a weak limit in $L^2$ of $(\frac{1}{\ep}f^\ep_+)$. Then $\hat \nu_\mu\in L^2$, $\hat \nu_\mu \geq 0$ and $\|\hat \nu_\mu\|_2\leq \|\mu_+\|_{2}$. Moreover, we can pass to the $\ep\to $ limit in \eqref{kjsdnfkfnzkrPsi1} to obtain 
$$
\hat f_\mu - \Delta \hat f_\mu + \hat \nu_\mu = \mu.
$$
Multiplying this equality by $\mu$ and  integrating gives, after using \eqref{formPhiPsi1} and the positivity of $\hat \nu_\mu$ for the first inequality and Cauchy-Schwarz for the second one and the bound $\|\hat \nu_\mu\|_2\leq \|\mu_+\|_{2}$ for the last one, 
\begin{align*}
\int_{\T^d} D\hat f_\mu\cdot D\mu &  = \int_{\T^d} -\hat f_\mu \mu - \hat \nu_\mu  \mu +\mu^2 \geq -2\Phi(\mu) -\int_{\T^d} \hat \nu_\mu \mu_+ +\|\mu\|_2^2\\ 
& \geq -2\Phi(\mu) -\| \hat \nu_\mu\|_2\| \mu_+\|_2 +\|\mu\|_2^2 \geq  -2\Phi(\mu)  +\|\mu_-\|^2,
\end{align*}

The proof is now complete. 

\end{proof}

 We continue with  the proof of the comparison. 

\begin{proof}[The proof of Theorem~\ref{thm.comparison}] We argue by contradiction assuming that $\max \big(U-V)>0$.
Since $U$ and $V$ are continuous,  we can choose $\delta,\theta>0$ small enough such that
$$
M_{\delta, \theta}:= \max_{(t,m)\in [0,T]\times \sub } V^-(t,m)-V^+(t,m)  -  2\delta \|m \|_2^2 -\theta(T-t) - \theta m(\T^d) >0. 
$$
In addition, as $U \leq V$ in $\{T\}\times \T^d$, any maximum point $(t_\delta,m_\delta)$ for $M_{\delta, \theta}$ satisfies $t_\delta<T$. We do not include the dependence of the maximum point on $\theta$, because $\theta$ will be fixed throughout the argument.
\vs
Fix now $\ep>0$ and let  $( t_{\delta,\ep}, m_{\delta,\ep},  s_{\delta,\ep},  n_{\delta,\ep})\in ([0,T]\times \sub)^2$ be a maximum point of 
\begin{equation*}
\begin{split}
(t,m,s,n)& \ \to \ V^-(t,m)-V^+(s,n) -\frac{1}{\ep}\Phi(m-n) - \int_{\T^d} \Psi(x,m) (m-n)(dx)\\
&\qquad  -\frac{1}{2\ep}(t-s)^2 -\delta( \|m\|^2_2+ \|n\|^2_2) -\theta(T-t) - \theta n(\T^d).
\end{split}
\end{equation*}
Note that, since  $\Phi(0)=0$,  
\begin{equation}\label{izaukehbsrdfngBISPsi}
\begin{split}
& V^-(t_{\delta,\ep},m_{\delta,\ep})- V^+(s_{\delta,\ep},n_{\delta,\ep}) - \int_{\T^d} \Psi(x,m_{\delta,\ep}) (m_{\delta,\ep}-n_{\delta,\ep})(dx) -\frac{1}{\ep}\Phi(m_{\delta,\ep}-n_{\delta,\ep}) \\
& \qquad \qquad -\frac{1}{2\ep}(t_{\delta,\ep}-s_{\delta,\ep})^2 -\delta( \|m_{\delta,\ep}\|^2_2+ \|n_{\delta,\ep}\|^2_2) -\theta(T-t_{\delta,\ep}) - \theta n_{\delta,\ep}(\T^d)
\geq M_{\delta, \theta}.
\end{split}
\ee
In addition, since  $U$, $V$ and $\Psi$ are bounded and $\Phi\geq0$, we have 
\begin{align} \label{penalizations.est}
\Phi( m_{\delta,\ep}- n_{\delta,\ep})\leq C\ep, \qquad |t_{\delta,\ep}-s_{\delta,\ep}|\leq C\ep^{1/2}, \qquad \| m_{\delta,\ep}\|_2^2+ \|n_{\delta,\ep}\|_2^2\leq C\delta^{-1}. 
\end{align}
We claim that any limit point $(t_\delta,m_\delta,s_\delta,n_\delta)$, as $\ep\to 0$ and in the weak topology in $L^2$, of $( t_{\delta,\ep}, m_{\delta,\ep},  s_{\delta,\ep},  n_{\delta,\ep})$ must satisfy $m_\delta=n_\delta$ and $s_\delta=t_\delta<T$, where $(t_\delta, m_\delta)$ is a maximum point in the expression which defines $M_{\delta, \theta}$. Note that such limit points exist in view of \eqref{penalizations.est}. 
\vs
The equality $s_\delta=t_\delta$ is clear. Since  $\Phi(m_\delta-n_\delta)=0$ and $m_\delta-n_\delta\in L^2$, it follows from Lemma~\ref{ath300}, that $m_\delta-n_\delta\geq 0$. 
\vs
If 
$m_\delta\neq n_\delta$, then passing to the limsup in \eqref{izaukehbsrdfngBISPsi}, we obtain 
$$
V^-(t_\delta,m_\delta)-V^+(t_\delta,n_\delta)  - \int_{\T^d} \Psi(x,m_\delta) (m_\delta-n_\delta) -\delta ( \|m_\delta\|^2_2 +\|n_\delta \|^2_2)-\theta(T-t_\delta) - \theta n(\T^d) \geq M_{\delta, \theta}.
$$
Since  $V^-(t,m)$ is $\Psi-$non-increasing and $m_\delta\geq n_\delta$, we find 
$$
 V^-(t_\delta,m_\delta)\leq V^-(t_\delta,n_\delta)  + \int_{\T^d} \Psi(x,m_\delta) (m_\delta-n_\delta)(dx) ,
$$
while, as $m_\delta\geq n_\delta\geq 0$ with $m_\delta\neq n_\delta$, $\|m_\delta\|_2^2>\|n_\delta\|_2^2$. Thus,
$$
V^-(t_\delta,n_\delta)-V^+(t_\delta,n_\delta)  -\delta \|n_{\delta}\|_2^2 + \delta \|n_\delta \|^2_2 -\theta(T-t_\delta) - \theta n_{\delta}(\T^d) > M_{\delta, \theta}, 
$$
a contradiction to the definition of $M_{\delta, \theta}$. 
\vs

Hence $m_\delta=n_\delta$ and $(t_\delta,m_\delta)$ is a maximum point in the definition of $M_{\delta, \theta}$ and, therefore,  $t_\delta<T$. 
\vs
We also note  for later use that 
\be\label{ath310}
\frac{1}{2\ep}\Phi(m_{\delta,\ep}-n_{\delta,\ep}) \to 0 \ \text{as } \ep\to0 \ \ \text{and} \ \ \delta \|m_\delta\|_2^2 \to 0   \ \text{as} \ \delta \to0.
\ee

From now on  we choose $\ep>0$ small enough such that $s_{\delta,\ep}, t_{\delta,\ep}<T$ and, to simplify the presentation, set $f_{\delta,\ep}= D_{H^{-1}} \Phi(m_{\delta, \eps} - n_{\delta, \eps}) = \hat f_{m_{\delta,\ep}-n_{\delta,\ep}}$; here we use the notation from Lemma \ref{ath300}.
\vs

Since  $V^-$ is a sub-solution to  \eqref{eq.HJstoppingPSI}, we have, in view of  Proposition~\ref{prop.subsol.equiv}, that $ m_{\delta,\ep}\in H^1$ and 
\begin{align*}
& \theta -\frac{ t_{\delta,\ep}- s_{\delta,\ep}}{\ep} +\int_{\T^d}  (\frac{Df_{\delta,\ep}}{\ep} + D\xi_{\delta,\ep})\cdot D m_{\delta,\ep}  +\delta \int_{\T^d} |D m_{\delta,\ep}|^2 \\
& \qquad\qquad  +\int_{\T^d} H(x, \frac{Df_{\delta,\ep}}{\ep} +D\xi_{\delta,\ep},m_{\delta,\ep}) m_{\delta,\ep}  \leq C\delta\| m_{\delta,\ep}\|_2^2,
\end{align*}
where 
$$
\xi_{\delta, \ep}(x) =  \int_{\T^d} \frac{\delta \Psi}{\delta m}(y,m_{\delta,\ep},x) (m_{\delta,\ep}-n_{\delta,\ep})(dy) +
\Psi(x,m_{\delta,\ep}) .
$$
\vs
Next, we use Proposition \ref{prop.subsol.equiv} and apply the super-solution test for $V^+$. 
For this, we need to know that the smooth test function
\begin{align} \label{decreasing}
    (s,n) \mapsto  \int_{\T^d} \Psi(x,m_{\eps,\delta}) dn - \theta  n(\T^d)
\end{align}
is locally $\Psi$-decreasing at $n_{\eps,\delta}$ for $\eps$ small enough. Since ${\bf d}(m_{\delta,\ep}, n_{\delta,\ep})\to 0$ as $\ep\to0$,  for $\eps$ small enough and any $n$ close enough to $n_{\delta, \eps})$ and any $n' \leq n$, we have
\begin{align*}
    \int_{\T^d} & \Psi(x, m_{\delta, \eps}) n(dx) =  \int_{\T^d} \Psi( x, m_{\delta, \eps}) n'(dx) +  \int_{\T^d} \Psi(x, m_{\delta, \eps}) (n - n')(dx)
    \\
    &= \int_{\T^d} \Psi(x, m_{\delta, \eps}) n'(dx) +  \int_{\T^d} \Big( \Psi(x, m_{\delta, \eps}) - \Psi(x, n) \Big)  (n - n')(dx)
    \\
    &\qquad + \int_{\T^d}\Psi( x, n)   (n - n')(dx)
    \\
    &\leq \int_{\T^d} \Psi(x, m_{\delta, \eps}) n'(dx) + \int_{\T^d}\Psi( x, n)   (n - n')(dx) + \frac{\theta}{2} (n-n')(\T^d). 
\end{align*}
\vs
It follows that, for $\eps$ small enough, the map in \eqref{decreasing} is indeed locally $\Psi$-decreasing at $(s_{\delta, \eps}, n_{\delta, \eps})$. 
\vs
Since  the map $n \mapsto - \Phi(m-n)$ is $C^{1,1}$ in $H^{-1}$ and non-increasing, and $V^+$ is a super-solution, we use Proposition \ref{prop.subsol.equiv} to deduce that $ n_{\delta,\ep} \in H^1$ and 
\begin{align*}
& -\frac{ t_{\delta,\ep}- s_{\delta,\ep}}{\ep}+\int_{\T^d}  (\frac{D f_{\delta,\ep} }{\ep}+ D\zeta_{\delta, \ep}) \cdot D n_{\delta,\ep}  -\delta \int_{\T^d} |D n_{\delta,\ep}|^2  \\
& \qquad \qquad +\int_{\T^d} H(x, \frac{D f_{\delta,\ep} }{\ep}+D\zeta_{\delta, \ep},n_{\delta,\ep})  n_{\delta,\ep}   \geq 
-C\delta \| n_{\delta,\ep}\|_2^2,
\end{align*}
where 
$$
\zeta_{\delta, \ep}(x)=  \Psi(x,m_{\delta,\ep}).
$$
Thus
\begin{align*}
& \theta+\int_{\T^d} \frac{ D f_{\delta,\ep}}{\ep} \cdot D( m_{\delta,\ep}- n_{\delta,\ep})-\int_{\T^d} \Delta \xi_{\delta, \ep}m_{\delta, \ep} 
+\int_{\T^d} \Delta \zeta_{\delta, \ep}n_{\delta, \ep}
 +\delta \int_{\T^d}( |D m_{\delta,\ep}|^2+ |D n_{\delta,\ep}|^2)  \\
& \qquad +\int_{\T^d} H(x, \frac{Df_{\delta,\ep}}{\ep}+D\xi_{\delta, \ep},m_{\delta,\ep}) m_{\delta,\ep}-\int_{\T^d} H(x, \frac{Df_{\delta,\ep}}{\ep}+D\zeta_{\delta, \ep},n_{\delta,\ep})n_{\delta,\ep}  \\
& \qquad  \leq C\delta \big( \|m_{\delta,\ep}\|_2^2+\|n_{\delta,\ep}\|_2^2). 
\end{align*}
Using \eqref{slkdfklkeknjzerTERPsi1} we have 
\begin{align*}
&\int_{\T^d} \frac{ D f_{\delta,\ep}}{\ep} \cdot D( m_{\delta,\ep}- n_{\delta,\ep}) \geq -\frac{2}{\ep}\Phi(m_{\delta,\ep}-n_{\delta,\ep}) + \frac{1}{\ep}\|(m_{\delta,\ep}-n_{\delta,\ep})_-\|^2_2, 
\end{align*}
while, by the regularity of $\Psi$, we get 
\begin{align*}
 -\int_{\T^d} \Delta \xi_{\delta, \ep}m_{\delta, \ep}  +\int_{\T^d} \Delta \zeta_{\delta, \ep}n_{\delta, \ep} & 
\geq -\int_{\T^d} \Delta \xi_{\delta, \ep}(m_{\delta, \ep} -n_{\delta, \ep} ) +\int_{\T^d} \Delta (\zeta_{\delta, \ep}-\xi_{\delta, \ep})n_{\delta, \ep} \\ 
& \geq -C {\bf d}(m_{\delta, \ep},n_{\delta, \ep}) \; \geq \; -C\int_{\T^d} |m_{\delta, \ep}-n_{\delta, \ep}|. 
\end{align*}
Finally, since  $H$ is bounded from below and Lipschitz,  we find 
\begin{align*}
& \int_{\T^d} H(x, \frac{Df_{\delta,\ep}}{\ep}+D\xi_{\delta, \ep}, m_{\delta,\ep}) m_{\delta,\ep}-\int_{\T^d} H(x, \frac{Df_{\delta,\ep}}{\ep}+D\zeta_{\delta, \ep}, n_{\delta,\ep})n_{\delta,\ep}\\ 
& \qquad \geq  \int_{\T^d} H(x, \frac{Df_{\delta,\ep}}{\ep}+D\xi_{\delta, \ep},m_{\delta,\ep}) (m_{\delta,\ep}-n_{\delta,\ep})_+-
 \int_{\T^d} H(x, \frac{Df_{\delta,\ep}}{\ep}+D\xi_{\delta, \ep}, m_{\delta,\ep}) (m_{\delta,\ep}-n_{\delta,\ep})_- \\
 & \qquad\qquad\qquad -C {\bf d}( m_{\delta,\ep},n_{\delta,\ep}) - C\| D\xi_{\delta, \ep}-D\zeta_{\delta, \ep}\|_\infty \\
 &\qquad  \geq -C\int_{\T^d} (m_{\delta,\ep}- n_{\delta,\ep})_+ -C\int_{\T^d} (m_{\delta,\ep}- n_{\delta,\ep})_- (1+ \frac{|Df_{\delta,\ep}|}{\ep}+|D\xi_{\delta, \ep}|) - C{\bf d}(m_{\delta, \ep},n_{\delta, \ep})  \\ 
&\qquad  \geq -C\int_{\T^d} |m_{\delta,\ep}- n_{\delta,\ep}| -\frac{1}{2\ep} \|(m_{\delta,\ep}- n_{\delta,\ep})_-\|_2^2 -\frac{C}{\ep} \|Df_{\delta,\ep}\|^2_2.  
\end{align*} 
Recalling \eqref{formPhiPsi1}, we can collect the four strings of inequalities above to get 
\begin{equation}\label{sldkjfeljhbnePsi}
\begin{split}
&  \theta  -\frac{C}{\ep} \Phi(m_{\delta,\ep}-n_{\delta,\ep}) -C\int_{\T^d} |m_{\delta,\ep}- n_{\delta,\ep}| +\delta \int_{\T^d}( |D m_{\delta,\ep}|^2+ |D n_{\delta,\ep}|^2) \\
& \qquad \qquad  \leq C\delta ( \|m_{\delta,\ep}\|_2^2+\|n_{\delta,\ep}\|_2^2). 
\end{split}
\ee
\vs
Recalling that \eqref{ath310}  and  \eqref{sldkjfeljhbnePsi} imply 
that the $m_{\delta,\ep}$'s and $n_{\delta,\ep}$'s are bounded in $H^1$ and 
that  the $m_{\delta,\ep}$'s and $n_{\delta,\ep}$'s  have the same weak limit, as $\ep\to 0$,  in $L^2$,  
yields that the  $m_{\delta,\ep}$'s and $n_{\delta,\ep}$'s  converge strongly, again as $\ep\to 0$,  in $L^2$ to some $ m_\delta\in H^1$. 
\vs
We also proved that the $t_{\delta,\ep}$'s and $s_{\delta,\ep}$'s converge to some $ t_\delta$ and that $( t_\delta,  m_\delta)$ is a maximum point in the definition of $M_{\delta, \theta}$. 
\vs
Passing  to the limit in \eqref{sldkjfeljhbnePsi} we get 
$$
\theta \leq C\delta\| m_\delta\|_2^2,
$$
and find a contradiction by choosing $\delta$ small enough since  $\delta\| m_\delta\|_2^2$ tends to $0$. 

\end{proof}

\subsection*{The value function is a viscosity solution}

We now show that the value function $U$ is a viscosity solution.

\begin{proposition} \label{prop.valsub} Assume \eqref{summarize}. Then, the value function $U$ is a viscosity solution of \eqref{eq.HJstoppingPSI}. 
\end{proposition}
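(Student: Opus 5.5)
We will verify the three ingredients of Definition \ref{def.viscositysoln} in turn, using continuity of $U$ from Theorem \ref{thm.main.lip} (Corollary \ref{cor.regU}) throughout. First the boundary and monotonicity conditions. The $\Psi$-non-increasing property has already been established. For the terminal condition, $U(T,m)$ is computed from admissible triples on the degenerate interval $\{T\}$: there $\mu=\mu(\{T\},\cdot)=m-m_T$ with $m_T\le m$ by Lemma \ref{lem.cdlag}. The cost is then $\int\Psi(x,m)\,d(m-m_T)+G(m_T)$, so $U(T,\cdot)=G_\Psi$ by \eqref{def.monotoneenvelope}. This gives both $V(T,\cdot)\le G_\Psi$ and $V(T,\cdot)\ge G_\Psi$.

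\textbf{Subsolution property.} Let $U-\Phi$ have a maximum at $(\bar t,\bar m)$ with $\bar t<T$. We take $\mu=0$, $\alpha(t,x)=a(x)$ a fixed smooth field, and let $m$ be the continuous solution of the Fokker--Planck equation from $\bar m$. Lemma \ref{dp} gives
\[
U(\bar t,\bar m)\le \int_{\bar t}^{\bar t+h}\!\!\int L(x,a,m_t)\,dm_t\,dt+U(\bar t+h,m_{\bar t+h}).
\]
Replacing $U$ by $\Phi$ via the maximum, we expand $\Phi(\bar t+h,m_{\bar t+h})-\Phi(\bar t,\bar m)$ using the linear derivative and the equation for $m$. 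The regularity in Definition \ref{def.testfunction}, namely Lipschitz $D_x\frac{\delta\Phi}{\delta m}$ and $D_x^2\frac{\delta\Phi}{\delta m}$, together with $\bd(m_t,\bar m)\le C\sqrt{t-\bar t}$, justifies the expansion. Dividing by $h$ and letting $h\to0$ gives
\[
-\partial_t\Phi-\int\Delta_x\tfrac{\delta\Phi}{\delta m}\,d\bar m-\int\Big(D_x\tfrac{\delta\Phi}{\delta m}\cdot a+L(x,a,\bar m)\Big)d\bar m\le 0 .
\]
Finally we choose $a=-D_pH(x,D_x\frac{\delta\Phi}{\delta m}(\bar t,\bar m,x),\bar m)$, which is Lipschitz in $x$ and attains the sup in \eqref{ath200}, and obtain the $H$ inequality.

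\textbf{Supersolution property.} This is the main step. Let $U-\Phi$ have a minimum at $(\bar t,\bar m)$, with $\Phi$ locally $\gamma$-$\Psi$-decreasing there for some $\gamma>0$. By Corollary \ref{cor.regU} we take an optimal $(m,\alpha,\mu)$ for $U(\bar t,\bar m)$ with $\|\alpha\|_\infty\le R_0$, and apply Lemma \ref{dp} on $[\bar t,\bar t+h]$:
\[
U(\bar t,\bar m)=\int_{\bar t}^{\bar t+h}\!\!\int L\,dm_t\,dt+\iint_{[\bar t,\bar t+h]}\Psi(x,m_{s-})\,d\mu+U(\bar t+h,m_{\bar t+h}).
\]
Using $U\ge\Phi+c$ near the minimum, the task is to expand $\Phi(\bar t+h,m_{\bar t+h})-\Phi(\bar t,\bar m)$ along a path that may jump. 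We plan to compare with the unkilled path $\tilde m$ (same $\alpha$, $\mu=0$), which satisfies $\tilde m_t\ge m_t$ and for which $\Phi$ expands smoothly as in the subsolution step. We then use the local $\gamma$-$\Psi$-decrease at the point $(\bar t+h,\tilde m_{\bar t+h})$, which lies in the $\eps$-neighbourhood for small $h$ by the $\sqrt h$ bound \eqref{ineqtildemst}, with $n=m_{\bar t+h}\le\tilde m_{\bar t+h}$:
\[
\Phi(\bar t+h,\tilde m_{\bar t+h})\le\Phi(\bar t+h,m_{\bar t+h})+\int\Psi(x,\tilde m_{\bar t+h})\,d(\tilde m-m)_{\bar t+h}-\gamma\,\mu([\bar t,\bar t+h]\times\T^d).
\]
Exactly as in the proof of Proposition \ref{prop.reguU}, the $\Psi$ term is compared with $\iint\Psi(x,m_{s-})\,d\mu$. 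Monotonicity of $\Psi$ and $m_{s-}\le\tilde m_s$ give $\Psi(x,m_{s-})\ge\Psi(x,\tilde m_s)$. Writing $\tilde m-m$ via the equations and using $\|\alpha\|_\infty\le R_0$ costs an error $o(1)\cdot\mu([\bar t,\bar t+h])+O(h)$. Writing $M_h=\mu([\bar t,\bar t+h]\times\T^d)$ and combining, we get
\[
0\ge h\Big[-\partial_t\Phi-\int\Delta_x\tfrac{\delta\Phi}{\delta m}d\bar m-\tfrac1h\int_{\bar t}^{\bar t+h}\!\!\int\big(D_x\tfrac{\delta\Phi}{\delta m}\cdot\alpha+L\big)dm\Big]+(\gamma-o(1))M_h-o(h).
\]
The term $-(D_x\frac{\delta\Phi}{\delta m}\cdot\alpha+L)$ is bounded above by $H$ by definition \eqref{ath200}. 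The positive term $\gamma M_h$ can simply be dropped, and dividing by $h$ and sending $h\to0$ yields the supersolution inequality. The delicate points are the $o(1)$ errors, which must be uniform in the jump size; this is exactly why a strict $\gamma>0$ is required, since it absorbs errors proportional to $M_h$. The other delicate point is replacing $m_t$ by $\bar m$ in the $H$ term; since $\bd(m_t,\tilde m_t)\le M_h$, this costs at most $O(M_h)$, again absorbed by $\gamma$.
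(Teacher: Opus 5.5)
Your argument is correct in substance. In the supersolution step it takes a different, somewhat more economical, route than the paper.

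\textbf{The paper's route.} The paper first rules out a jump at the initial time. Letting $h\to0$ in the dynamic programming identity gives $U(\bar t,\bar m)-U(\bar t,m_{\bar t})=\int_{\T^d}\Psi(x,\bar m)\,d(\bar m-m_{\bar t})$. Comparing this with $U(\bar t,\bar m)-U(\bar t,m_{\bar t})\le\Phi(\bar t,\bar m)-\Phi(\bar t,m_{\bar t})$, and using the strict local $\Psi$-decrease of $\Phi$ at $(\bar t,\bar m)$ with $n=m_{\bar t}$, forces $m_{\bar t}=\bar m$. Continuity of $m$ at $\bar t$ then gives $(\tilde m_t-m_t)(\T^d)\to0$. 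So the cross terms between the killed and unkilled flows are $o(h)$ outright, and $\gamma$ is only used to absorb the oscillation of $\Psi(\cdot,m_{t-})$ on $[\bar t,\bar t+h]$.

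\textbf{Your route.} You never isolate a possible atom of $\mu$ at $\bar t$. Because $0\le\tilde m_t-m_t$ has mass at most $M_h$, every discrepancy between the two flows costs $O(h)M_h$. This covers the equation terms, and the replacement of $\int L(x,\alpha,m_t)\,dm_t$ and of the $H$-term by their $\tilde m$-counterparts. Monotonicity of $\Psi$ and the $\sqrt h$-continuity of $\tilde m$ give $\Psi(x,m_{s-})\ge\Psi(x,\tilde m_{\bar t+h})-C\sqrt h$, which costs $C\sqrt h\,M_h$ after integrating against $\mu$. So all errors are $o(1)M_h$ and are absorbed by $\gamma M_h$. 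This needs no continuity of $m$ at $\bar t$, and it yields $M_h=O(h)$, hence no initial jump, as a by-product. The paper instead makes that structural fact (the optimal control removes no mass instantaneously at a touching point) explicit first, and then uses softer estimates.

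\textbf{Fixes needed in your write-up.} As written, three points would break the argument.
\begin{itemize}
\item The error from writing $\tilde m-m$ through the equations must be stated as $O(h)M_h$, not $o(1)M_h+O(h)$. An additive $O(h)$ error, which is the size the cruder estimates in Proposition~\ref{prop.reguU} produce, does not vanish after dividing by $h$.
\item The inequality in your final display points the wrong way. With $B$ denoting your bracket, the dynamic programming identity gives $hB\ge(\gamma-o(1))M_h-o(h)$. Only in that direction does $-(D_x\frac{\delta\Phi}{\delta m}\cdot\alpha+L)\le H$ yield the supersolution inequality.
\item You evaluate $U\ge\Phi+c$ at $(\bar t+h,m_{\bar t+h})$. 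When $\mu$ charges $\{\bar t\}\times\T^d$, this point may a priori be far from $\bar m$. So you need the minimum of $U-\Phi$ to be global, as the definition provides, not merely ``near the minimum''.
\end{itemize}
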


\begin{proof} We already know that $U$ is continuous and $\Psi-$non-increasing. The argument for the subsolution property is standard (see for instance the companion paper \cite{CJSabsorption}), as we can choose flows of measures  satisfying, for $\mu=0$, the equality 
$$
\partial_t m-\Delta m +{\rm div}(m\alpha) = 0 \ \text{in} \ (t_0,T]\times \T^d  \ \ \text{and} \ \ m_{t_0}=m_0.
$$

We show first that $U$ is a super-solution. Let  $(\bar t, \bar m)$ be a minimum of $(t,m)\to U( t,m)-\Phi( t,m)$, where $\Phi$ is a smooth test function which is locally $\Psi$-decreasing at $(\ov t, \ov m)$. Using Corollary \ref{cor.regU}, we can find $(m,  \alpha,\mu)$ which is optimal for $U(\ov t, \ov m)$ and such that $\alpha$ is bounded.
\vs
For $h>0$, the dynamic programming principle says that 
\be\label{oijnfdlkjnPSI}
U(\bar t,\bar m) = \int_{\bar t}^{\bar t+h}  \int_{\T^d} L(x,\alpha,m_{t}) dm_t(x)  dt + \int_{[\ov t, \ov t + h] \times \T^d } \Psi(x,m_{t-}) d\mu(t,x) + U(\bar t+h, m_{\bar t + h}).
\ee
Sending $h \to 0$ and using Lemma \ref{lem.cdlag} shows that
\begin{align*}
    U(\ov{t},\ov{m}) - U(\ov{t}, m_{\ov t}) = \int_{\T^d} \Psi(x,\ov{m}) \mu(\{\ov t\},  dx) = \int_{\T^d} \Psi(x,\ov{m}) d(\ov{m} - m_{\ov t}).
\end{align*}
We also  have, for some $\delta > 0$. that 
\begin{align*}
    U(\ov t, \ov m) - U(\ov t, m_{\ov t}) \leq \Phi(\ov t, \ov m) - \Phi(\ov t, m_{\ov t}) \leq \int_{\R^d} \Psi(x,\ov{m}) d(\ov{m} - m_{\ov t}) - \delta (\ov{m} - m_{\ov t})(\T^d), 
\end{align*}

Subtracting the two inequalities, we find that $(\ov{m} - m_{\ov t})(\T^d) = 0$, and, thus,  $\ov{m} = m_{\ov t}$. 
\vs
Next, using the fact that $U - \Phi$ is minimized at $(\ov t, \ov m)$, we find that  
\begin{equation} \label{phbar.ineq}
\begin{split}
    \Phi(\ov t, \ov m) &\geq \int_{\bar t}^{\bar t+h}  \int_{\T^d} L\big(x,\alpha(t,x),m_t \big) dm_t(x)  dt \\
    &\qquad +\int_{[\ov t, \ov t + h] \times \T^d} \Psi(x,m_{t-})d\mu(t,x) + \Phi(\bar t+h, m_{\bar t + h}).
\end{split}
\ee
\vs
Let $\tilde{m}$ be  the solution of 
\begin{align*}
    \partial_t \wt{m} - \Delta \wt{m} + \text{div}(\wt m \alpha) = 0\ \text{in} \ (t_0,T] \times \T^d \ \ \text{and}  \ \  \wt{m}_{t_0} = \ov m. 
\end{align*}
We note that, since $\wt{m}_t \geq m_t$, and, in view of the continuity of  $m$ at $t_0$, $\big(\wt{m}_t - m_t \big)(\T^d) \to 0$ as $t\to t_0$,  
it follows that, if   $\bd_{\text{TV}}$ is the total variation metric, 
\begin{align}\label{takis4000}
    \lim_{t \to t_0} \; \bd_{\text{TV}}(\wt{m}_t,m_t)=0. 
\end{align}

Next, we observe that,  in view of the fact that $\Phi$ is locally $\Psi$-decreasing at $(\ov{t}, \ov{m})$, we have, for $h$ small enough and some $\delta > 0$, that 
\begin{align*}
    \Phi(\ov t + h, m_{\ov t + h}) &\geq \Phi(\ov t + h, \wt{m}_{\ov t + h}) - \int_{\T^d} \Psi(x, \wt{m}_{\ov t + h} ) d(\wt{m}_{t+ h} - m_{t+h})(x) 
+ \delta (\wt{m}_{\ov t + h} - m_{\ov t + h})(\T^d) 
    \\
    &= \Phi(\ov t + h, \wt{m}_{\ov t + h}) - \int_{\T^d} \Psi(x, \wt{m}_{\ov t + h} ) d(\wt{m}_{t+ h} - m_{t+h})(x) + \delta \mu([\ov t, \ov t + h] \times \T^d).
\end{align*}
Combining the last inequality  with \eqref{phbar.ineq} yields 
\begin{align*}
   \Phi(\bar t,\bar m) &\geq \int_{\bar t}^{\bar t+h}  \int_{\T^d} L\big(x,\alpha(t,x),m_t \big) dm_t(x) dt +\int_{[\ov t,\bar t+h] \times \T^d} \Psi(x,m_{t-})\mu
   \\
   &\qquad - \int_{\T^d} \Psi(x, \wt{m}_{\ov t+h})d(\wt{m}_{\ov t + h} - m_{\ov t + h})(x)
  + \Phi(\bar t+h, \wt{m}_{\ov t + h})  + \delta \mu([\ov t, \ov t + h] \times \T^d).
\end{align*}
Using the equations satisfied by $m$ and $\wt{m}$ gives 
\begin{align*}
    \int_{\T^d} \Psi(x, \wt{m}_{\ov t +h}) & d(\wt{m}_{\ov t + h} - m_{\ov t + h}) = \int_{\ov t}^{\ov t + h} \int_{\T^d} \big( \Delta_x \Psi( \wt{m}_{\ov t + h}, x )  + D_x \Psi(\wt{m}_{\ov t + h},x)  \cdot \alpha(t,x)  \big)  d(\wt{m}_t - m_t)(x) dt
    \\
    &\quad +   \int_{[\ov t,\ov t + h] \times \T^d} \Psi( x, \wt{m}_{\ov t + h}) d\mu 
    \leq o(h) +  \int_{[\ov t, \ov t + h] \times \T^d} \Psi( x, m_{(\ov t + h)-}) d\mu.
\end{align*}
Hence
\begin{align*}
  &\int_{[\ov t, \ov t + h] \times \T^d} \Psi(x, m_{t-})\mu(dt,dx)  - \int_{\T^d} \Psi(x, \wt{m}_{\ov t+h})d(\wt{m}_{\ov t + h} - m_{\ov t + h})  + \delta \mu([\ov t, \ov t + h] \times \T^d)
  \\
  &\quad \geq - o(h) + \int_{[t_0,t_0 + h] \times \T^d} \Big( \delta +  \Psi(x,m_{t-}) - \Psi(x, m_{\ov t + h}) \Big) d\mu 
  \\
  &\quad \geq - o(h),
\end{align*}
where we used the fact that the second term is positive for small $h$ by continuity. 
\vs 
In particular, we find that 
\begin{align*}
   \Phi(\bar t,\bar m) &\geq \int_{\bar t}^{\bar t+h}  \int_{\T^d} L(x,\alpha,m) m dt  + \Phi(\bar t+h, \wt{m}_{\bar t+h}) - o(h),
\end{align*}
and, in view of \eqref{takis4000}, 
we conclude that
\begin{align*}
  \Big|  \int_{\ov t}^{\ov t + h} \int_{\T^d} L(x, \alpha, m) m dt -  \int_{\ov t}^{\ov t + h} \int_{\T^d} L(x, \alpha, \wt{m}) \wt{m} dt \Big| = o(h), 
\end{align*}
and, thus, 
\begin{align*}
   \Phi(\bar t,\bar m) &\geq \int_{\bar t}^{\bar t+h}  \int_{\T^d} L(x,\alpha,\wt{m}) \wt{m} dt  + \Phi(\bar t+h, \wt{m}_{\bar t + h}) - o(h). 
\end{align*}
Expanding $\Phi$ along the regular flow $\tilde m$ yields
\begin{align*}
    o(h) &\geq \int_{\ov{t}}^{\ov t + h} \bigg( \partial_t \Phi(t, \wt{m}_{t}) + \int_{\T^d} \Delta_x \frac{\delta \Phi}{\delta m} (t, \wt{m}_{t},x ) d\wt{m}_{t}(d) + \int_{\T^d} \Big( L(x, \alpha, \wt{m}) \\ &+ D_x \frac{\delta \Phi}{\delta m}(t, \wt{m}_{t},x ) \cdot \alpha \Big) d\wt{m}_t(x) \bigg) dt 
    \\
    &\geq \int_{\ov{t}}^{\ov t + h} \bigg( \partial_t \Phi(t, \wt{m}_{t}) + \int_{\T^d} \Delta_x \frac{\delta \Phi}{\delta m} (t, \wt{m}_{t},x )d \wt{m}_{t}(x) - \int_{\T^d} H\Big(x, D_x \frac{\delta \Phi}{\delta m} (t, d\wt{m}_{t}, )  , \wt{m}_t \Big)   \bigg)dt 
\end{align*}
Dividing by $h$ and sending $h \to 0$ gives the result. 

\end{proof}

\subsection*{The proof of Theorem~\ref{thm.uniqueness}}

We start with  a corollary of Theorem \ref{thm.comparison}.
\begin{corollary} \label{cor.truncation}
    Suppose that \eqref{phl2} and \eqref{phl3} hold, and that $H : \T^d \times \R^d \times \cP(\T^d) \to \R$ is just locally Lipschitz continuous. If $U$ and $V$ are respectively  a viscosity sub- and super-solution to  \eqref{eq.HJstoppingPSI} 
    and that both $U$ and $V$ are Lipschitz continuous in $m$, uniformly in $t$. Then, $U \leq V$. 
\end{corollary}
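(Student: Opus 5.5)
The plan is a truncation argument: replace $H$ by a globally Lipschitz Hamiltonian $H_R$ that agrees with $H$ on $\{|p|\leq R\}$, show that $U$ and $V$ are still sub- and super-solutions for $H_R$, and then apply Theorem \ref{thm.comparison}. Let $C_L$ bound the Lipschitz constants in $m$ of $U$ and $V$, uniformly in $t$. Fix $R > C_L$. Set
\[
H_R(x,p,m) = H\big(x, \pi_R(p), m\big),
\]
where $\pi_R$ is the projection of $\R^d$ onto the closed ball of radius $R$. Then $H_R$ is globally Lipschitz, because $H$ is Lipschitz on the compact set $\T^d\times \overline{B}_R\times \sub$. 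It is also bounded below, since it is continuous on a compact set. So $H_R$ satisfies \eqref{ath100}, and the structural assumptions \eqref{phl2} and \eqref{phl3} are unchanged.

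The key claim is the following. Suppose $\Phi$ is a smooth test function and $U-\Phi$ has a local maximum at $(\ov t,\ov m)$. Then $|D_x \frac{\delta \Phi}{\delta m}(\ov t,\ov m,x)|\leq C_L$ for $\ov m$-a.e.\ $x$, at least on the support of $\ov m$. To see this, test maximality with directional perturbations $m_h = \ov m + h(\delta_{y}-\delta_{x})$-type displacements. More precisely, push $\ov m$ forward by $x\mapsto x+h\,b(x)$ with $b$ smooth. This gives $\Phi(\ov t, m_h)-\Phi(\ov t,\ov m)\geq U(\ov t,m_h)-U(\ov t,\ov m)\geq -C_L\bd(m_h,\ov m)$. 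Since $\bd(m_h,\ov m)\leq h\int |b|\,d\ov m$, dividing by $h$ and sending $h\to 0^\pm$ yields
\[
\Big|\int D_x\frac{\delta\Phi}{\delta m}(\ov t,\ov m,x)\cdot b(x)\,d\ov m(x)\Big| \leq C_L\int|b|\,d\ov m
\]
for every smooth $b$. By density this gives the pointwise bound $\ov m$-a.e. Only values on the support of $\ov m$ enter the integral $\int H(\cdot)\,d\ov m$. Therefore $H$ and $H_R$ coincide in the sub-solution inequality, and $U$ is a sub-solution for $H_R$.

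The same argument applies to $V$ at minimum points, using the Lipschitz bound with reversed inequalities. Note that mass-preserving push-forwards are admissible perturbations in both directions, so the locally $\Psi$-decreasing requirement on $\Phi$ does not interfere. Hence $V$ is a super-solution for $H_R$. The terminal conditions and the $\Psi$-non-increasing property do not involve $H$, so they transfer directly. Theorem \ref{thm.comparison} applied with $H_R$ then gives $U\leq V$.

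The main obstacle is the derivative bound. The first difficulty is justifying the first-order expansion of $\Phi$ along push-forwards. This follows from the $C^1$ regularity of $\frac{\delta\Phi}{\delta m}$ and the Lipschitz continuity of $D_x\frac{\delta \Phi}{\delta m}$ required in Definition \ref{def.testfunction}. The second difficulty is making sure that the localization ``$U-\Phi$ has a maximum'' is global in the paper's definition, so that perturbations are allowed; this is indeed the case. If the push-forward approach gave trouble, I would fall back on controlling the relevant $L^\infty(\ov m)$ norm by duality with the $\bd$-Lipschitz bound.
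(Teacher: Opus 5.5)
Your proposal is correct and follows the paper's proof. The paper also truncates $H$ via $H^R(x,p,m)=H(x,\pi^R(p),m)$ and uses the $\bd$-Lipschitz bound to get $|D_x\frac{\delta\Phi}{\delta m}(\ov t,\ov m,x)|\le \mathrm{Lip}$ on the support of $\ov m$ at extremum points. From this it concludes that $U$ and $V$ remain sub- and super-solutions for $H^R$ when $R$ is large, and then applies Theorem \ref{thm.comparison}. Your push-forward duality argument supplies the derivative bound that the paper only calls straightforward.
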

\begin{proof}
    The idea is to replace $H$ by 
    \begin{align}  \label{def.Hr}
        H^R(x,p,m) = H\big(x, \pi^R(p), m \big)
    \end{align}
    where $\pi^R : \R^d \to \R^d$ is projection onto the ball of radius $R$.
    \vs
     It is straightforward to show that if $\phi : \sub \to \R$ is Lipschitz with respect to $\bd$, and, if $\phi - \psi$ has a maximum or a minimum at $m \in \sub$ for some smooth test function $\psi$, then 
     $$| D_x \frac{\delta \psi}{\delta m}(m,x) | \leq \text{Lip}(\phi; \bd) \ \text{ for  all $x$ in the support of $m$}.$$
      It follows  that, if $U$ and $V$ are Lipschitz in $m$, then they will be respectively  viscosity sub- and super-solution to \eqref{eq.HJstoppingPSI} with $H^R$ replacing $H$ for all $R$ large enough. 
      \vs
      Since $H^R$ is globally Lipschitz (and bounded), the claim follows applying Theorem \ref{thm.comparison}.
     
\end{proof}

\begin{proof}[Proof of Theorem \ref{thm.uniqueness}]
  This is an immediate consequence of Proposition \ref{prop.valsub} and Corollaries \ref{cor.regU} and \ref{cor.truncation}.
\end{proof}

\section{The mean field limit}  \label{sec.convergence}

We consider here the mean field limit behavior of \eqref{stopping.gen2}.

\begin{proof}[Proof of Theorem \ref{thm.convergence}] 
With $C$  the constant in  Theorem~\ref{thm.main.lip}, we define $\wt{V}^N : [0,T] \times \sub \to \R$ by 
$$
\wt{V}^N(t,m)= \sup_{K, \bx\in (\T^d)^K} V^{N,K}(t,\bx) - C{\bf d}(m^{N,K}_{\bx}, m). 
$$
Then  Theorem~\ref{thm.main.lip} implies that $\tilde V^N(t,m^{N,K}_{\bx} ) =V^{N,K}(t,\bx)$  for all $K\in \{0, \dots, N\}$ and all  $(t,\bx) \in [0,T]\times (\T^d)^K$ and that 
\begin{align*}
    |\wt{V}^N(t,m) - \wt{V}^N(s,n)| \leq C \Big(\bd(m,n) + |t-s|^{1/2} \Big).
\end{align*}
Hence, the  $\tilde V^N(t,m^{N,K})$'s are  compact for the uniform convergence. 
\vs

Let $V$ be any cluster point of this sequence. For simplicity of notation, we  still denote by $\wt{V}^N$ the converging subsequence. We claim  that $V$ is a solution to  the HJ equation \eqref{eq.HJstoppingPSI}. 
\vs
That  $V(T,m) = G_{\Psi}(m)$ follows from Lemma \ref{lem.gnk.conv}. 

\vs

To show that  $V$ is $\Psi$-non-increasing, we fix $t_0\in [0,T]$ and  $m_0\leq n_0$, and aim is to show  that
\begin{align} \label{V.nondecreasing}
    V(t_0,n_0) \leq V(t_0,m_0) + \int_{\T^d} \Psi(x, n_0) (m_0 - n_0)(dx). 
\end{align}
By the continuity of $V$, we can assume that $\int_{\T^d} n_0<1$. 
Then  Lemma \ref{lem.approxm0} below states that, if  $\nu_0= n_0-m_0$,  there exist $K^N,M^N\in \{0, \dots, N\}$, $\bx_N \in (\T^d)^{K^N}$ and $\by_N\in (\T^d)^{M^N}$ such that the $m^{N,K^N}_{\bx_N}$'s and the  $m^{N,M^N}_{\by_N}$'s converge respectively  to $m_0$ and $\nu_0$. It follows that the  $m^{N, K^N+M^N}_{(\bx_N, \by_N)}= m^{N,K^N}_{\bx_N} +m^{N,M^N}_{\by_N}$'s  converge to $n_0$. 

\vs
From  the definition of the $(V^{N,K})$'s, we have 
$$
 V^{N, K^N+M^N}(t, m^{N, K^N+M^N}_{(\bx_N, \by_N)}) \leq V^{N,K^N}(t, m^{N,K^N}_{\bx_N}) + \frac{1}{N} \sum_{i = 1}^{M^N} \Psi\big(t, m^{N, K^N+M^N}_{(\bx_N, \by_N)}, y_N^j\big). 
$$
Letting $N\to \infty$, we obtain \eqref{V.nondecreasing}.
\\

We now check that $V$ is a sub-solution. Let $\Phi:(0,T)\times \sub \to \R$ be a smooth test function in the sense of Definition \ref{def.testfunction}, such that $V-\Phi$ has a strict maximum at $(t_0,m_0)$. Then there exist (up to a subsequence) $K^N$ and  $(t^N, {\bx^N})$ local maximum of $(t,\bx)\to V^{N,K^N}(t,\bx)-\Phi(t,m^{N,K^N}_{\bx})$ such that $t^N\to t_0$ and  $m^{N,K^N}_{\bx^N}\to m_0$. 
\vs
Set $w(t,\bx)=\Phi(t,m^{N,K^N}_{\bx})$. Then Lemma \ref{lem.C11} below implies that $w(t^N,\cdot)$ is $C^{1,1}$. Moroever, for each $\ep>0$, the map $\bx\to V^{N,K^N}(t^N,\bx)-w(t^N, \bx)-\ep|\bx -\bx^N|^2$ has a strict maximum at $\bx^N$. 
\vs

Thus (see \cite[Lemma A.3]{usersguide}), for any $\delta>0$ small, there is ${\bf p}^N\in (\R^d)^N$ such that $|{\bf p}^N|<\delta$, the map $\bx\to V^{N,K^N}(t^N,\bx)-w(t^N, \bx)-\ep|\bx -\bx^N|^2-{\bf p}^N\cdot (\bx-\bx^N)$ has a maximum at some  $\by^{N,\delta}$ point of second order differentiability of $w$ and $\by^{N,\delta}\to \bx^N$ as $\delta \to 0$.

\vs

It also follows from  Lemma \ref{lem.C11} and by the equation satisfied by $V^{N,K}$ that, for some independent of $N$ constant  $C$, 
\begin{align*}
 &   - \partial_t \Phi (t^N,m^{N,K^N}_{\bx^N})-\frac{1}{N} \sum_{i = 1}^{K^N} (\Delta_{y} \frac{\delta \Phi}{\delta m} (t^N, m^{N,K^N}_{\bx^N},x^{N,i}) + 2\ep d)     \\
&     
      + \frac{1}{N} \sum_{i = 1}^{K^N} H\Big( x^{N,i},D_x \frac{\delta \Phi}{\delta m}(t^N, m^{N,K}_{\by^{N,\delta}},\by^{N,\delta,i}))+2\ep N (\by^{N,\delta,i}-\bar x^{N,i})+N{\bf p}^{N,i},x^{N,i}\big), m_{\bx^N}^{N,K^N} \Big)\; \leq \; \frac{C d}{N}. 
\end{align*}
\vs
Letting first $\delta \to 0$, then $\ep\to 0$ and, finally, $N\to \infty$ we obtain  
\begin{align*}
 &  - \partial_t \Phi (t_0,m_0)-\int_{\T^d} \Delta_{y} \frac{\delta \Phi}{\delta m} (t_0, m_0,x) m_0(dx) 
      + \int_{\T^d} H\Big( x,D_{m} \Phi (t_0, m_0,x\big), m_0\Big) m_0(dx) \leq 0. 
\end{align*}

Next we show that  $V$ is a super-solution. Let $\Phi:(0,T)\times \sub \to \R$ be a smooth test function, such that $V-\Phi$ has a minimum at $(t_0,m_0)$ and $\Phi$ is locally $\Psi$-increasing at $(t_0,m_0)$. We can assume without loss of generality that this minimum is strict, since otherwise we can replace $\Phi$ by $\tilde \Phi:=\Phi+\ep((t-t_0)^2+ \|\cdot - m_0\|_{H^{-s}}^2)$ for $\ep\geq 0$ and  $s>d/2 + 2$, which is such that $V-\tilde \Phi$ has a strict minimum at $(t_0,m_0)$, and remains locally $\Psi$-decreasing at $(t_0,m_0)$ for $\eps$ small enough. 
\vs

Then there exist,  up to  subsequences still denoted by $N$,   $K^N\in\{0, \dots, N\}$ and $(t^N, {\bx^N})$ local minimum of $(t,\bx)\to V^{N,K^N}(t,\bx)-\Phi(t,m^{N,K^N}_{\bx})$ such that $t_N \to t_0$ and  $m^{N,K^N}_{\bx_N}\to m_0$. 
\vs
Arguing as in the proof of the sub-solution property, we find  
\begin{align}\label{zielrgfkg2}
 &  \max\Bigl\{  - \partial_t \Phi (t^N,m^{N,K^N}_{\bx^N})-\frac{1}{N} \sum_{i = 1}^{K^N} \Delta_{y} \frac{\delta \Phi}{\delta m} (t_N, m^{N,K^N}_{\bx_N},x_{N}^i)    +\frac{Cd}{N} \notag  \\
& \qquad\qquad    
      + \frac{1}{N} \sum_{i = 1}^K H\Big( x_N^i ,D_{m} \Phi (t_, m^{N,K^N}_{\bx_N},x_N^i \big),m_{\bx_N}^{N,K^N} \Big) \; ,\\
      & \qquad\qquad  \max_{S \subset [K^N]} \Big( V^{N,K^N}(t_N,{\bf x}_N)-V^{N,K^N-|S|}(t_N,\bx_N^{-S})  - \frac{1}{N} \sum_{i \in S} \Psi(m_{\bx_N}^{N,K_N}, x_N^i) \Big) \Bigr\} \geq  0 . \notag
\end{align}
Next we claim that, for $N$ large enough, 
\be\label{zielrgfkg}
\max_{S \subset [K^N]} \Big( V^{N,K^N}(t_N,{\bf x}_N)-V^{N,K^N-|S|}(t_N,\bx_N^{-S})  - \frac{1}{N} \sum_{i \in S} \Psi(m_{\bx_N}^{N,K_N}, x_N^i) \Big)  <0. 
\ee
Indeed, suppose that,  for some $N \in \N$ and $S \subset [K_N]$, we have
$$
V^{N,K^N}(t_N,\bx_N)-V^{N,K^N- |S| }(t_N,\bx_N^{-S}) \geq \frac{1}{N} \sum_{ i \in S} \Psi(m^N_{\bx_N}, x_N^i) . 
$$
Using the optimality of  $(t_N, \bx_N)$, we have therefore 
$$
\Phi(t_N,m_{\bx_N}^N)-\Phi(t_N,m_{\bx_N^{-S}}^{N,K_N - |S|}) \geq \frac{1}{N} \sum_{ i \in S} \Psi( m_{\bx_N}^N, x_N^i).
$$
For large enough $N$, this clearly contradicts the fact that $\Phi$ is locally $\Psi$-decreasing at $(t_0,m_0)$, and, thus,  \eqref{zielrgfkg} holds. 
\vs

It follows that, in view of  \eqref{zielrgfkg2}, we have 
\begin{align*}
 &- \partial_t \Phi (t^N,m^{N,K^N}_{\bx^N})-\frac{1}{N} \sum_{i = 1}^{K^N} \Delta_{y} \frac{\delta \Phi}{\delta m} (t^N, m^{N,K^N}_{\bx^N},x^{N,i})   +\frac{Cd}{N}  \notag  \\
& \qquad    \qquad
      + \frac{1}{N} \sum_{i = 1}^K H\big( x^{N,i},D_{m} \Phi (t^N, m^{N,K^N}_{\bx^N},x^{N,i}\big), m_{\bx^N}^{N,K^N}) \geq 0.
      \end{align*}
Passing  to the $N\to\infty$ limit to find the required inequality. 

\end{proof}

We are left to prove the technical lemmata used in the proof above.

\begin{lemma}\label{lem.approxm0} Given $m_0\in \sub $, there exist sequences $K^N\in \{0, \dots, N\}$ and $\bx^N \in (\T^d)^{K^N}$ such that the $(m^{N, K^N}_{\bx^N})$'s converge to $m_0$.
\end{lemma}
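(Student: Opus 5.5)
We want empirical measures with weight $1/N$ per atom, at most $N$ atoms, converging to $m_0\in\sub$ in the metric $\bd$ (equivalently, weakly). The plan is to normalize, approximate the resulting probability measure with an empirical measure of the right number of atoms, and control the error with the triangle inequality.

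\textbf{Setting up.} Write $a=m_0(\T^d)\in[0,1]$ and set $K^N=\lfloor aN\rfloor\in\{0,\dots,N\}$. If $a=0$, take $K^N=0$, so $m^{N,0}=\bzero=m_0$ and we are done. Otherwise, for $N$ large $K^N\geq1$, and we let $\hat m_0=m_0/a\in\cP(\T^d)$.

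\textbf{Main step.} We use the classical fact that on the compact space $\T^d$ every probability measure is a $\bd_1$-limit of uniform empirical measures with $K$ atoms as $K\to\infty$. This follows, for instance, from the law of large numbers applied to i.i.d.\ samples, together with Varadarajan's theorem, by selecting one realization. Alternatively, one can quantize $\hat m_0$ on a fine grid of cells, round the cell masses to multiples of $1/K$, and estimate the error by (cell diameter) plus (number of cells)$/K$. Applying this with $K=K^N$ gives points $\bx^N\in(\T^d)^{K^N}$ such that
\[
\bd_1\Big(\frac1{K^N}\sum_{i=1}^{K^N}\delta_{x^{N,i}},\hat m_0\Big)\to0.
\]

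\textbf{Conclusion.} Observe that $m^{N,K^N}_{\bx^N}=\frac{K^N}{N}\cdot\frac1{K^N}\sum_i\delta_{x^{N,i}}$. By the triangle inequality,
\[
\bd\big(m^{N,K^N}_{\bx^N},m_0\big)\leq \frac{K^N}{N}\,\bd\Big(\frac1{K^N}\sum_i\delta_{x^{N,i}},\hat m_0\Big)+\Big|\frac{K^N}{N}-a\Big|.
\]
The first term is controlled by $\bd\le\bd_1$ on probability measures, since the supremum defining $\bd$ runs over a smaller class of test functions (those with $\|f\|_{1,\infty}\leq1$). The second term is at most $1/N$. Both terms tend to $0$.

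The main obstacle is the existence of the empirical approximation, and that is classical. I would give the grid-rounding argument explicitly so that the proof stays deterministic.
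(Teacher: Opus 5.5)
Your proposal is correct and follows essentially the same route as the paper: normalize to $\hat m_0 = m_0/m_0(\T^d)$, approximate it in $\bd_1$ by uniform empirical measures, and bound $\bd(m^{N,K^N}_{\bx^N}, m_0)$ by $\frac{K^N}{N}\bd_1(\cdot,\hat m_0) + \bigl|\frac{K^N}{N} - m_0(\T^d)\bigr|$. The only difference is that you fix $K^N = \lfloor m_0(\T^d) N\rfloor$ explicitly instead of the paper's $\epsilon$-dependent choice of $K^N$, which is a slightly cleaner way to organize the same argument.
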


\begin{proof}  We assume that $m_0\neq 0$ since otherwise one simply chooses $K^N=0$ for any $N$. 
\vs
Set $\hat m_0=m_0/\int_{\T^d} m_0$ and fix $\ep>0$. It is then  known that there exists $K_0$ such that, for all $K\geq K_0$, there exists $\bx^K\in (\T^d)^K$ such that 
$$
{\bf d}_1(m^K_{\bx^K}, \hat m_0)\leq \ep.
$$
\vs
We now choose $N_0$ large enough such that, if $N\geq N_0$, then there exists $K^N\in \{K_0, \dots, N\}$ such that $$|K^N/N- \int_{\T^d} m_0|\leq \ep. $$ 
\vs
It follows that 
\begin{align*}
{\bf d} (m^{N, K^N}_{\bx^{K^N}}, m_0) &= \frac{K^N}{N} {\bf d} (m^{K^N}_{\bx^{K^N}},  \frac{N}{K^N} m_0) 
 \leq \frac{K^N}{N}{\bf d}_1 (m^{K^N}_{\bx^{K^N}}, \hat m_0) + \frac{K^N}{N}{\bf d}( \frac{N}{K^N} m_0, \hat m_0) \\ 
& \leq \ep +  \frac{K^N}{N}  \left| \frac{N}{K^N}\int_{\T^d}m_0 -1\right| = \ep + \left| \int_{\T^d} m_0-\frac{K^N}{N}\right| \leq 2\ep. 
\end{align*}
This completes the proof.

\end{proof}

For the next lemma we recall that $C^{1,1}$ maps have a second order expansion at a.e. point. By a small abuse of notation we denote by $D^2_{x^ix^i}w(\bar{\bx})$ the second order term in the expansion.

\begin{lemma}\label{lem.C11} Assume that $\Phi:[0,T]\times \sub\to \R$ is smooth in the sense of Definition \ref{def.testfunction}. Then there exists $C$ such that, for all  $N\geq 1$, $K\in \{1, \dots, N\}$ and $\bar t\in [0,T]$, the map $w(\bx)= \Phi(\bar t, m^{N,K}_{\bx})$ is $C^{1,1}$ and for all points $\bar{\bx}\in (\T^d)^K$ of second order differentiability of $w$ we have 
\be\label{derivative1}
 D_{x^i} w(\bar{\bx}) = \frac{1}{N} D_x \frac{\delta \Phi}{\delta m}(\bar t, m^{N,K}_{\bar{\bx}}, \bar x^i) \\
\ee
and 
\be\label{derivative2}
 \Bigl| D^2_{x^ix^i}w(\bar{\bx}) - \frac{1}{N} D^2_x \frac{\delta \Phi}{\delta m}(\bar t, m^{N,K}_{\bar{\bx}}, \bar x^i,\bar x^i)\Bigr| \leq \frac{C}{N}. 
\ee
\end{lemma}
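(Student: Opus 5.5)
The plan is to reduce everything to the chain rule for linear derivatives along perturbations of a single particle. Fix $\bar t$ and drop it from the notation. For $\bx,\by\in(\T^d)^K$ that differ only in the $i$-th coordinate, the definition of the linear derivative gives
\begin{align*}
w(\by)-w(\bx)=\frac1N\int_0^1\Big(\frac{\delta \Phi}{\delta m}(m^\tau,y^i)-\frac{\delta \Phi}{\delta m}(m^\tau,x^i)\Big)d\tau,
\end{align*}
where $m^\tau=(1-\tau)m^{N,K}_{\bx}+\tau m^{N,K}_{\by}$.

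\emph{First derivative.} Write the inner difference as $\int_0^1 D_x\frac{\delta\Phi}{\delta m}(m^\tau,x^i+s(y^i-x^i))\cdot(y^i-x^i)\,ds$. Then use the Lipschitz continuity of $(m,x)\mapsto D_x\frac{\delta\Phi}{\delta m}$ from Definition \ref{def.testfunction}, together with $\bd(m^\tau,m^{N,K}_{\bx})\le \bd(m^{N,K}_{\by},m^{N,K}_{\bx})\le |y^i-x^i|/N$. This yields
\begin{align*}
w(\by)-w(\bx)=\frac1N D_x\frac{\delta\Phi}{\delta m}(m^{N,K}_{\bx},x^i)\cdot(y^i-x^i)+O\Big(\frac{|y^i-x^i|^2}{N}\Big).
\end{align*}
So $w$ is differentiable in each block with the partial derivative \eqref{derivative1}. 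To get differentiability in all variables jointly, change the coordinates one at a time and sum. The cross errors are of order $|\by-\bx|^2$, because $D_x\frac{\delta\Phi}{\delta m}$ is Lipschitz in $m$ and changing $x^j$ moves the measure by at most $|y^j-x^j|/N$ in $\bd$.

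\emph{$C^{1,1}$ bound.} The gradient $\bx\mapsto \frac1N D_x\frac{\delta\Phi}{\delta m}(m^{N,K}_{\bx},x^i)$ is Lipschitz: its variation is at most $\frac{C}{N}\big(|x^i-y^i|+\bd(m^{N,K}_{\bx},m^{N,K}_{\by})\big)$, and $\bd(m^{N,K}_{\bx},m^{N,K}_{\by})\le \frac1N\sum_j|x^j-y^j|$. Hence $w\in C^{1,1}$, and by Rademacher/Alexandrov it has a second-order expansion almost everywhere.

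\emph{Second-order estimate \eqref{derivative2}.} At a point $\bar\bx$ of second-order differentiability, $D^2_{x^ix^i}w(\bar\bx)$ equals the derivative in $x^i$ of the map $x^i\mapsto \frac1N D_x\frac{\delta\Phi}{\delta m}(m^{N,K}_{\bx},x^i)$, taken along the $i$-th coordinate. Split the increment of this map into two parts:
\begin{align*}
&\frac1N\Big(D_x\frac{\delta\Phi}{\delta m}(m^{N,K}_{\by},y^i)-D_x\frac{\delta\Phi}{\delta m}(m^{N,K}_{\by},\bar x^i)\Big)\\
&\qquad+\frac1N\Big(D_x\frac{\delta\Phi}{\delta m}(m^{N,K}_{\by},\bar x^i)-D_x\frac{\delta\Phi}{\delta m}(m^{N,K}_{\bar\bx},\bar x^i)\Big).
\end{align*}
The first part equals $\frac1N D^2_x\frac{\delta\Phi}{\delta m}(m^{N,K}_{\bar\bx},\bar x^i)(y^i-\bar x^i)+o(|y^i-\bar x^i|)/N$. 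This uses the continuity of $D_x^2\frac{\delta\Phi}{\delta m}$, which is Lipschitz in $m$, so the shift from $m^{N,K}_{\by}$ to $m^{N,K}_{\bar\bx}$ costs only $O(|y^i-\bar x^i|/N)$. The second part is bounded by $\frac CN\cdot\frac{|y^i-\bar x^i|}{N}$, by the Lipschitz continuity of $D_x\frac{\delta\Phi}{\delta m}$ in $m$.

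Dividing by $|y^i-\bar x^i|$ and letting $\by\to\bar\bx$ gives \eqref{derivative2}, in fact with the sharper error $C/N^2$. The claimed $C/N$ bound therefore holds, and this is all that is used later, where $N$ times these errors is summed over $K\le N$ particles.

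The main obstacle is bookkeeping rather than anything deep. The key requirement is that the constant $C$ does not depend on $N$ or $K$, and this follows from the uniform Lipschitz bounds in Definition \ref{def.testfunction} together with the elementary estimate $\bd(m^{N,K}_{\bx},m^{N,K}_{\by})\le\frac1N\sum_j|x^j-y^j|$.
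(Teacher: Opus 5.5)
Your argument is correct, but it identifies the Hessian block differently from the paper.

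The paper never differentiates the gradient. It starts from $w(\bar{\bx}^{-i},x^i)-w(\bar{\bx})=\frac1N\int_0^1\big(\frac{\delta\Phi}{\delta m}(m_s,x^i)-\frac{\delta\Phi}{\delta m}(m_s,\bar x^i)\big)ds$ and expands $\frac{\delta\Phi}{\delta m}(m_s,\cdot)$ to second order around $\bar x^i$; the remainder is cubic by the Lipschitz bound on $D^2_x\frac{\delta\Phi}{\delta m}$. It then freezes the measure at $m^{N,K}_{\bar{\bx}}$ at a cost $O(|x^i-\bar x^i|^2/N^2)$. Finally it reads off $D^2_{x^ix^i}w(\bar{\bx})$ by matching with the Peano expansion of $w$ at $\bar{\bx}$, which is exactly what ``point of second order differentiability'' provides.

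You instead linearize $\bx\mapsto D_{x^i}w(\bx)$ and assert that at such a point $D^2_{x^ix^i}w(\bar{\bx})$ is the derivative of this map. That is true for $C^{1,1}$ functions, but it is not automatic and you should justify it. Write $w(\bar{\bx}+h)=w(\bar{\bx})+p\cdot h+\frac12Ah\cdot h+o(|h|^2)$ with $p=Dw(\bar{\bx})$. Use the semiconvexity inequality $w(\bar{\bx}+h+k)\ge w(\bar{\bx}+h)+Dw(\bar{\bx}+h)\cdot k-\frac{\Lambda}{2}|k|^2$, where $\Lambda$ is the Lipschitz constant of $Dw$. Taking $k=\epsilon|h|e$ with $|e|=1$ gives $(Dw(\bar{\bx}+h)-p-Ah)\cdot e\le C\epsilon|h|+o(|h|^2)/(\epsilon|h|)$. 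A choice $\epsilon=\epsilon(|h|)\to0$ then yields $|Dw(\bar{\bx}+h)-p-Ah|=o(|h|)$.

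Alternatively, integrate your increment estimate for $D_{x^i}w$ along $s\mapsto(\bar{\bx}^{-i},\bar x^i+sh)$ to get the second-order expansion of $w$ itself. That turns your argument into the paper's. What your version adds is an explicit proof of joint differentiability and of the Lipschitz bound on $Dw$, with constants independent of $N$ and $K$; the paper only calls these standard.

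One correction to your closing remark. Both arguments give the per-particle error $C/N^2$, and that is what the convergence proof actually uses. There, $\sum_{i=1}^{K^N}\Delta_{x^i}w$ is compared with $\frac1N\sum_{i}\Delta_x\frac{\delta\Phi}{\delta m}$ with total error $Cd/N$. A per-particle error of $C/N$ would accumulate to $O(K^N/N)=O(1)$ and would not vanish. So the $C/N$ in the statement is weaker than what is needed later, even though it follows from what you prove.
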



\begin{proof} The proof is an easy application of the definition of the derivative and the Lipschitz regularity of this derivative. To simplify the notation, we omit the dependence of $\Phi$ on the time variable, which plays no role here, since all the estimate are uniform with respect to this variable. 
\vs
The fact that $w$ is $C^1$ with a derivative given by \eqref{derivative1} is standard (see \cite{CDLL}). In addition, $w$ is $C^{1,1}$ because the map $D_x \frac{\delta \Phi}{\delta m}$ is globally Lipschitz. 
\vs
The definition of the derivative implies that, for $m_s= (1-s)m^{N,K}_{(\bar{\bx}^{-i},x^i)}+s m^{N,K}_{\bar{\bx}}$,
\begin{align*} 
 w((\bar{\bx}^{-i},x^i))-w(\bar{\bx}) &=  \Phi( m^{N,K}_{(\bar{\bx}^{-i},x^i)}) -\Phi( m^{N,K}_{\bar{\bx}}) = \int_0^1 \int_{\T^d} \frac{\delta \Phi}{\delta m} (m_s, y) d(m^{N,K}_{(\bar{\bx}^{-i},x^i)}-m^{N,K}_{\bar{\bx}})(y)ds\\
& = \frac{1}{N} \sum_{i=1}^K \int_0^1(\frac{\delta \Phi}{\delta m} (m_s, x^i)-\frac{\delta \Phi}{\delta m} (m_s, \bar x^i))ds
\end{align*}
That  $x\to D^2_{x}\frac{\delta \Phi}{\delta m} (m,x)$ is Lipschitz uniformly in $m$ yields that 
\begin{align*}
&\Bigl| \frac{\delta \Phi}{\delta m} (m_s, x^i)-\frac{\delta \Phi}{\delta m} (m_s, \bar x^i)
- D_x\frac{\delta \Phi}{\delta m} (m_s, \bar x^i)(x^i-\bar x^i) -\frac12 D^2_x\frac{\delta \Phi}{\delta m}(m^{N,K}_{\bar{\bx}}, \bar x^i)(x^i-\bar x^i)\cdot (x^i-\bar x^i)\Bigr| \\
&\qquad \qquad \leq C |x^i -\bar x^i|^3
\end{align*}

Since  $m\to D_{x}\frac{\delta \Phi}{\delta m} (m,x)$ and $m\to D^2_{x}\frac{\delta \Phi}{\delta m} (m,x)$ are Lipschitz uniformly in $x$, we also have 
\begin{align*}
& \Bigl| D_x\frac{\delta \Phi}{\delta m} (m_s, \bar x^i)\cdot (x^i-\bar x^i) -D_x\frac{\delta \Phi}{\delta m} (m^{N,K}_{\bar x}, \bar x^i)\cdot (x^i-\bar x^i)\Bigr|  \\
& + \Bigl|  D^2_x\frac{\delta \Phi}{\delta m}(m^{N,K}_{\bar{\bx}}, \bar x^i)(x^i-\bar x^i)\cdot (x^i-\bar x^i)-
 D^2_x\frac{\delta \Phi}{\delta m}(m^{N,K}_{\bar{\bx}}, \bar x^i)(x^i-\bar x^i)\cdot (x^i-\bar x^i)\Bigr| \\ 
& \qquad \qquad   \leq \frac{C}{N} |x^i-\bar x^i| {\bf d}(m^{N,K}_{(\bar{\bx}^{-i},x^i)}, m^{N,K}_{\bar{\bx}}) \leq \frac{C}{N^2} |x^i-\bar x^i|^2. 
\end{align*}
\vs
Collecting the results above gives \eqref{derivative2}. 

\end{proof}

\appendix

\section{Mollifications and $\Psi$-monotone envelopes}

\label{sec.mollification}

\subsection*{$\Phi$-monotone envelopes}

We prove here some technical facts about the maps 
\begin{align*}
    G_{\Psi} : \sub \to \R \ \text{and} \ G_{\Psi}^{N,K} : (\T^d)^K \to \R \ \text{for  $N\in\N$ and $K \in \{1,...,N\}$}, 
\end{align*}
defined from $G : \sub \to \R$ and $\Psi : \T^d \times \sub \to \R$ by \eqref{def.monotoneenvelope} and \eqref{def.discretemonotoneenvelop}, respectively.

\begin{lemma} \label{lem.envolopeismonotone}
    The functions $G_{\Psi}$ is $\Psi$-non-increasing in the sense of \eqref{def.psinon-increasing}. Similarly, the functions $G_{\Psi}^{N,K}$ are $\Psi$-non-increasing, in the sense that, for any $N \in \N$, $K \in \{1,...,N\}$, $\bx \in (\T^d)^K$, and $S \subset [K]$, we have
    \begin{align*}
        G_{\Psi}^{N,K}(\bx) \leq G_{\Psi}^{N,K-|S|}(\bx^{-S}) + \frac{1}{N} \sum_{i \in S} \Psi(x^i, m_{\bx}^{N,K}). 
    \end{align*}
\end{lemma}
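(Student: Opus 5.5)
The plan is to verify both inequalities directly from the definitions \eqref{def.monotoneenvelope} and \eqref{def.discretemonotoneenvelop}, using only one structural ingredient: the monotonicity of $m\mapsto\Psi(x,m)$ in \eqref{phl2}. The idea is that any competitor for the envelope at the smaller measure can be turned into a competitor at the larger measure whose cost is no bigger once the penalty for the extra removed mass is added.

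For $G_\Psi$, fix $n\le m$ and an arbitrary $m'\le n$. Since $m'\le n\le m$, the measure $m'$ is admissible in the infimum defining $G_\Psi(m)$. This gives
\begin{align*}
G_\Psi(m)\le G(m')+\int_{\T^d}\Psi(x,m)\,(m-m')(dx)=G(m')+\int_{\T^d}\Psi(x,m)\,(n-m')(dx)+\int_{\T^d}\Psi(x,m)\,(m-n)(dx).
\end{align*}
Because $n\le m$ and $\Psi(x,\cdot)$ is non-increasing, we have $\Psi(x,m)\le\Psi(x,n)$ pointwise. Since $n-m'\ge0$, the middle integral is therefore at most $\int\Psi(x,n)\,(n-m')(dx)$. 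Taking the infimum over $m'\le n$ then yields $G_\Psi(m)\le G_\Psi(n)+\int\Psi(x,m)\,d(m-n)$, which is exactly \eqref{def.psinon-increasing}.

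The discrete statement follows the same pattern. Fix $S\subset[K]$ and any $S'\subset[K]\setminus S$; this parametrizes the competitors for $G^{N,K-|S|}_\Psi(\bx^{-S})$, after relabelling the indices of $\bx^{-S}$. The set $S\cup S'$ is admissible for $G^{N,K}_\Psi(\bx)$, and $\bx^{-(S\cup S')}=(\bx^{-S})^{-S'}$. Hence
\begin{align*}
G^{N,K}_\Psi(\bx)\le G\big(m^{N,K-|S|-|S'|}_{\bx^{-(S\cup S')}}\big)+\frac1N\sum_{i\in S'}\Psi(x^i,m^{N,K}_{\bx})+\frac1N\sum_{i\in S}\Psi(x^i,m^{N,K}_{\bx}).
\end{align*}
Since $m^{N,K-|S|}_{\bx^{-S}}\le m^{N,K}_{\bx}$, monotonicity of $\Psi$ gives $\Psi(x^i,m^{N,K}_{\bx})\le\Psi(x^i,m^{N,K-|S|}_{\bx^{-S}})$ for $i\in S'$. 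Taking the infimum over $S'$ then gives the claim. The degenerate case $S=[K]$, where $G^{N,0}_\Psi=G(\bzero)$, is handled the same way with $S'=\emptyset$.

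There is no real obstacle. The only point that needs care is the direction of the monotonicity: the penalty for the intermediate removals must be evaluated at the larger measure $m$, so it is dominated by the penalty evaluated at $n$. This is precisely the "one big jump is cheaper" inequality noted after \eqref{summarize}.
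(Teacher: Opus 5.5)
Your proposal is correct and follows essentially the same argument as the paper. Both take a competitor for the envelope at the smaller measure, plug it into the envelope at the larger measure, and use that $\Psi(x,\cdot)$ is non-increasing to bound the penalty on the intermediate removed mass. The only differences are minor: you take an arbitrary competitor $m'\le n$ and then pass to the infimum, so you never need the infimum defining $G_\Psi$ to be attained (the paper picks a minimizer), and you write out the discrete case that the paper dismisses as similar.
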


\begin{proof}
    Fix $m,m' \in \sub$ with $m' \leq m$ and choose  $m'' \leq m'$ such that
    \begin{align*}
        G_{\Psi}(m') = G(m'') + \int_{\T^d} \Psi(x,m') d(m' - m'').  
    \end{align*}
    Since the map  $m \mapsto \Psi(x,m)$ is non-increasing, it follows that 
    \begin{align*}
        G_{\Psi}(m) &\leq G(m'') + \int_{\T^d} \Psi(x,m) d(m - m'') 
        \\
        &\leq G(m'') + \int_{\T^d} \Psi(x,m') d(m' - m'') + \int_{\T^d} \Psi(x,m) d(m-m') 
        \\
        &= G_{\Psi}(m) + \int_{\T^d} \Psi(x,m') d(m - m'). 
    \end{align*}
    
 The proof of the $\Psi$-non-increasing property of $G_{\Psi}^{N,K}$ is similar. 

\end{proof}

\begin{lemma} \label{lem.envelopeLip}
    If $\Psi$ and $G$ are  Lipschitz continuous, then so is $G_{\Psi}$
    Moreover, $G^{N,K}_{\Psi}$ is Lipschitz continuous uniformly in $N$, in the sense that there exists a constant $C$ such that, for each $N \in \N$, $J,K \in \{1,...,N\}$, $\bx \in (\T^d)^K$ and $ \by \in (\T^d)^K$,
      \begin{align*}
        |G^{N,K}_{\Psi}(\bx) - G^{N,J}_{\Psi}(\by)| \leq C \bd(m_{\bx}^{N,K}, m_{\by}^{N,J}).
    \end{align*}
\end{lemma}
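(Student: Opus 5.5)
We prove both claims with the same two-step scheme. First we bound the $G_\Psi$-difference between nested measures; then we reduce a general pair to the nested case by passing through a common sub-measure. For the continuous envelope $G_\Psi$, fix $m,n\in\sub$ and use the distance $\rho$ of Lemma~\ref{lem.dequrho}. Assume $m(\T^d)\le n(\T^d)$, and pick $n'\le n$ with $n'(\T^d)=m(\T^d)$ nearly attaining the infimum in the definition of $\rho(m,n)$. Then
\[
|G_\Psi(m)-G_\Psi(n)|\le |G_\Psi(m)-G_\Psi(n')|+|G_\Psi(n')-G_\Psi(n)|.
\]
So it suffices to prove two estimates: (a) $|G_\Psi(n')-G_\Psi(n)|\le C(n-n')(\T^d)$ whenever $n'\le n$; and (b) $|G_\Psi(m)-G_\Psi(n')|\le C\bd(m,n')$ whenever $m$ and $n'$ have equal mass. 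Summing gives $C\rho(m,n)\le 3C\bd(m,n)$.

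Estimate (a) follows from two one-sided bounds. By Lemma~\ref{lem.envolopeismonotone}, $G_\Psi(n)\le G_\Psi(n')+\|\Psi\|_\infty(n-n')(\T^d)$. For the reverse bound, take an (almost) optimal $n''\le n$ for $G_\Psi(n)$, and split $n''$ via the Radon–Nikodym ratio $n'/n\in[0,1]$ by setting $\tilde n=(n'/n)\,n''\le n'$. Then $n''-\tilde n\le n-n'$, so $\bd(\tilde n,n'')\le(n-n')(\T^d)$ by \eqref{ord2}. Testing $G_\Psi(n')$ with $\tilde n$ gives
\[
G_\Psi(n')\le G(\tilde n)+\int\Psi(x,n')\,d(n'-\tilde n).
\]
We compare the right-hand side with $G(n'')+\int\Psi(x,n)\,d(n-n'')$ using three facts: the Lipschitz continuity of $G$; the Lipschitz continuity of $\Psi$ in $m$ together with $\bd(n,n')=(n-n')(\T^d)$; and the decomposition $(n'-\tilde n)=(n-n'')-[(n-n')-(n''-\tilde n)]$, in which the bracket is a nonnegative measure of mass at most $(n-n')(\T^d)$. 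Each error is $O((n-n')(\T^d))$.

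For (b), take $m''\le m$ almost optimal for $G_\Psi(m)$. We must transport $m''$ to some $\tilde n\le n'$ with $\bd(m'',\tilde n)\lesssim\bd(m,n')$ and $\bd(m-m'',n'-\tilde n)\lesssim \bd(m,n')$. Choose an optimal transport plan $\pi$ between $m$ and $n'$; they have equal mass, so $\bd_1$ is controlled by $\operatorname{diam}(\T^d)\,\bd$ as in \eqref{dandd1comp}. Disintegrate $\pi$ and set $\tilde n$ to be the second marginal of $(m''/m)(x)\,\pi(dx,dy)$. Then $\tilde n\le n'$, and both $\bd_1(m'',\tilde n)$ and $\bd_1(m-m'',n'-\tilde n)$ are at most $\bd_1(m,n')$. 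Testing $G_\Psi(n')$ with $\tilde n$, and using the Lipschitz bounds on $G$ and on $\Psi$ (in $x$ via its $C^2$ bound, and in $m$), gives $G_\Psi(n')\le G_\Psi(m)+C\bd(m,n')$. Symmetry gives the other direction. This transport-plan construction, which keeps $\tilde n\le n'$ while controlling the errors, is the main obstacle, and the disintegration is how we handle it.

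The discrete statement has the same structure, with combinatorial objects in place of measures. Sub-measures $\le m^{N,K}_{\bx}$ of the form $m^{N,K-|S|}_{\bx^{-S}}$ replace general sub-measures, and permutations replace transport plans. In that setting, removing $S$ and transporting reduces to the following: given an optimal permutation $\sigma$ matching $\bx$ to $\by$ (for $K=J$), remove the image set $\sigma(S)$. Removing one particle costs $C/N$, by the analogue of (a) with the $1/N$-weighted $\Psi$ terms. Moving particles costs $\frac{C}{N}\sum|x^i-y^{\sigma(i)}|$, because $G$ and $\Psi$ are Lipschitz and $\bd(m^{N,K-|S|}_{\bx^{-S}},m^{N,K-|S|}_{\by^{-\sigma(S)}})\le\frac1N\sum|x^i-y^{\sigma(i)}|$. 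Combining the two via Lemma~\ref{lem.wminus.discrete} and Lemma~\ref{lem.dequrho} yields the bound $C\bd(m_{\bx}^{N,K},m_{\by}^{N,J})$ with $C$ independent of $N$.
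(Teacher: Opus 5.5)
Your argument is correct and is essentially the paper's proof: the same equal-mass construction via disintegrating an optimal coupling, the same two-sided estimate for nested measures (with Lemma~\ref{lem.envolopeismonotone} giving one direction), and the same reduction of the general case through the distance $\rho$ of Lemma~\ref{lem.dequrho}. The differences are cosmetic. In the nested case you test with the proportional sub-measure $(n'/n)\,n''$, where the paper uses the positive part $(m-(n-n'))_+$ (in its notation). You work with near-minimizers rather than exact ones. And you spell out the discrete case, which the paper only describes as similar.
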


\begin{proof}
   First, we assume  that $m$ and $n$ have the same mass, that is,  $m(\T^d) = n(\T^d)$ and recall that,  if $\bd_1$ is the 1-Wasserstein distance between $m$ and $n$, then
   \begin{align*}
       \bd(m,n) \leq \bd_1(m,n) \leq \text{diam}(\T^d) \bd(m,n), 
   \end{align*}
   Let $\pi$ be an optimal coupling  between $m$, and $n$, that is,  $\pi \in \cP(\T^d \times \T^d)$ is such that 
   \begin{align*}
       m(A) = \pi(A \times \T^d), \quad n(A) = \pi (\T^d \times A) \ \text{and} \ \bd_1(m,n) = \int_{\T^d} \int_{\T^d} |x-y| d\pi(x,y), 
   \end{align*}
 and denote  by  $(\pi_x)_{x \in \T^d}$ its  disintegration  with respect to the first marginal, that is, 
   \begin{align*}
       d\pi(x,y) = dm(x) d\pi_x(y) \ \text{and} \  dn(y) = \int_{\T^d} d\pi_x(y) dm(x) . 
   \end{align*}
   Next, let $m' \in \sub$ be such that 
   \begin{align*}
        G_{\Psi}(m) = G(m') + \int_{\T^d} \Psi(x,m) d(m - m')(x),  
   \end{align*}
and define $n'$ by 
   \begin{align*}
        dn'(y) = \int_{\T^d} d\pi_x(y) dm'(x) . 
   \end{align*}
   Notice that $n' \leq n$, and, for any $1$-Lipschitz $f : \T^d \to \R$, 
   \begin{align*}
       \int_{\T^d} &f d(m' - n') = \int_{\T^d} f(x)  dm'(x) - \int_{\T^d} \int_{\T^d} f(y) d\pi_x(y) dm'(x) 
       \\
       &\leq \int_{\T^d} \int_{\T^d} |x-y| d\pi_x(y) dm'(x) 
       \leq \int_{\T^d} \int_{\T^d} |x-y| d\pi_x(y) dm(x)  = \bd_1(m,n). 
   \end{align*}
   
  It follows that  $$\bd(m',n') \leq \bd_1(m,n) \leq \text{diam}(\T^d) \bd(m,n),$$
  and, thus, 
   \begin{align*}
        &G_{\Psi}(n) \leq G(n') + \int_{\T^d} \Psi(x,n) d(n - n')(x) 
        \\
        &\quad \leq G(m') + \int_{\T^d} \Psi(x,m) d(n - n')(x) + C \bd(m,n)
    \leq G(m') + \int_{\T^d} \Psi(x,m) d(m-m') + C \bd(m,n)
        \\
        &\quad= G_{\Psi}(m) + C \bd(m,n). 
   \end{align*}
   The above  shows that,  for any $m,n$ with $m(\T^d) = n(\T^d)$, 
   \begin{align*}
       |G_{\Psi}(m) - G_{\Psi}(n)| \leq C \bd(m,n). 
   \end{align*}
   Now consider the case  $m \leq n$, which implies that  $\bd(m,n) = n(\T^d) - m(\T^d)$. In view of Lemma \ref{lem.envolopeismonotone}, we have 
   \begin{align*}
       G_{\Psi}(n) \leq G_{\Psi}(m) + \int_{\T^d} \Psi(x,n) d(n-m) \leq G_{\Psi}(m) + C (n-m)(\T^d) = G_{\Psi}(m) + C \bd(m,n). 
   \end{align*}
   Choose $n' \leq n$ such that 
   \begin{align*}
      G_{\Psi}(n) = G(n') +  \int_{\T^d} \Psi(x,n) (n - n')(dx), 
   \end{align*}
  and let  $m' = \big( m - (n - n') \big)_+$. We note that 
   \begin{align*}
       n' - m' = n' - \big( m - (n - n') \big)_+ = \big( n - (n - n') \big)_+ - \big( m - (n - n') \big)_+,
   \end{align*}
   and, hence, $0 \leq n' - m' \leq n - m$, so that 
   \begin{align*}
       \bd(n',m') = n'(\T^d) - m'(\T^d) \leq n(\T^d) - m(\T^d). 
   \end{align*}
 It follows that 
   \begin{align*}
       G_{\Psi}(m) &\leq G(m') + \int_{\T^d} \Psi(x,m) d(m' - m) 
       \\
       &\leq G(n')  + \int_{\T^d} \Psi(x,n) d(n' - n) + C \bd(m,n) + C \bd(m',n')
       \\
       &\leq G_{\Psi}(n) + C \bd(m,n). 
   \end{align*}
   At this stage, we have shown that there is a constant $C$ such that 
   \begin{align*}
       |G_{\Psi}(m) - G_{\Psi}(n)| \leq C \bd(m,n)
   \end{align*}
   holds if either $m(\T^d) = n(\T^d)$ or $m \leq n$, and, hence, in view of  Lemma \ref{lem.dequrho},  $G_{\Psi}$ is Lipschitz. 
   \vs
   The argument for the Lipschitz property of $(G_{\Psi}^{N,K})$ is similar. 

\end{proof}

\begin{lemma} \label{lem.gnk.conv}
Suppose that  $\Phi$ and $G$ are Lipschitz. Then, as $N\to \infty$,  $G^{N,K}_{\Psi} \to G_{\Psi}$ uniformly, in the sense that 
\begin{align*}
    \lim_{N \to \infty} \max_{K = 1,...,N} \sup_{\bx \in (\T^d)^k} \Big| G_{\Psi}(m_{\bx}^{N,K}) - G_{\Psi}^{N,K}(\bx) \Big| = 0.
\end{align*}
\end{lemma}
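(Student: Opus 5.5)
We prove two inequalities, $G^{N,K}_\Psi(\bx) \geq G_\Psi(m^{N,K}_{\bx}) - o(1)$ and $G^{N,K}_\Psi(\bx) \leq G_\Psi(m^{N,K}_{\bx}) + o(1)$, uniformly in $K$ and $\bx$.

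\textbf{Lower bound.} This one is immediate. For any $S\subset[K]$, the measure $m' = m^{N,K-|S|}_{\bx^{-S}}$ satisfies $m'\leq m^{N,K}_{\bx}$ and
\[
m^{N,K}_{\bx}-m' = \frac1N\sum_{i\in S}\delta_{x^i}.
\]
Hence the quantity being minimized in \eqref{def.discretemonotoneenvelop} is exactly the quantity minimized in \eqref{def.monotoneenvelope} at $m'$. Taking the infimum over $S$ gives $G_\Psi(m^{N,K}_{\bx}) \leq G^{N,K}_\Psi(\bx)$, with no error term at all.

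\textbf{Upper bound.} Write $m=m^{N,K}_{\bx}$ and fix a minimizer (or $\eps$-minimizer) $m'\leq m$ for $G_\Psi(m)$. Since $m'\leq m$ and $m$ is a sum of atoms of mass $1/N$, we have $m'=\frac1N\sum_{i=1}^K \lambda_i\delta_{x^i}$ with weights $\lambda_i\in[0,1]$. If several $x^i$ coincide, split the mass among them arbitrarily.

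The task is to round the weights $\lambda_i$ to $0$ or $1$. Let $S$ be a set of indices whose complement has cardinality $\lfloor\sum_i\lambda_i\rfloor$ or so; the choice of which indices will be made below so that $m^{N,K-|S|}_{\bx^{-S}}$ is $\bd$-close to $m'$. Given such an $S$, $G$ is Lipschitz and $\Psi$ is bounded and Lipschitz, so
\begin{align*}
\Big|\frac1N\sum_{i\in S}\Psi(x^i,m) - \int\Psi(x,m)\,d(m-m')\Big| &\leq C\,\bd\big(m^{N,K-|S|}_{\bx^{-S}},m'\big),\\
\big|G(m^{N,K-|S|}_{\bx^{-S}})-G(m')\big| &\leq C\,\bd\big(m^{N,K-|S|}_{\bx^{-S}},m'\big),
\end{align*}
where the first estimate uses $\|\Psi(\cdot,m)\|_{W^{1,\infty}}\leq C$. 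So it suffices to make the $\bd$ distance between $m^{N,K-|S|}_{\bx^{-S}}$ and $m'$ uniformly small.

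\textbf{The rounding step.} This is the main obstacle: a naive rounding of each $\lambda_i$ separately could move a lot of mass. I would use a spatial partition. Partition $\T^d$ into cubes $Q_1,\dots,Q_{r^d}$ of side $1/r$. In each cube $Q_j$, keep $\big\lfloor \sum_{x^i\in Q_j}\lambda_i\big\rfloor$ of the particles located there, and put the remaining particles of that cube in $S$. The kept measure and $m'$ then differ by mass at most $1/N$ in each cube, and within a cube mass is displaced by at most $\sqrt d/r$. This gives
\[
\bd\big(m^{N,K-|S|}_{\bx^{-S}},m'\big)\leq C\Big(\frac{r^d}{N}+\frac1r\Big).
\]
Choosing $r=N^{1/(d+1)}$ makes the right side $O(N^{-1/(d+1)})$, uniformly in $K$ and $\bx$. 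To justify the distance bound, I would compare the two measures cube by cube through the distance $\rho$ of Lemma~\ref{lem.dequrho}: the mass mismatch per cube costs at most $1/N$, and the matched part is moved within a cube. Alternatively, test directly against $f$ with $\|f\|_{1,\infty}\leq1$ by replacing $f$ with its value at each cube center.

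Combining the two bounds completes the proof.
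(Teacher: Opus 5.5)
Your proof is correct, and it takes a genuinely different route from the paper's. The paper first proves only a pointwise statement. For a fixed $m$ it takes an optimal $m'\le m$ and approximates $m-m'$ and $m'$ separately by empirical measures. Because the configuration $\bx_N$ is chosen freely, no rounding is needed, and it gets some $\bx_N$ with $m^{N,K_N}_{\bx_N}\to m$ and $G^{N,K_N}_{\Psi}(\bx_N)\to G_\Psi(m)$. Uniformity over all $K$ and $\bx$ is then recovered softly: the paper uses the uniform-in-$N$ Lipschitz bound on the discrete envelopes (Lemma \ref{lem.envelopeLip}) together with an Arzel\`a--Ascoli subsequence argument that identifies every limit with $G_\Psi$.

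You instead work directly at an arbitrary configuration. Your lower bound $G_\Psi(m^{N,K}_{\bx})\le G^{N,K}_{\Psi}(\bx)$ is exact, and the paper uses this same inequality implicitly in its liminf step. For the upper bound you face a prescribed atomic $m$, which is exactly why the cube-wise rounding of the weights $\lambda_i$ is needed. Your estimate $\bd\le \sqrt d/r+r^d/N$ is right: test against $f$ with $\|f\|_{1,\infty}\le 1$ and replace $f$ by its value at the cube centres. You should take $r$ to be an integer of order $N^{1/(d+1)}$ and use an $\eps$-minimizer $m'$, then let $\eps\to0$.

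Your approach buys three things. It gives an explicit rate $O(N^{-1/(d+1)})$, uniform in $K$ and $\bx$. It does not depend on the discrete Lipschitz estimate for $G^{N,K}_\Psi$, whose proof the paper only calls ``similar.'' And it needs no regularity or monotonicity of $\Psi$ in the measure variable, since $\Psi(\cdot,m)$ is evaluated at the same $m=m^{N,K}_{\bx}$ on both sides. You only use that $G$ is $\bd$-Lipschitz and that $\sup_m\|\Psi(\cdot,m)\|_{W^{1,\infty}}<\infty$.

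The paper's approach avoids the combinatorial rounding entirely and reuses the compactness scheme from the proof of Theorem \ref{thm.convergence}. The price is that it is purely qualitative.
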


\begin{proof}
    We begin  proving the  weaker statement that, for each $m \in \sub$, there exist  sequences $K_N \in \{1,...,N\}$ and $\bx_N \in (\T^d)^{K_N}$ such that, as $N\to \infty$,  
    \begin{align*}
        \bd(m, m_{\bx}^{N,K_N}) \to 0 \ \ \text{and} \ \  \big| G_{\Psi}(m) - G^{N,K}_{\Psi}(m_{\bx_N}^{N,K_N}) \big| \to 0.
    \end{align*}
    \vs
    Let $m' \leq m$ be such that 
    \begin{align*}
        G_{\Psi}(m) = G(m') + \int_{\T^d} \Psi(x,m) d(m-m')(x),
    \end{align*}
    and choose $J_N^1, J_N^2$, $\by_N \in (\T^d)^{J_N^1}$ and $\bz_N \in (\T^d)^{J_N^1}$ such that, as $N\to \infty$, 
    \begin{align*}
       K_N = J_N^1 + J_N^2 \leq N, \quad m_{\by_N}^{N,J_N^1} \to (m -m') \ \text{and} \  m_{\bz_N}^{N,J_N^2} \to m'. 
    \end{align*}
   \vs
    If we set $\bx_N = (\by_N, \bz_N) \in (\T^d)^{K_N}$, it follows that 
    \begin{align*}
        m_{\bx_N}^{N,K_N} \to m, 
    \end{align*}
    and also 
    \begin{align*}
        G_{\Psi}^{N,K_N}(\bx) &\leq G(m_{\bx}^{N,J_N^1}) + \frac{1}{N} \sum_{i = 1}^{J_N^2} \Psi(z_N^i, m_{\bx}^{N,J_N^1}) = G(m') + \int_{\T^d} \Psi(x,m) d(m-m')(x) 
        \\
        &\qquad + C \bd(m, m_{\bx}^{N,K_N}) + C \bd(m', m_{\bz}^{N,J_N^2}). 
    \end{align*}    
    \vs
  It follows that 
    \begin{align*}
        \limsup_{N \to \infty} \big( G_{\Psi}^{N,K_N}(\bx_N) - G_{\Psi}(m) \big) = 0.
    \end{align*}
    \vs
    Using the Lipschitz continuity of $G_{\Psi}$ we also know that 
    \begin{align*}
    G_{\Psi}(m) \leq G_{\Psi}(m_{\bx_N}^{N,K_N}) + C \bd(m,m_{\bx_N}^{N,K_N}) \leq G_{\Psi}^{N,K_N}(\bx_N) + C \bd(m,m_{\bx}^{N,K_N}), 
    \end{align*}
    so that 
        \begin{align*}
        \liminf{N \to \infty} \big( G_{\Psi}^{N,K_N}(\bx_N) - G_{\Psi}(m) \big) = 0, 
    \end{align*}
   which completes the proof of the assertion.
 \vs 
    Next, following the reasoning in the proof of Theorem \ref{thm.convergence}, the uniform Lipschitz estimates on $(G^{N,K}_{\Psi})_{K= 1,...,N}$  in Lemma \ref{lem.envelopeLip}  together with the Arzela-Ascoli theorem imply that, for each subsequence $(N_j)_{j \in \N}$, there is a further subsequence $(N_{j_k})_{k \in \N}$ and a Lipschitz function $F : \sub \to \R$, which, a-priori, may depend on the subsequence,  such that 
    \begin{align*}
        \lim_{k \to \infty} \max_{K = 1,...,N_{j_k}} \sup_{\bx \in (\T^d)^K} \big| G^{N_{j_k},K}_{\Psi}(\bx) - F(m_{\bx}^{N_{j_k},K}) \big| = 0.
    \end{align*}
    \vs
    But then the first step of the proof   implies that $F = G_{\Psi}$, and thus  we deduce that,   for each subsequence $(N_j)_{j \in \N}$, there is a further subsequence $(N_{j_k})_{k \in \N}$ such that 
    \begin{align*}
        \lim_{k \to \infty} \max_{K = 1,...,N_{j_k}} \sup_{\bx \in (\T^d)^K} \big| G^{N_{j_k},K}_{\Psi}(\bx) - G_{\Psi}(m_{\bx}^{N_{j_k},K}) \big| = 0.
    \end{align*}
  The proof is now complete.
  
\end{proof}

\subsection*{Mollification}
We prove the following result. 

\begin{proposition} \label{prop.mollification}
    Suppose that $G : \sub \to \R$ is $\Psi$-non-increasing and Lipschitz continuous. Then, there exists a sequence $(G^k)_{k \in \N}$ such that 

    (i)~ each $G^k$ is $\Psi$-non-increasing,
    \vs
    (ii)~ $G^k$ is Lipschitz continuous, uniformly in $k$, 
    \vs
   (iii)~ $G^k$ is $C^1$, with $\dfrac{\delta G^k}{\delta m}$ Lipschitz continuous,  and
   \vs
   (iv)~$G^k \to G$ uniformly on $\sub$.
    \end{proposition}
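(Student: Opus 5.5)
The plan is to mollify $G$ by first smoothing the measure argument with a Lipschitz-in-$\bd$ regularization, and then to restore $\Psi$-monotonicity by taking a $\Psi$-monotone envelope of a slightly perturbed function. The monotone-envelope step has to be done in a way that preserves the smoothness gained in the first step; this is where the main difficulty lies.

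\textbf{Step 1 (smoothing).} Restrict attention to finitely many Fourier modes. Fix $k$ and let $\rho_k$ be a smooth mollifier on $\T^d$. Set $\pi_k(m)=\big(m(\T^d),\ (\int \phi_j\, d(\rho_k * m))_{j\le J_k}\big)\in\R^{1+J_k}$ for a finite family $(\phi_j)$ of smooth functions with $\|\phi_j\|_{1,\infty}\le C$. Choose $J_k$ large enough that $\rho_k*m$ is determined up to error $1/k$ in $\bd$ by $\pi_k(m)$; equivalently, choose a Lipschitz map $R_k$ from the image back to $\sub$ with
\[
\bd\big(R_k(\pi_k(m)),m\big)\le C/k .
\]
Then define $\tilde G^k=\big((G\circ R_k)*\eta_k\big)\circ\pi_k$, where $\eta_k$ is a finite-dimensional mollifier. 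This function is $C^1$, its linear derivative $\sum_j \partial_j(\cdot)\,\rho_k*\phi_j(x)$ is jointly Lipschitz, it is Lipschitz in $\bd$ uniformly in $k$, and $\|\tilde G^k-G\|_\infty\le C/k$. A simpler alternative I would try first is $\tilde G^k(m)=\E\big[G(\Gamma_{1/k}*m)\big]$ averaged over a finite-dimensional smoothing. Either way, (ii), (iii) and (iv) hold for $\tilde G^k$.

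\textbf{Step 2 (restoring monotonicity).} $\tilde G^k$ need not be $\Psi$-non-increasing. Since $G$ is $\Psi$-non-increasing and $\|\tilde G^k-G\|_\infty\le C/k$, we have, for $n\le m$,
\[
\tilde G^k(m)\le \tilde G^k(n)+\int\Psi(x,m)\,d(m-n)+2C/k .
\]
I would correct this by adding a term that strictly gains: set
\[
G^k(m)=\tilde G^k(m)-\gamma_k\, m(\T^d)+\text{const}, \qquad \gamma_k\to0 .
\]
This alone does not absorb the additive error $2C/k$ when $(m-n)(\T^d)$ is small. The correct fix is to apply the mollification in Step 1 to $G$ and $\Psi$ consistently. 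Concretely, I would define $\tilde G^k(m)=\E_Z[G(S_Z m)]$, where $S_Z$ is a random translation (convolution by a smooth kernel), which commutes with the order $n\le m$. Then
\[
\tilde G^k(m)-\tilde G^k(n)\le \E\int \Psi(x,S_Zm)\,d\big(S_Z(m-n)\big) ,
\]
and by regularity of $\Psi$ (its $C^2$ bound in $x$ and its Lipschitz dependence on $m$), the right-hand side is at most $\int\Psi(x,m)\,d(m-n)+\tfrac{C}{k}(m-n)(\T^d)$. The crucial point is that this error is \emph{proportional to the removed mass}. It is therefore absorbed by adding $-\tfrac{C}{k}m(\T^d)$, which gives (i). Convolution in space yields smoothness in the $x$ variable of $\delta G^k/\delta m$. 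For smoothness in $m$, I would combine this with a finite-dimensional projection that is order-preserving (pushing $m$ forward onto a fine grid of bumps). Mollifying in those finite-dimensional coordinates along positive directions keeps the mass-proportional error structure, again absorbed by the linear mass term.

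\textbf{Main obstacle.} The expected obstacle is obtaining joint $C^1$ regularity in $m$ (item (iii)) through an operation that commutes with the partial order, so that the monotonicity defect stays proportional to $(m-n)(\T^d)$ rather than being merely $O(1/k)$. The plan is to use positive linear smoothing operators $m\mapsto \sum_j (\int\chi_j\,dm)\,\delta_{y_j}$ with a smooth partition of unity $(\chi_j)$. Composed with a mollifier in $\R^J$ that averages only over the transformation $a\mapsto a+\text{(small)}$, this keeps $G$'s monotonicity up to $\tfrac{C}{k}(m-n)(\T^d)$.
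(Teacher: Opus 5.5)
Your final plan matches the paper's proof: project $m$ order-preservingly onto $\sum_j(\int\chi_j\,dm)\,\delta_{y_j}$ using a fine smooth partition of unity, mollify in the finite-dimensional coefficients by translations applied identically to $m$ and $n$, observe that the $\Psi$-monotonicity defect is then proportional to the removed mass $(m-n)(\T^d)$, and absorb it by subtracting a small multiple of $m(\T^d)$ (the non-order-preserving smoothing of your Step 1 can simply be dropped). The paper checks two points explicitly that you leave implicit: the bound $\bd(Pm,Pn)\le C\bd(m,n)$ for your projection $P$, uniformly in the mesh $\delta$, which is needed for (ii) and relies on $\operatorname{diam}\operatorname{supp}\chi_j\le\delta$, $|D\chi_j|\le C/\delta$ and the cancellation $\sum_j D\chi_j=0$ (a naive estimate gives only $C/\delta$); and nonnegativity of the shifted coefficients, which the paper secures by first mixing $m$ with $\frac{\eta}{r_\delta}\,\mathrm{Leb}$, and which your positive-shift variant secures equally well.
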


\begin{proof}
    Following the mollification argument in \cite{MZ24}, there is a constant $C > 0$ such that, for each $\delta > 0$, there exists a finite smooth partition of unity $\{\phi_i\}_{i = 1,...,n}$ on $\T^d$ with the property that 
    \begin{align*}
       \text{diam}\big( \text{supp}(\phi_i) \big) \leq \delta \ \ \text{ and } \ \ 
        |D \phi_i| \leq \frac{C}{\delta}. 
    \end{align*}
   Choose $x_i \in \text{supp}(\phi_i)$ for each $i$ and define 
  the map  $T_{\delta} : \sub \to \sub$ by 
   \begin{align*}
       T_{\delta} m = \sum_{i = 1}^n \langle \phi_i, m \rangle \delta_{x_i}. 
   \end{align*}
   We note that, for any $f \in W^{1,\infty}$ with $\|f\|_{1,\infty} \leq 1$, we have 
   \begin{align*}
       \int_{\T^d} f d\big( m - T_{\delta} m\big) &= \int_{\T^d} \Big( f(x) - \sum_{i = 1}^n f(x_i) \phi_i(x) \Big) dm(x) \leq \sup_{x  \in \T^d} \Big|f(x) - \sum_{i = 1}^n f(x_i) \phi_i(x) \Big|
       \\
 &=  \sup_{x  \in \T^d} \Big| \sum_{i = 1}^n f(x)\phi_i(x) - \sum_{i = 1}^n f(x_i) \phi_i(x) \Big|
       \leq \sup_{x \in \T^d} \sum_{i = 1}^n |f(x) - f(x_i)| \phi_i(x) \leq \delta, 
   \end{align*}
   where the last inequality uses the fact that, if $\phi_i(x) > 0$, then $x \in \text{supp}(\phi_i)$ and $ |x- x_i| \leq \delta.$
   \vs
   It follows that 
   \begin{align}
       \bd(m, T_{\delta} m ) \leq \delta.
   \end{align}
   Next, we notice that, for $m,n \in \sub$, and $f \in W^{1,\infty}$ with $\|f\|_{1,\infty} \leq 1$, we have 
   \begin{align} \label{Tdelta.Lip}
     \int f d\big( T_{\delta} m - T_{\delta} n \big) = \int_{\T^d} \Big(\sum_{i =  1}^n f(x_i) \phi(x) \Big) d(m - n), 
   \end{align}
   and 
   \begin{align*}
      D_x \Big(\sum_{i =  1}^n f(x_i) \phi_i(x) \Big) = \sum_{i = 1}^n f(x_i) D_x \phi_i(x) 
= \sum_{i = 1}^n \big(f(x_i) - f(x)\big) D_x \phi_i(x)
 \leq \delta \frac{C}{\delta} = C, 
   \end{align*}
   where we used the facts that  $|x_i - x| \leq \delta$ if $D_x \phi_i(x) \neq 0$, $|D_x \phi_i(x) | \leq C/\delta$ and 
   \begin{align*}
       \sum_{i - 1}^n D_x \phi_i(x) = D_x \big( \sum_{i = 1}^n \phi_i(x) \big) = D_x 1 = 0.
   \end{align*}
   \vs
   Coming back to \eqref{Tdelta.Lip}, we deduce that 
   \begin{align*}
        \int f d\big( T_{\delta} m - T_{\delta} n \big) \leq C\bd(m,n), 
   \end{align*}
   and, hence, 
   \begin{align} \label{Tdelta.lip.2}
       \bd(T_{\delta} m, T_{\delta} n) \leq C \bd(m,n).
   \end{align}
  Next, we take  a standard approximation to the identity $(\rho_{\eta})_{\eta>0}$ on $\R$ with $\text{supp}(\rho_{\eta}) \subset [-\eta, \eta]$ and, for fixed $\delta$, we set
   \begin{align*}
     r_{\delta} = \min_{i = 1,...,n}  \int \phi_i dx,
   \end{align*}
    and then, for each $0 < \eta < r_{\delta}$, we define $G_{\delta, \eta} : \sub \to \R$ by 
   \begin{align*}
      G_{\delta, \eta}(m) = \int_{\R^n} G\Big( \sum_{i = 1}^n \big(\langle \phi_i, (1-  \frac{\eta}{r_{\delta}}) m + \frac{\eta}{r_{\delta}} \text{Leb} \rangle + y_i \big) \delta_{x_i}  \Big) d\rho_{\eta}^{\otimes n}(\by).
   \end{align*}
   It follows that 
   \be\label{A6}
   \begin{split}
    &  \big| G_{\delta, \eta}(m)  - G(m) \big|\\
    & \leq \int_{\R^n} \Big| G\Big( \sum_{i = 1}^n \big(\langle \phi_i, (1-  \frac{\eta}{r_{\delta}}) m + \frac{\eta}{r_{\delta}} \text{Leb} \rangle + y_i \big) \delta_{x_i}  \Big) - G(m) \Big| d\rho_{\eta}^{\otimes n}(\by) 
    \\
      & \leq \int_{\R^n} \bigg( \bd\Big( \sum_{i = 1}^n \big(\langle \phi_i, (1-  \frac{\eta}{r_{\delta}}) m + \frac{\eta}{r_{\delta}} \text{Leb} \rangle + y_i \big) \delta_{x_i} ,T_{\delta}\big( (1 - \frac{\eta}{r_{\delta}}) m + \frac{\eta}{r_{\delta}} \text{Leb}\big) \Big) d\rho_{\eta}^{\otimes n}(\by)
      \\
      &\qquad  + \bd\Big( T_{\delta}\big( (1 - \frac{\eta}{r_{\delta}}) m + \frac{\eta}{r_{\delta}} \text{Leb}\big),T_{\delta}\big( m \big)\Big) + \bd\Big( T_{\delta}\big( m \big), m \Big)
  \leq C \Big( \eta + \frac{\eta}{r_{\delta}} + \eta  + \delta \Big).
   \end{split}
   \ee
   Moreover, in view of \eqref{Tdelta.lip.2}, 
   \begin{align*}
     & \Big| G_{\delta, \eta}(m) - G_{\delta,\eta}(n) \Big| 
      \\
      &\leq \int_{\R^n} \Big| G\Big( \sum_{i = 1}^n \big(\langle \phi_i, (1-  \frac{\eta}{r_{\delta}}) m + \frac{\eta}{r_{\delta}} \text{Leb} \rangle + y_i \big) \delta_{x_i}  \Big)
      \\
      &\qquad \qquad - G\Big( \sum_{i = 1}^n \big(\langle \phi_i, (1-  \frac{\eta}{r_{\delta}}) n + \frac{\eta}{r_{\delta}} \text{Leb} \rangle + y_i \big) \delta_{x_i}  \Big) \Big| d\rho_{\eta}^{\otimes n}(\by)
      \\
      &\leq \int_{\R^n} \bd\Big( (1 - \frac{\eta}{r_{\delta}}) T_{\delta}m + \sum_{i = 1}^n y_i \delta_{x_i}, (1 - \frac{\eta}{r_{\delta}}) T_{\delta}n + \sum_{i = 1}^n y_i \delta_{x_i} \Big) d\rho_{\eta}^{\otimes n}(\by)
      \\
      &= \bd \Big( (1 - \frac{\eta}{r_{\delta}}) T_{\delta}m , (1 - \frac{\eta}{r_{\delta}}) T_{\delta}n \Big) 
      \leq \bd\Big( T_{\delta} m, T_{\delta} n \Big) \leq C \bd(m,n), 
   \end{align*}
   \vs
Finally, again following \cite{MZ24}, it is straightforward to check that $G_{\delta, \eta}$ is smooth, and, in particular, that  $\dfrac{\delta G_{\delta, \eta}}{\delta m}$ is globally Lipschitz. 
\vs
   Next, for $m' < m$, we use the fact that $G$ is $\Psi$ non-increasing to get 
   \begin{equation} \label{Gdeltaeta.nondecreasing} 
   \begin{split} 
       G_{\delta, \eta}&(m) = \int_{\R^n} G\Big( \sum_{i = 1}^n \big(\langle \phi_i, (1-  \frac{\eta}{r_{\delta}}) m + \frac{\eta}{r_{\delta}} \text{Leb} \rangle + y_i \big) \delta_{x_i}  \Big) d\rho_{\eta}^{\otimes n}(\by) 
       \\
       &\leq  \int_{\R^n}  \bigg( G\Big( \sum_{i = 1}^n \big(\langle \phi_i, (1-  \frac{\eta}{r_{\delta}}) m' + \frac{\eta}{r_{\delta}} \text{Leb} \rangle + y_i \big) \delta_{x_i}  \Big)
       \\
       & \quad + \int_{\T^d} \Phi\Big(x, \sum_{i = 1}^n \big(\langle \phi_i, (1-  \frac{\eta}{r_{\delta}}) m + \frac{\eta}{r_{\delta}} \text{Leb} \rangle + y_i \big) \delta_{x_i} \Big) d\Big( \sum_{i = 1}^n \langle \phi_i, (1 - \frac{\eta}{r_{\delta}}) (m - m') \rangle \delta_{x_i}\Big)(x) \bigg) d\rho_{\eta}^{\otimes n}(\by) 
       \\ 
       &=  
       G_{\delta, \eta}(m') 
       \\ 
       &\quad  + \big(1 - \frac{\eta}{r_{\delta}} \big) \int_{\R^n} \int_{\T^d} \Big( \sum_{i = 1}^n \Phi\big(x_i, \sum_{i = 1}^n \big( \langle (1-  \frac{\eta}{r_{\delta}}) m + \frac{\eta}{r_{\delta}} \text{Leb} \rangle + y_i\big)\delta_{x_i} \big) \phi_i(x) \Big) d(m - m')(x) d\rho_{\eta}^{\otimes n}(\by).
   \end{split}
   \ee
   Then, notice that 
   \begin{align*}
       \Big| &\Phi(x, m)  - \sum_{i = 1}^n \phi_i(x) \Phi\big(x_i,\sum_{i = 1}^n  \big( \langle (1-  \frac{\eta}{r_{\delta}}) m + \frac{\eta}{r_{\delta}} \text{Leb}, \phi_i \rangle + y_i\big)\delta_{x_i} \big) \Big|
       \\
       &\leq C \delta + \Big|\Phi(x,m) - \Phi\big(x, \sum_{i = 1}^n  \big( \langle (1-  \frac{\eta}{r_{\delta}}) m + \frac{\eta}{r_{\delta}} \text{Leb}, \phi_i \rangle + y_i\big)\delta_{x_i} \big) \Big|
       \\
       &\leq C \delta + \bd\Big(m,  \sum_{i = 1}^n  \big( \langle (1-  \frac{\eta}{r_{\delta}}) m + \frac{\eta}{r_{\delta}} \text{Leb}, \phi_i \rangle + y_i\big)\delta_{x_i} \Big) 
       \\
       &\leq C\Big( \delta + \sum_{i = 1}^n |y_i| + \eta + \frac{\eta}{r_{\delta}} \Big) + \bd\Big(m, T_{\delta}m \Big)
       \leq C \Big( \delta + \sum_{i = 1}^n |y_i| + \eta + \frac{\eta}{r_{\delta}} \Big). 
   \end{align*}
   Coming back to \eqref{Gdeltaeta.nondecreasing}, we get, for  $C$ independent of $\delta$ and $\eta$, 
   \begin{align*}
        G_{\delta, \eta}(m) \leq G_{\delta, \eta}(m') + \int_{\T^d} \Phi(x,m) d(m - m') + C \Big( \delta +  (1 + n) \eta + \frac{\eta}{r_{\delta}} \Big) (m - m')(\T^d).
   \end{align*}
   Thus, setting 
   \begin{align*}
       \wt{G}_{\delta, \eta}(m) = G_{\delta, \eta}(m) - C \Big( \delta +  (1 + n) \eta + \frac{\eta}{r_{\delta}} \Big) m(\T^d), 
   \end{align*}
   we see that $\wt{G}_{\delta, \eta}$ is $\Phi$-non-increasing. 
   \vs
   Moreover, clearly 
   \begin{align*}
        \big| \wt{G}_{\delta, \eta}(m) - G(m) \big| &\leq  \big| G_{\delta, \eta}(m) - G(m) \big| + C \Big( \delta +  (1 + n) \eta + \frac{\eta}{r_{\delta}} \Big)
        \leq C \Big( \delta +  (1 + n) \eta + \frac{\eta}{r_{\delta}} \Big), 
   \end{align*}
   and 
   \begin{align*}
       \big|\wt{G}_{\delta, \eta}(m) - \wt{G}_{\delta, \eta}(n) \big| &\leq \big|{G}_{\delta, \eta}(m) - {G}_{\delta, \eta}(n) \big| +   C \Big( \delta +  (1 + n) \eta + \frac{\eta}{r_{\delta}} \Big) (m - m')(\T^d) 
       \\
       &\leq C \Big( 1 + \delta +  (1 + n) \eta + \frac{\eta}{r_{\delta}} \Big) \bd(m,n).
   \end{align*}
   We can thus build the desired approximation by choosing first $\delta$ and then $\eta$ small enough. 

\end{proof}

\bibliographystyle{alpha}

\newcommand{\etalchar}[1]{$^{#1}$}

\end{document}